\theoremstyle{plain}
\newtheorem{theorem}[equation]{Theorem}
\newtheorem{lemma}[equation]{Lemma}
\newtheorem{corollary}[equation]{Corollary}
\newtheorem{proposition}[equation]{Proposition}
\theoremstyle{definition}
\newtheorem{definition}[equation]{Definition}
\newtheorem{remark}[equation]{Remark}
\theoremstyle{remark}
\numberwithin{equation}{section}
\numberwithin{figure}{section}
\newcommand{\e}{{\varepsilon}}
\newcommand{\N}{{\mathbb{N}}}
\newcommand{\pom}{{\partial \Omega}}
\newcommand{\Q}{{\mathbf{Q}}}
\newcommand{\T}{{\mathcal{T}}}
\newcommand{\R}{{\mathbb{R}}}
\newcommand{\Rn}{{\R^n}}
\newcommand{\Wloc}{W^{1,2}_{\mathrm{loc}}}
\def\loc{{\operatorname{loc}}}
\newcommand{\mbf}[1]{\mathbf{#1}}
\newcommand{\X}{{\mbf{X}}}
\newcommand{\x}{{\mbf{x}}}
\newcommand{\Y}{{\mbf{Y}}}
\newcommand{\y}{{\mbf{y}}}
\newcommand{\Z}{{\mbf{Z}}}
\newcommand{\abs}[1]{\left\lvert #1 \right\rvert}
\newcommand{\norm}[1]{\left\lVert #1 \right\rVert}
\newcommand{\mres}{\mathbin{\vrule height 1.6ex depth 0pt width
		0.13ex\vrule height 0.13ex depth 0pt width 1.3ex}} % restriction of measures
\renewcommand{\emptyset}{\mbox{\textup{\O}}}
\DeclareMathOperator{\supp}{supp}
\DeclareMathOperator{\diam}{diam}
\DeclareMathOperator{\dist}{dist}
\def\div{\mathop{\operatorname{div}}\nolimits}
\def\l@subsection{\@tocline{2}{0pt}{2.5pc}{5pc}{}}
\newcommand{\Bk}{\color{black}}
\begin{document}
\allowdisplaybreaks

\title[Rough parabolic Dirichlet problem with continuous and Hölder data]{The parabolic Dirichlet problem with continuous and Hölder boundary data, and rough coefficients}

% 35K20   	Initial-boundary value problems for second-order parabolic equations

% 31B25   	Boundary behavior of harmonic functions in higher dimensions
% 35B65   	Smoothness and regularity of solutions to PDEs

\author{Pablo Hidalgo-Palencia}
\address{Pablo Hidalgo-Palencia, Universitat de Barcelona, Departament de Matemàtiques i Informàtica, Gran Via de les Corts Catalanes 585, 08007 Barcelona, Spain}
\email{pablo.hidalgo@ub.edu}

\author{Cody Hutcheson}
\address{Cody Hutcheson\\
         Department of Mathematics\\
         University of Alabama\\
         Tuscaloosa, AL 35487, USA}
\email{cmhutcheson@crimson.ua.edu}

\author{Joseph Kasel}
\address{Joseph Kasel\\
         Department of Mathematics\\
         University of Alabama\\
         Tuscaloosa, AL 35487, USA}
\email{jekasel@crimson.ua.edu}

\thanks{The authors are indebted to Simon Bortz for his support and guidance along the whole project. The authors are also grateful to Steve Hofmann for some enlightening conversations and for suggestions regarding Theorem~\ref{thm:ExistenceOfParabolicMeasureOnBoundedDomains}, and also to Nicola Garofalo for pointing us to his paper with Fabes and Lanconelli 
for Remark~\ref{rem:symmetry}. P.H.-P. has been supported by the grants CEX2019-000904-S-20-3 and PCI2024-155066-2, funded by MCIN/AEI/10.13039/501100011033 and MICIU/AEI/10.13039/501100011033/UE (co-funded by the European Union), respectively, and acknowledges financial support from MCIN/AEI/10.13039/ 501100011033 grants CEX2019-000904-S and PID2019-107914GB-I00.}

\begin{abstract}
	 
	We provide very mild sufficient conditions for space-time domains (non-necessarily cylindrical) which ensure that the continuous Dirichlet problem and the Hölder Dirichlet problem are well-posed, for any parabolic operator in divergence form with merely bounded coefficients. Concretely, we show that the parabolic measure exists, even for unbounded domains, hence solving an open problem posed by Genschaw and Hofmann (2020). 

    This problem has inherent difficulties because of its parabolic nature, as the behavior of solutions near the boundary may depend strongly on the values of the coefficients of the operator. One of our sufficient conditions, the \textit{time-backwards capacity density condition}, is a quantitative version of the parabolic Wiener's criterion, and hence is adapted to the operator under consideration. The other condition, the \textit{time-backwards Hausdorff content condition}, is (albeit slightly stronger) purely geometrical and independent of the operator, hence much easier to check in practice.
    
\end{abstract}

\maketitle
\tableofcontents

%%%%%%%%%%%%%%%%%%%%%%%%%%%%%%%%%%%%%%%%%%%%%%%%%%%%%%%%%%%
%%%%%%%%%%%%%%%%%%%%%%%%%%%%%%%%%%%%%%%%%%%%%%%%%%%%%%%%%%%
%%%%%%%%%%%%%%%%%%%%%%%%%%%%%%%%%%%%%%%%%%%%%%%%%%%%%%%%%%%
%%%%%%%%%%%%%%%%%%%%%%%%%%%%%%%%%%%%%%%%%%%%%%%%%%%%%%%%%%%
%%%%%%%%%%%%%%%%%%%%%%%%%%%%%%%%%%%%%%%%%%%%%%%%%%%%%%%%%%%
%%%%%%%%%%%%%%%%%%%%%%%%%%%%%%%%%%%%%%%%%%%%%%%%%%%%%%%%%%%
%%%%%%%%%%%%%%%%%%%%%%%%%%%%%%%%%%%%%%%%%%%%%%%%%%%%%%%%%%%
%%%%%%%%%%%%%%%%%%%%%%%%%%%%%%%%%%%%%%%%%%%%%%%%%%%%%%%%%%%
%%%%%%%%%%%%%%%%%%%%%%%%%%%%%%%%%%%%%%%%%%%%%%%%%%%%%%%%%%%

\section{Introduction}
The heat equation has been a central topic of study for mathematicians since it was introduced over 200 years ago by Fourier. In its most general form, we may consider heat diffusion inside domains that may vary in time (that is, non-cylindrical), as in 
\[
\partial_t u(X, t) - \Delta_X u(X, t) = 0 \qquad \text{ for } (X, t) \in \Omega \subseteq \Rn \times \R.
\] 
Along with this equation, we will consider Dirichlet boundary conditions $u = f$ over $\partial \Omega$ (or at least over the portion of $\partial \Omega$ which is relevant for the diffusion). 

A fundamental question in this setting is to understand how $u$ behaves when approaching the boundary, and concretely the manner in which it attains the boundary values $f$. Intense efforts have been made recently to understand what happens when $f \in L^p$ (see e.g. \cite{DPP, AEN, GH, DLP, DN, BHMN, BFHH}), but there were even more fundamental questions that were unanswered in the case when $f$ is continuous.

Indeed, when $f$ is continuous, one would expect $u = f$ to hold pointwise over $\pom$. However, it turns out that this is not a trivial matter. Even in the elliptic setting (i.e. considering the steady state $-\Delta u = 0$, Laplace's equation), the work of Wiener \cite{Wiener} showed that $u$ does not always attain the boundary values $f$ in a continuous way (i.e. $u$ may not belong to $C(\overline{\Omega})$). In fact, he gave a full characterization of the class domains where this happens, using capacities, a key object from potential theory.

In the case of the heat equation, the study of the continuous Dirichlet problem (that is, obtaining solutions $u$ which are continuous all the way up to the boundary, given a continuous boundary datum $f$) is more recent. It took more than 50 years since Wiener's result for his criterion to be generalized to the heat equation, which was done by Evans and Gariepy in \cite{EG} (after some preliminary investigations and partial results in \cite{Levi, Gevrey, P, Pini, K, Landis, EK, L}). Soon afterwards, Garofalo and Lanconelli extended the criterion to parabolic operators in divergence form with smooth coefficients in \cite{GL}, and later to $C^{1, \text{Dini}}$ coefficients in \cite{FGL} jointly with Fabes (related works include \cite{Nov, GZ, BM}). In sharp contrast with the elliptic setting, their criterion depends on the operator taken into consideration. We will elaborate on this soon.

Our goal in this paper is to understand the continuous Dirichlet problem for much more general diffusions. Concretely, we will consider the parabolic equation 
\begin{equation} \label{eq:general_eq}
\partial_t u(X, t) - \div_X (A(X, t) \nabla_X u(X, t)) = 0, 
\qquad 
\text{ for } (X, t) \in \Omega \subseteq \Rn \times \R,
\end{equation}
where $A$ will be assumed to satisfy the minimal ellipticity and boundedness conditions, but not any smoothness assumption. Indeed, considering such general diffusions is beneficial not only for the sake of representing real-life processes that are inhomogeneous and anisotropic, but also to study non-linear equations, where the coefficients possibly depend on the solution itself, so no smoothness can be taken for granted. 

The connection between the continuous Dirichlet problem and the caloric/parabolic measure (the parabolic analogue of the harmonic/elliptic measure) is very classical. Indeed, these measures are a really powerful tool, see e.g. \cite{Dah, CFMS, HL, KPT, GMT, AHMMT, HMMTZ, BHMN, FTV} among many others. However, the existence of the parabolic measure in very general settings (i.e. for general $L$ and $\Omega$) is not easy to establish. For instance, Wiener criteria like those in \cite{EG, GL, FGL} imply that the parabolic measure exists for operators with (somehow) smooth diffusion, in domains satisfying some potential theoretic assumptions. For more general diffusions, Genschaw and Hofmann raised a question (see \cite[p. 1532]{GH}) concerning the existence of the parabolic measure, even under more beneficial geometric conditions. In fact, the authors could only continue their analysis by (artificially) imposing the existence of parabolic measure.

In the first main result of this paper, we solve an open problem posed by Genschaw and Hofmann in \cite{GH} (and therefore extend the works cited right above) by providing very mild sufficient conditions on $\Omega$ that ensure that, for any given operator as in \eqref{eq:general_eq}, the associated parabolic measure exists. Notably, this also means that the continuous Dirichlet problem is solvable on $\Omega$. In fact, our answer is considerably more general than the authors asked in \cite{GH}, because our conditions will be notably weaker (see Remark~\ref{rmk:GH}).

Furthermore, by using the parabolic measure, we will obtain a clear understanding of a very natural quantitative version of the continuous Dirichlet problem: the Hölder Dirichlet problem, that is, the boundary value problem where one imposes Hölder continuous boundary data and one looks for solutions in the same Hölder space. In particular, the Hölder exponent must not get smaller. The quantitative nature of this new problem requires a finer analysis of the boundary behavior of solutions to our equations.

In the second main result of this paper, we establish (under background hypotheses on $\Omega$ that are similar to the ones before) the well-posedness of the Hölder Dirichlet problem for any parabolic operator as in \eqref{eq:general_eq}. That is, we show existence and uniqueness of solutions, and also quantitative bounds for the solution in terms of the boundary values. For that, we extend to the parabolic setting the (elliptic) method introduced by Cao, Martell, Prisuelos-Arribas, Zhao, and the first named author, in \cite{CHMPZ} (see also \cite{BMP} for a recent extension to Besov spaces). 

Both of our main results hold for parabolic operators in divergence form with merely bounded diffusion, and under suitable hypotheses on $\Omega \subseteq \Rn \times \R$. Our first main assumption on $\Omega$, the so-called \textit{time backwards capacity density condition} (TBCDC for short), comes from potential theory (in fact, it is somehow a quantitative form of Wiener's criterion). For the case of the heat equation, Mourgoglou and Puliatti showed in their recent and very complete work \cite{MP} that the TBCDC allows for very fine analysis of solutions of the heat equation around the boundary. We are able to extend this to general operators, making strong use of the Aronson's bounds for fundamental solutions. The relevance of the TBCDC is that it should be (at least) close to optimal for our results, as suggested by the elliptic counterparts developed in \cite{Aikawa, CHMPZ}.

However, the TBCDC has a clear downside: it depends on the coefficients of the operator. This comes as no surprise taking into account that Wiener criteria in the parabolic setting, as mentioned before, necessarily depend on the operator, as shown in the early work of Petrovsky \cite{P}, and later refined by Lanconelli in \cite{L2} (see a more detailed discussion in \cite{GL}). This is in fact one (perhaps shocking) relevant difference between elliptic and parabolic equations (for elliptic equations, Wiener's criterion is independent of the operator, as shown by Littman, Stampacchia and Weinberger in their fundamental paper \cite{LSW}). Ultimately, the reason is the shape of the fundamental solutions: although Aronson's bounds say that every fundamental solution looks somehow like a Gaussian, these Gaussians do not have the same parameters, and the bounds are not sharp enough to distinguish very fine properties as in the continuous Dirichlet problem. 

To overcome the issue that the definition of the TBCDC depends on the operator, we introduce a purely geometrical condition, the so-called \textit{time backwards Hausdorff content condition} (TBHCC for short), which is also sufficient for our two main results.
Although (slightly) stronger than the TBCDC, it has the advantage that it is purely geometrical and does not depend on the operator, so it is definitely much easier to check in practice.

%%%%%%%%%%%%%%%%%%%%%%%%%%%%%%%%%%%%
%%%%%%%%%%%%%%%%%%%%%%%%%%%%%%%%%%%%
%%%%%%%%%%%%%%%%%%%%%%%%%%%%%%%%%%%%
%%%%%%%%%%%%%%%%%%%%%%%%%%%%%%%%%%%%

\subsection{Main results}

Before presenting the two main results of this paper, let us start by stating the key technical tool that will pave the way for us. We show that the TBCDC assumption is sufficient for the parabolic measure to be non-degenerate near the boundary (as in \eqref{maineqn:BourgainEstimate}). This estimate showcases the usefulness of the parabolic measure, because it easily implies \eqref{maineqn:HolderEstimate}, a powerful decay estimate for general solutions at the boundary.

\begin{theorem}[Bourgain's estimate\footnote{The estimate \eqref{maineqn:BourgainEstimate} is frequently known as Bourgain's estimate because it originally proved to be useful in \cite{Bourgain}. A very complete and understandable extension to the heat equation can be found in \cite{BG}.} and Hölder continuity up to the boundary]
\label{mainthm:Bourgain}
    Let $L=\partial_t-\div A\nabla$ be a parabolic operator with merely bounded coefficients (see Definition~\ref{def:operator}), and $\Omega\subseteq\R^{n+1}$ be an open set that satisfies the time backwards capacity density condition for $L$ (TBCDC, see Definition~\ref{def:TBCDC}). Suppose, in addition, that the parabolic measure $\omega_L$ exists for $L$ on $\Omega$ (see Definition~\ref{def:parabolic_measure}). Then, there exist $\eta,\gamma>0$ such that, for any $(x_0, t_0) \in \partial_e \Omega \setminus \{\infty\}$ and $r > 0$,
    \begin{equation}
    	\label{maineqn:BourgainEstimate}
    	\omega_L^{X,t}(\Q_r(x_0,t_0))\geq\eta, 
    	\qquad 
    	\text{ for all } (X,t)\in \Q_{\gamma r}(x_0,t_0)\cap\Omega.
    \end{equation} 
    Moreover, there exists $\alpha_H \in (0, 1)$ and $C > 0$ such that: if $u \geq 0$ is a weak solution to $Lu = 0$ in $\Q_{2r}(x_0, t_0) \cap \Omega$ for some $(x_0,t_0)\in\partial_e\Omega\setminus\{\infty\}$ and $r>0$, and $u$ vanishes continuously over $\Q_{2r}(x_0,t_0) \cap \partial_e \Omega$, then 
    \begin{equation}
    \label{maineqn:HolderEstimate}
        u(X,t)
        \leq 
        C \left(\frac{\dist((X,t), \partial_e \Omega)}{r}\right)^{\alpha_H} \sup_{\Q_{2r}(x_0,t_0)\cap\Omega} u, 
        \qquad 
        \text{for all $(X,t)\in \Q_r(x_0,t_0)\cap\Omega$.}
    \end{equation}
    The values of $\eta, \gamma, \alpha_H$ and $C$ only depend on $n, \lambda$ (ellipticity) and the TBCDC constants. 
\end{theorem}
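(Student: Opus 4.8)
The plan is to establish the Bourgain estimate \eqref{maineqn:BourgainEstimate} first, by comparing the parabolic measure with an $L$-capacitary potential of a thick portion of $\Omega^c$ lying in the time-backwards part of $\Q_r(x_0,t_0)$, and then to derive the Hölder decay \eqref{maineqn:HolderEstimate} from it by a standard oscillation-decay iteration.

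For \eqref{maineqn:BourgainEstimate}: fix $(x_0,t_0)\in\partial_e\Omega\setminus\{\infty\}$ and $r>0$. Under the TBCDC every point of $\partial_e\Omega$ is regular for $L$ (a quantitative parabolic Wiener criterion), so the Perron method and $\omega_L$ behave as usual and the maximum principle is available on bounded subdomains. Pick a small geometric parameter $\gamma_0\in(0,1)$ and put $K:=\Omega^c\cap\overline{\Q^{-}_{\gamma_0 r}(x_0,t_0)}$, where $\Q^{-}_{\gamma_0 r}(x_0,t_0)$ is the time-backwards cylinder from the definition of the TBCDC (so that, for $\gamma\ll\gamma_0$, the whole of $\Q_{\gamma r}(x_0,t_0)$ lies strictly in the future of $K$ — this is exactly where the \emph{time-backwards} character of the hypothesis enters). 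Let $\mu_K\ge0$ be the $L$-equilibrium measure of $K$, with $\mu_K(\R^{n+1})=\operatorname{cap}_L(K)$, and $p_K$ its $L$-potential (built from the fundamental solution $\Gamma_L$ of $L$ on $\R^{n+1}$, which obeys Aronson's two-sided Gaussian bounds); thus $0\le p_K\le1$, $p_K=1$ q.e.\ on $K$, $p_K$ is $L$-caloric in $\R^{n+1}\setminus\supp\mu_K\supseteq\Omega$, and $p_K$ vanishes strictly in the past of $K$ by causality. On the bounded domain $\Q_r(x_0,t_0)\cap\Omega$ the difference $(X,t)\mapsto\omega_L^{X,t}(\Q_r(x_0,t_0))-p_K(X,t)$ is $L$-caloric and, on the parabolic boundary, is $\ge-\delta$: on $\partial_e\Omega\cap\Q_r(x_0,t_0)$ the first term tends to $1\ge p_K$ by regularity, while on the rest of the boundary the first term is $\ge0$ and, by Aronson's upper bound together with $\operatorname{cap}_L(K)\lesssim(\gamma_0 r)^n$ and the fact that this part lies at parabolic distance $\gtrsim r$ from $K$, one has $p_K\le\delta\lesssim\gamma_0^{n}$ there. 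By the minimum principle,
\[
\omega_L^{X,t}(\Q_r(x_0,t_0))\ \ge\ p_K(X,t)-\delta,\qquad (X,t)\in\Q_r(x_0,t_0)\cap\Omega .
\]

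It remains to bound $p_K$ below near $(x_0,t_0)$, and this is the heart of the matter: the TBCDC, being a quantitative form of the parabolic Wiener criterion, yields $\operatorname{cap}_L(K)\gtrsim(\gamma_0 r)^n$, and inserting Aronson's \emph{lower} Gaussian bound for $\Gamma_L$ into $p_K(X,t)=\int_K\Gamma_L((X,t);(Y,s))\,d\mu_K(Y,s)$ shows that $p_K(X,t)\ge c_0$ for $(X,t)\in\Q_{\gamma r}(x_0,t_0)$ with $\gamma\ll\gamma_0$ (so that $(X,t)$ is well inside $K$ spatially and well in its future temporally), where $c_0$ depends only on $n$, $\lambda$ and the TBCDC constants. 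Then, choosing $\gamma_0$ small enough that $\delta<c_0/2$, the display above gives \eqref{maineqn:BourgainEstimate} with $\eta:=c_0/2$ and this $\gamma$. \textbf{The genuine difficulty I anticipate is precisely this potential-theoretic input for rough $L$}: whereas for the heat operator the needed facts (existence and properties of $\mu_K$ and $p_K$, the capacity of a cylinder, the estimates of $\delta$ and $c_0$) rest on the explicit Gauss kernel, as in \cite{MP}, for a general bounded-coefficient $L$ this machinery has to be rebuilt on top of Aronson's two-sided bounds, which are coarser — they do not pin down the Gaussian constants — but are still precise enough to run these capacity/potential estimates with constants depending only on $n$, $\lambda$ and the TBCDC data; the comparison via the maximum principle in the previous paragraph is then routine.

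Finally, \eqref{maineqn:HolderEstimate} follows formally from \eqref{maineqn:BourgainEstimate}. Given $u\ge0$ with $Lu=0$ in $\Q_{2r}(x_0,t_0)\cap\Omega$ vanishing continuously on $\Q_{2r}(x_0,t_0)\cap\partial_e\Omega$, fix a boundary point $(q,s)\in\partial_e\Omega$ near $(x_0,t_0)$ and set $M(\rho):=\sup_{\Q_\rho(q,s)\cap\Omega}u$. Since $u=0$ on $\partial_e\Omega\cap\overline{\Q_\rho(q,s)}$ and $u\le M(2\rho)$ on $\Omega\cap\partial_e\Q_\rho(q,s)$, the maximum principle gives
\[
u(X,t)\ \le\ M(2\rho)\Big(1-\omega^{X,t}\big(\partial_e\Omega\cap\overline{\Q_\rho(q,s)}\big)\Big),\qquad(X,t)\in\Q_\rho(q,s)\cap\Omega,
\]
where $\omega$ is the parabolic measure of the bounded domain $\Q_\rho(q,s)\cap\Omega$. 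This domain again satisfies the TBCDC — automatically at boundary points on $\partial\Q_\rho(q,s)$, since the exterior of a parabolic cylinder is uniformly thick, and by inheritance from $\Omega$ at the remaining ones — so the already-proven \eqref{maineqn:BourgainEstimate} applies to it, yielding $\omega^{X,t}(\partial_e\Omega\cap\overline{\Q_\rho(q,s)})\ge\eta$ on $\Q_{\gamma\rho}(q,s)\cap\Omega$ with the same $\eta,\gamma$; hence $M(\gamma\rho)\le(1-\eta)M(2\rho)$. Iterating over the scales $\rho\simeq(\gamma/2)^k r$ gives $M(\rho)\le C(\rho/r)^{\alpha_H}\sup_{\Q_{2r}(x_0,t_0)\cap\Omega}u$ with $\alpha_H:=\log\frac{1}{1-\eta}/\log\frac{2}{\gamma}\in(0,1)$, and choosing $(q,s)$ with $\dist((X,t),(q,s))\simeq\dist((X,t),\partial_e\Omega)=:d$ (the case $d\gtrsim r$ being trivial) and $\rho\simeq d$ produces \eqref{maineqn:HolderEstimate}. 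All constants depend only on $n$, $\lambda$ and the TBCDC constants, as asserted.
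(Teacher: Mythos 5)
Your barrier construction for \eqref{maineqn:BourgainEstimate} is essentially the paper's Lemma~\ref{thm:capacityestimate}: take an (almost) maximizing admissible measure for $\mathrm{Cap}_L$ of the time-backwards TBCDC cylinder intersected with $\Omega^c$, use Aronson's lower bound together with the built-in time gap $(a\gamma_0 r)^2$ between that cylinder and the center to bound the potential below on a forward cube, use the decay $\Gamma_L(\X;\Y)\lesssim\norm{\X-\Y}^{-n}$ to bound it above away from $K$, and compare with $\omega_L^{X,t}$ by the maximum principle. (The paper never needs the equilibrium measure or the property $p_K=1$ q.e.\ on $K$ — the supremum definition of $\mathrm{Cap}_L$ already hands you an admissible $\mu$ with $\mu(K)\ge\mathrm{Cap}_L(K)/2$, which is all your argument actually uses; this removes the potential-theoretic difficulty you flag.) The first genuine gap is coverage: the theorem asserts \eqref{maineqn:BourgainEstimate} for \emph{every} $(x_0,t_0)\in\partial_e\Omega\setminus\{\infty\}$ and \emph{every} $r>0$, while the TBCDC (Definition~\ref{def:TBCDC}) only gives the capacity lower bound for $(x_0,t_0)\in\Sigma$ and $0<r<\sqrt{t_0-T_{\min}}/4$. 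Points of $\mathcal{B}\Omega\cap\T_{=T_{\min}}$ and large radii are not reached by your argument; the paper treats them in Cases 2--4 of the proof, where the fix is that the backward cylinder then lies partly or wholly in $\T_{<T_{\min}}\subseteq\Omega^c$, so its capacity is $\gtrsim r^n$ by Lemma~\ref{lem:cap_cylinder} without invoking the TBCDC.

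The second gap is in the iteration for \eqref{maineqn:HolderEstimate}: you invoke the parabolic measure of the truncated domain $\Q_\rho(q,s)\cap\Omega$. Its existence is not among the hypotheses (only $\omega_L$ on $\Omega$ is assumed to exist), and for merely bounded coefficients it is precisely the content of Theorem~\ref{mainthm:ExistenceOfParabolicMeasure}, which is proved \emph{later} and whose proof uses the present theorem — so, as written, this step is either unjustified or circular; moreover, \eqref{maineqn:BourgainEstimate} for the truncated domain would concern surface balls of its full essential boundary (which includes $\Omega\cap\partial\Q_\rho$), not of $\partial_e\Omega\cap\overline{\Q_\rho}$ alone. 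The paper's Lemma~\ref{lem:iteration_holder} avoids all of this: it iterates the functions $w_j=1-u/(1-\eta)^{j-1}$, each a nonnegative local solution equal to $1$ on the relevant boundary portion, and bounds each from below by the capacitary barrier of Lemma~\ref{thm:capacityestimate} using only the classical comparison principle on the bounded open set $\Q_{\gamma^{j-1}R}(\hat{\y})\cap\Omega$. Replacing your appeal to $\omega^{X,t}_{\Q_\rho(q,s)\cap\Omega}$ by that barrier comparison repairs the argument; the remainder of your iteration and the resulting exponent $\alpha_H$ agree with the paper's.
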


For all the relevant definitions, like $\partial_e \Omega$ or cubes like $\Q_{4r}(x_0, t_0)$, we refer to Section~\ref{sec:preliminaries}. 

\begin{remark}
	Note that in Theorem~\ref{maineqn:BourgainEstimate} (and later in Theorem~\ref{mainthm:ExistenceOfHolderSolutions}) we are making the \textit{a priori} assumption that the parabolic measure for $L$ in $\Omega$ exists. Although the reader could be tempted to think that this means that we can only apply these theorems to \textit{nicely behaved} operators, like those with somehow smooth coefficients (see the parabolic Wiener criteria in Section~\ref{sec:wiener}, and concretely Remark~\ref{rmk:ExistenceOfParabolicMeasureForCinfty}), we will shortly state Theorem~\ref{mainthm:ExistenceOfParabolicMeasure}, where we show that the parabolic measure exists for general operators (check also Corollary~\ref{corol:general_L}).
\end{remark}

\begin{remark}
	The sufficiency of the TBCDC for Theorem~\ref{mainthm:Bourgain}, at least for heat-like equations, was already noticed and very carefully developed by Mourgoglou and Puliatti in \cite{MP}. In Theorem~\ref{mainthm:Bourgain}, we just extend that to general equations with rough diffusion, including the proofs in order to try to make the paper approachable and understandable.
\end{remark}

With Theorem~\ref{mainthm:Bourgain} in hand, we know that solutions vanish in a Hölder fashion close to the regions where the associated boundary value vanishes. Moreover, solutions are naturally Hölder continuous in the interior (as shown by the ground-breaking work of Nash in \cite{N}, see Lemma~\ref{lem:interiorHolder}). Combining these, we are able to extend the elliptic program of \cite{CHMPZ} to parabolic equations to show well-posedness of the Hölder Dirichlet problem.

\begin{theorem}[Well-posedness of the Hölder-Dirichlet problem]
	\label{mainthm:ExistenceOfHolderSolutions}
	Let $L=\partial_t-\div A\nabla$ be a parabolic operator with merely bounded coefficients (see Definition~\ref{def:operator}), and $\Omega\subseteq\R^{n+1}$ be an open set that satisfies the TBCDC for $L$ (see Definition~\ref{def:TBCDC}). Suppose, in addition, that the parabolic measure $\omega_L$ exists for $L$ on $\Omega$ (see Definition~\ref{def:parabolic_measure}).
	Then, there exists some $\alpha\in(0,1)$\footnote{Actually, one can take $\alpha := \min\{\alpha_N,\alpha_H\}$, with $\alpha_N$ from Lemma \ref{lem:interiorHolder} and $\alpha_H$ from Theorem~\ref{mainthm:Bourgain}.} such that for all $\beta\in(0,\alpha)$, the $\dot{C}^\beta$-Dirichlet problem (see Definition~\ref{def:holder_problem})
	\begin{equation*}
		\begin{cases}
			u \in \dot{C}^\beta(\Omega \cup \partial_n \Omega) \cap W^{1, 2}_\loc(\Omega), \\ 
			Lu = 0 \text{ in the weak sense in $\Omega$}, \\ 
			u|_{\partial_e \Omega} = f \in \dot{C}^\beta(\partial_e \Omega),
		\end{cases}
	\end{equation*}
	is well-posed. More specifically, for any $f \in \dot{C}^\beta(\partial_e \Omega)$ there is a unique solution given by
	\[
	u(X, t) 
	:= 
	\int_{\partial_e\Omega}f(y, s)\,d\omega_L^{X, t}(y, s),
	\qquad (X, t) \in \Omega,
	\]
	which belongs to $\dot{C}^\beta(\Omega \cup \partial_n \Omega) \cap W^{1, 2}_\loc(\Omega)$,
	and also satisfies
	\begin{equation}
		\label{3.5.2}
		\lVert f\rVert_{\dot{C}^\beta(\partial_e\Omega)}\leq\lVert u\rVert_{\dot{C}^\beta(\Omega)}\leq C\lVert f\rVert_{\dot{C}^\beta(\partial_e\Omega)}.
	\end{equation}
	The values of $\alpha$ and $C > 0$ only depend on $n, \lambda$ and the TBCDC constants.
\end{theorem}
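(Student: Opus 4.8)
The plan is to construct the solution as the parabolic--measure integral $u(X,t):=\int_{\partial_e\Omega}f\,d\omega_L^{X,t}$, check that it lies in $\dot C^\beta(\Omega\cup\partial_n\Omega)\cap W^{1,2}_\loc(\Omega)$, solves $Lu=0$ weakly, attains $f$ continuously on $\partial_e\Omega$, and satisfies the right-hand inequality in \eqref{3.5.2} (the left-hand one being automatic), and then to derive uniqueness from \eqref{maineqn:HolderEstimate}. I begin by recording two consequences of Theorem~\ref{mainthm:Bourgain}. Applying \eqref{maineqn:HolderEstimate} to the nonnegative solution $(X,t)\mapsto\omega_L^{X,t}\!\big(\partial_e\Omega\setminus\Q_R(x_0,t_0)\big)$ --- which vanishes continuously on $\Q_R(x_0,t_0)\cap\partial_e\Omega$, all points of $\partial_e\Omega\setminus\{\infty\}$ being regular for $L$ under the TBCDC --- gives, for $(x_0,t_0)\in\partial_e\Omega\setminus\{\infty\}$ and $(X,t)\in\Q_R(x_0,t_0)\cap\Omega$,
\[
\omega_L^{X,t}\!\big(\partial_e\Omega\setminus\Q_R(x_0,t_0)\big)\leq C\Big(\tfrac{\dist((X,t),\partial_e\Omega)}{R}\Big)^{\alpha_H};
\]
letting $R\to\infty$ shows $\omega_L^{X,t}$ carries no mass at $\infty$, hence is a probability measure on $\partial_e\Omega\setminus\{\infty\}$. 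Since $|f(y,s)-f(x_0,t_0)|\leq\|f\|_{\dot C^\beta(\partial_e\Omega)}\dist((y,s),(x_0,t_0))^\beta$, breaking $\int f\,d\omega_L^{X,t}$ into dyadic annuli around $(x_0,t_0)$ and using the decay above with $\beta<\alpha_H$ shows the integral converges and $u$ is finite and locally bounded on $\Omega$. As a locally uniform limit of the $\omega_L$-solutions of the truncations $\max\{-N,\min\{N,f\}\}$, $u$ is a weak solution of $Lu=0$ in $\Omega$, and the Caccioppoli inequality yields $u\in W^{1,2}_\loc(\Omega)$.

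The heart of the matter is the following \emph{boundary oscillation estimate}: for every $(x_0,t_0)\in\partial_e\Omega\setminus\{\infty\}$ and every $\rho>0$,
\[
\osc_{\Q_\rho(x_0,t_0)\cap\Omega}u\;\leq\;C\,\|f\|_{\dot C^\beta(\partial_e\Omega)}\,\rho^\beta .
\]
To see it, set $\tilde f:=f-f(x_0,t_0)$, so that $u(X,t)-f(x_0,t_0)=\int_{\partial_e\Omega}\tilde f\,d\omega_L^{X,t}$ (using that $\omega_L^{X,t}$ is a probability measure) and $|\tilde f(y,s)|\leq\|f\|_{\dot C^\beta}\dist((y,s),(x_0,t_0))^\beta$. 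For $(X,t)\in\Q_\rho(x_0,t_0)\cap\Omega$ split the integral over $\Q_{2\rho}(x_0,t_0)$, where it is at most $\|f\|_{\dot C^\beta}(2\rho)^\beta$, and over the dyadic complement, where it is bounded by
\[
\sum_{k\geq1}\|f\|_{\dot C^\beta}(2^{k+1}\rho)^\beta\,\omega_L^{X,t}\!\big(\partial_e\Omega\setminus\Q_{2^k\rho}(x_0,t_0)\big)\;\lesssim\;\|f\|_{\dot C^\beta}\rho^\beta\sum_{k\geq1}2^{k(\beta-\alpha_H)}\;\lesssim\;\|f\|_{\dot C^\beta}\rho^\beta,
\]
where we used the decay estimate of the first paragraph together with $\dist((X,t),\partial_e\Omega)\leq\rho$, and the sum converges because $\beta<\alpha_H$. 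Hence $\sup_{\Q_\rho(x_0,t_0)\cap\Omega}|u-f(x_0,t_0)|\leq C\|f\|_{\dot C^\beta}\rho^\beta$, which gives the oscillation estimate and, letting $\rho\to0$, also $u(X,t)\to f(x_0,t_0)$ as $(X,t)\to(x_0,t_0)$; so $u$ attains $f$ continuously on $\partial_e\Omega$.

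For the right-hand inequality in \eqref{3.5.2}, fix $(X,t),(Y,\tau)\in\Omega$, put $\rho:=\dist((X,t),(Y,\tau))$ and $\delta:=\dist((X,t),\partial_e\Omega)$, and assume $\delta\leq\dist((Y,\tau),\partial_e\Omega)$. If $\rho\geq\delta/4$, both points lie in $\Q_{C\rho}(x_0,t_0)\cap\Omega$ for some $(x_0,t_0)\in\partial_e\Omega$ with $\dist((X,t),(x_0,t_0))\leq2\delta$, and the oscillation estimate gives $|u(X,t)-u(Y,\tau)|\leq C\|f\|_{\dot C^\beta}\rho^\beta$. If $\rho<\delta/4$, then $\Q_{\delta/2}(X,t)\subseteq\Omega$ and $(Y,\tau)\in\Q_\rho(X,t)$, so the interior Hölder estimate (Lemma~\ref{lem:interiorHolder}) gives $|u(X,t)-u(Y,\tau)|\leq C(\rho/\delta)^{\alpha_N}\osc_{\Q_{\delta/2}(X,t)}u$; since $\Q_{\delta/2}(X,t)\subseteq\Q_{2\delta}(x_0,t_0)$ for some $(x_0,t_0)\in\partial_e\Omega$ with $\dist((X,t),(x_0,t_0))\leq2\delta$, this oscillation is $\leq C\|f\|_{\dot C^\beta}\delta^\beta$ by the previous step, and since $\beta<\alpha_N$ and $\delta>4\rho$ we get $(\rho/\delta)^{\alpha_N}\delta^\beta\lesssim\rho^\beta$. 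In both cases $|u(X,t)-u(Y,\tau)|\leq C\|f\|_{\dot C^\beta(\partial_e\Omega)}\rho^\beta$; this is precisely where the range $\beta<\alpha:=\min\{\alpha_N,\alpha_H\}$ is used, and combined with the continuity up to $\partial_n\Omega$ from the previous paragraph it gives $u\in\dot C^\beta(\Omega\cup\partial_n\Omega)$ with the stated bound. The left-hand inequality in \eqref{3.5.2} is immediate: since $u=f$ continuously on $\partial_e\Omega$, for $p,q\in\partial_e\Omega$ we get $|f(p)-f(q)|=|u(p)-u(q)|\leq\|u\|_{\dot C^\beta(\Omega)}\dist(p,q)^\beta$ by approximating $p,q$ with interior points.

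Finally, for uniqueness, if $u_1,u_2$ both solve the problem then $v:=u_1-u_2\in\dot C^\beta(\Omega\cup\partial_n\Omega)$, $Lv=0$ in $\Omega$, and $v\equiv0$ on $\partial_e\Omega$; hence $|v(X,t)|\leq\|v\|_{\dot C^\beta}\dist((X,t),\partial_e\Omega)^\beta$, so $\sup_{\Q_{2r}(x_0,t_0)\cap\Omega}|v|\leq C\|v\|_{\dot C^\beta}r^\beta$ for every $(x_0,t_0)\in\partial_e\Omega\setminus\{\infty\}$ and $r>0$. Applying \eqref{maineqn:HolderEstimate} to the nonnegative subsolution $|v|$ (which vanishes continuously on $\Q_{2r}(x_0,t_0)\cap\partial_e\Omega$; the extension of \eqref{maineqn:HolderEstimate} from solutions to subsolutions is routine, via comparison with parabolic measure), for $(X,t)\in\Q_r(x_0,t_0)\cap\Omega$ we get
\[
|v(X,t)|\leq C\Big(\tfrac{\dist((X,t),\partial_e\Omega)}{r}\Big)^{\alpha_H}\|v\|_{\dot C^\beta}r^\beta=C\|v\|_{\dot C^\beta}\dist((X,t),\partial_e\Omega)^{\alpha_H}\,r^{\beta-\alpha_H},
\]
which tends to $0$ as $r\to\infty$ because $\beta<\alpha_H$; hence $v\equiv0$. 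I expect the main obstacle to be the boundary oscillation estimate: converting the single-scale, one-sided bound \eqref{maineqn:HolderEstimate} into a genuine $\beta$-Hölder modulus for $u$ up to the boundary forces the dyadic summation around each finite boundary point and --- delicately, for unbounded $\Omega$ --- the fact that $\omega_L^{X,t}$ is a probability measure with quantitative decay at infinity, which must itself be extracted from Theorem~\ref{mainthm:Bourgain}.
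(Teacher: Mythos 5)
Your proposal is correct and follows essentially the same route as the paper: the engine is the same tail-decay estimate for $\omega_L^{X,t}$ away from a boundary point obtained from \eqref{maineqn:HolderEstimate} (the paper's Lemma~\ref{CHMPZ 2.17}), the same dyadic-annulus computation giving $\int\|\y-\x_0\|^\beta\,d\omega_L^{\X}\lesssim\delta(\X)^\beta$, and the same two-case split (interior H\"older via Lemma~\ref{lem:interiorHolder} when $\|\X-\Y\|\lesssim\delta(\Y)$, boundary oscillation otherwise) for \eqref{3.5.2}. Two small differences are worth flagging. First, you apply \eqref{maineqn:HolderEstimate} directly to $\X\mapsto\omega_L^{\X}(\partial_e\Omega\setminus\Q_R(\x_0))$, asserting it vanishes continuously on $\Q_R(\x_0)\cap\partial_e\Omega$; since the boundary datum $\mathbf{1}_{\partial_e\Omega\setminus\Q_R}$ is not continuous, this needs a word of justification --- the paper dominates this quantity by $\int\phi_j\,d\omega_L^{\X}$ for smooth cutoffs $\phi_j$ and passes to the limit by monotone convergence, which is the clean way to do it (your step is fixable in one line by the same domination). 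Second, your uniqueness argument is genuinely different and arguably more robust: rather than invoking the maximum principle for solutions continuous up to $\partial_e\Omega$ including $\infty$ (the paper's route, via Definition~\ref{def:parabolic_measure} and Lemma~\ref{lem:Uniqueness}), you run a Phragm\'en--Lindel\"of-type argument, feeding the a priori bound $|v|\leq\|v\|_{\dot C^\beta}\delta(\cdot)^\beta$ into \eqref{maineqn:HolderEstimate} at scale $r$ and letting $r\to\infty$ using $\beta<\alpha_H$; this avoids having to interpret continuity of a $\dot C^\beta$ function at the point at infinity, at the price of needing the subsolution version of \eqref{maineqn:HolderEstimate} for $|v|$ --- which the paper explicitly licenses in the remark following Lemma~\ref{lem:iteration_holder}, so this is not a gap.
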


In this result, uniqueness of solutions is not an issue because we are assuming that the parabolic measure exists, and it is in fact always a probability because, if $\Omega$ is unbounded, we consider the point at infinity as part of the boundary (namely $\infty \in \partial_e \Omega$), which is the usual convention in the literature (see e.g. \cite{W2, GH, MP}). This basically means that we can impose boundary values at infinity. We will explain why this is reasonable in Section~\ref{sec:classification_boundary}. In contrast, we also study in Section~\ref{sec:probability} what happens when we abandon this convention and do not impose boundary values at infinity (more similar to what is done in \cite{CHMPZ}): there will be cases where the parabolic measure is no longer a probability. 

Our second main result asserts that the TBCDC is enough for the parabolic measure to exist, even for general operators. 

\begin{theorem}[Existence of parabolic measure]
	\label{mainthm:ExistenceOfParabolicMeasure}
	Let $L=\partial_t-\div A\nabla$ be a parabolic operator with merely bounded coefficients (see Definition~\ref{def:operator}),
	and $\Omega\subseteq\R^{n+1}$ be an open set that satisfies the TBCDC for $\partial_t-M_2\Delta$ (see Definition~\ref{def:TBCDC}), the operator associated to $L$ in Lemma~\ref{lem:capLtoM} (so $M_2$ only depends on $n$ and ellipticity $\lambda$). 
	Then, the continuous Dirichlet problem for $L$ is solvable in $\Omega$ (see Definition~\ref{def:continuous_problem}), and the parabolic measure for $L$ in $\Omega$ exists (see Definition~\ref{def:parabolic_measure}). 
	Namely, for each $(X,t)\in\Omega$, there exists a (unique) positive Radon measure $\omega_L^{X,t}$ supported on $\partial_e\Omega$ so that for all $f\in C(\partial_e\Omega)$, the function given by
	\[
	u(X,t)=\int_{\partial_e\Omega}f\,d\omega_L^{X,t}, 
    \qquad (X, t) \in \Omega,
	\]
	solves the continuous Dirichlet problem with boundary data $f$. Moreover, for each $(X,t)\in\Omega$, $\omega_L^{X,t}$ is a probability measure, that is, $\omega_L^{X,t}(\partial_e\Omega)=1$.
\end{theorem}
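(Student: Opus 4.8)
The plan is to construct $\omega_L$ by Perron's method adapted to the parabolic setting; the only nontrivial input is the regularity of every boundary point, which is extracted from the TBCDC through a potential-theoretic barrier built on Aronson's Gaussian bounds (this is the rough-coefficient analogue of Evans--Gariepy \cite{EG}, carried out for the heat equation by Mourgoglou--Puliatti \cite{MP}). First, by Lemma~\ref{lem:capLtoM} the $L$-caloric capacity is comparable, with constants depending only on $n$ and $\lambda$, to the capacity associated with $\partial_t-M_2\Delta$, so the standing hypothesis upgrades to the TBCDC \emph{for $L$ itself}, and from now on every capacity refers to $L$. Assume first that $\Omega$ is bounded (this is Theorem~\ref{thm:ExistenceOfParabolicMeasureOnBoundedDomains}); the unbounded case is reduced to it at the end. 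Given $f\in C(\partial_e\Omega)$, introduce the upper and lower Perron solutions $\overline H_f,\underline H_f$ relative to the essential boundary. The interior De Giorgi--Nash--Moser estimates and the parabolic Harnack inequality (Nash; see Lemma~\ref{lem:interiorHolder}), together with the weak comparison principle for weak sub/supersolutions of $Lu=0$, show in the classical way that $\underline H_f=\overline H_f=:H_f$ is a weak solution of $Lu=0$ in $\Omega$ (hence in $W^{1,2}_\loc(\Omega)$ by Caccioppoli), that $f\mapsto H_f$ is linear, order preserving and a contraction for the supremum norm, and that $H_c\equiv c$ for every constant $c$.

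The heart of the argument is to show that each finite $(x_0,t_0)\in\partial_e\Omega$ is a regular point, i.e.\ $H_f(X,t)\to f(x_0,t_0)$ as $\Omega\ni(X,t)\to(x_0,t_0)$; granting this for all boundary points one gets $H_f\in C(\Omega\cup\partial_e\Omega)$ with $H_f|_{\partial_e\Omega}=f$. Fix such $(x_0,t_0)$ and a small $r_0>0$; for $j\geq1$ set $K_j:=\overline{\Q^{-}_{2^{-j}r_0}(x_0,t_0)}\setminus\Omega$, the time-backwards half of the parabolic cube, and let $p_j$ be the $L$-equilibrium (capacitary) potential of $K_j$: then $0\leq p_j\leq1$, $p_j$ is $L$-caloric off $K_j$, and $p_j=1$ quasi-everywhere on $K_j$. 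The TBCDC gives $\mathrm{cap}_L(K_j)\gtrsim\mathrm{cap}_L(\Q_{2^{-j}r_0})$; writing $p_j$ as the $\Gamma_L$-potential of its Riesz measure and invoking Aronson's \emph{lower} Gaussian bound for $\Gamma_L$ forces $p_j\geq c_\ast$ on $\Q^{-}_{c\,2^{-j}r_0}(x_0,t_0)\cap\Omega$ for a uniform $c_\ast=c_\ast(n,\lambda,\mathrm{TBCDC})>0$. Consequently a lacunary superposition $w:=\sum_{j}\lambda_j(1-p_j)$ for a suitable summable $\lambda_j\downarrow0$ --- equivalently, an iteration of a De Giorgi--type oscillation-decay step --- is a nonnegative $L$-supersolution in $\Omega$ that vanishes at $(x_0,t_0)$ while remaining $\geq\delta(\rho)>0$ on $\Omega\setminus\Q_\rho(x_0,t_0)$. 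Comparing $H_f$ with $f(x_0,t_0)\pm\big(\osc_{\Q_\rho(x_0,t_0)\cap\partial_e\Omega}f+\delta(\rho)^{-1}\|f\|_{\infty}\,w\big)$ and letting first $(X,t)\to(x_0,t_0)$ and then $\rho\to0$ yields the claimed continuity. This step is the main obstacle: each estimate has to survive the replacement of the heat operator by a general $L$, for which the only structural information is the two-sided Aronson bound, so the comparisons must be organized quantitatively rather than through explicit caloric kernels.

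With boundary regularity in hand, the continuous Dirichlet problem for $L$ in $\Omega$ is solvable (Definition~\ref{def:continuous_problem}). Fixing $(X,t)\in\Omega$, the map $f\mapsto H_f(X,t)$ is a positive linear functional on $C(\partial_e\Omega)$ of norm $H_1(X,t)=1$, so the Riesz representation theorem provides a unique positive Radon measure $\omega_L^{X,t}$ on $\partial_e\Omega$ with $H_f(X,t)=\int_{\partial_e\Omega}f\,d\omega_L^{X,t}$ and total mass $\omega_L^{X,t}(\partial_e\Omega)=H_1(X,t)=1$. This is exactly the parabolic measure of Definition~\ref{def:parabolic_measure}, and it is a probability.

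Finally, for unbounded $\Omega$ write $\Omega=\bigcup_k\Omega_k$ with $\Omega_k:=\Omega\cap\Q_{R_k}(0,0)$ and $R_k\uparrow\infty$, a sequence of bounded open sets. The TBCDC passes to each $\Omega_k$: it is unchanged at boundary points belonging to $\partial\Omega$, and the newly created pieces of $\partial\Omega_k$ lying on $\partial\Q_{R_k}(0,0)$ trivially satisfy a time-backwards capacity density bound. Hence the bounded case produces probability measures $\omega_{L,k}^{X,t}$; by the maximum principle the associated solutions are monotone in $k$ on compact subsets of $\Omega$, so a normal-families argument yields a limiting $L$-caloric function and, by weak-$\ast$ compactness, a limiting measure $\omega_L^{X,t}$ on $\partial_e\Omega$. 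Here the TBCDC imposed \emph{near infinity} is what prevents mass from leaking away uncontrollably: the escaping mass is accounted for exactly by $\omega_L^{X,t}(\{\infty\})$ (recall $\infty\in\partial_e\Omega$ by convention, as in \cite{W2,GH,MP}), so $\omega_L^{X,t}$ remains a probability on $\partial_e\Omega$. For $f\in C(\partial_e\Omega)$, which in particular has a limit at $\infty$, the uniform-in-$k$ boundary regularity of the barrier step at finite points together with the mass-at-infinity control at $\infty$ shows that $u(X,t):=\int_{\partial_e\Omega}f\,d\omega_L^{X,t}$ attains $f$ continuously on all of $\partial_e\Omega$, i.e.\ solves the continuous Dirichlet problem; combined with the previous paragraph this finishes the proof.
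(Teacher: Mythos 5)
Your proposal takes a genuinely different route from the paper: you run a Perron--Wiener--Brelot construction directly for the rough operator $L$, whereas the paper mollifies the coefficients, invokes the smooth-coefficient Wiener criterion of Garofalo--Lanconelli (Lemma~\ref{thm:TBCDCimpliesWiener}) to get parabolic measures $\omega_{L_k}$ for the approximating operators $L_k$, and then uses the uniform H\"older estimates of Theorem~\ref{mainthm:ExistenceOfHolderSolutions} to pass to the limit. The problem is that the step you dismiss as ``classical'' --- that $\underline H_f=\overline H_f$, that this common value is a weak solution, that continuous data are resolutive, and that there exist $L$-equilibrium potentials $p_j$ which are $L$-caloric off $K_j$ and equal to $1$ quasi-everywhere on $K_j$ --- is precisely the machinery that is \emph{not} available off the shelf for parabolic operators with merely bounded measurable coefficients. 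The PWB theory the paper relies on (Bauer, Watson, \cite{GL,FGL}) is established for smooth or $C^{1,\mathrm{Dini}}$ coefficients; to build it for rough $L$ you would first need a base of sets on which the continuous Dirichlet problem for $L$ is solvable (to perform Poisson modifications) and a supply of genuine $L$-supersolutions for the barrier comparison, which is essentially the open problem of Genschaw--Hofmann that the theorem is resolving. This is a genuine gap, not a routine omission. A related warning sign: your opening reduction ``upgrades'' the hypothesis to the TBCDC for $L$ alone and discards the stronger assumption, yet Remark~\ref{rmk:WeakenM2Assumption} explains that the extra strength (TBCDC for $\partial_t-M_2\Delta$, hence uniformly for all operators with ellipticity $\lambda$) is exactly what the approximation argument consumes; if your direct route worked as stated it would prove a strictly stronger theorem than the paper's method can reach, which should have prompted more scrutiny of the ``classical'' steps.

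Two smaller issues. First, your barrier is set up with $K_j$ filling the whole backwards half-cube up to time $t_0$ and with the lower bound for $p_j$ claimed on a \emph{backwards} cube; since $\Gamma_L(X,t;Y,s)$ vanishes for $t<s$, the capacitary mass must be kept at a definite time gap below the evaluation points, which is why Lemma~\ref{thm:capacityestimate} uses $\overline{Q_r}\times[t_0-r^2,t_0-(ar)^2]$ and proves the lower bound on $Q_r\times(t_0-(ar)^2/2,\,t_0+r^2)$, a genuine neighborhood of $(x_0,t_0)$ in $\Omega$. Second, in the unbounded case the assertion that ``the escaping mass is accounted for exactly by $\omega_L^{X,t}(\{\infty\})$'' and that $u$ attains $f(\infty)$ continuously is not a consequence of weak-$*$ compactness alone: the paper needs monotonicity of the truncated solutions for compactly supported nonnegative data, the uniform (in $R$) boundary H\"older decay from \eqref{maineqn:HolderEstimate} applied to differences $u_{R^*}-u_R$, and an explicit supersolution built from $\Gamma_L(\cdot;\Y_0)$ with $\Y_0$ in the far past to force decay at infinity (Theorem~\ref{thm:ExistenceOfParabolicMeasureOnUnboundedDomains}, Part I, Step 4). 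Your sketch does not supply a substitute for that barrier at infinity.
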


The proof of this result uses strongly Theorems~\ref{mainthm:Bourgain} and \ref{mainthm:ExistenceOfHolderSolutions}. The strategy will be to approximate $L$ by smoother operators, for which we know that the associated parabolic measure exists (see Section~\ref{sec:wiener}). This enables the use Theorem~\ref{mainthm:ExistenceOfHolderSolutions} to obtain some uniform Hölder behavior for the approximate solutions, which translates into nice compactness properties that ultimately let us find a solution to the continuous Dirichlet problem for $L$. 

\begin{remark}
	\label{rmk:WeakenM2Assumption}
	Regarding the optimality of the assumption of the TBCDC for $\partial_t-M_2\Delta$:
	\begin{itemize}
        \item As already hinted, the assumption must be operator-dependent because of Petrovsky's classical example \cite{P} (see also the discussion of \cite[Theorem 1.7]{GL}) 
        \begin{align*}
        \Omega_P
        :=&
        \big\{ 
        (X, t) \in \R \times \R : -1/e < t < 0 \text{  and  } X^2 < -4t \log |\log |t||
        \big\}
        \\ =&
        \big\{ 
        (X, t) \in \R \times \R : -1/e < t < 0 \text{  and  } e^{-\frac{X^2}{4t}} < -\log (-t)
        \big\}.
        \end{align*}
        For this domain, the regularity of the origin (i.e. whether solutions attain the boundary value at the origin in a continuous manner) depends on the operator: for instance, it is regular for $\partial_t - \Delta$, but not for $\partial_t - \frac12 \Delta$. The fact that the origin is a terminal point for $\Omega_P$ is actually not important: one could easily modify $\Omega_P$ (as shown in Figure~\ref{fig:petrowsky2}) so that the origin is still an irregular point, but it belongs to the portion of the boundary where it is meaningful to impose boundary values (precisely, to the parabolic boundary, see Section~\ref{sec:classification_boundary}). Note that this can be easily generalized to higher dimensions (as already done in \cite[Section 4]{P}).

        \begin{figure}
            \centering
            \scalebox{.45}{
            \begin{tikzpicture}[scale=1]
                \filldraw[ultra thick, fill=Cyan!40, fill opacity=0.3] (0.503, 7.255) .. controls (0.503, 7.255) and (0.525, 7.83) .. (0.55, 8.305) .. controls (0.576, 8.781) and (0.605, 9.157) .. (0.652, 9.533) .. controls (0.699, 9.909) and (0.764, 10.285) .. (0.829, 10.657) .. controls (0.894, 11.03) and (0.959, 11.399) .. (1.057, 11.767) .. controls (1.154, 12.136) and (1.284, 12.505) .. (1.436, 12.841) .. controls (1.588, 13.178) and (1.762, 13.481) .. (1.921, 13.698) .. controls (2.08, 13.915) and (2.224, 14.045) .. (2.362, 14.143) .. controls (2.499, 14.241) and (2.629, 14.306) .. (2.789, 14.288) .. controls (2.948, 14.27) and (3.136, 14.168) .. (3.273, 14.056) .. controls (3.41, 13.944) and (3.497, 13.821) .. (3.617, 13.626) .. controls (3.736, 13.431) and (3.888, 13.163) .. (4.018, 12.838) .. controls (4.148, 12.512) and (4.257, 12.129) .. (4.336, 11.746) .. controls (4.416, 11.362) and (4.466, 10.979) .. (4.51, 10.607) .. controls (4.553, 10.234) and (4.589, 9.873) .. (4.618, 9.41) .. controls (4.647, 8.947) and (4.669, 8.383) .. (4.694, 7.27) .. controls (4.641, 6.151) and (4.616, 5.587) .. (4.588, 5.11) .. controls (4.559, 4.633) and (4.528, 4.243) .. (4.479, 3.867) .. controls (4.429, 3.491) and (4.362, 3.13) .. (4.273, 2.762) .. controls (4.184, 2.394) and (4.073, 2.018) .. (3.944, 1.69) .. controls (3.815, 1.362) and (3.669, 1.081) .. (3.548, 0.893) .. controls (3.428, 0.706) and (3.333, 0.613) .. (3.192, 0.496) .. controls (3.05, 0.379) and (2.861, 0.24) .. (2.635, 0.28) .. controls (2.409, 0.32) and (2.147, 0.541) .. (1.942, 0.797) .. controls (1.738, 1.053) and (1.591, 1.345) .. (1.444, 1.687) .. controls (1.296, 2.029) and (1.146, 2.42) .. (1.037, 2.793) .. controls (0.927, 3.166) and (0.857, 3.521) .. (0.801, 3.894) .. controls (0.746, 4.267) and (0.705, 4.658) .. (0.664, 5.135) .. controls (0.624, 5.613) and (0.584, 6.177) .. (0.504, 7.262);
                
                \filldraw[ultra thick, fill=Cyan!40, fill opacity=0.3] (11.289, 7.17) .. controls (11.334, 8.289) and (11.356, 8.853) .. (11.381, 9.33) .. controls (11.407, 9.807) and (11.436, 10.198) .. (11.483, 10.574) .. controls (11.53, 10.95) and (11.595, 11.311) .. (11.682, 11.68) .. controls (11.768, 12.049) and (11.877, 12.425) .. (12.003, 12.754) .. controls (12.13, 13.083) and (12.275, 13.365) .. (12.394, 13.553) .. controls (12.513, 13.741) and (12.607, 13.835) .. (12.748, 13.953) .. controls (12.889, 14.07) and (13.077, 14.211) .. (13.303, 14.173) .. controls (13.529, 14.134) and (13.793, 13.915) .. (13.999, 13.66) .. controls (14.205, 13.405) and (14.354, 13.114) .. (14.504, 12.773) .. controls (14.654, 12.432) and (14.806, 12.042) .. (14.918, 11.669) .. controls (15.03, 11.297) and (15.102, 10.943) .. (15.16, 10.57) .. controls (15.218, 10.198) and (15.261, 9.807) .. (15.305, 9.33) .. controls (15.348, 8.853) and (15.391, 8.289) .. (15.478, 7.204) .. controls (15.522, 14.79) and (15.522, 14.79) .. (15.522, 14.79) .. controls (15.522, 14.79) and (15.522, 14.79) .. (14.111, 14.79) .. controls (12.7, 14.79) and (9.878, 14.79) .. (8.467, 14.79) .. controls (7.056, 14.79) and (7.056, 14.79) .. (7.056, 14.79) .. controls (7.056, 14.79) and (7.056, 14.79) .. (7.099, 7.163) .. controls (7.179, 6.077) and (7.219, 5.513) .. (7.26, 5.035) .. controls (7.3, 4.558) and (7.341, 4.167) .. (7.396, 3.794) .. controls (7.452, 3.421) and (7.522, 3.067) .. (7.632, 2.693) .. controls (7.741, 2.32) and (7.891, 1.929) .. (8.039, 1.587) .. controls (8.187, 1.245) and (8.333, 0.953) .. (8.537, 0.697) .. controls (8.742, 0.441) and (9.005, 0.22) .. (9.23, 0.18) .. controls (9.456, 0.14) and (9.645, 0.28) .. (9.787, 0.396) .. controls (9.928, 0.513) and (10.023, 0.606) .. (10.144, 0.794) .. controls (10.264, 0.981) and (10.411, 1.262) .. (10.539, 1.59) .. controls (10.668, 1.918) and (10.779, 2.294) .. (10.868, 2.662) .. controls (10.957, 3.03) and (11.024, 3.392) .. (11.074, 3.767) .. controls (11.123, 4.143) and (11.154, 4.533) .. (11.183, 5.01) .. controls (11.211, 5.488) and (11.236, 6.052) .. (11.289, 7.17) -- cycle;
                
                \node[circle, fill, inner sep=3pt] at (4.688, 7.15) {};
                \node[circle, fill, inner sep=3pt] at (11.29, 7.202) {};
                
                \draw[ultra thick, ->] (0, 0.397) -- (1.129, 0.397);
                \draw[ultra thick, ->] (0, 0.397) -- (0, 1.525);
                \node[anchor=center, font=\huge] at (0.515, 1.545) {$X$};
                \node[anchor=center, font=\huge] at (1.152, 0.765) {$t$};
                
                \draw[ultra thick, ->] (11.305, 0.397) -- (12.434, 0.397);
                \draw[ultra thick, ->] (11.305, 0.397) -- (11.305, 1.525);
                \node[anchor=center, font=\huge] at (11.926, 1.51) {$X$};
                \node[anchor=center, font=\huge] at (12.493, 0.801) {$t$};
                
                \node[anchor=center, font=\huge] at (3.669, 7.17) {$(0, 0)$};
                \node[anchor=center, font=\huge] at (10.371, 7.17) {$(0, 0)$};
                \node[anchor=center, font=\Huge] at (2.499, 10.369) {$\Omega_P$};
                \node[anchor=center, font=\Huge] at (9.41, 10.511) {$\Omega_P'$};
            \end{tikzpicture}
            }
            \caption{
            Using a mirror reflection of $\Omega_P$ across $t = 0$, one can easily define $\Omega_P'$ as in the picture, and the origin is still a regular point for $\partial_t - \Delta$, and irregular for $\partial_t - \frac12 \Delta$ (since heat only travels towards the future, what affects regularity is the shape of the domain towards the past). Furthermore, it is clear that $(0, 0) \in \mathcal{P}\Omega_P'$ according to the definition in Section~\ref{sec:classification_boundary}.
            }
            \label{fig:petrowsky2}
        \end{figure}
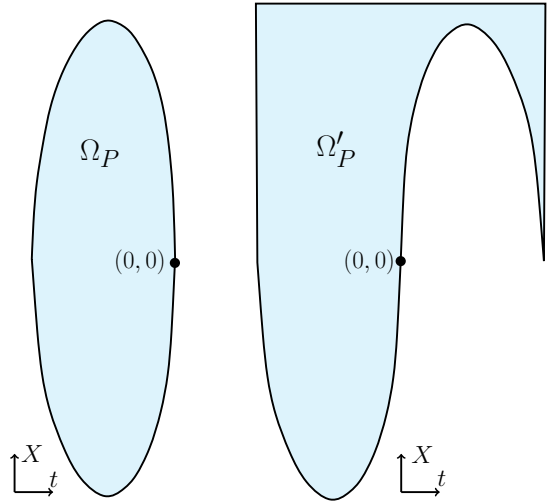
    
		\item As can be seen from the proof, our method does not allow for substituting the assumption of the TBCDC for $\partial_t-M_2\Delta$ in Theorem~\ref{mainthm:ExistenceOfParabolicMeasure} by the TBCDC for $L$ (at least for general rough operators). Indeed, as already commented, our proof runs by an approximation argument (concretely, see Theorem~\ref{thm:ExistenceOfParabolicMeasureOnBoundedDomains}, Part I, Step 1), and since we want to apply Theorem~\ref{mainthm:ExistenceOfHolderSolutions} to these approximating operators, we need the TBCDC to be true also for them. However, boundary points can be regular for one approximating operator and not for others (parabolic Wiener criteria are operator-dependent, see the previous item). Therefore, we make use of a stronger assumption that only takes into account the ellipticity constant of the approximating operators, and not the concrete values: the TBCDC for $\partial_t-M_2\Delta$ implies the TBCDC for all the approximating operators (as we shall see). 
		
		\item 
		Nevertheless, if $L$ has smooth coefficients, the condition that $\Omega$ satisfies the TBCDC for $\partial_t-M_2\Delta$ can be weakened to the condition that $\Omega$ satisfies the TBCDC for $L$. See Section~\ref{sec:wiener}.
	\end{itemize}
\end{remark}

Since Theorem~\ref{mainthm:ExistenceOfParabolicMeasure} ensures the existence of the parabolic measure, we can now remove the extra hypotheses in Theorems~\ref{mainthm:Bourgain} and \ref{mainthm:ExistenceOfHolderSolutions}:

\begin{corollary} \label{corol:general_L}
	Let $L=\partial_t-\div A\nabla$ be a parabolic operator with merely bounded coefficients (see Definition~\ref{def:operator}),
	and $\Omega\subseteq\R^{n+1}$ be an open set that satisfies the TBCDC for $\partial_t-M_2\Delta$ (see Definition~\ref{def:TBCDC}), the operator associated to $L$ in Lemma~\ref{lem:capLtoM}. 
	Then, all the conclusions in Theorems~\ref{mainthm:Bourgain} and \ref{mainthm:ExistenceOfHolderSolutions} hold.
\end{corollary}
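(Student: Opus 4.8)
The plan is to derive this corollary by simply feeding the earlier results into one another; no new argument is needed beyond keeping careful track of which hypotheses get upgraded. To apply Theorems~\ref{mainthm:Bourgain} and~\ref{mainthm:ExistenceOfHolderSolutions} as stated, two ingredients are missing: (i) that $\Omega$ satisfies the TBCDC \emph{for $L$ itself} (rather than merely for $\partial_t - M_2\Delta$), and (ii) that the parabolic measure $\omega_L$ exists on $\Omega$. I would obtain (i) from Lemma~\ref{lem:capLtoM}: by construction $M_2$ depends only on $n$ and the ellipticity constant $\lambda$, and the capacity comparison encoded in that lemma (which rests on Aronson's two-sided Gaussian bounds for the fundamental solution of $L$) shows that the TBCDC for $\partial_t - M_2\Delta$ implies the TBCDC for $L$, with TBCDC constants for $L$ depending only on $n$, $\lambda$, and the TBCDC constants for $\partial_t - M_2\Delta$.

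For (ii), I would invoke Theorem~\ref{mainthm:ExistenceOfParabolicMeasure}, whose hypothesis is precisely that $\Omega$ satisfies the TBCDC for $\partial_t - M_2\Delta$; its conclusion gives that the parabolic measure $\omega_L$ for $L$ on $\Omega$ exists (and is a probability measure supported on $\partial_e\Omega$). With (i) and (ii) in hand, both standing assumptions of Theorems~\ref{mainthm:Bourgain} and~\ref{mainthm:ExistenceOfHolderSolutions} are now satisfied for $(L,\Omega)$, so their conclusions hold verbatim: the non-degeneracy estimate \eqref{maineqn:BourgainEstimate}, the boundary Hölder decay \eqref{maineqn:HolderEstimate}, and the well-posedness of the $\dot{C}^\beta$-Dirichlet problem together with the two-sided bound \eqref{3.5.2}.

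Finally, I would check the quantitative dependence of the constants. The parameters $\eta, \gamma, \alpha_H, C, \alpha$ produced by Theorems~\ref{mainthm:Bourgain} and~\ref{mainthm:ExistenceOfHolderSolutions} depend only on $n$, $\lambda$, and the TBCDC constants for $L$; by step (i) those in turn depend only on $n$, $\lambda$, and the TBCDC constants for $\partial_t - M_2\Delta$, so all constants in the corollary have the asserted dependence. There is essentially no obstacle in this proof: all the substantive work lies in Lemma~\ref{lem:capLtoM} and Theorem~\ref{mainthm:ExistenceOfParabolicMeasure}, and the only point requiring a moment's care is verifying that the passage from the TBCDC for $\partial_t - M_2\Delta$ to the TBCDC for $L$ introduces no dependence on the rough coefficients of $L$ beyond their ellipticity bounds.
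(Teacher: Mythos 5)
Your proposal is correct and matches the paper's (implicit) argument: the paper justifies the corollary by the one-line observation preceding it together with Remark~\ref{rmk:M2TBCDCimpliesBourgain}, i.e.\ Theorem~\ref{mainthm:ExistenceOfParabolicMeasure} supplies the parabolic measure and the capacity comparison \eqref{eq:comparison_capacities} transfers the TBCDC from $\partial_t - M_2\Delta$ to $L$. The only microscopic point worth making explicit is that the transfer of the TBCDC also uses Lemma~\ref{lem:cap_cylinder} to identify both denominators $\mathrm{Cap}_L$ and $\mathrm{Cap}_{\partial_t-M_2\Delta}$ of the full cylinder with $r^n$ (the one-sided chain in Lemma~\ref{lem:capLtoM} alone does not bound $\mathrm{Cap}_L$ of the cylinder from above by the $M_2$-capacity), exactly as in the paper's proof of Proposition~\ref{thm:TBHCCimpliesTBCDCforallL}.
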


So far, we have only used potential theoretical assumptions, in the form of the TBCDC. As already explained above, TBCDC-like conditions are close to being optimal for some of the above results, as \cite{CHMPZ} shows for the elliptic counterpart of Theorem~\ref{mainthm:ExistenceOfHolderSolutions}. However, these assumptions are inevitably operator-dependent. Because of this, we have decided to introduce another condition which is only slightly stronger, but purely geometrical and independent of the operator taken into consideration, so easier to verify in practice.

\begin{proposition}[TBHCC$\implies$TBCDC for all $L$] \label{thm:TBHCCimpliesTBCDCforallL}
	If the open set $\Omega \subseteq \R^{n+1}$ satisfies the time backwards Hausdorff content condition (TBHCC, see Definition~\ref{def:TBHCC}), then it satisfies the TBCDC (see Definition~\ref{def:TBCDC}) for any parabolic operator $L$ with merely bounded coefficients as in Definition~\ref{def:operator}.
\end{proposition}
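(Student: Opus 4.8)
The plan is to compare the two conditions pointwise: the TBCDC for a fixed operator $L$ asks for a quantitative lower bound on the $L$-capacity of the part of $\Omega^c$ lying in a (time-backwards) parabolic cube, while the TBHCC asks for a lower bound on the (parabolic) Hausdorff content of that same set. So the whole proposition reduces to a single capacitary estimate: for any compact set $K$ contained in a parabolic cube of radius $r$, one has
\[
\operatorname{Cap}_L\big(K,\, \Q_{Cr}\big) \;\gtrsim\; \frac{\calH^{n}_\infty(K)}{r^{\,?}},
\]
with an implicit constant depending only on $n$ and the ellipticity $\lambda$ (and the parabolic Hausdorff content normalized to the correct homogeneity, which in the parabolic metric gives total "dimension" $n+2$, so the relevant content is the one of codimension two, i.e. $\calH^{n}_\infty$ in the parabolic sense). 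Granting such an inequality, if $\Omega$ satisfies the TBHCC with constants $(c_0, \text{etc.})$, then feeding $K = \Q^-_r(x_0,t_0)\setminus\Omega$ (the time-backwards portion of the complement) into the estimate immediately yields the TBCDC lower bound for \emph{every} $L$, with new constants depending only on $n,\lambda$ and the TBHCC constants, which is exactly the claim.

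The key step, then, is the capacity–content comparison, and the natural route is via the fundamental solution and Aronson's bounds (which are already invoked repeatedly in the paper). Concretely, I would let $\mu$ be (a multiple of) the restriction to $K$ of the parabolic Hausdorff content "measure" — more precisely, use Frostman's lemma in the parabolic metric to produce a nonnegative measure $\mu$ supported on $K$ with $\mu(K)\gtrsim \calH^{n}_\infty(K)$ and the growth bound $\mu(\Q_\rho(y,s)) \lesssim \rho^{n}$ for all parabolic cubes. Then I would test the capacity against the potential $u(X,t) = \int \Gamma_L((X,t),(y,s))\,d\mu(y,s)$ (with $\Gamma_L$ the fundamental solution of $L$, or its formal adjoint, chosen so that the time ordering matches the \emph{backwards} nature of the cube in the TBCDC — this is where the "time-backwards" in both conditions must be lined up carefully). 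Using the upper Gaussian bound in Aronson's estimate together with the $\mu$-growth bound, a standard layer-cake / dyadic-annuli computation gives $u \lesssim 1$ on a fixed dilate $\Q_{Cr}$; rescaling $u$ by this constant produces an admissible competitor for the capacity, and pairing it against $\mu$ (using that $L u = \mu$ in the distributional sense, again up to the constant) yields $\operatorname{Cap}_L(K,\Q_{Cr}) \gtrsim \mu(K) \gtrsim \calH^{n}_\infty(K)$ after accounting for the parabolic scaling. Crucially, every constant here comes only from Aronson's bounds, hence only from $n$ and $\lambda$, so the estimate is uniform in $L$ — that is the whole point of passing through Hausdorff content.

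I expect the main obstacle to be bookkeeping rather than a deep new idea: first, getting the \emph{scaling exponents} exactly right in the parabolic metric (the correct codimension-two content, and the precise power of $r$ appearing in the normalization of $\operatorname{Cap}_L$ used in Definition~\ref{def:TBCDC}), since an off-by-a-power error would make the argument vacuous; and second, matching the time-orientation — the TBCDC involves the capacity of the complement in a cube \emph{reaching into the past}, and the fundamental-solution potential that is bounded above is the one built from $\Gamma_L((X,t),(y,s))$ with $s < t$, so one must be sure the Frostman measure sits in the half of the cube the potential "sees". A secondary technical point is that the fundamental solution of a rough operator is only a weak solution, so the identity "$Lu=\mu$, hence $u$ is a valid test function up to truncation" needs the usual approximation/truncation argument (replace $\Gamma_L$ by a smooth mollification or truncate near the pole), but this is routine given the interior regularity and Caccioppoli estimates already available in the paper. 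I would also need the elementary fact that the parabolic Hausdorff content of a set can be estimated from inside by that of its compact subsets, so that it suffices to treat compact $K$; this is standard. Once these are pinned down, the proof is short.
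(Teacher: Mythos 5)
Your overall strategy is the same as the paper's: produce a Frostman measure on the relevant piece of $\Omega^c$, bound its $L$-heat potential uniformly via the Aronson upper bound and a dyadic-annuli sum, and then test the definition of capacity with the (normalized) measure. However, there are two genuine gaps, and both trace back to the same mistake: you have replaced the TBHCC exponent $n+\e$ (with $\e>0$) by the critical exponent $n$.

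First, the capacity--content comparison you propose, $\mathrm{Cap}_L(K)\gtrsim \mathcal{H}^{n}_{\infty,p}(K)/r^{\,?}$, is false at the critical exponent, not merely hard to prove: a Frostman measure with growth $\mu(\Q_\rho)\lesssim\rho^{n}$ paired with the kernel bound $\Gamma_L(\X;\Y)\lesssim\lVert\X-\Y\rVert^{-n}$ gives dyadic-annulus contributions $2^{-kn}\mu(\Q_{2^{k+1}})\lesssim 1$ for every scale $k$, so the potential is not bounded and the measure is not (after normalization by a constant) admissible in Definition~\ref{def:capacity}; correspondingly, compact sets of positive finite $\mathcal{H}^{n}_p$ measure can have zero thermal capacity. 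The TBHCC is stated with $\mathcal{H}^{n+\e}_{\infty,p}$ precisely so that the annulus sum becomes $\sum 2^{k\e}\approx r^{\e}$, after which the normalization $\tilde\nu:=(Cr^{\e})^{-1}\nu$ yields $\mathrm{Cap}_L\gtrsim r^{n+\e}/r^{\e}=r^{n}$, which is the right power against $\mathrm{Cap}_L(\overline{Q_r}\times[t_0-r^2,t_0-(ar)^2])\approx r^{n}$ (Lemma~\ref{lem:cap_cylinder}, which you also need for the denominator in Definition~\ref{def:TBCDC}).

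Second, the time-localization issue you flag as ``bookkeeping'' requires an actual argument. The TBCDC asks for capacity of $\Omega^c$ inside $\overline{Q_r(x_0)}\times[t_0-r^2,t_0-(ar)^2]$, i.e.\ strictly away from the top time slice, whereas the TBHCC only controls the content of $\Q_r^-(x_0,t_0)\cap\Omega^c$, whose mass could in principle concentrate in the thin slab $\overline{Q_r(x_0)}\times[t_0-(ar)^2,t_0]$; testing with a measure supported there gives nothing. The paper resolves this with a self-improvement step (Lemma~\ref{lem:TBEHCCselfimprove}): the slab of temporal thickness $(ar)^2$ decomposes into $a^{-n}$ parabolic cubes of radius $ar$, so by the Frostman growth bound it carries mass at most $C a^{\e}r^{n+\e}$, which can be hidden for $a$ small --- and this again uses $\e>0$ in an essential way (Remark~\ref{rem:codimension}). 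A minor point in your favor: since Definition~\ref{def:capacity} is the global ``potential bounded by $1$'' capacity, the testing step is immediate once $\lVert\Gamma_L\tilde\nu\rVert_\infty\le 1$ is known, so the weak-solution/truncation technicalities you worry about in the last paragraph do not arise.
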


In light of this relationship, the TBHCC is useful to study the parabolic Dirichlet problem. Concretely, the TBHCC is a sufficient condition for all the results above. 

\begin{corollary}
	Let $L=\partial_t-\div A\nabla$ be a parabolic operator with merely bounded coefficients (see Definition~\ref{def:operator}),
	and $\Omega\subseteq\R^{n+1}$ be an open set that satisfies the TBHCC (see Definition~\ref{def:TBHCC}). 
	Then, all the conclusions in Theorems~\ref{mainthm:Bourgain}, \ref{mainthm:ExistenceOfHolderSolutions} and \ref{mainthm:ExistenceOfParabolicMeasure} hold. Concretely, the parabolic measure for $L$ in $\Omega$ exists.
\end{corollary}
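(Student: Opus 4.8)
The plan is to chain together the results already established, using the TBHCC as the single geometric input. First I would invoke Proposition~\ref{thm:TBHCCimpliesTBCDCforallL}: since $\Omega$ satisfies the TBHCC, it satisfies the TBCDC for \emph{every} parabolic operator with merely bounded coefficients in the sense of Definition~\ref{def:operator}. In particular, the constant-coefficient operator $\partial_t - M_2\Delta$, where $M_2 = M_2(n,\lambda)$ is the constant furnished by Lemma~\ref{lem:capLtoM}, is trivially of this form (it is uniformly elliptic and has bounded coefficients, with constants depending only on $n$ and $\lambda$), so $\Omega$ satisfies the TBCDC for $\partial_t - M_2\Delta$. Likewise, $\Omega$ satisfies the TBCDC for $L$ itself.

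With the TBCDC for $\partial_t - M_2\Delta$ in hand, I would apply Theorem~\ref{mainthm:ExistenceOfParabolicMeasure} directly, since that is exactly its hypothesis. This yields that the continuous Dirichlet problem for $L$ is solvable on $\Omega$ and that the parabolic measure $\omega_L$ for $L$ on $\Omega$ exists (and is a probability measure for every pole). This already gives the conclusion of Theorem~\ref{mainthm:ExistenceOfParabolicMeasure}, hence in particular the last assertion of the corollary.

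Finally, to recover the conclusions of Theorems~\ref{mainthm:Bourgain} and~\ref{mainthm:ExistenceOfHolderSolutions}, I would simply observe that both of their standing hypotheses are now met: $\Omega$ satisfies the TBCDC for $L$ (by Proposition~\ref{thm:TBHCCimpliesTBCDCforallL} applied to $L$), and $\omega_L$ exists (by the previous step). Hence those two theorems apply verbatim. Alternatively, one may shortcut this last step by citing Corollary~\ref{corol:general_L}, whose hypothesis — the TBCDC for $\partial_t - M_2\Delta$ — was verified in the first paragraph.

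As for difficulty: there is essentially no obstacle, since the statement is a formal consequence of Proposition~\ref{thm:TBHCCimpliesTBCDCforallL} together with Theorem~\ref{mainthm:ExistenceOfParabolicMeasure} and Corollary~\ref{corol:general_L}. The only point requiring (minimal) care is to confirm that $\partial_t - M_2\Delta$ indeed belongs to the class of operators covered by Proposition~\ref{thm:TBHCCimpliesTBCDCforallL}, i.e. that it meets the ellipticity and boundedness conditions of Definition~\ref{def:operator}; this is immediate because $M_2$ is a positive constant depending only on $n$ and $\lambda$. All of the substantive work — the capacity comparison behind Proposition~\ref{thm:TBHCCimpliesTBCDCforallL}, the Aronson-bounds argument behind Theorem~\ref{mainthm:Bourgain}, the $\dot C^\beta$ theory behind Theorem~\ref{mainthm:ExistenceOfHolderSolutions}, and the approximation scheme behind Theorem~\ref{mainthm:ExistenceOfParabolicMeasure} — has already been carried out earlier.
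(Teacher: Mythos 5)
Your proposal is correct and is exactly the intended argument: the corollary is an immediate consequence of Proposition~\ref{thm:TBHCCimpliesTBCDCforallL} (applied both to $L$ and to $\partial_t-M_2\Delta$, the latter being admissible under Definition~\ref{def:operator} since $M_2=M_2(n,\lambda)>0$), followed by Theorem~\ref{mainthm:ExistenceOfParabolicMeasure} and then Theorems~\ref{mainthm:Bourgain} and~\ref{mainthm:ExistenceOfHolderSolutions} (or Corollary~\ref{corol:general_L}). The paper treats this as a formal chaining of those results, just as you do.
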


\begin{remark} \label{rmk:GH}
	In particular, our results apply to open sets $\Omega$ whose boundary is time-backwards Ahlfors-David regular (TBADR), as in \cite{GH}. Indeed, if the TBADR condition is satisfied, so is the TBHCC (it essentially corresponds to only looking at thickness over the boundary instead of over all the exterior, and taking $\e = 1$ in Definition~\ref{def:TBHCC}; see a simple proof in Lemma~\ref{lem:TBADR_implies_TBHCC}), and hence also the TBCDC for any operator by Proposition~\ref{thm:TBHCCimpliesTBCDCforallL}. Therefore, our results resolve the open question raised in \cite[p. 1532]{GH}, and actually do so in greater generality, because the TBADR assumption is notably stronger than both of our main assumptions, the TBHCC or the TBCDC.
\end{remark}

%%%%%%%%%%%%%%%%%%%%%%%%%%%%%%%%%%%%%%%%%%%%%%%%%%%%%%%
%%%%%%%%%%%%%%%%%%%%%%%%%%%%%%%%%%%%%%%%%%%%%%%%%%%%%%%
%%%%%%%%%%%%%%%%%%%%%%%%%%%%%%%%%%%%%%%%%%%%%%%%%%%%%%%

\subsection{Outline of the paper}

\begin{itemize}
	\item In Section~\ref{sec:preliminaries}, we explain notation and the necessary definitions to understand the PDE framework in which we will work.
	
	\item In Section~\ref{sec:TBCDC}, we define our key assumptions, the TBCDC and the TBHCC. We give a series of basic results for them, like the proof that the TBHCC is stronger (Proposition~\ref{thm:TBHCCimpliesTBCDCforallL}), and the relationship of the TBCDC with parabolic Wiener criteria. The key will be to use Aronson's bounds for fundamental solutions (see Lemma~\ref{lem:fund_sol}) to relate general operators $L$ with heat-like operators $\partial_t - M\Delta$, which are much easier to understand (this moral will actually be followed throughout the paper).
	
	\item In Section~\ref{sec:bourgain_proof}, we prove Theorem~\ref{mainthm:Bourgain}. First, we show non-degeneracy of parabolic measure with some capacitary estimates, which implies \eqref{maineqn:BourgainEstimate} by the TBCDC (at least over the lateral boundary; at the bottom boundary, one needs other --simple-- arguments). Later, a standard iteration yields the Hölder decay \eqref{maineqn:HolderEstimate} for solutions vanishing over a portion of the boundary.
	
	\item In Section~\ref{sec:holder}, we show Theorem~\ref{mainthm:ExistenceOfHolderSolutions}: the well-posedness of the Hölder Dirichlet problem follows quickly as in \cite{CHMPZ} from Theorem~\ref{mainthm:Bourgain} and Lemma~\ref{lem:interiorHolder}.
	
	\item In Section~\ref{sec:existence_parabolic_measure}, we prove Theorem~\ref{mainthm:ExistenceOfParabolicMeasure}. First, we restrict to bounded domains, and show that one can find solutions to the continuous Dirichlet problem for rough operators by approximating with smoother operators, making use of some compactness granted by Theorem~\ref{mainthm:ExistenceOfHolderSolutions}. Later, we extend the result to unbounded domains, focusing on the role of the point at infinity.
	
	\item In Section~\ref{sec:probability}, we discuss what changes if we do not consider the point at infinity as part of the boundary (when the domain is unbounded): we study the phenomenon that the parabolic measure may not be a probability in such case.  
\end{itemize}

%%%%%%%%%%%%%%%%%%%%%%%%%%%%%%%%%%%%%%%%%%%%%%%%%%%%%%%
%%%%%%%%%%%%%%%%%%%%%%%%%%%%%%%%%%%%%%%%%%%%%%%%%%%%%%%
%%%%%%%%%%%%%%%%%%%%%%%%%%%%%%%%%%%%%%%%%%%%%%%%%%%%%%%
%%%%%%%%%%%%%%%%%%%%%%%%%%%%%%%%%%%%%%%%%%%%%%%%%%%%%%%
%%%%%%%%%%%%%%%%%%%%%%%%%%%%%%%%%%%%%%%%%%%%%%%%%%%%%%%
%%%%%%%%%%%%%%%%%%%%%%%%%%%%%%%%%%%%%%%%%%%%%%%%%%%%%%%
%%%%%%%%%%%%%%%%%%%%%%%%%%%%%%%%%%%%%%%%%%%%%%%%%%%%%%%

\section{Preliminaries} \label{sec:preliminaries}

%%%%%%%%%%%%%%%%%%%%%%%%%%%%%%%%%%%%%%%%%%%%%%%%%%%%%%%
%%%%%%%%%%%%%%%%%%%%%%%%%%%%%%%%%%%%%%%%%%%%%%%%%%%%%%%
%%%%%%%%%%%%%%%%%%%%%%%%%%%%%%%%%%%%%%%%%%%%%%%%%%%%%%%
%%%%%%%%%%%%%%%%%%%%%%%%%%%%%%%%%%%%%%%%%%%%%%%%%%%%%%%

\subsection{Notation}

In the sequel, $\Omega$ will be our reference open subset in space-time $\R^{n+1}$. We will use the following notation:
\begin{itemize}
    
    \item We shall orient our coordinate axes so that time runs from left to right.

    \item We use the letters $c$, $C$ to denote harmless positive constants, not necessarily the same at each occurrence, which depend only on dimension and the constants appearing in the hypotheses of the theorems. We shall also write $a\lesssim b$ and $a\approx b$ to mean, respectively, that $a\leq Cb$ and $0<c\leq a/b\leq C$, where the constants $c$ and $C$ are as above. $a \lesssim_\gamma b$ emphasizes that the implicit constant depends on $\gamma$, on top of other parameters which are to be expected, such as dimension and ellipticity.

    \item We denote by $\nabla$ the gradient with respect to spatial variables only.

    \item For the sake of notational brevity, we shall also use boldface capital letters to denote points in space-time $\R^{n+1} = \Rn \times \R$ when there is no need to distinguish their spatial and time coordinates, as in 
    \[\qquad\quad  \mbf{X}=(X,t), \Y = (Y, s) \in \Rn \times \R,
    \quad\mathrm{and}\quad
    \mbf{x}=(x,t), \y = (y, s) \in \pom \subseteq \Rn \times \R.\]
    Similarly, we also denote the space-time origin by $\mbf{0} = (0, 0) \in \Rn \times \R$.

     \item We shall use lower case letters to denote (the spatial component of) points on the boundary $\partial\Omega$, and capital letters for (the spatial component of) generic points in $\R^{n+1}$ (in particular those in $\Omega$).

    \item We denote solid integrals, taken over subsets of space-time $\R^{n+1}$, by $\iint$, whereas we reserve the notation $\int$ for integrals taken over proper subsets of $\R^{n+1}$, like $\pom$, $\Rn$ (space) or $\R$ (time).
   
    \item Given a time $t \in \R$, we denote the restriction to the past/future/present by 
    \[
    \qquad 
    \T_{< t} := \Rn \times (-\infty, t), 
    \qquad 
    \T_{= t} := \Rn \times \{t\}, 
    \qquad 
    \T_{> t} := \Rn \times (t, +\infty).
    \]
    
    \item Given $A\subseteq\R^{n+1}$, we denote initial and terminal times by 
    \[
    \qquad\;\;
    T_{\min}(A)
    := \inf \big\{t \in \R: A\cap \T_{=t} \neq\emptyset\big\}, 
    \;\;
    T_{\max}(A):=\sup\big\{t \in \R: A\cap \T_{=t} \neq\emptyset \big\},
    \]
    which may take infinite values.
    For our distinguished set $\Omega$, we abbreviate
    \[T_{\min} := T_{\min}(\Omega), \qquad T_{\max} := T_{\max}(\Omega).\]
    
    \item (Parabolic norm)
    Given $(X,t)\in\R^{n}\times\R$ in space-time, we say its \textit{parabolic norm} is
    \[\lVert (X, t) \rVert : = \max \big\{ |X|, |t|^{1/2} \big\}, \]
    although other equivalent choices like $|X| + |t|^{1/2}$ would lead to the very same conclusions.
    The distance induced by this norm will be called \textit{parabolic distance}. Further, given $A\subseteq\R^{n+1}$, its diameter with respect to the parabolic distance is
    \[\diam_p(A):=\sup_{(\mbf{X},\mbf{Y})\in A\times A}\lVert\mbf{X}-\mbf{Y}\rVert.\]
    
    \item (Space-time cubes) 
    Given $(X,t)\in\R^{n}\times\R$ in space-time and $r>0$, we define the (space-time) \textit{parabolic cube} centered at $(X, t)$ with radius $r$ by
    \[\Q_r(X,t):=\big\{(Y,s)\in\R^{n}\times\R:\lVert(X,t)-(Y,s)\rVert<r\big\}.\]
    Further, we define the \textit{time-backward} and \textit{time-forward} parabolic cubes by
    \[\Q_r^-(X,t):= \Q_r(X, t) \cap \T_{< t}, \qquad 
    \Q_r^+(X,t):= \Q_r(X, t) \cap \T_{> t}.\]

    \item (Purely spatial cubes) 
    We also use the notation of cubes for purely spatial regions around $X \in \Rn$ (note the lack of boldface fonts): 
    \[
    Q_r(X) := \big\{ Y\in\R^n: | X-Y | <r \big\}.
    \]
    
    \item  If $\mbf{X}\in\Omega$, we set
    \[\delta(\mbf{X}):=\dist(\mbf{X},\partial_e\Omega),\]
    the parabolic distance from $\mbf{X}$ to the essential boundary. In fact, \cite[Lemma 1.17]{GH} guarantees the existence of $\mbf{x_0}\in \partial_e\Omega$ such that $\delta(\mbf{X}) = \lVert \mbf{X}- \mbf{x_0}\rVert$.
     
\end{itemize}

%%%%%%%%%%%%%%%%%%%%%%%%%%%%%%%%%%%%%%%%%
%%%%%%%%%%%%%%%%%%%%%%%%%%%%%%%%%%%%%%%%%
%%%%%%%%%%%%%%%%%%%%%%%%%%%%%%%%%%%%%%%%%

\subsection{General classification of boundary points} \label{sec:classification_boundary}

Heat does not travel backwards in time. As a consequence, it only makes sense to prescribe boundary data on parts of $\pom$ which have some influence in the future, i.e., which have some nearby part of $\Omega$ to their right (recall our convention that time runs from left to right). This includes vertical faces that bound the domain on the left (like initial values), and of course non-vertical parts of the boundary (lateral boundary conditions). See Figure~\ref{fig:example domain 1}. 

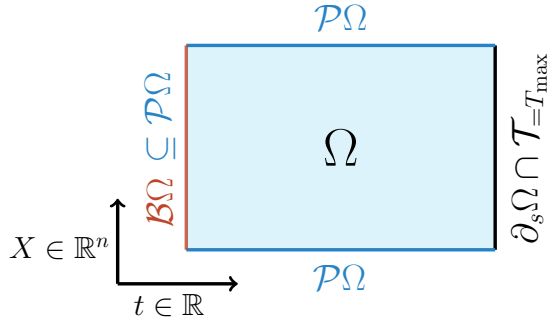
\begin{figure}[h]
	\centering
	\scalebox{.8}{
	\begin{tikzpicture}[scale=1]
		% Axes
		\draw[ultra thick, ->] (1.787, 0.532) -- (3.763, 0.532);
		\draw[ultra thick, ->] (1.787, 0.532) -- (1.787, 1.943);	\node[anchor=center, font=\Large] at (2.634, 0.184) {$t \in \mathbb{R}$};
		\node[anchor=center, font=\Large] at (0.832, 1.118) {$X \in \mathbb{R}^n$};
	
		% Omega
		\fill[Cyan!40, opacity=0.3] (2.916, 1.096) rectangle (7.996, 4.483);
		
		% Singular boundary
		\draw[ultra thick] (7.996, 4.483) -- (7.996, 1.096);
		
		% Parabolic boundary
		\draw[RoyalBlue!80, ultra thick] (2.916, 4.483) -- (7.996, 4.483);
		\draw[RoyalBlue!80, ultra thick] (2.916, 1.096) -- (7.996, 1.096);
		
		% Bottom boundary
		\draw[BrickRed!90, ultra thick, opacity=.8] (2.916, 4.483) -- (2.916, 1.096);
		
		% Labels
		\node[anchor=center, font=\LARGE, text=RoyalBlue!80] at (5.456, 4.939) {$\mathcal{P}\Omega$};
		\node[anchor=center, font=\LARGE, text=RoyalBlue!80] at (5.434, 0.64) {$\mathcal{P}\Omega$};
		\node[rotate=90, anchor=center, font=\LARGE, text=BrickRed!90] at (2.482, 1.878) {$\mathcal{B}\Omega$};
		\node[rotate=90, anchor=center, font=\LARGE] at (8.56, 2.789) {$\partial_s \Omega \cap \mathcal{T}_{=T_{\max}}$};
		\node[rotate=90, anchor=center, font=\LARGE, text=RoyalBlue!80] at (2.482, 3.289) {$\subseteq \mathcal{P}\Omega$};
		\node[anchor=center, font=\Huge] at (5.456, 2.789) {$\Omega$};	
	\end{tikzpicture}
	}
	\caption{
		In a rectangular/cylindrical domain, it only makes sense to prescribe boundary data on the red part of the boundary (initial values) and on the blue parts (lateral/boundary values). It does not make sense to prescribe boundary data on the black part because it has no influence on $\Omega$. Indeed, there is no part of $\Omega$ lying immediately to its right (future).
	}
	\label{fig:example domain 1}
\end{figure}

This suggests the definition of the \textit{parabolic boundary}: \[\mathcal{P}\Omega:= \left\{(x,t)\in\partial\Omega: \; \Q_r^-(x,t) \cap \Omega^c \neq \emptyset \text{  for every } r>0\right\}.\]
Basically, this excludes vertical terminal faces as the black one in Figure~\ref{fig:example domain 1}. The points in $\mathcal{P}\Omega$ are the points where it is reasonable to prescribe boundary values. Among points in $\mathcal{P}\Omega$, we may distinguish those in the bottom boundary 
\[\mathcal{B}\Omega:=\left\{(x,t)\in\mathcal{P}\Omega: \; \exists \, r>0 \textrm{ such that } \Q_r^+(x,t)\subseteq\Omega\right\}.\]
In Figure~\ref{fig:example domain 1}, these points are the red ones, and are usually thought of as initial values.

However, Figure~\ref{fig:example domain 1} is too simple for us: in this paper, we strive to work with non-cylindrical domains, i.e. domains whose cross-section may vary as time passes. This makes it trickier to understand which boundary points have an influence on the equation. Although under the framework of our hypotheses TBCDC and TBHCC, everything will be much simpler (see Section~\ref{sec:structure_boundary_TBCDC}), let us include a very general discussion following \cite{W2} (see also \cite{GH} or \cite{L3}). 

\begin{figure}[h]
	\centering
	\scalebox{.8}{
	\begin{tikzpicture}[scale=1]
		% Axes
		\draw[ultra thick, ->] (1.787, 0.532) -- (3.763, 0.532);
		\draw[ultra thick, ->] (1.787, 0.532) -- (1.787, 1.943);
		\node[anchor=center, font=\Large] at (2.634, 0.184) {$t \in \mathbb{R}$};
		\node[anchor=center, font=\Large] at (0.832, 1.118) {$X \in \mathbb{R}\textsuperscript{$n$}$};
		
		% Omega
		\fill[Cyan!40, opacity=0.3] 
		(0.096, 5.047)
		.. controls (0.096, 5.047) and (0.096, 5.047) .. (0.331, 5.094)
		.. controls (0.566, 5.141) and (1.036, 5.235) .. (1.724, 5.273)
		.. controls (2.411, 5.31) and (3.315, 5.291) .. (3.832, 5.103)
		.. controls (4.35, 4.915) and (4.48, 4.558) .. (4.593, 4.426)
		.. controls (4.705, 4.295) and (4.799, 4.389) .. (4.846, 4.436)
		.. controls (4.893, 4.483) and (4.893, 4.483) .. (4.893, 4.483)
		.. controls (4.893, 4.483) and (4.893, 4.483) .. (4.893, 4.295)
		.. controls (4.893, 4.107) and (4.893, 3.73) .. (4.893, 3.542)
		.. controls (4.893, 3.354) and (4.893, 3.354) .. (4.893, 3.354)
		.. controls (4.893, 3.354) and (4.893, 3.354) .. (4.94, 3.307)
		.. controls (4.987, 3.26) and (5.082, 3.166) .. (5.176, 3.166)
		.. controls (5.27, 3.166) and (5.364, 3.26) .. (5.458, 3.213)
		.. controls (5.552, 3.166) and (5.646, 2.978) .. (5.787, 2.978)
		.. controls (5.928, 2.978) and (6.116, 3.166) .. (6.21, 3.26)
		.. controls (6.305, 3.354) and (6.305, 3.354) .. (6.305, 3.354)
		.. controls (6.305, 3.354) and (6.305, 3.354) .. (6.305, 3.542)
		.. controls (6.305, 3.73) and (6.305, 4.107) .. (6.305, 4.295)
		.. controls (6.305, 4.483) and (6.305, 4.483) .. (6.305, 4.483)
		.. controls (6.305, 4.483) and (6.305, 4.483) .. (6.54, 4.436)
		.. controls (6.775, 4.389) and (7.245, 4.295) .. (7.527, 3.965)
		.. controls (7.81, 3.636) and (7.904, 3.072) .. (8.092, 2.931)
		.. controls (8.28, 2.789) and (8.562, 3.072) .. (8.703, 3.213)
		.. controls (8.845, 3.354) and (8.845, 3.354) .. (8.845, 3.354)
		.. controls (8.845, 3.354) and (8.845, 3.354) .. (8.845, 2.978)
		.. controls (8.845, 2.601) and (8.845, 1.849) .. (8.845, 1.472)
		.. controls (8.845, 1.096) and (8.845, 1.096) .. (8.845, 1.096)
		.. controls (8.845, 1.096) and (8.845, 1.096) .. (8.656, 1.002)
		.. controls (8.468, 0.908) and (8.092, 0.72) .. (7.81, 0.814)
		.. controls (7.527, 0.908) and (7.339, 1.284) .. (7.104, 1.237)
		.. controls (6.869, 1.19) and (6.587, 0.72) .. (6.399, 0.579)
		.. controls (6.21, 0.438) and (6.116, 0.626) .. (6.069, 0.72)
		.. controls (6.022, 0.814) and (6.022, 0.814) .. (6.022, 0.72)
		.. controls (6.022, 0.626) and (6.022, 0.438) .. (5.881, 0.485)
		.. controls (5.74, 0.532) and (5.458, 0.814) .. (5.082, 0.955)
		.. controls (4.705, 1.096) and (4.235, 1.096) .. (3.953, 1.19)
		.. controls (3.67, 1.284) and (3.576, 1.472) .. (3.529, 1.567)
		.. controls (3.482, 1.661) and (3.482, 1.661) .. (3.482, 1.661)
		.. controls (3.482, 1.661) and (3.482, 1.661) .. (3.482, 1.943)
		.. controls (3.482, 2.225) and (3.482, 2.789) .. (3.482, 3.072)
		.. controls (3.482, 3.354) and (3.482, 3.354) .. (3.482, 3.354)
		.. controls (3.482, 3.354) and (3.482, 3.354) .. (3.341, 3.542)
		.. controls (3.2, 3.73) and (2.918, 4.107) .. (2.683, 4.295)
		.. controls (2.447, 4.483) and (2.259, 4.483) .. (1.977, 4.53)
		.. controls (1.695, 4.577) and (1.319, 4.671) .. (1.036, 4.718)
		.. controls (0.754, 4.765) and (0.566, 4.765) .. (0.425, 4.765)
		.. controls (0.284, 4.765) and (0.19, 4.765) .. (0.143, 4.765)
		.. controls (0.096, 4.765) and (0.096, 4.765) .. (0.096, 4.765)
		.. controls (0.096, 4.765) and (0.096, 4.765) .. (0.096, 4.765)
		.. controls (0.096, 4.765) and (0.096, 4.765) .. (0.096, 4.812)
		.. controls (0.096, 4.859) and (0.096, 4.953) .. (0.096, 5)
		.. controls (0.096, 5.047) and (0.096, 5.047) .. cycle;
		
		% Singular boundary
		\draw[ultra thick] (8.845, 3.354) -- (8.845, 1.096);		
		\draw[ultra thick] (4.893, 4.483) -- (4.893, 3.354);
		
		% Bottom boundary
		\draw[BrickRed!90, ultra thick, opacity=.8] (3.482, 3.354) .. controls (3.482, 2.225) and (3.482, 1.661) .. (3.482, 1.661);		
		\draw[BrickRed!90, ultra thick, opacity=.8] (6.305, 4.483) -- (6.305, 3.354);
		
		% Semi-singular boundary
		\draw[Green!80, ultra thick] (5.74, 2.225) -- (5.74, 1.378);
		
		% Parabolic boundary
		\draw[RoyalBlue!80, ultra thick] (4.92, 2.208) .. controls (5.233, 2.078) and (5.32, 1.926) .. (5.74, 1.894);	
			
		\draw[RoyalBlue!80, ultra thick] (5.74, 1.71) .. controls (5.536, 1.634) and (5.531, 1.558) .. (5.497, 1.504) .. controls (5.464, 1.45) and (5.403, 1.418) .. (5.302, 1.491) .. controls (5.201, 1.564) and (5.06, 1.742) .. (4.909, 1.667);
				
		\draw[RoyalBlue!80, ultra thick] (3.484, 1.657) .. controls (3.658, 1.373) and (3.704, 1.333) .. (3.761, 1.295) .. controls (3.819, 1.258) and (3.888, 1.223) .. (3.951, 1.199) .. controls (4.014, 1.174) and (4.07, 1.16) .. (4.134, 1.147) .. controls (4.198, 1.134) and (4.27, 1.122) .. (4.338, 1.112) .. controls (4.406, 1.101) and (4.469, 1.092) .. (4.537, 1.081) .. controls (4.605, 1.07) and (4.677, 1.058) .. (4.751, 1.043) .. controls (4.825, 1.027) and (4.901, 1.009) .. (4.982, 0.982) .. controls (5.063, 0.956) and (5.15, 0.921) .. (5.229, 0.884) .. controls (5.307, 0.847) and (5.378, 0.806) .. (5.436, 0.77) .. controls (5.495, 0.735) and (5.541, 0.704) .. (5.579, 0.677) .. controls (5.618, 0.65) and (5.65, 0.628) .. (5.691, 0.6) .. controls (5.732, 0.572) and (5.783, 0.539) .. (5.83, 0.52) .. controls (5.878, 0.5) and (5.921, 0.495) .. (5.95, 0.505) .. controls (5.979, 0.514) and (5.994, 0.539) .. (6.004, 0.572) .. controls (6.013, 0.605) and (6.018, 0.646) .. (6.025, 0.679) .. controls (6.031, 0.711) and (6.041, 0.735) .. (6.046, 0.745) .. controls (6.052, 0.755) and (6.053, 0.752) .. (6.063, 0.733) .. controls (6.074, 0.713) and (6.093, 0.677) .. (6.113, 0.646) .. controls (6.133, 0.616) and (6.154, 0.591) .. (6.181, 0.572) .. controls (6.208, 0.553) and (6.241, 0.541) .. (6.267, 0.536) .. controls (6.293, 0.532) and (6.312, 0.536) .. (6.328, 0.541) .. controls (6.344, 0.547) and (6.359, 0.554) .. (6.374, 0.564) .. controls (6.39, 0.574) and (6.407, 0.587) .. (6.426, 0.603) .. controls (6.445, 0.62) and (6.466, 0.639) .. (6.494, 0.669) .. controls (6.522, 0.699) and (6.558, 0.739) .. (6.584, 0.769) .. controls (6.611, 0.799) and (6.628, 0.819) .. (6.656, 0.853) .. controls (6.685, 0.886) and (6.726, 0.933) .. (6.757, 0.968) .. controls (6.788, 1.003) and (6.81, 1.026) .. (6.835, 1.052) .. controls (6.861, 1.078) and (6.891, 1.106) .. (6.924, 1.133) .. controls (6.958, 1.16) and (6.994, 1.185) .. (7.04, 1.202) .. controls (7.085, 1.219) and (7.139, 1.227) .. (7.174, 1.229) .. controls (7.21, 1.23) and (7.227, 1.224) .. (7.251, 1.213) .. controls (7.275, 1.201) and (7.305, 1.182) .. (7.337, 1.159) .. controls (7.369, 1.136) and (7.403, 1.109) .. (7.436, 1.08) .. controls (7.469, 1.052) and (7.502, 1.022) .. (7.529, 0.998) .. controls (7.556, 0.974) and (7.578, 0.956) .. (7.62, 0.927) .. controls (7.662, 0.899) and (7.725, 0.86) .. (7.796, 0.836) .. controls (7.867, 0.811) and (7.946, 0.8) .. (7.996, 0.796) .. controls (8.046, 0.792) and (8.067, 0.794) .. (8.11, 0.803) .. controls (8.152, 0.811) and (8.217, 0.826) .. (8.287, 0.849) .. controls (8.357, 0.871) and (8.433, 0.901) .. (8.5, 0.93) .. controls (8.567, 0.959) and (8.625, 0.986) .. (8.845, 1.115);
		
		\draw[RoyalBlue!80, ultra thick] (6.332, 4.477) .. controls (6.54, 4.436) and (6.607, 4.422) .. (6.665, 4.409) .. controls (6.722, 4.396) and (6.77, 4.384) .. (6.811, 4.373) .. controls (6.852, 4.361) and (6.888, 4.351) .. (6.942, 4.33) .. controls (6.997, 4.31) and (7.071, 4.28) .. (7.131, 4.253) .. controls (7.191, 4.225) and (7.237, 4.2) .. (7.259, 4.187) .. controls (7.282, 4.174) and (7.282, 4.174) .. (7.302, 4.16) .. controls (7.321, 4.146) and (7.36, 4.118) .. (7.4, 4.084) .. controls (7.44, 4.051) and (7.481, 4.011) .. (7.513, 3.976) .. controls (7.546, 3.941) and (7.57, 3.91) .. (7.588, 3.885) .. controls (7.606, 3.86) and (7.618, 3.842) .. (7.633, 3.819) .. controls (7.647, 3.796) and (7.663, 3.768) .. (7.686, 3.723) .. controls (7.71, 3.678) and (7.74, 3.617) .. (7.768, 3.554) .. controls (7.796, 3.492) and (7.823, 3.428) .. (7.845, 3.376) .. controls (7.867, 3.324) and (7.884, 3.284) .. (7.903, 3.241) .. controls (7.921, 3.198) and (7.942, 3.154) .. (7.964, 3.112) .. controls (7.986, 3.07) and (8.009, 3.031) .. (8.041, 2.996) .. controls (8.073, 2.961) and (8.114, 2.93) .. (8.156, 2.915) .. controls (8.197, 2.9) and (8.239, 2.901) .. (8.274, 2.908) .. controls (8.309, 2.914) and (8.337, 2.926) .. (8.372, 2.945) .. controls (8.407, 2.965) and (8.449, 2.992) .. (8.494, 3.026) .. controls (8.539, 3.061) and (8.587, 3.103) .. (8.629, 3.142) .. controls (8.67, 3.18) and (8.705, 3.215) .. (8.845, 3.354);
		
		\draw[RoyalBlue!80, ultra thick] (4.893, 3.365) .. controls (4.962, 3.286) and (5.008, 3.248) .. (5.042, 3.223) .. controls (5.076, 3.198) and (5.097, 3.187) .. (5.117, 3.18) .. controls (5.137, 3.173) and (5.157, 3.169) .. (5.187, 3.173) .. controls (5.217, 3.177) and (5.259, 3.189) .. (5.289, 3.198) .. controls (5.318, 3.207) and (5.336, 3.213) .. (5.362, 3.215) .. controls (5.388, 3.217) and (5.423, 3.215) .. (5.449, 3.208) .. controls (5.475, 3.2) and (5.492, 3.188) .. (5.509, 3.173) .. controls (5.527, 3.157) and (5.546, 3.139) .. (5.564, 3.12) .. controls (5.583, 3.1) and (5.603, 3.081) .. (5.631, 3.058) .. controls (5.66, 3.036) and (5.697, 3.011) .. (5.731, 2.998) .. controls (5.764, 2.985) and (5.793, 2.983) .. (5.823, 2.988) .. controls (5.853, 2.993) and (5.884, 3.005) .. (5.927, 3.031) .. controls (5.971, 3.057) and (6.027, 3.098) .. (6.062, 3.125) .. controls (6.097, 3.151) and (6.11, 3.163) .. (6.135, 3.187) .. controls (6.16, 3.21) and (6.196, 3.245) .. (6.305, 3.355);
		
		\draw[RoyalBlue!80, ultra thick] (0.096, 4.765) .. controls (0.317, 4.765) and (0.377, 4.765) .. (0.426, 4.765) .. controls (0.474, 4.765) and (0.509, 4.765) .. (0.546, 4.764) .. controls (0.583, 4.763) and (0.622, 4.762) .. (0.665, 4.759) .. controls (0.709, 4.756) and (0.757, 4.753) .. (0.836, 4.743) .. controls (0.914, 4.733) and (1.022, 4.717) .. (1.089, 4.707) .. controls (1.156, 4.696) and (1.181, 4.692) .. (1.231, 4.681) .. controls (1.282, 4.671) and (1.358, 4.655) .. (1.436, 4.639) .. controls (1.515, 4.622) and (1.596, 4.605) .. (1.673, 4.589) .. controls (1.749, 4.573) and (1.821, 4.559) .. (1.89, 4.546) .. controls (1.959, 4.534) and (2.025, 4.523) .. (2.075, 4.515) .. controls (2.124, 4.507) and (2.156, 4.502) .. (2.199, 4.494) .. controls (2.242, 4.487) and (2.295, 4.476) .. (2.345, 4.462) .. controls (2.395, 4.448) and (2.442, 4.432) .. (2.493, 4.408) .. controls (2.543, 4.383) and (2.597, 4.352) .. (2.648, 4.314) .. controls (2.699, 4.277) and (2.749, 4.235) .. (2.789, 4.197) .. controls (2.83, 4.159) and (2.862, 4.126) .. (2.909, 4.074) .. controls (2.956, 4.022) and (3.017, 3.951) .. (3.067, 3.892) .. controls (3.116, 3.833) and (3.154, 3.786) .. (3.202, 3.724) .. controls (3.251, 3.661) and (3.31, 3.584) .. (3.482, 3.333);
		
		\draw[RoyalBlue!80, ultra thick] (0.096, 5.042) .. controls (0.368, 5.102) and (0.449, 5.117) .. (0.529, 5.131) .. controls (0.61, 5.145) and (0.689, 5.158) .. (0.768, 5.169) .. controls (0.846, 5.181) and (0.923, 5.19) .. (0.992, 5.198) .. controls (1.062, 5.205) and (1.123, 5.211) .. (1.201, 5.22) .. controls (1.28, 5.229) and (1.376, 5.243) .. (1.481, 5.253) .. controls (1.585, 5.263) and (1.698, 5.27) .. (1.797, 5.275) .. controls (1.896, 5.28) and (1.982, 5.283) .. (2.079, 5.285) .. controls (2.177, 5.287) and (2.288, 5.288) .. (2.413, 5.286) .. controls (2.539, 5.283) and (2.68, 5.279) .. (2.821, 5.27) .. controls (2.963, 5.261) and (3.104, 5.248) .. (3.237, 5.23) .. controls (3.37, 5.212) and (3.494, 5.189) .. (3.591, 5.168) .. controls (3.689, 5.146) and (3.761, 5.125) .. (3.825, 5.101) .. controls (3.889, 5.078) and (3.947, 5.053) .. (3.994, 5.029) .. controls (4.042, 5.006) and (4.081, 4.983) .. (4.124, 4.953) .. controls (4.168, 4.922) and (4.217, 4.884) .. (4.261, 4.844) .. controls (4.305, 4.804) and (4.344, 4.763) .. (4.382, 4.717) .. controls (4.42, 4.671) and (4.457, 4.62) .. (4.488, 4.576) .. controls (4.518, 4.533) and (4.542, 4.498) .. (4.566, 4.467) .. controls (4.589, 4.436) and (4.614, 4.41) .. (4.642, 4.395) .. controls (4.671, 4.38) and (4.704, 4.376) .. (4.741, 4.383) .. controls (4.778, 4.39) and (4.818, 4.409) .. (4.893, 4.48);
		
		% Infinity
		\draw[Mulberry!80, thick, ->] (1.243, 4.937) -- (0.334, 4.937);
		
		% Labels
		\node[anchor=center, font=\Large, text=BrickRed!90] at (3.029, 2.489) {$\mathcal{B}\Omega$};
		\node[anchor=center, font=\Large, text=BrickRed!90] at (6.74, 3.852) {$\mathcal{B}\Omega$};
		\node[anchor=center, font=\Large, text=RoyalBlue!80] at (2.459, 3.904) {$\mathcal{P}\Omega$};
		\node[anchor=center, font=\Large, text=RoyalBlue!80] at (4.191, 5.329) {$\mathcal{P}\Omega$};
		\node[anchor=center, font=\Large, text=RoyalBlue!80] at (4.684, 1.905) {$\mathcal{P}\Omega$};
		\node[anchor=center, font=\Large, text=Green!80] at (6.34, 1.807) {$\partial_{ss}\Omega$};
		\node[anchor=center, font=\Large, text=RoyalBlue!80] at (5.799, 2.77) {$\mathcal{P}\Omega$};
		\node[anchor=center, font=\Large] at (4.435, 3.95) {$\partial_s \Omega$};
		\node[anchor=center, font=\Large, text=RoyalBlue!80] at (6.913, 0.53) {$\mathcal{P}\Omega$};
		\node[anchor=center, font=\Large, text=RoyalBlue!80] at (7.9, 4.188) {$\mathcal{P}\Omega$};
		\node[anchor=center, font=\Large] at (9.348, 2.283) {$\partial_s \Omega$};
		\node[anchor=center, font=\Large, text=Mulberry!80] at (2.297, 4.949) {$\infty \in \partial_n \Omega$};
	\end{tikzpicture}
	}
	\caption{A more general, and non-cylindrical, space-time domain: there are different kinds of vertical faces ($\mathcal{B}\Omega$, $\partial_s \Omega$ and $\partial_{ss}\Omega$), and even the point at infinity may belong to the boundary if $\Omega$ is unbounded.}
	\label{fig:example domain 2}
\end{figure}
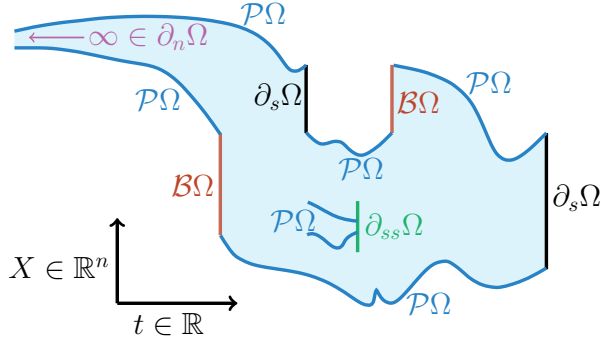

As motivated by Figure~\ref{fig:example domain 1}, it is clear that it does not make sense to prescribe boundary values on vertical faces that have nothing to the right (which will be denoted $\partial_s \Omega$, colored black). The situation with vertical faces like the green one in Figure~\ref{fig:example domain 2} (denoted later by $\partial_{ss}\Omega$) is trickier. Since they have an influence in the near future, we need to be able to impose boundary values there. At the same time, the imposed boundary value can be drastically different from the value suggested by the solution in the near past, which suggests that one should expect to be in trouble when solving the continuous Dirichlet problem there. Indeed, our main assumptions (the TBCDC and the TBHCC) rule out the existence of these vertical walls, so it will be reasonable to tackle the continuous Dirichlet problem. 
For the rest of the boundary points (which will be denoted by $\partial_n \Omega$), it is perfectly reasonable to prescribe boundary values. Concretely, one typically thinks that one can also impose values at the point at infinity, as suggested in Figure~\ref{fig:example domain 2}. 

Let us give rigorous definitions. The \textit{essential boundary} is where it is reasonable to prescribe boundary values (indeed, parabolic measure will be supported here): 
\[
\partial_e\Omega
:=
\partial_n\Omega\cup\partial_{ss}\Omega=\begin{cases}
	\partial\Omega\setminus\partial_s\Omega&\textrm{if }\Omega\textrm{ is bounded}\\
	(\partial\Omega\cup\{\infty\})\setminus\partial_s\Omega&\textrm{if }\Omega\textrm{ is unbounded.}
\end{cases}
\]
Here, in $\partial_n \Omega$, the \textit{normal boundary}, it is crystal clear that boundary values are meaningful:
\[\partial_n\Omega=\begin{cases}
	\mathcal{P}\Omega&\textrm{if }\Omega\textrm{ is bounded}\\
	\mathcal{P}\Omega\cup\{\infty\}&\textrm{if }\Omega\textrm{ is unbounded.}
\end{cases}\]
Its complement $\partial_a\Omega:=\partial\Omega\setminus\partial_n\Omega =\left\{(x,t)\in\partial\Omega:\exists r>0\textrm{ such that }\Q_r^-(x,t)\subseteq\Omega\right\}$, the \textit{abnormal boundary}, takes care of conflictive vertical faces, and can be split as 
\[
\partial_a\Omega=\partial_s\Omega\cup\partial_{ss}\Omega, 
\;
\text{ where }
\;\;\;
\begin{array}{c}
	\partial_s\Omega :=\big\{(x,t)\in\partial_a\Omega:\exists\, r>0\textrm{ such that }\Q_r^+(x,t)\cap\Omega=\emptyset\big\}, \\[.2cm]
	\partial_{ss}\Omega :=\big\{(x,t)\in\partial_a\Omega: \Q_r^+(x,t) \cap \Omega \neq \emptyset \text{  for all } r > 0\big\},
\end{array}
\]
called \textit{singular} and \textit{semi-singular} boundary, respectively. We discussed above that it does not make sense to impose boundary values over $\partial_s \Omega$ (that is why it is not included in $\partial_e \Omega$), whereas it does make sense over $\partial_{ss} \Omega$ (although we should be careful there about values of the solutions coming from the past not matching those coming from the future).

In any case, the situation throughout this paper will be much simpler: the TBCDC and the TBHCC rule out most of the patological behaviors (see Section~\ref{sec:structure_boundary_TBCDC}).

Finally, the \textit{quasi-lateral boundary} $\Sigma$, which removes only initial and terminal faces, is 
\[\Sigma:=\begin{cases}
    \partial\Omega&\textrm{if }T_{\min}=-\infty\textrm{ and }T_{\max}=\infty,\\
    \partial\Omega \setminus \big(\mathcal{B}\Omega \cap \T_{=T_{\min}}\big) & \textrm{if }T_{\min}>-\infty\textrm{ and }T_{\max}=\infty,\\
    \partial\Omega \setminus \big(\partial_s\Omega \cap \T_{=T_{\max}}\big) & \textrm{if }T_{\min}=-\infty\textrm{ and }T_{\max}<\infty,\\
    \partial\Omega\setminus\Big( \big(\mathcal{B}\Omega \cap \T_{=T_{\min}} \big) \cup \big(\partial_s\Omega \cap \T_{=T_{\max}}\big) \Big)
    &\textrm{if }T_{\min}>-\infty\textrm{ and }T_{\max}<\infty.
\end{cases}\]
One can show that $\partial_e\Omega$ and $\Sigma$ are closed sets (see \cite[Lemma 1.17]{GH}).

%%%%%%%%%%%%%%%%%%%%%%%%%%%%%%%%%%%%%%%%%%%%%%%%%%%%%%%
%%%%%%%%%%%%%%%%%%%%%%%%%%%%%%%%%%%%%%%%%%%%%%%%%%%%%%%
%%%%%%%%%%%%%%%%%%%%%%%%%%%%%%%%%%%%%%%%%%%%%%%%%%%%%%%
%%%%%%%%%%%%%%%%%%%%%%%%%%%%%%%%%%%%%%%%%%%%%%%%%%%%%%%

\subsection{PDE framework} \label{sec:pde}
Across the text, recall that $\Omega\subseteq\R^{n+1}$ is an open set.

\begin{definition}[Parabolic operator with bounded coefficients] \label{def:operator}
    We consider second order parabolic operators in divergence form 
    \begin{equation*} %\label{eq:def_L}
        L:=\partial_t-\mathrm{div}(A(X,t)\nabla),
        \qquad 
        (X, t) \in \Omega,
    \end{equation*}
    where $A(X, t)$ is a (not necessarily symmetric) $n\times n$ matrix with real entries for every $(X, t) \in \Omega$, which satisfies, for some $\lambda >0$:
    \begin{equation*} %\label{eq:ellipticity}
        \lambda|\xi|^2
        \leq
        A(X,t)\xi \cdot \xi,\quad\lVert A\rVert_{L^\infty(\R^n)}\leq\lambda^{-1}, 
        \qquad 
        \forall \, \xi\in\R^{n}, \; \text{a.e.} \, (X,t)\in\Omega.
    \end{equation*}
\end{definition}

\begin{definition}[Weak solution]
    We say that $u$ is a weak solution to $Lu = 0$ in $\Omega$ (or that $Lu= 0$ in $\Omega$ in the weak sense) if $u\in W^{1,2}_\loc(\Omega)$ (that is, $u\in L^2_\loc(\Omega)$, and the distributional spatial gradient satisfies $|\nabla u|\in L^2_\loc(\Omega)$) and for every $\psi \in C^\infty_c(\Omega)$, it holds $\iint_\Omega (-u\partial_t \psi + A \nabla u \cdot \nabla \psi) \, dX dt= 0$.    
\end{definition}    

We are now in the position to define two of the main objects of study in the paper.

\begin{definition}[Continuous Dirichlet problem] \label{def:continuous_problem}
    We say that the \textit{continuous Dirichlet problem} is solvable for $L$ (as in Definition~\ref{def:operator}) in $\Omega$, if for every $f \in C(\partial_e \Omega)$, there is a solution to
    \[
        \begin{cases}
            u\in C(\Omega\cup\partial_n\Omega)\cap\Wloc(\Omega),\\
            Lu=0\textrm{ in the weak sense in $\Omega$},\\
            u =f \text{ on } \partial_e\Omega.
        \end{cases}
    \]
    By the last equality $u =f \text{ on } \partial_e\Omega$, we mean that $u$ agrees pointwise with $f$, i.e.,
    \[
        \lim_{(X,t)\to(y,s)}u(X,t)=f(y,s),\qquad\qquad(y,s)\in\partial_n\Omega,
    \]
    and
    \[
        \lim_{(X,t)\to(y,s^+)}u(X,t)=f(y,s),\qquad\qquad(y,s)\in\partial_{ss}\Omega.
    \]
    Indeed, for the semi-singular boundary, we can only expect the imposed boundary values to be have an impact on the future, possibly not being consistent with the values to the near past (revisit the discussion in Section~\ref{sec:classification_boundary} for an intuition about this). 
\end{definition}

\begin{definition}[Parabolic measure] \label{def:parabolic_measure}
    Given an open set $\Omega \subseteq \R^{n+1}$ and an operator $L$ as in Definition~\ref{def:operator}, the \textit{parabolic measure} for $L$ in $\Omega$, denoted by $\{\omega_{L, \Omega}^{X, t}\}_{(X, t) \in \Omega}$ (we will often omit the dependence on $\Omega$), is a family of positive, Radon, probability measures supported on $\partial_e \Omega$ such that, for each $f \in C(\partial_e \Omega)$, the solution to the continuous Dirichlet problem with datum $f$ is given by 
    \begin{equation*} %\label{eq:def_parabolic_measure}
        u(X,t)=\int_{\partial_e\Omega}f\,d\omega_L^{X,t}, 
        \qquad (X, t) \in \Omega.
    \end{equation*}
\end{definition}

\begin{remark} \label{rem:probability_bounded}
    When $\Omega$ is bounded, the uniqueness of solutions to the continuous Dirichlet problem is granted by the classical maximum principle. When $\Omega$ is unbounded, we consider the point at infinity as part of the (essential) boundary (see Section~\ref{sec:classification_boundary}). Hence, maximum principles are also available (see the details in Lemma~\ref{lem:Uniqueness}), which again ensures uniqueness for the continuous Dirichlet problem. This justifies that our definition of parabolic measure assumes that they are probability measures (i.e. $\omega_L^{X, t}(\partial_e \Omega) = 1$ for every $(X, t) \in \Omega$). We will analyze some different conventions in Section~\ref{sec:probability}.
\end{remark}

Moreover, we also consider a natural quantitative version of the continuous Dirichlet problem, the Hölder Dirichlet problem, which is the focus of Theorem~\ref{mainthm:ExistenceOfHolderSolutions}.

\begin{definition}[Hölder Dirichlet problem] \label{def:holder_problem}
    Given $\alpha > 0$, we define the \textit{$\dot{C}^\alpha$-Dirichlet problem}  by
    \[
        \begin{cases}
            u\in \dot{C}^\alpha(\Omega\cup\partial_n\Omega)\cap\Wloc(\Omega),\\
            Lu=0\textrm{ in the weak sense in $\Omega$},\\
            u|_{\partial_e\Omega}=f\in \dot{C}^\alpha(\partial_e\Omega),
        \end{cases}
    \]
    where the (homogeneous) $\alpha$-H\"older space on a set $E\subseteq\R^{n+1}$ is defined by 
    \[
        \dot{C}^\alpha(E):=
        \bigg\{u:E\to\R:\lVert u\rVert _{\dot{C}^\alpha(E)}
        :=
        \sup_{\substack{\mbf{X},\mbf{Y}\in E \\ \mbf{X}\neq\mbf{Y}}}\frac{|u(\mbf{X})-u(\mbf{Y})|}{\lVert\mbf{X}-\mbf{Y}\rVert ^\alpha}<\infty
        \bigg\}.
    \]
\end{definition}

One of the main tools to work with parabolic equations are \textit{fundamental solutions}. Indeed, for the heat equation, solutions in the whole space are given by convolution against the Gaussian profile. More generally, the fundamental solution of $\partial_t-M\Delta$ (for $M > 0$) is
\begin{equation} \label{eq:fund_sol_heat}
\Gamma_{\partial_t-M\Delta}(X,t;Y,s)=(4\pi M(t-s))^{-n/2}\exp\left(-\frac{|X-Y|^2}{4M(t-s)}\right)\mathbf{1}_{\{t>s\}},
\end{equation}
which satisfies the simple upper bound (see \cite[(A.8)]{GH} or \cite[(3.2)]{MP})
\begin{equation} \label{eq:aronson_easy_M}
	\Gamma_{\partial_t - M\Delta}(\X; \Y)\leq C_M\lVert \X-\Y \rVert^{-n}, 
	\qquad \X, \Y \in \R^{n+1}.
\end{equation}

For more general operators with merely bounded coefficients, fundamental solutions are also known to exist and satisfy Gaussian-like estimates, as shown by the early works of Aronson \cite{Aronson} and Nash \cite{N}. In full generality, for possibly non-symmetric and merely bounded coefficients, we refer to \cite[Theorem 5.5]{QX}:
\begin{lemma}
\label{lem:fund_sol}
	Let $L$ be a parabolic operator as in Definition~\ref{def:operator}. Then, there exists a fundamental solution for $L$, denoted by $\Gamma_L(\cdot; \cdot)$, satisfying:
	\begin{enumerate}
		\item Given $f \in L^2(\Rn)$ and $t_0 \in \R$, the function $u(X, t):= \int_{\Rn} \Gamma_L(X, t; Y, t_0) f(Y) \, dY$ is the weak solution to $Lu = 0$ in $\T_{> t_0}$, with initial datum $f$ on $\T_{=t_0}$, and decaying at infinity (see \cite{QX} for a more precise statement),
		
		\item (Aronson's bounds) There exists some $N>0$, depending only on $n$ and $\lambda$, such that
		\begin{multline*} 
			\frac{1}{N(t-s)^{n/2}}\exp\left(-\frac{N|X-y|^2}{t-s}\right) \mathbf{1}_{\{t>s\}} 
			\leq
			\Gamma_L(X,t; Y,s)
			\\ \leq
			\frac{N}{(t-s)^{n/2}}\exp\left(-\frac{|X-y|^2}{N(t-s)}\right) \mathbf{1}_{\{t>s\}}, 
            \quad (X, t), (Y, s) \in \R^{n+1}.
		\end{multline*}
	\end{enumerate}
\end{lemma}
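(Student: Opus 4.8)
The plan is to follow the classical construction of Aronson \cite{Aronson} and Nash \cite{N}, in the form needed for non-symmetric, merely measurable coefficients as in \cite{QX}: build $\Gamma_L$ as a limit of fundamental solutions of operators with \emph{smooth} coefficients, extracting all quantitative information by proving it uniformly along the approximation. Concretely, mollify $A$ to obtain smooth, uniformly elliptic matrices $A_k\to A$ a.e.\ with the \emph{same} ellipticity constant $\lambda$; for $L_k:=\partial_t-\div(A_k\nabla)$, classical parabolic theory (the parametrix method) produces a smooth fundamental solution $\Gamma_k$ enjoying Gaussian bounds, with $k$-dependent constants a priori. The point is then: (i) show that the $\Gamma_k$ satisfy the two-sided Gaussian bounds with a constant $N=N(n,\lambda)$ \emph{independent} of $k$; (ii) show that the $\Gamma_k$ are locally equi-Hölder on $\{t>s\}$. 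Then Arzelà--Ascoli yields a locally uniform limit $\Gamma_L$ inheriting the bounds, and a routine passage to the limit in the weak formulation identifies $\Gamma_L$ as a fundamental solution for $L$.

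For the \emph{upper} bound I would use the Davies exponential-perturbation argument, which is insensitive to symmetry. First, the energy inequality $\frac{d}{dt}\|u(t)\|_{L^2}^2\le-2\lambda\|\nabla u(t)\|_{L^2}^2$ together with Nash's inequality gives the on-diagonal decay $\|\Gamma_k(\cdot,t;\cdot,s)\|_{L^\infty_X}\lesssim(t-s)^{-n/2}$, i.e.\ the $L^1\to L^\infty$ bound for the propagator, with constant depending only on $n,\lambda$. To produce the off-diagonal Gaussian factor, conjugate the propagator by $e^{\alpha\psi}$ with $\psi$ Lipschitz and $|\nabla\psi|\le 1$: the twisted energy identity picks up only a controlled error, $\frac{d}{dt}\|u_\alpha(t)\|_{L^2}^2\le-\lambda\|\nabla u_\alpha(t)\|_{L^2}^2+C\alpha^2\|u_\alpha(t)\|_{L^2}^2$ (the cross terms coming from non-symmetry are absorbed using $|\nabla\psi|\le1$), so Gronwall plus the on-diagonal bound give a weighted $L^1\to L^\infty$ estimate with an extra factor $e^{C\alpha^2(t-s)}$; choosing $\psi$ close to $Z\mapsto|Z-Y|$ and optimizing in $\alpha$ yields $\Gamma_k(X,t;Y,s)\lesssim(t-s)^{-n/2}\exp(-|X-Y|^2/(N(t-s)))$. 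The local equi-Hölder bound on $\{t>s\}$ then comes from the De Giorgi--Nash--Moser interior estimate applied to $\Gamma_k(\cdot,\cdot;Y,s)$ as a solution of $L_k u=0$ and to $\Gamma_k(X,t;\cdot,\cdot)$ as a solution of the adjoint equation, all with constants depending only on $n,\lambda$.

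For the \emph{lower} bound I would use Moser's parabolic Harnack inequality — which also holds for non-symmetric bounded measurable coefficients — chained along a sequence of parabolic cylinders joining $(Y,s)$ to $(X,t)$; the number of cylinders needed is comparable to $1+|X-Y|^2/(t-s)$, which is precisely what converts a fixed interior lower bound into the Gaussian $\frac{1}{N(t-s)^{n/2}}\exp(-N|X-Y|^2/(t-s))$. The ``seed'' estimate $\Gamma_k(X,t;Y,s)\gtrsim(t-s)^{-n/2}$ for $|X-Y|^2\lesssim t-s$ follows by combining the Harnack inequality with the conservation of mass $\int_{\R^n}\Gamma_k(X,t;Y,s)\,dX=1$ (obtained by integrating the equation, the boundary terms vanishing by the upper Gaussian bound already in hand): since the mass equals $1$ and the tails are Gaussian, a definite fraction of it sits in $\{|X-Y|\lesssim\sqrt{t-s}\}$, producing a point where $\Gamma_k$ has the expected size, from which Harnack propagates forward. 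Passing to the limit preserves both bounds.

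Finally, for part~(1): set $u_k(X,t):=\int_{\R^n}\Gamma_k(X,t;Y,t_0)f(Y)\,dY$; the uniform Gaussian bounds give uniform local $L^\infty$ and (via Caccioppoli) $W^{1,2}$ bounds, so along a subsequence $u_k\to u:=\int_{\R^n}\Gamma_L(\cdot,\cdot;Y,t_0)f(Y)\,dY$, with $u$ a weak solution of $Lu=0$ in $\T_{>t_0}$; the Gaussian bound forces $u(\cdot,t)\to f$ in $L^2$ as $t\downarrow t_0$ (and pointwise wherever $f$ is continuous), together with the decay at spatial infinity. Uniqueness within the class of solutions decaying at infinity is a standard Gronwall/energy argument. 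I expect the genuine obstacle to be the uniform Gaussian \emph{upper} bound for merely measurable, non-symmetric coefficients: self-adjoint semigroup tools are unavailable, so the Davies perturbation step (keeping careful track of the $\alpha^2(t-s)$ error and of the non-symmetric cross terms) must be done by hand — once it is in place, the remainder is standard De Giorgi--Nash--Moser machinery, and in any event the fully detailed argument is carried out in \cite{QX} (see also \cite{Aronson, N}).
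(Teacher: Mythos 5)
Your outline is essentially a correct sketch of the classical argument, but be aware that the paper does not prove this lemma at all: it is quoted directly from \cite[Theorem 5.5]{QX} (with the antecedents \cite{Aronson, N}), so there is no internal proof to compare against. Your route --- mollify $A$ preserving the ellipticity constant, Nash's on-diagonal $L^1\to L^\infty$ decay, Davies' exponential twisting for the off-diagonal Gaussian upper bound, conservation of mass plus Harnack chaining for the lower bound, and equi-H\"older compactness to pass to the limit --- is precisely the standard path to that theorem, and each step survives non-symmetry and mere measurability of $A$, as you correctly identify. The one place where the sketch is materially thinner than a proof is the twisted upper bound: the Gronwall inequality on the perturbed $L^2$ energy only yields $L^2\to L^2$ decay of the twisted propagator $e^{\alpha\psi}P_{s,t}e^{-\alpha\psi}$, whereas what you need is a weighted $L^1\to L^\infty$ bound; obtaining that requires rerunning the Nash (or Moser) iteration for the twisted semigroup while tracking the $e^{C\alpha^2(t-s)}$ factor through every step of the iteration, which is the technical heart of \cite{QX} and of the Fabes--Stroock treatment, not a corollary of the energy inequality alone. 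This is compression rather than a genuine gap, and your deferral to \cite{QX} for the details matches how the paper itself handles the lemma; the remaining ingredients (mass conservation in the $X$-variable, the seed lower bound on the diagonal, the Harnack chain with $\sim 1+|X-Y|^2/(t-s)$ cylinders, and the limiting argument for part (1)) are all sound.
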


The fact that the fundamental solution exists for any parabolic operator, and that it satisfies Aronson's bounds, is what will enable most of our results to hold for operators with very general diffusion. An important (and sometimes surprising fact) about parabolic equations is that the precise value of the constant $N$ in Aronson's bounds is very influential for some properties of the PDE. This is seen, for example, in Wiener-like criteria (see Section~\ref{sec:wiener}), which are inherently operator-dependent, as can be seen in \cite{P} (see the discussion around \cite[Theorem 1.7]{GL}). In this paper, the relevance of $N$ will only arise when comparing the TBCDC for different operators. 
 
We will also make use of the following well-known estimates for parabolic equations. The reader may find them in references like \cite[Section 0]{FGS}, \cite[Section 3]{HL}, or even \cite{Aronson}. The original sources are the works of Moser \cite{Moser} and Nash \cite{N}.

\begin{lemma}[Harnack's Inequality]\label{lem:harnack}
	Let $L$ be a parabolic operator as in Definition~\ref{def:operator}. Let $u \geq 0$ solve $Lu = 0$ in the weak sense in $\Q_{4r}(X, t)$, where $(X, t) \in \R^{n+1}$. Then 
    \[u(Z,\tau)\leq u(Y,s)\exp\left[C\left(\frac{|Y-Z|^2}{|s-\tau|}+1\right)\right], 
    \quad 
    \forall \, (Y,s),(Z,\tau)\in \Q_{2r}(X,t) \text{ with } \tau < s.\]
    The constant $C > 0$ depends only on $n$ and $\lambda$ (ellipticity).
\end{lemma}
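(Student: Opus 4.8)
The plan is to establish this as the classical parabolic Harnack inequality of Moser. Replacing $u$ by $u+\varepsilon$ and letting $\varepsilon\downarrow0$ at the end, we may assume $u>0$. I would then pass from the parabolic cubes $\Q_\rho(X,t)$ to the comparable space-time cylinders $Q_\rho(X)\times(t-\rho^2,t+\rho^2)$, on which the usual energy estimates apply verbatim, and argue in two stages: first a quantitative \emph{ball-to-ball} Harnack inequality on cylinders separated in time, obtained by Moser iteration, and then a \emph{chaining} argument that upgrades it to the pointwise bound with the Gaussian factor $\exp(C|Y-Z|^2/|s-\tau|)$.

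For the first stage, I would test the weak formulation against $\varphi^2u^{2p-1}$ (with $\varphi$ a space-time cutoff and $p\neq0$), using only $\lambda|\xi|^2\le A\xi\cdot\xi$ and $\|A\|_\infty\le\lambda^{-1}$, to obtain Caccioppoli-type energy inequalities for the powers $u^p$; the sign of the leading term (hence whether $u^p$ behaves like a subsolution or a supersolution) depends on whether $p>\tfrac12$, $0<p<\tfrac12$, or $p<0$. Coupling these with the parabolic Sobolev inequality $\iint|w|^{2(1+2/n)}\lesssim\big(\sup_t\int|w|^2\big)^{2/n}\iint|\nabla w|^2$ (for $w$ with compact spatial support) produces reverse-Hölder gains, and iterating the exponent yields, for every $q>0$,
\[
\sup_{Q^-}u\;\lesssim_q\;\Big(\tfrac{1}{|Q^{--}|}\iint_{Q^{--}}u^{q}\Big)^{1/q},
\qquad
\Big(\tfrac{1}{|Q^{++}|}\iint_{Q^{++}}u^{-q}\Big)^{-1/q}\;\lesssim_q\;\inf_{Q^+}u,
\]
where $Q^{--},Q^-$ lie in the past and $Q^+,Q^{++}$ in the future of the reference cylinder. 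The delicate point is to bridge a positive and a negative moment of $u$: testing with $\varphi^2u^{-1}$ bounds $\nabla\log u$ in $L^2$, which via the parabolic Poincaré inequality shows that $\log u$ has controlled parabolic oscillation; a John–Nirenberg (or Bombieri–Giusti) argument on cylinders then provides a fixed $p_0=p_0(n,\lambda)>0$ with $\big(\tfrac{1}{|Q^{--}|}\iint_{Q^{--}}u^{p_0}\big)\big(\tfrac{1}{|Q^{++}|}\iint_{Q^{++}}u^{-p_0}\big)\lesssim1$, provided $Q^{--}$ and $Q^{++}$ are separated in time by an amount comparable to the square of their common radius. Concatenating the three estimates gives $\sup_{Q^-}u\le C_0\inf_{Q^+}u$ for cylinders of comparable size with $Q^-$ strictly in the past of $Q^+$, and $C_0=C_0(n,\lambda)$.

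For the second stage, given $(Z,\tau),(Y,s)\in\Q_{2r}(X,t)$ with $\tau<s$, I would set $k:=\lceil1+|Y-Z|^2/(s-\tau)\rceil$ and $\rho:=\sqrt{(s-\tau)/k}$, and connect $(Z,\tau)$ to $(Y,s)$ by a chain of $\mathcal{O}(k)$ applications of the ball-to-ball Harnack at scale $\rho$, advancing in time by $\sim\rho^2$ and sideways by $\lesssim\rho$ at each step. Since $\rho\lesssim r$, the chain stays inside a parabolic tube of radius $\lesssim r$ about the segment joining the two points, hence inside $\Q_{4r}(X,t)$, where the equation is assumed to hold; and each step costs a factor $C_0$, so
\[
u(Z,\tau)\le C_0^{\,\mathcal{O}(k)}\,u(Y,s)=\exp\big(\mathcal{O}(k)\log C_0\big)\,u(Y,s)\le u(Y,s)\exp\Big[C\Big(\tfrac{|Y-Z|^2}{|s-\tau|}+1\Big)\Big],
\]
which is the asserted inequality. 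I expect the main obstacle to be the first stage, and within it the logarithmic/crossover estimate together with the correct placement of the time-shifted cylinders: this is exactly where the parabolic theory diverges from the elliptic one, since the obligatory time lag $\tau<s$ and the failure of any unrestricted Harnack inequality are encoded precisely here. Of course, for the purposes of the paper one may instead simply invoke \cite{Moser,N,FGS,HL}.
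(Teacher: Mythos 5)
The paper does not prove this lemma at all: it is quoted verbatim from the literature, with the text immediately preceding it pointing to \cite{FGS}, \cite{HL}, \cite{Aronson} and the original sources \cite{Moser}, \cite{N}. Your sketch is a faithful reconstruction of exactly that standard argument -- Caccioppoli estimates for the powers $u^p$ via the test function $\varphi^2u^{2p-1}$, the parabolic Sobolev interpolation, Moser iteration for the positive and negative moments, the $\nabla\log u$ estimate combined with a Bombieri--Giusti/John--Nirenberg crossover to bridge the two, and then the Harnack chain that converts the ball-to-ball inequality into the quantitative form $u(Z,\tau)\leq u(Y,s)\exp[C(|Y-Z|^2/|s-\tau|+1)]$, which is precisely the version stated in \cite{Aronson} and \cite[Section 0]{FGS}. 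The outline is correct, and your closing remark that one may simply invoke the references is exactly what the paper does. The only point worth tightening if you were to write this out in full is the containment of the chain in $\Q_{4r}(X,t)$: since $|s-\tau|$ can be as large as $8r^2$, the scale $\rho=\sqrt{(s-\tau)/k}$ can be comparable to $r$, and each application of the ball-to-ball Harnack needs a full cylinder of radius a fixed multiple of $\rho$ inside the domain; one should therefore shrink $\rho$ by a harmless dimensional constant (increasing the number of steps only by a bounded factor, hence only the constant $C$), so that the tube around the segment joining $(Z,\tau)$ to $(Y,s)$ genuinely stays inside $\Q_{4r}(X,t)$.
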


\begin{lemma}[Caccioppoli's estimate]\label{lem:caccioppoli}
    Let $L$ be a parabolic operator as in Definition~\ref{def:operator}. Let $u$ solve $Lu = 0$ in the weak sense in $\Q_{4r}(X, t)$, where $(X, t) \in \R^{n+1}$. Then
    \[\iint_{\Q_r(X,t)}|\nabla u(Y, s)|^2\,dYds
    \lesssim 
    r^{-2}\iint_{\Q_{2r}(X,t)}u(Y,s)^2 \,dYds.\]
    The implicit constant depends only on $n$ and $\lambda$. 
\end{lemma}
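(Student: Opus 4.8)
The plan is to prove this by the classical energy (Caccioppoli) argument: test the weak formulation of $Lu=0$ against $\psi=\varphi^2 u$ for a space-time cutoff $\varphi$ adapted to the parabolic scaling. First I would normalise: replacing $u(Y,s)$ by $u(X+rY,\,t+r^2s)$ — which solves a parabolic equation governed by a matrix with the same ellipticity constant $\lambda$ — reduces matters to $(X,t)=\mbf 0$, $r=1$; here it is convenient to recall that $\Q_\rho(\mbf 0)=Q_\rho(0)\times(-\rho^2,\rho^2)$ is an honest space-time cylinder. Then fix $\varphi\in C^\infty_c(\Q_2(\mbf 0))$ with $0\le\varphi\le1$, $\varphi\equiv1$ on $\Q_1(\mbf 0)$, $\abs{\nabla\varphi}\lesssim1$ and $\abs{\partial_t\varphi}\lesssim1$. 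The point of taking $\varphi$ compactly supported in \emph{all} of space-time (rather than only in the spatial variable) is that the boundary terms in the time integration by parts below vanish, so that no supremum-in-time term appears and the plain gradient bound in the statement is exactly what comes out.

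The algebraic heart of the argument is the following: using $\nabla(\varphi^2u)=\varphi^2\nabla u+2\varphi u\nabla\varphi$ and, after integrating the parabolic term by parts in time, $\iint -u\,\partial_t(\varphi^2u)=-\iint\varphi\,\partial_t\varphi\,u^2$, testing $Lu=0$ against $\varphi^2u$ yields
\[
\iint \varphi^2\,A\nabla u\cdot\nabla u \;=\; \iint\varphi\,\partial_t\varphi\,u^2 \;-\;2\iint\varphi\,u\,A\nabla u\cdot\nabla\varphi .
\]
On the left I use ellipticity, $\varphi^2A\nabla u\cdot\nabla u\ge\lambda\varphi^2\abs{\nabla u}^2$; the first term on the right is $\le\norm{\partial_t\varphi}_{L^\infty}\iint_{\Q_2(\mbf 0)}u^2\lesssim\iint_{\Q_2(\mbf 0)}u^2$; and the last term is handled with Young's inequality, $2\varphi\abs{u}\,\abs{A\nabla u}\,\abs{\nabla\varphi}\le\e\,\varphi^2\abs{\nabla u}^2+C_{\e,\lambda}\,u^2\abs{\nabla\varphi}^2$, choosing $\e$ small enough (depending on $\lambda$) that the $\e\,\varphi^2\abs{\nabla u}^2$ contribution is absorbed into the left-hand side. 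This gives $\iint\varphi^2\abs{\nabla u}^2\lesssim\iint_{\Q_2(\mbf 0)}u^2$, and since $\varphi\equiv1$ on $\Q_1(\mbf 0)$ we conclude $\iint_{\Q_1(\mbf 0)}\abs{\nabla u}^2\lesssim\iint_{\Q_2(\mbf 0)}u^2$; undoing the scaling reinstates the factor $r^{-2}$ by dimensional analysis, and the constant depends only on $n$ and $\lambda$.

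The only step that is not purely formal — and the one I expect to be the main (mild) obstacle — is justifying the time integration by parts and the use of $\varphi^2u$ as a test function: a weak solution $u\in\Wloc(\Omega)$ is not a priori known to have $\partial_t u\in L^2_\loc$, and $\varphi^2u$ is only a compactly supported $W^{1,2}$ function, not smooth. The standard remedy is Steklov averaging: one works with $u_h(Y,s):=\tfrac1h\int_s^{s+h}u(Y,\tau)\,d\tau$, which does satisfy $\partial_s u_h\in L^2_\loc$ and a mollified version of the equation, tests that against $\varphi^2u_h$ (now approximable in $W^{1,2}$ by functions in $C^\infty_c$, hence admissible), runs the computation above with $u_h$ in place of $u$, and lets $h\to0$ using $u_h\to u$ and $\nabla u_h\to\nabla u$ in $L^2_\loc$. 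I would merely cite this routine device (e.g.\ \cite{Moser}, \cite{Aronson}, or \cite[Section~0]{FGS}); granting it, the inequality follows from the displayed chain of estimates.
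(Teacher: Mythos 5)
Your argument is correct: it is the standard Caccioppoli energy estimate (test against $\varphi^2 u$, integrate by parts in time, use ellipticity and Young's inequality to absorb, justify via Steklov averaging, and rescale), and this is exactly the proof contained in the references the paper cites for this lemma (the paper itself gives no proof, pointing instead to \cite{Moser}, \cite{Aronson}, \cite[Section 0]{FGS}, \cite[Section 3]{HL}). The algebra, the scaling, and the handling of the time-regularity issue are all in order, so nothing further is needed.
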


\begin{lemma}[Interior Hölder continuity]
\label{lem:interiorHolder}
    Let $L$ be a parabolic operator as in Definition~\ref{def:operator}. Let $u$ solve $Lu = 0$ in the weak sense in $\Q_{4r}(X, t)$, where $(X, t) \in \R^{n+1}$. Then
    \[
    | u(Y, s) - u(Z, \tau) |
    \lesssim
    \left( \frac{\lVert (Y, s)-(Z, \tau)\rVert}{r} \right)^{\alpha_N}
    \|u\|_{L^\infty(\Q_{4r}(X, t))}
    , 
    \quad 
    (Y, s),(Z, \tau)\in \Q_{2r}(\mbf{X}).
    \]
    The values of the implicit constant and $\alpha_N \in (0, 1)$ depend only on $n$ and $\lambda$.
\end{lemma}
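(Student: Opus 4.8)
Since the Harnack inequality (Lemma~\ref{lem:harnack}) is already available, the plan is to deduce H\"older continuity from it by the classical oscillation-decay argument. Concretely, I would (i) establish a one-scale \emph{oscillation decay} estimate from Harnack; (ii) iterate it to get geometric decay of $\osc u$ over shrinking parabolic cubes, which yields a H\"older-continuous representative of $u$ and its modulus of continuity at every point; and (iii) upgrade pointwise continuity to the joint H\"older bound on $\Q_{2r}(\X)$ by a two-point argument. I note that the estimate is vacuous unless $u\in L^\infty(\Q_{4r}(\X))$, so I may assume this from the start; in particular no $L^2\to L^\infty$ Moser bound is needed.

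For the oscillation decay, let $\mathcal Q$ be a parabolic cube on (a fixed dilate of) which $Lu=0$, and set $M:=\esssup_{\mathcal Q}u$, $m:=\essinf_{\mathcal Q}u$, $\omega:=M-m$; then $v_+:=u-m$ and $v_-:=M-u$ are nonnegative weak solutions on $\mathcal Q$. Picking a sub-cube $Q^-$ inside the \emph{past} part of $\mathcal Q$, one of the super-level sets $\{u\ge(M+m)/2\}$, $\{u<(M+m)/2\}$ must fill at least half of $|Q^-|$, so one of $v_\pm$ is $\ge\omega/2$ on a set of proportional measure inside $Q^-$. Applying Lemma~\ref{lem:harnack} to that $v_\pm$ --- read in its almost-everywhere form and combined with a Fubini argument, so as to pass from ``$v\ge\omega/2$ on a positive-measure subset of the past cube $Q^-$'' to ``$v\ge c\,\omega$ a.e.\ on a future sub-cube $Q^+$'', where $Q^-$ and $Q^+$ are chosen of comparable parabolic size and separated in time by an amount comparable to the square of that size, so that the exponential factor in Lemma~\ref{lem:harnack} is $\le C(n,\lambda)$ --- I would conclude $\osc_{Q^+}u\le(1-c)\,\omega=:\theta\,\osc_{\mathcal Q}u$, with $\theta\in(0,1)$ depending only on $n$ and $\lambda$.

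To iterate while keeping the cubes nested, I would run this on the time-backward cubes of a \emph{fixed} top time: for fixed $(x_0,\sigma)$ the cubes $\Q_\rho^-(x_0,\sigma)$ decrease in $\rho$, and the lemma becomes $\osc_{\Q_\rho^-(x_0,\sigma)}u\le\theta\,\osc_{\Q_{K\rho}^-(x_0,\sigma)}u$ for a fixed integer $K\ge2$ and $\theta\in(0,1)$ depending only on $n,\lambda$. Iterating over $\rho=K^{-j}\rho_0$ with $\rho_0\approx r$ gives $\osc_{\Q_{K^{-j}\rho_0}^-(x_0,\sigma)}u\le 2\theta^{j}\|u\|_{L^\infty(\Q_{4r}(\X))}$, hence, interpolating over scales,
\[
\osc_{\Q_\rho^-(x_0,\sigma)}u\;\lesssim\;\big(\rho/r\big)^{\alpha_N}\|u\|_{L^\infty(\Q_{4r}(\X))},
\]
with $\alpha_N:=\log(1/\theta)/\log K\in(0,1)$ (shrinking $\theta$ if needed to force $\alpha_N<1$); in particular $u$ agrees a.e.\ with a H\"older-continuous function, to which the rest refers. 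For the two-point bound, given $\mbf Y_1=(Y_1,s_1),\mbf Y_2=(Y_2,s_2)\in\Q_{2r}(\X)$ with, say, $s_2\le s_1$, and $d:=\|\mbf Y_1-\mbf Y_2\|$: if $d$ is comparable to $r$ the estimate follows from $|u(\mbf Y_1)-u(\mbf Y_2)|\le2\|u\|_{L^\infty(\Q_{4r}(\X))}$; if $d$ is small, both points lie in a time-backward cube $\Q_{Cd}^-(Y_1,s_1+d^2)$ which, for $d$ a small absolute multiple of $r$, sits inside $\Q_{4r}(\X)$ with the room to run the iteration centered there, so the display at scale $\approx d$ gives $|u(\mbf Y_1)-u(\mbf Y_2)|\lesssim(d/r)^{\alpha_N}\|u\|_{L^\infty(\Q_{4r}(\X))}$. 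Every constant is inherited from Lemma~\ref{lem:harnack} (equivalently, from Caccioppoli's estimate, Lemma~\ref{lem:caccioppoli}, through Moser's iteration), hence depends only on $n,\lambda$.

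The hard part, compared with the elliptic case, is the time-directionality of the parabolic Harnack inequality: a nonnegative solution cannot be propagated backward in time, so the oscillation iteration has to use \emph{time-shifted} (here, backward) cylinders rather than concentric balls, and I must keep the ``waiting time'' between the past super-level region and the future cube fixed at each scale so that the exponential factor in Lemma~\ref{lem:harnack} stays uniformly bounded. A secondary point is that a priori $u\in\Wloc$ only, so the level-set dichotomy must be measure-theoretic and everything phrased with $\esssup/\essinf$ until the continuous representative is produced. If one preferred to avoid Harnack entirely, an alternative is the De Giorgi--DiBenedetto route: use Caccioppoli (Lemma~\ref{lem:caccioppoli}) on the truncations $(u-k)_\pm$ to place $u$ in the parabolic De Giorgi classes, and run the expansion-of-positivity argument to obtain the oscillation decay directly.
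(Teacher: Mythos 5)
The paper does not actually prove this lemma: it is stated as a classical fact with references to Moser, Nash, \cite{FGS} and \cite{HL}, so there is no in-paper argument to compare against. Your strategy (Harnack $\Rightarrow$ one-scale oscillation decay $\Rightarrow$ geometric decay $\Rightarrow$ two-point H\"older bound) is exactly the classical route, and most of the sketch is sound. However, one step fails as written: the oscillation-decay inequality on time-backward cubes sharing a \emph{fixed top time},
\[
\osc_{\Q_\rho^-(x_0,\sigma)}u\;\le\;\theta\,\osc_{\Q_{K\rho}^-(x_0,\sigma)}u,
\]
cannot be extracted from Lemma~\ref{lem:harnack} with a uniform $\theta<1$. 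Lemma~\ref{lem:harnack} is an \emph{interior} Harnack inequality: to compare $v_\pm$ at a past point with its values at $(Y,s)$, both points must lie in $\Q_{2r'}(X',t')$ while $v_\pm\ge 0$ on the full cube $\Q_{4r'}(X',t')$. Since $v_\pm=u-m,\ M-u$ are only known to be nonnegative on $\Q_{K\rho}^-(x_0,\sigma)$, you are forced to take $t'+16r'^2<\sigma$, so the conclusion only reaches times $s<t'+4r'^2<\sigma-12r'^2$; and connecting to a super-level region near the bottom of $\Q_{K\rho}^-(x_0,\sigma)$ forces $r'\gtrsim K\rho$. Hence points of $\Q_\rho^-(x_0,\sigma)$ with $s$ close to $\sigma$ are unreachable, and a Harnack chain does not repair this uniformly (each link covers at most a fixed fraction of the remaining time gap to $\sigma$ and costs a factor $e^{C}>1$, so the product diverges as $s\to\sigma^-$). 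Backward cylinders with a common top time are the right objects when one has Moser's Harnack inequality \emph{up to the final time slice}; they are the wrong objects for the interior form stated in the paper. The same obstruction survives your final reduction, since the last scale of the iteration still requires controlling $u$ at times within a fixed fraction of $\rho^2$ of the top.

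The repair is simple and stays within your framework: because the estimate is interior, iterate on \emph{full concentric} cubes $\Q_{K^{-j}\rho_0}(x_0,\sigma)$ with $(x_0,\sigma)\in\Q_{2r}(\X)$ and $\rho_0\le 2r$, so that every cube in the chain lies in $\Q_{4r}(\X)$. With $M,m$ the essential extrema of $u$ over $\Q_P(x_0,\sigma)$, $P=K^{-j}\rho_0$, apply Lemma~\ref{lem:harnack} centered at $(x_0,\sigma)$ with radius $r'=P/4$ (so $\Q_{4r'}=\Q_P(x_0,\sigma)$ is precisely where $v_\pm\ge0$), from a past region at time about $\sigma-P^2/8$ to all of $\Q_{P/K}(x_0,\sigma)$; for $K\ge 3$ the time ordering holds and $|Y-Z|^2/|s-\tau|\lesssim_K 1$, giving $\osc_{\Q_{P/K}(x_0,\sigma)}u\le(1-c)\osc_{\Q_P(x_0,\sigma)}u$ with $c=c(n,\lambda,K)$. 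The remaining ingredients of your sketch --- the measure-theoretic level-set dichotomy phrased with $\esssup/\essinf$, the interpolation over scales, and the two-point case analysis --- then go through unchanged.
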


%%%%%%%%%%%%%%%%%%%%%%%%%%%%%%%%%%%%%%%%%%%%%%%%%%%%%%%
%%%%%%%%%%%%%%%%%%%%%%%%%%%%%%%%%%%%%%%%%%%%%%%%%%%%%%%
%%%%%%%%%%%%%%%%%%%%%%%%%%%%%%%%%%%%%%%%%%%%%%%%%%%%%%%
%%%%%%%%%%%%%%%%%%%%%%%%%%%%%%%%%%%%%%%%%%%%%%%%%%%%%%%

\section{The TBCDC and TBHCC assumptions} \label{sec:TBCDC}

In this section, we will define and develop some basic properties of our two main assumptions along the paper: the time-backwards capacity density condition (TBCDC) and the time-backwards Hausdorff content condition (TBHCC). Both are fairly mild conditions that allow for good solvability of the Dirichlet problem (as we shall see later on). They are somehow related, but still fairly different in nature: the TBCDC is based on capacities (from potential theory), whereas the TBHCC is based on Hausdorff measures (from geometric measure theory).

A key feature of these in the parabolic world is that the TBCDC will depend on the underlying operator $L$, whereas the TBHCC is purely geometrical and does not depend on the operator under consideration. Throughout the paper, there will be an interplay between both conditions: the TBCDC is usually a sharper assumption (as seen with the Wiener criterion in Subsection~\ref{sec:wiener}), but it depends on $L$. Conversely, the TBHCC is a stronger assumption (i.e. more difficult to satisfy than the TBCDC, as will be seen in Subsection~\ref{sec:implies}), but at the same time purely geometric and independent of the operator, which makes it easier to verify. We have preferred to work with both assumptions, including comparisons between them, to provide the reader a versatile toolkit for possible applications.

Let us start by defining the TBCDC, a potential theoretic notion whose connection to the heat equation has been explored thoroughly by Mourgoglou and Puliatti in \cite{MP}. This notion (with slight modifications) has been known for decades, for instance, in connection with Wiener criteria. 
Following \cite[Section 3]{MP} or \cite[Chapter 5]{W2}, we first define thermal capacity.

\begin{definition}[($L$-Thermal) capacities] \label{def:capacity}
    Let $L$ be a parabolic operator as in Definition~\ref{def:operator}. Given a Borel measure $\mu$ on $\R^{n+1}$, we define its \textit{$L$-heat potential} by 
    \[
        \Gamma_L\mu(\mbf{X}):=\iint_{\R^{n+1}} \Gamma_L(\mbf{X};\mbf{Y})\,d\mu(\mbf{Y}),
    \] 
    where $\Gamma_L$ is the fundamental solution from Lemma~\ref{lem:fund_sol}.
    Then, given a compact set $K\subseteq\R^{n+1}$, we define its $L$-\textit{thermal capacity} to be
    \begin{equation} \label{eq:def_cap}
        \text{Cap}_L(K):=\sup \big\{ \mu(K): \; \Gamma_L\mu\leq1\text{ in }\R^{n+1} , \; \mu\geq0, \; \text{and spt}(\mu)\subseteq K \big\}.
    \end{equation}
\end{definition}

With this definition in hand, we can define the TBCDC. Essentially, it asserts that the exterior of our domain $\Omega$ is uniformly ``large'', in the sense of capacities, to the past of every point in the lateral boundary (away from the initial boundary). Naturally, we need to look at the boundary (or, indirectly, the exterior) to the past, because those boundary values are the ones affecting the present (remember that heat only flows to the future).

\begin{definition}[The TBCDC] \label{def:TBCDC}
    Let $L$ be a parabolic operator as in Definition~\ref{def:operator}. Let $\Omega \subseteq \R^{n+1}$ be an open set. We say that $\Omega$ satisfies the \textit{time backwards capacity density condition (TBCDC)} for $L$ if there exist $a\in(0,1)$ and $c>0$ such that
    \begin{equation*} %\label{eq:TBCDC}
        \frac{ \text{Cap}_L\left( \big( \overline{Q_r(x_0)} \times [t_0-r^2,t_0-(ar)^2] \big) \, \cap \, \Omega^c\right)} {\text{Cap}_L\left(\overline{Q_r(x_0)}\times [t_0-r^2,t_0-(ar)^2] \right)} 
    	\geq c
    \end{equation*}
    for all $(x_0,t_0)\in\Sigma$ and $0<r<\sqrt{t_0-T_\text{min}}/4$.
\end{definition}

As already suggested in Section~\ref{sec:classification_boundary}, the TBCDC rules out many pathological behaviors of non-cylindrical domains, like the existence of most vertical faces (abnormal boundary). We will explain this in more detail in Section~\ref{sec:structure_boundary_TBCDC}.

We stress here that the definition of $L$-thermal capacity depends on the fundamental solution, which in turn depends on the parabolic operator $L$. 
To have a more versatile set of hypotheses and also operator-independent conditions, we introduce the TBHCC next. To that end, we begin by defining Hausdorff contents and measures.

\begin{definition}[Hausdorff content and measures]
    Given $s\in[0,\infty)$ and $\delta\in(0,\infty]$, we set
    \[\mathcal{H}^s_{\delta,p}(A):=\inf\bigg\{\sum_{i=1}^\infty (\diam_p A_i)^s:\; A\subseteq\bigcup_{i=1}^\infty A_i, \; \diam_p A_i\leq\delta \bigg\}, 
    \qquad A \subseteq \R^{n+1},\]
    where we emphasize with the subscript $p$ that we are measuring diameters with respect to the parabolic distance. 
    
    We call $\mathcal{H}^s_{\infty,p}$ the $s$-dimensional parabolic \textit{Hausdorff content} on $\R^{n+1}$, and we define the s-dimensional parabolic \textit{Hausdorff measure} by
    \[\mathcal{H}^s_p(A):=\lim_{\delta\to0^+}\mathcal{H}^s_{\delta,p}(A), \qquad A \subseteq \R^{n+1}.\]
\end{definition}

We are ready to define the TBHCC, our geometrical and operator-independent condition. It asserts that the exterior of our domain $\Omega$ is ``large'' to the past of every lateral boundary point (far from the bottom boundary). But, in contrast to the TBCDC, this ``size'' is measured with Hausdorff contents instead of capacities. 

\begin{definition}[The TBHCC] \label{def:TBHCC}
    We say that $\Omega \subseteq \R^{n+1}$ satisfies the \textit{time-backwards Hausdorff content condition (TBHCC)} if there exist $b,\e>0$ such that
    \[
        \mathcal{H}^{n+\e}_{\infty, p}(\Q_r^-(x_0,t_0)\cap\Omega^c)\geq b \, r^{n+\e}
    \]
    for all $(x_0,t_0)\in\Sigma$ and $0<r<\sqrt{t_0-T_\text{min}}/4$.
\end{definition}

The relevance of the threshold $n$ (codimension 2 in $\R^{n+1}$ endowed with the parabolic distance) in the exponent is classical, and will be seen in Remark~\ref{rem:codimension}.

Before studying the TBHCC and TBCDC more deeply, let us quickly show that the TBADR condition in \cite{GH} is stronger than the TBHCC. The proof is inspired by \cite{De}.

\begin{lemma}[TBADR implies TBHCC] \label{lem:TBADR_implies_TBHCC}
    Let $\Omega \subseteq \R^{n+1}$ be an open set satisfying the TBADR condition, as defined in \cite[Definition 1.22]{GH}. Then, it satisfies the TBHCC, with parameters $\varepsilon = 1$ and $b$ only depending on $n$ and the TBADR constants.
\end{lemma}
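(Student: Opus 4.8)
The plan is to deduce the lower Hausdorff-content bound directly from the time-backwards Ahlfors--David regularity of $\partial\Omega$ by a standard covering/packing argument. Fix $(x_0,t_0)\in\Sigma$ and $0<r<\sqrt{t_0-T_{\min}}/4$. The TBADR hypothesis says (roughly) that the parabolic surface measure $\sigma := \mathcal H^{n+1}_p\mres\partial\Omega$ restricted to the time-backwards region satisfies $\sigma(\Q_\rho^-(x,t))\approx \rho^{n+1}$ for boundary points $(x,t)$ and appropriate scales; in particular $\sigma(\Q_r^-(x_0,t_0)) \gtrsim r^{n+1}$, with implicit constants depending only on $n$ and the TBADR constants. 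Since $\partial\Omega \subseteq \Omega^c$, the set $E := \Q_r^-(x_0,t_0)\cap\Omega^c$ carries this mass: $\sigma(E) \ge \sigma(\Q_r^-(x_0,t_0)\cap\partial\Omega)\gtrsim r^{n+1}$.

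Now I would compare $\mathcal H^{n+1}_{\infty,p}$ on $E$ with $\sigma(E)$. Let $\{A_i\}$ be any cover of $E$ with $\diam_p A_i \le \delta$ (in fact $\delta = \infty$ is allowed for the content, but I will use the upper ADR bound on each piece). For each $i$ pick a point $\x_i \in A_i \cap \partial\Omega$ if such exists (discard $A_i$ otherwise, since $\sigma$ is supported on $\partial\Omega$), so that $A_i \subseteq \Q_{\diam_p A_i}(\x_i)$ up to doubling the radius, hence $A_i \cap \partial\Omega \subseteq \Q_{2\,\diam_p A_i}(\x_i)$. By the upper ADR bound, $\sigma(A_i) \le \sigma(\Q_{2\,\diam_p A_i}(\x_i)\cap\partial\Omega) \lesssim (\diam_p A_i)^{n+1}$. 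Summing and using that $\{A_i\}$ covers $E$,
\[
\sigma(E) \;\le\; \sum_i \sigma(A_i) \;\lesssim\; \sum_i (\diam_p A_i)^{n+1}.
\]
Taking the infimum over all such covers yields $\mathcal H^{n+1}_{\infty,p}(E) \gtrsim \sigma(E) \gtrsim r^{n+1}$, which is exactly the TBHCC with $\varepsilon = 1$ and $b$ depending only on $n$ and the TBADR constants. One must be mildly careful that the upper ADR estimate is only assumed to hold at certain scales/locations relative to $T_{\min}$; since all the cubes $A_i$ relevant to the cover are contained in $\Q_r^-(x_0,t_0)$ and $r < \sqrt{t_0-T_{\min}}/4$, they sit comfortably inside the regime where the upper bound is available, so this is not a real obstruction.

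The main point to get right — and the one place the argument is not completely automatic — is the passage from the \emph{measure} lower bound $\sigma(\Q_r^-(x_0,t_0)\cap\partial\Omega)\gtrsim r^{n+1}$ to a \emph{content} lower bound: this is precisely the elementary fact (the Frostman-type / mass-distribution principle) that if a finite Borel measure $\nu$ satisfies $\nu(\Q_\rho(\x)) \le C_0 \rho^{n+1}$ for all balls, then $\mathcal H^{n+1}_{\infty,p}(E) \ge \nu(E)/C_0$ for every set $E$. Applying it with $\nu = \sigma\mres\partial\Omega$ and the ADR upper bound as the Frostman condition gives the conclusion immediately. I would state this mass-distribution principle as a one-line lemma (or just invoke it inline, as the reference \cite{De} presumably does), and the rest is bookkeeping on constants and on checking the scale restriction $0<r<\sqrt{t_0-T_{\min}}/4$ is compatible with the corresponding restriction in the TBADR definition. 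No serious obstacle is expected; the proof is short.
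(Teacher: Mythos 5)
Your proposal is correct and follows essentially the same route as the paper: both arguments are the mass-distribution principle, using the upper ADR bound as the Frostman condition on each covering set and the lower ADR bound to produce the mass $\gtrsim r^{n+1}$ in $\Q_r^-(x_0,t_0)$. The only cosmetic differences are that the paper works with $\mathcal{H}^{n+1}_p|_\Sigma$ (via the equivalence with $\sigma$ from \cite[Appendix B]{BHHLN2}) rather than $\sigma\mres\partial\Omega$ directly, and handles the metric/scale compatibility with \cite{GH} in a footnote, which you also flag.
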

\begin{proof}
    First note that, by \cite[Appendix B]{BHHLN2}, the TBADR condition can be expressed in terms of ampleness with respect to $\mathcal{H}^{n+1}_p|_\Sigma$ instead of $\sigma$ (the surface measure over $\Sigma$, as defined in \cite[p. 1537]{GH}). Moreover, note that the upper bound in the ADR condition implies that, for any $A \subseteq \R^{n+1}$, it holds
    \[
    \mathcal{H}^{n+1}_p(A \cap \Sigma) 
    \leq 
    \mathcal{H}^{n+1}_p(\Q_{\diam_p(A \cap \Sigma)}(\x_A) \cap \Sigma)
    \lesssim 
    (\diam_p(A \cap \Sigma))^{n+1},
    \]
    where we have used any point $\x_A \in A \cap \Sigma$.    
    Now let $\x_0 \in \Sigma$ with $0 < r < \sqrt{t_0 - T_{\min}} / 4$. Using the previous bound and some elementary manipulations, we have
    \begin{align*}
        \mathcal{H}^{n+1}_{\infty, p}(\Q_r^-(\x_0) \cap \Omega^c)
        & \geq 
        \mathcal{H}^{n+1}_{\infty, p}(\Q_r^-(\x_0) \cap \Sigma)
        \\ & =
        \inf \Big\{ \sum_j (\diam_p (A_j \cap \Sigma))^{n+1} : \Q_r^-(\x_0) \cap \Sigma \subset \bigcup_j (A_j \cap \Sigma) \Big\}
        \\ & \gtrsim 
        \inf \Big\{ \sum_j \mathcal{H}^{n+1}_p (A_j \cap \Sigma) : \Q_r^-(\x_0) \cap \Sigma \subset \bigcup_j (A_j \cap \Sigma) \Big\}
        \\ & \geq 
        \mathcal{H}^{n+1}_p (\Q_r^-(\x_0) \cap \Sigma)
        \\ & \gtrsim 
        r^{n+1},
    \end{align*}
    where in the last step we are using the lower bound in the TBADR condition.\footnote{
        We note here that $\Q_r^-(\x_0) \supset \widetilde{\Q}_{r/\sqrt{n}}^-(\x_0)$ if we denote by $\widetilde{\Q}$ the cubes from \cite{GH} (which are defined with respect to a different --yet equivalent-- parabolic metric, see \cite[Section 1]{GH}). Therefore, our condition that $0 < r < \sqrt{t_0 - T_{\min}} / 4$ is equivalent to $0 < r / \sqrt{n} < \sqrt{t_0 - T_{\min}} / (4\sqrt{n})$, which allows us to use the TBADR as in \cite{GH}.
    }
\end{proof}

%%%%%%%%%%%%%%%%%%%%%%%%%%%%%%%%%%%%%%%%%%%%%%%%%%%%%%%
%%%%%%%%%%%%%%%%%%%%%%%%%%%%%%%%%%%%%%%%%%%%%%%%%%%%%%%
%%%%%%%%%%%%%%%%%%%%%%%%%%%%%%%%%%%%%%%%%%%%%%%%%%%%%%%
%%%%%%%%%%%%%%%%%%%%%%%%%%%%%%%%%%%%%%%%%%%%%%%%%%%%%%%
%%%%%%%%%%%%%%%%%%%%%%%%%%%%%%%%%%%%%%%%%%%%%%%%%%%%%%%
%%%%%%%%%%%%%%%%%%%%%%%%%%%%%%%%%%%%%%%%%%%%%%%%%%%%%%%
%%%%%%%%%%%%%%%%%%%%%%%%%%%%%%%%%%%%%%%%%%%%%%%%%%%%%%%
%%%%%%%%%%%%%%%%%%%%%%%%%%%%%%%%%%%%%%%%%%%%%%%%%%%%%%%
%%%%%%%%%%%%%%%%%%%%%%%%%%%%%%%%%%%%%%%%%%%%%%%%%%%%%%%
%%%%%%%%%%%%%%%%%%%%%%%%%%%%%%%%%%%%%%%%%%%%%%%%%%%%%%%
%%%%%%%%%%%%%%%%%%%%%%%%%%%%%%%%%%%%%%%%%%%%%%%%%%%%%%%

\subsection{The TBHCC implies the TBCDC for all operators} \label{sec:implies}

The main goal of this subsection is to show Proposition~\ref{thm:TBHCCimpliesTBCDCforallL}, which states that the TBHCC is indeed a stronger condition than the TBCDC, regardless of the operator.

Before proving it, let us develop a series of tools. First, we show that fundamental solutions and capacities associated to a general parabolic operators can be controlled by those of operators of the form $\partial_t - M\Delta$ for $M > 0$, whose behavior is very close to that of the heat equation. This will simplify many arguments later.

\begin{lemma} \label{lem:capLtoM}
    Let $L$ be a parabolic operator as in Definition~\ref{def:operator}. Then, there exist $M_1,M_2>0$  such that
    \begin{equation} \label{eq:comparison_fund_sol}
        \Gamma_{\partial_t-M_1\Delta}
        \, \lesssim \,
        \Gamma_L
        \, \lesssim \,
        \Gamma_{\partial_t-M_2\Delta}, 
        \qquad 
        \text{ in } \R^{n+1} \times \R^{n+1},
    \end{equation}
	so that concretely it holds 
	\begin{equation} \label{eq:aronson_easy}
		\Gamma_L(\X; \Y) \, \lesssim \, \lVert \X-\Y \rVert^{-n}, 
		\qquad \X, \Y \in \R^{n+1}.
	\end{equation}
    Moreover, for all compact $K\subseteq\R^{n+1}$,
    \begin{equation} \label{eq:comparison_capacities}
        \mathrm{Cap}_{\partial_t-M_1\Delta}(K)
        \, \gtrsim \,
        \mathrm{Cap}_L(K)
        \, \gtrsim \,
        \mathrm{Cap}_{\partial_t-M_2\Delta}(K).
    \end{equation} 
    Here, $M_1, M_2$ and the implicit constants in the above estimates only depend on $n$ and $\lambda$.
\end{lemma}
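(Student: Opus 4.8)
The plan is to use Aronson's bounds (Lemma~\ref{lem:fund_sol}) to sandwich $\Gamma_L$ between two Gaussian kernels, and then to transfer this pointwise comparison to the capacities. For the first part, recall that by Aronson there is $N = N(n,\lambda) > 0$ with
\[
\frac{1}{N(t-s)^{n/2}}\exp\!\left(-\frac{N|X-Y|^2}{t-s}\right)\mathbf{1}_{\{t>s\}}
\leq
\Gamma_L(X,t;Y,s)
\leq
\frac{N}{(t-s)^{n/2}}\exp\!\left(-\frac{|X-Y|^2}{N(t-s)}\right)\mathbf{1}_{\{t>s\}}.
\]
Comparing with the explicit heat kernel \eqref{eq:fund_sol_heat}, we see that the right-hand side equals $N(4\pi M_2)^{n/2}\,\Gamma_{\partial_t - M_2\Delta}(X,t;Y,s)$ if we choose $M_2 := \tfrac14 N$ (so that $\tfrac{1}{4M_2} = \tfrac1N$), and the left-hand side equals $\tfrac1N (4\pi M_1)^{n/2}\,\Gamma_{\partial_t - M_1\Delta}(X,t;Y,s)$ if we choose $M_1 := \tfrac{1}{4N}$ (so that $\tfrac{1}{4M_1} = N$). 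This gives \eqref{eq:comparison_fund_sol} with implicit constants depending only on $n$ and $\lambda$; then \eqref{eq:aronson_easy} follows by combining the upper bound in \eqref{eq:comparison_fund_sol} with the elementary estimate \eqref{eq:aronson_easy_M} applied to $M = M_2$.

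For the second part, \eqref{eq:comparison_capacities}, the strategy is to push the pointwise bound $\Gamma_{\partial_t - M_1\Delta} \leq C\,\Gamma_L \leq C'\,\Gamma_{\partial_t - M_2\Delta}$ through the definition \eqref{eq:def_cap} of capacity, which is a supremum over positive measures $\mu$ supported in $K$ with $\Gamma_L\mu \leq 1$. The key observation is that if $\Gamma_1 \leq C\,\Gamma_2$ pointwise, then for any admissible $\mu$ for $\mathrm{Cap}_{\Gamma_2}$ (i.e. $\Gamma_2\mu \leq 1$), the measure $\tfrac{1}{C}\mu$ is admissible for $\mathrm{Cap}_{\Gamma_1}$, since $\Gamma_1(\tfrac1C\mu) = \tfrac1C\,\Gamma_1\mu \leq \Gamma_2\mu \leq 1$; hence $\mathrm{Cap}_{\Gamma_1}(K) \geq \tfrac1C\,\mathrm{Cap}_{\Gamma_2}(K)$. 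Applying this twice — first with $\Gamma_1 = \Gamma_L$, $\Gamma_2 = C'\Gamma_{\partial_t - M_2\Delta}$ (rescaling the constant harmlessly), and then with $\Gamma_1 = \Gamma_{\partial_t - M_1\Delta}$, $\Gamma_2 = C\,\Gamma_L$ — yields $\mathrm{Cap}_{\partial_t - M_1\Delta}(K) \gtrsim \mathrm{Cap}_L(K) \gtrsim \mathrm{Cap}_{\partial_t - M_2\Delta}(K)$, with constants depending only on $n,\lambda$. One small point to record carefully is that the potentials here are defined with the \emph{heat potential convention} (integration in the $\mathbf{Y}$ variable, i.e. against the second argument of $\Gamma$), and the comparison \eqref{eq:comparison_fund_sol} holds for the full kernels on $\R^{n+1}\times\R^{n+1}$, so the monotonicity argument applies verbatim without any issue about which variable plays which role.

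I do not expect a genuine obstacle here; the lemma is essentially a bookkeeping exercise once Aronson's bounds are in hand. The only mildly delicate point is matching the Gaussian profiles to the normalized heat kernel \eqref{eq:fund_sol_heat} with the correct choice of $M_1, M_2$ and tracking that the resulting multiplicative constants are dimensional and ellipticity-dependent only; and, for the capacity comparison, being careful that the scaling $\mu \mapsto \tfrac1C\mu$ preserves the support and positivity constraints in \eqref{eq:def_cap} (which it obviously does). Everything reduces to the monotonicity of capacity under pointwise domination of kernels, a standard potential-theoretic fact.
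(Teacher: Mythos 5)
Your proposal is correct and follows essentially the same route as the paper: choose $M_2 = N/4$ (and $M_1 = 1/(4N)$) so that Aronson's bounds match the explicit heat kernels up to dimensional constants, then transfer the pointwise kernel comparison to capacities by rescaling an admissible measure $\mu \mapsto \mu/C$ in the definition \eqref{eq:def_cap}. No gaps.
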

\begin{proof}
	Let us obtain the bounds where $M_2$ appears; the ones involving $M_1$ are analogous. The upper bound in \eqref{eq:comparison_fund_sol} follows at once from Lemma~\ref{lem:fund_sol} (and \eqref{eq:fund_sol_heat}) choosing $M_2 := N/4$. Let us call $C_2$ the implicit constant obtained in this estimate. Check also that \eqref{eq:aronson_easy} follows from this upper bound and \eqref{eq:aronson_easy_M}.
	
	With this choice of $M_2$, verifying the lower bound in \eqref{eq:comparison_capacities} is elementary. Fix $K\subseteq\R^{n+1}$ compact, and let $\mu$ be a positive Borel measure on $\R^{n+1}$ such that $\Gamma_{\partial_t-M_2\Delta}\mu\leq 1$ and $\textrm{spt}(\mu)\subseteq K$ (recall Definition~\ref{def:capacity}). Then, by the comparison between fundamental solutions obtained in the last paragraph, we obtain $\Gamma_L(\mu /C_2 ) \leq \Gamma_{\partial_t - M_2\Delta} \mu \leq 1$, so by definition of capacity, $(\mu/C_2)(K) \leq \text{Cap}_L(K)$, i.e., $\mu(K) \leq C_2 \text{Cap}_L(K)$. By arbitrariness of $\mu$, we obtain $\text{Cap}_{\partial_t - M_2 \Delta}(K) \leq C_2 \text{Cap}_L(K)$.
\end{proof}

This allows us to obtain an auxiliary estimate that will be used frequently in the sequel. 

\begin{lemma} \label{lem:cap_cylinder}
	Let $L$ be a parabolic operator as in Definition~\ref{def:operator}. Then, if $0 < b < a \leq 1$, 
	\[
	\mathrm{Cap}_L \Big(\overline{Q_r(X)} \times [t-(ar)^2, t-(br)^2] \Big)
	\approx r^n, 
	\qquad 
	\forall \, (X, t) \in \R^{n+1},
	\]
	with constants only depending on $n, a, b$, and ellipticity $\lambda$.
\end{lemma}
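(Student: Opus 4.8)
The plan is to bracket $\mathrm{Cap}_L(K)$, where $K:=\overline{Q_r(X)}\times[t-(ar)^2,t-(br)^2]$, from both sides using only Aronson's bounds from Lemma~\ref{lem:fund_sol}, whose constant $N$ depends solely on $n$ and $\lambda$; no translation or dilation symmetry of $L$ is invoked, which is appropriate since $L$ has variable coefficients. Two elementary geometric observations will be used throughout: first, the ``tip'' $(X,t)$ lies strictly to the future of $K$, and every $(Y,s)\in K$ satisfies $|X-Y|\leq r$ and $t-s\in[(br)^2,(ar)^2]$; second, $\diam_p K\leq 2r$ while $|K|=|Q_r(X)|\,\big((ar)^2-(br)^2\big)\approx_{a,b} r^{n+2}$.

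For the upper bound I would take an arbitrary admissible measure $\mu$ for $K$ as in \eqref{eq:def_cap} (so $\mu\geq 0$, $\mathrm{spt}\,\mu\subseteq K$ and $\Gamma_L\mu\leq 1$ on $\R^{n+1}$), evaluate its potential at $(X,t)$, and insert the \emph{lower} Aronson bound: by the first observation, for $(Y,s)\in K$ one gets $\Gamma_L(X,t;Y,s)\geq \big(N(ar)^n\big)^{-1}e^{-N/b^2}$. Integrating against $d\mu$ and using $\Gamma_L\mu(X,t)\leq 1$ forces $\mu(K)\leq N a^n e^{N/b^2}\,r^n$, and taking the supremum over all admissible $\mu$ gives $\mathrm{Cap}_L(K)\lesssim r^n$ with a constant depending only on $n,\lambda,a,b$.

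For the lower bound I would exhibit one admissible measure of the right mass, namely $\mu:=\kappa\, r^{-2}$ times Lebesgue measure on $\R^{n+1}$ restricted to $K$, for a small $\kappa=\kappa(n,\lambda)>0$ to be fixed at the end. It is supported on the compact set $K$ and has mass $\mu(K)=\kappa\, r^{-2}|K|\approx_{a,b}\kappa\, r^n$, so the only thing to check is $\Gamma_L\mu\leq 1$ everywhere. Using the pointwise bound \eqref{eq:aronson_easy}, this reduces to the uniform-in-$\Z$ potential estimate $\iint_K\|\Z-\Y\|^{-n}\,d\Y\lesssim_n r^2$ for all $\Z\in\R^{n+1}$. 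I would prove the latter by recording that a parabolic cube $\Q_\rho(\Z)$ has Lebesgue measure $\approx\rho^{n+2}$, so that $\|\cdot\|^{-n}$ is locally integrable (since $n<n+2$) with $\iint_{\Q_\rho(\Z)}\|\Z-\Y\|^{-n}\,d\Y\lesssim_n\rho^2$ by a dyadic-annulus decomposition; then the second observation gives $K\subseteq\Q_{4r}(\Z)$ whenever $\dist_p(\Z,K)\leq 2r$, while if $\dist_p(\Z,K)>2r$ the crude estimate $\iint_K\|\Z-\Y\|^{-n}\,d\Y\leq (2r)^{-n}|K|\lesssim_n r^2$ applies. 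Choosing $\kappa$ to absorb the resulting constant makes $\mu$ admissible, so $\mathrm{Cap}_L(K)\geq\mu(K)\gtrsim r^n$ with a constant depending only on $n,\lambda,a,b$, which combined with the upper bound proves the claim.

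As a remark on alternatives and on the crux: one could instead apply Lemma~\ref{lem:capLtoM} to reduce everything to a heat-type operator $\partial_t-M\Delta$, which enjoys exact parabolic scaling $\delta_\rho(X,t)=(\rho X,\rho^2 t)$ under which $\Gamma_{\partial_t-M\Delta}$ is homogeneous of degree $-n$ and hence $\mathrm{Cap}_{\partial_t-M\Delta}$ is homogeneous of degree $n$, collapsing the statement to the single fact $0<\mathrm{Cap}_{\partial_t-M\Delta}\big(\overline{Q_1(0)}\times[-a^2,-b^2]\big)<\infty$; but the direct route above is self-contained. I expect the only mildly delicate point to be the uniform potential bound $\iint_K\|\Z-\Y\|^{-n}\,d\Y\lesssim r^2$, which is really just the statement that $\|\cdot\|^{-n}$ integrates against the homogeneous dimension $n+2$ of $(\R^{n+1},\|\cdot\|)$ with the correct power of the scale; everything else is immediate from Aronson's bounds.
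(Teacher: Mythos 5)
Your proof is correct, and it takes a genuinely different and more self-contained route than the paper. The paper disposes of both bounds by first comparing $\mathrm{Cap}_L$ with $\mathrm{Cap}_{\partial_t-M_i\Delta}$ via Lemma~\ref{lem:capLtoM} and then citing the heat-operator estimates of Mourgoglou--Puliatti (their Lemmas 3.3, 3.4 and Corollary 3.5, together with the identification of $\mathcal{H}^{n+2}_p$ with Lebesgue measure) — essentially the "alternative" you sketch in your closing remark. You instead argue directly from Aronson's bounds: for the upper bound you test an arbitrary admissible $\mu$ at the tip $(X,t)$, where the lower Gaussian bound gives $\Gamma_L(X,t;\cdot)\gtrsim_{a,b} r^{-n}$ on $K$ (valid since $t-s\geq (br)^2>0$ there), forcing $\mu(K)\lesssim r^n$; for the lower bound you exhibit the explicit competitor $\kappa r^{-2}\,\mathrm{Leb}|_K$ and verify admissibility through the uniform Riesz-type estimate $\iint_K\lVert\Z-\Y\rVert^{-n}\,d\Y\lesssim r^2$, which your dyadic-annulus computation in the homogeneous dimension $n+2$ correctly establishes (the near/far dichotomy via $\diam_p K\leq 2r$ is also fine). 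What your route buys is independence from the external references and an argument that works for $L$ directly without passing through the model operators; what the paper's route buys is brevity and reuse of machinery already needed elsewhere (the comparison Lemma~\ref{lem:capLtoM} and the \cite{MP} toolbox). All constants in your argument depend only on $n,\lambda,a,b$ as required.
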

\begin{proof}
    Let us quickly sketch the proof, referring to \cite[Corollary 3.5]{MP} for more details. Abbreviate $K := \overline{Q_r(X)} \times [t-(ar)^2, t-(br)^2]$. For the upper bound, we have
	\begin{equation*}
		\text{Cap}_L(K)
        \lesssim 
        \text{Cap}_{\partial_t - M_1 \Delta}(K)
		\leq 
		\text{Cap}_{\partial_t - M_1 \Delta}(K,\mbf{Q}_{2r}(X,t))
		\lesssim 
		r^n,
	\end{equation*}
	where we have used Lemma~\ref{lem:capLtoM}, and the last inequalities follow from (3.11) and Lemma 3.3 from \cite{MP}, respectively. (They only work with the heat operator, but their arguments readily translate to $\partial_t - M_1\Delta$ at the expense of allowing constants depend on $M_1$.)
    
	For the lower bound, we again use Lemma~\ref{lem:capLtoM} to infer 
	\begin{equation*}
		\text{Cap}_L(K)
		\gtrsim 
		\text{Cap}_{\partial_t - M_2 \Delta}(K)
		\gtrsim 
		\frac{\mathcal{H}_p^{n+2}(K)}{r^2}
		\gtrsim 
		r^n,
	\end{equation*}
	where the last step is easy because $\mathcal{H}_p^{n+2}$ is a constant multiple of the Lebesgue measure, and the second-to-last step follows from \cite[Lemma 3.4]{MP} (and the comment in the proof of \cite[Corollary 3.5]{MP} to use $\mathcal{H}_p^{n+2}$ instead of $\mathcal{H}_{p, \infty}^{n+2}$).\footnote{Indeed, although in \cite{MP}, they state the result with capacities relative to $\Q_{2r}(X, t)$, one can check that their proof works for our capacities (relative to the whole $\R^{n+1}$) because they only use the easy upper bound \eqref{eq:aronson_easy} for the Green's function relative to $\Q_{2r}(X, t)$, and this is also available for our $\Gamma_L$. }
\end{proof}

A key tool for Proposition~\ref{thm:TBHCCimpliesTBCDCforallL} is Frostman's lemma (see e.g. \cite[Theorem 8.8]{M}).

\begin{lemma}[Frostman's lemma] \label{lem:frostman}
    Let $E\subseteq\R^{n+1}$ be a Borel set. Then, $\mathcal{H}^s_p(E)>0$ if and only if there exists a Borel measure $\mu$ on $\R^{n+1}$ such that $\mu(A)\leq C_1(\diam_p A)^s$ for all Borel sets $A\subseteq\R^{n+1}$ and $\mu(E)\geq c_2\mathcal{H}^s_{\infty, p}(E)$, where $C_1,c_2>0$ depend only on $n$.
\end{lemma}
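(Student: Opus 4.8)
This is the classical Frostman lemma, here phrased for the parabolic metric $\norm{\cdot}$ on $\R^{n+1}$; its Euclidean form is \cite[Theorem 8.8]{M}, and the proof transfers once one works with \emph{parabolic dyadic cubes}, i.e. sets $R=Q\times[j2^{-2k},(j+1)2^{-2k}]$ with $Q$ a Euclidean dyadic cube of side $2^{-k}$ and $j,k\in\ZZ$, for which $\diam_p R\approx 2^{-k}$, which tile $\R^{n+1}$ at each generation $k$, and of which any set of parabolic diameter $d$ meets only boundedly many in the generation $k$ with $2^{-k}\approx d$. The plan is: (i) dispatch the easy implication; (ii) construct a Frostman measure on this grid by the standard mass–redistribution scheme when $E$ is compact; (iii) pass from compact to Borel $E$ by inner regularity of Hausdorff content.

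\emph{Easy direction.} If such a $\mu$ exists and $\mathcal{H}^s_{\infty,p}(E)>0$, then for any cover $E\subseteq\bigcup_i A_i$ with $\diam_p A_i\le\delta$ one has $\mu(E)\le\sum_i\mu(A_i)\le C_1\sum_i(\diam_p A_i)^s$, so $\mathcal{H}^s_{\delta,p}(E)\ge\mu(E)/C_1\ge (c_2/C_1)\mathcal{H}^s_{\infty,p}(E)>0$ for every $\delta$, whence $\mathcal{H}^s_p(E)>0$. Since $\mathcal{H}^s_p(E)>0\iff\mathcal{H}^s_{\infty,p}(E)>0$ always, the trivial measure is the only one available when $\mathcal{H}^s_p(E)=0$, so the biconditional holds.

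\emph{Main direction, $E$ compact.} Assume $\mathcal{H}^s_{\infty,p}(E)>0$; after translating and rescaling we may assume $E$ lies in a single parabolic dyadic cube $R_0$ of generation $0$. Fix a large generation $N$: put on each generation-$N$ parabolic cube meeting $E$ the mass $(\diam_p)^s$ of that cube (spread proportionally to Lebesgue measure), then sweep upward through $k=N-1,\dots,0$, rescaling the measure inside any generation-$k$ cube $R$ whose current mass exceeds $(\diam_p R)^s$ by the factor $(\diam_p R)^s/(\text{current mass of }R)$. The resulting measure $\mu_N$ satisfies $\mu_N(R)\le(\diam_p R)^s$ for every parabolic dyadic cube $R$, and every generation-$N$ cube meeting $E$ has a \emph{saturated} ancestor (itself included) for which equality holds; the maximal saturated cubes $\{R_j\}$ are pairwise disjoint, cover $E$, carry all of the mass, and give $\mu_N(\R^{n+1})=\sum_j(\diam_p R_j)^s\ge\mathcal{H}^s_{\infty,p}(E)$, while $\mu_N(\R^{n+1})=\mu_N(R_0)\le(\diam_p R_0)^s$. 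The $\mu_N$ are supported in the $O(2^{-N})$-neighbourhood of $E$, so a weak-$*$ subsequential limit $\mu$ exists, is supported on $E$, and satisfies $\mu(R)\lesssim(\diam_p R)^s$ on parabolic cubes (with a harmless dilation to absorb boundary effects), hence $\mu(A)\le C_1(\diam_p A)^s$ for all Borel $A$ by covering $A$ with boundedly many cubes of generation $\approx\log_2(1/\diam_p A)$; finally, applying the Portmanteau inequality $\mu(F_\delta)\ge\limsup_N\mu_N(F_\delta)$ to the closed $\delta$-neighbourhoods $F_\delta\supseteq E$ (which eventually contain $\supp\mu_N$) and letting $\delta\to0^+$ by continuity from above gives $\mu(E)\ge\mathcal{H}^s_{\infty,p}(E)$.

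\emph{From compact to Borel.} For a general Borel $E$, inner regularity (capacitability) of $\mathcal{H}^s_{\infty,p}$ produces a compact $K\subseteq E$ with $\mathcal{H}^s_{\infty,p}(K)\ge\tfrac{1}{2}\mathcal{H}^s_{\infty,p}(E)$; applying the compact case to $K$ gives a measure $\mu$ supported on $K\subseteq E$ with $\mu(A)\le C_1(\diam_p A)^s$ and $\mu(E)\ge\mu(K)\ge (c_2/2)\mathcal{H}^s_{\infty,p}(E)$, which is the claim after relabelling $c_2$. I expect the two genuine obstacles to be (a) making the lower mass bound survive the weak-$*$ limit — the bound $\mu_N(\R^{n+1})\ge\mathcal{H}^s_{\infty,p}(E)$ controls only the \emph{content} of $E$, not its $\mu_N$-measure, so one must route through shrinking closed neighbourhoods — and (b) extracting a compact subset of $E$ of comparable content, which rests on the capacitability theorem for Hausdorff content rather than on any parabolic-specific input.
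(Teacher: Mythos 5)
The paper does not prove this lemma at all: it simply cites Mattila's Theorem 8.8, whose proof is precisely the argument you sketch (easy direction by covering; compact case via bottom-up truncated mass distribution on a dyadic grid and a weak-$*$ limit; Borel case via capacitability of Hausdorff content), here transferred verbatim to parabolic dyadic cubes. Your proposal is correct and coincides with the intended source, so there is nothing to add.
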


With Frostman's lemma in hand, we can prove the following lemma regarding the self-improvement of the TBHCC, in the spirit of \cite[Appendix A]{GH} or \cite[Lemma 2.8]{MP}.

\begin{lemma}
\label{lem:TBEHCCselfimprove}
    Let the open set $\Omega \subseteq \R^{n+1}$ satisfy the TBHCC (with constants $b$ and $\varepsilon$), and fix $(x_0,t_0)\in\Sigma$ and $0<r<\sqrt{t_0-T_\mathrm{min}}/4$. Then, there exists a Borel measure $\mu$ and constants $a\in(0,1/2)$, $c_3>0$, depending only on $n$ and the TBHCC constants, such that 
    \[
        \mu\Big( \big(Q_r(x_0)\times(t_0-r^2,t_0-(ar)^2) \big) \, \cap \, \Omega^c\Big)\geq c_3 r^{n+\e},
    \]
    and for all Borel sets $A\subseteq\R^{n+1}$ it also holds
    \[
        \mu(A)\leq C_1(\diam_p A)^{n+\e},
    \]
    where $C_1$ is the constant from Frostman's lemma (Lemma \ref{lem:frostman}).
\end{lemma}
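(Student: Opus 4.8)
The plan is to derive the measure $\mu$ directly from Frostman's lemma applied to a suitable portion of $\Omega^c$, and then upgrade the conclusion so that the mass is concentrated on a set that stays uniformly away from the present time slice $\{t = t_0\}$. First I would record that, by the TBHCC, $\mathcal{H}^{n+\e}_{\infty,p}(\Q_r^-(x_0,t_0)\cap\Omega^c)\geq b\,r^{n+\e}>0$, so the set $E := \overline{\Q_r^-(x_0,t_0)}\cap\Omega^c$ (which is compact, and has Hausdorff content at least $b r^{n+\e}$ as well) satisfies $\mathcal{H}^{n+\e}_p(E) > 0$. Frostman's lemma (Lemma~\ref{lem:frostman}) then produces a Borel measure $\mu_0$ with $\mu_0(A)\leq C_1(\diam_p A)^{n+\e}$ for all Borel $A$, and $\mu_0(E)\geq c_2\,\mathcal{H}^{n+\e}_{\infty,p}(E)\geq c_2 b\,r^{n+\e}$. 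The measure $\mu := \mu_0$ will already satisfy the required upper bound $\mu(A)\leq C_1(\diam_p A)^{n+\e}$, so the only remaining task is the lower bound on a set of the form $\big(Q_r(x_0)\times(t_0-r^2,t_0-(ar)^2)\big)\cap\Omega^c$ for some $a\in(0,1/2)$.

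The key step is choosing $a$ to absorb two pieces: the ``spatial boundary'' $Q_r^- \setminus (Q_r \times (\cdot))$ discrepancy, and, more importantly, the thin time-slab $\{t_0-(ar)^2 \le t < t_0\}$ near the present. For the time slab, note that $\big(\overline{Q_r(x_0)}\times[t_0-(ar)^2,t_0]\big)\cap\Omega^c$ is contained in a parabolic cube of radius $\approx ar$, hence has $\mu_0$-measure at most $C_1\,(C\,ar)^{n+\e} = C_1 C^{n+\e} a^{n+\e} r^{n+\e}$ by the Frostman upper bound. Choosing $a$ small enough (depending only on $n$, $\e$, $b$, $c_2$, $C_1$) so that this is at most $\tfrac14 c_2 b\, r^{n+\e}$, and noting that $\Q_r^-(x_0,t_0)\subseteq \big(Q_r(x_0)\times(t_0-r^2,t_0)\big)$ by definition of the parabolic cube, we obtain
\[
\mu_0\Big(\big(Q_r(x_0)\times(t_0-r^2,t_0-(ar)^2)\big)\cap\Omega^c\Big)
\geq \mu_0(E) - \tfrac14 c_2 b\, r^{n+\e}
\geq \tfrac12 c_2 b\, r^{n+\e} =: c_3\, r^{n+\e},
\]
which is the desired estimate with $c_3 := \tfrac12 c_2 b$.

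The main (minor) obstacle is purely bookkeeping: making sure the Frostman measure is produced from a set contained in $\overline{\Q_r^-(x_0,t_0)}$ so that both the upper bound and the ``localization in time away from $t_0$'' argument go through with constants depending only on $n$ and the TBHCC data. One subtlety is that Frostman's lemma as quoted is for Borel sets, and $\Q_r^-(x_0,t_0)\cap\Omega^c$ is Borel, so one can apply it directly to $E := \Q_r^-(x_0,t_0)\cap\Omega^c$ without passing to the closure; the Hausdorff content of $E$ is exactly what the TBHCC controls. A second point worth checking is that $a$ can be taken in $(0,1/2)$ as claimed — since $a^{n+\e}\to 0$ as $a\to 0$, any sufficiently small $a$ works, in particular one in $(0,1/2)$. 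No deep estimate is needed beyond Frostman's lemma and the elementary fact that a thin time-slab sits inside a small parabolic cube; the entire content is that the TBHCC gives enough content in $\Q_r^-$ that a definite fraction of it survives after deleting the offending thin slab near the present.
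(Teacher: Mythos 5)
Your overall strategy is the same as the paper's (apply Frostman's lemma to $\Q_r^-(x_0,t_0)\cap\Omega^c$, then shave off the thin time-slab near $t=t_0$), but the step where you estimate the measure of that slab contains a genuine error. You claim that $\big(\overline{Q_{r}(x_0)}\times[t_0-(ar)^2,t_0]\big)\cap\Omega^c$ ``is contained in a parabolic cube of radius $\approx ar$'' and hence has $\mu_0$-measure at most $C_1(Car)^{n+\e}$. This is false: the slab is thin only in \emph{time}; in \emph{space} it still has the full extent $2r$, so its parabolic diameter is $\approx r$, not $\approx ar$. The naive Frostman upper bound therefore only gives $\mu_0(\text{slab})\leq C_1(Cr)^{n+\e}$, which is of the same order as the total mass and cannot be absorbed no matter how small $a$ is.

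The missing idea is a covering argument: decompose the slab $\overline{Q_r(x_0)}\times[t_0-(ar)^2,t_0]$ into roughly $a^{-n}$ parabolic cubes of sidelength $ar$ (this count reflects the parabolic scaling: one needs $a^{-n}$ spatial translates to cover $Q_r(x_0)$, while a single layer suffices in time since the slab has temporal thickness $(ar)^2$). Summing the Frostman bound over these pieces gives
\[
\mu_0(\text{slab}\cap\Omega^c)\;\leq\; C_1\, a^{-n}\,(ar)^{n+\e}\;=\;C_1\,a^{\e}\,r^{n+\e},
\]
and it is exactly the strict positivity of $\e$ (i.e., the exponent $n+\e$ lying strictly above the codimension-two threshold $n$) that makes the surviving factor $a^{\e}$ small for small $a$. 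With this correction the rest of your argument — choosing $a$ so that $C_1a^{\e}\leq\tfrac12 bc_2$ and subtracting — goes through and coincides with the paper's proof. Without it, the proof does not close.
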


\begin{proof}
    For simplicity, assume $(x_0,t_0)=(0,0)$, and write $\Q_r^-:=\Q_r^-(0,0)$. By the TBHCC,  
    \begin{equation} \label{eq:using_TBHCC}
    	\mathcal{H}^{n+\e}_{\infty, p}(\Q_r^-\cap\Omega^c)\geq br^{n+\e}.
    \end{equation}
	Concretely, it holds $\mathcal{H}^{n+\e}_p(\Q_r^-\cap\Omega^c)>0$ (see \cite[Proposition 2.2]{BG}), whence Frostman's lemma (Lemma \ref{lem:frostman}) asserts that there exists a Borel measure $\mu$ such that
    \begin{equation} \label{eq:using_frostman}
        \mu(\Q_r^-\cap\Omega^c)
        \geq 
        c_2\mathcal{H}^{n+\e}_{\infty, p}(\Q_r^-\cap\Omega^c)
    \end{equation}
    and $\mu(A)\leq C_1(\diam_p A)^{n+\e}$ for all Borel sets $A\subseteq\R^{n+1}$.
    
    Now, set $\Phi_{ar}:=\overline{Q_r(0)}\times[-(ar)^2,0]$ for $a := 2^{-m}$, where $m\in\N$ will be chosen large enough momentarily. Let us decompose $\Phi_{ar}$ into a union of subcubes $\overline{\Q^i_{ar}}$ of sidelength $ar$ (all of the same size): clearly this can be done with $a^{-n}$ subcubes (this is the size of the \textit{lateral face} $\overline{Q_r(0)} \times \{0\}$), as shown in Figure~\ref{fig:decomposition}.
    Thus, using the properties of $\mu$ from above,
    \[
        \mu(\Omega^c\cap\Phi_{ar})
        =
        \mu \bigg( \Omega^c \cap \bigcup_{i=1}^{a^{-n}} \overline{\Q_{ar}^i} \bigg)
        \leq
        \sum_{i=1}^{a^{-n}} \mu(\Omega^c\cap \overline{\Q_{ar}^i})
        \leq 
        C_1\sum_{i=1}^{a^{-n}}(ar)^{n+\e}
        =
        C_1a^\e r^{n+\e}.
    \]
    
    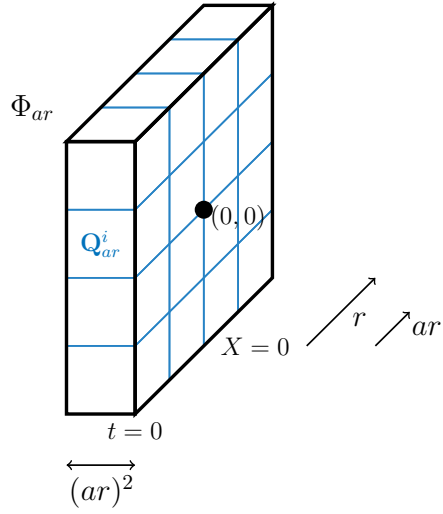
\begin{figure}
    	\centering 
    	\scalebox{.8}{
    	\begin{tikzpicture}[scale=1]
    		% Small cubes
    		\draw[RoyalBlue!80, thick] (0.929, 6.063) rectangle (2.058, 4.934);
    		\draw[RoyalBlue!80, thick] (0.929, 4.934) rectangle (2.058, 3.805);
    		\draw[RoyalBlue!80, thick] (0.929, 3.805) rectangle (2.058, 2.677);
    		\draw[RoyalBlue!80, thick] (0.929, 2.677) rectangle (2.058, 1.548);
    		\draw[RoyalBlue!80, thick] (0.929, 4.37) rectangle (0.929, 4.37);
    		\draw[RoyalBlue!80, thick] (0.929, 6.063) -- (2.058, 6.063) -- (2.623, 6.628) -- (1.494, 6.628) -- cycle;
    		\draw[RoyalBlue!80, thick] (1.494, 6.628) -- (2.623, 6.628) -- (3.187, 7.192) -- (2.058, 7.192) -- cycle;
    		\draw[RoyalBlue!80, thick] (2.058, 7.192) -- (3.187, 7.192) -- (3.751, 7.757) -- (2.623, 7.757) -- cycle;
    		\draw[RoyalBlue!80, thick] (2.623, 7.757) -- (3.751, 7.757) -- (4.316, 8.321) -- (3.187, 8.321) -- cycle;
    		\draw[RoyalBlue!80, thick] (2.058, 6.063) -- (2.623, 6.628) -- (2.623, 5.499) -- (2.058, 4.934) -- cycle;
    		\draw[RoyalBlue!80, thick] (2.623, 5.499) -- (2.058, 4.934) -- (2.058, 3.805) -- (2.623, 4.37) -- cycle;
    		\draw[RoyalBlue!80, thick] (2.623, 4.37) -- (2.058, 3.805) -- (2.058, 2.677) -- (2.623, 3.241) -- cycle;
    		\draw[RoyalBlue!80, thick] (2.623, 3.241) -- (2.058, 2.677) -- (2.058, 1.548) -- (2.623, 2.112) -- cycle;
    		\draw[RoyalBlue!80, thick] (3.187, 7.192) -- (2.623, 6.628) -- (2.623, 5.499) -- (3.187, 6.063) -- cycle;
    		\draw[RoyalBlue!80, thick] (3.751, 7.757) -- (3.187, 7.192) -- (3.187, 6.063) -- (3.751, 6.628) -- cycle;
    		\draw[RoyalBlue!80, thick] (4.316, 8.321) -- (3.751, 7.757) -- (3.751, 6.628) -- (4.316, 7.192) -- cycle;
    		\draw[RoyalBlue!80, thick] (4.316, 7.192) -- (3.751, 6.628) -- (3.751, 5.499) -- (4.316, 6.063) -- cycle;
    		\draw[RoyalBlue!80, thick] (3.751, 6.628) -- (3.187, 6.063) -- (3.187, 4.934) -- (3.751, 5.499) -- cycle;
    		\draw[RoyalBlue!80, thick] (3.187, 6.063) -- (2.623, 5.499) -- (2.623, 4.37) -- (3.187, 4.934) -- cycle;
    		\draw[RoyalBlue!80, thick] (3.187, 4.934) -- (2.623, 4.37) -- (2.623, 3.241) -- (3.187, 3.805) -- cycle;
    		\draw[RoyalBlue!80, thick] (3.187, 3.805) -- (2.623, 3.241) -- (2.623, 2.112) -- (3.187, 2.677) -- cycle;
    		\draw[RoyalBlue!80, thick] (3.751, 5.499) -- (3.187, 4.934) -- (3.187, 3.805) -- (3.751, 4.37) -- cycle;
    		\draw[RoyalBlue!80, thick] (4.316, 6.063) -- (3.751, 5.499) -- (3.751, 4.37) -- (4.316, 4.934) -- cycle;
    		\draw[RoyalBlue!80, thick] (3.751, 4.37) -- (3.187, 3.805) -- (3.187, 2.677) -- (3.751, 3.241) -- cycle;
    		\draw[RoyalBlue!80, thick] (4.316, 4.934) -- (3.751, 4.37) -- (3.751, 3.241) -- (4.316, 3.805) -- cycle;
    		
    		% Big slab
    		\draw[ultra thick] (4.316, 8.321) -- (3.187, 8.321) -- (0.929, 6.063) -- (2.058, 6.063) -- cycle;
    		\draw[ultra thick] (2.058, 6.063) -- (4.316, 8.321) -- (4.316, 3.805) -- (2.058, 1.548) -- cycle;
    		\draw[ultra thick] (0.929, 6.063) -- (2.058, 6.063) -- (2.058, 1.548) -- (0.929, 1.548) -- cycle;
    		
    		% Origin
    		\node[circle, fill, inner sep=3pt] at (3.187, 4.934) {};
    		
    		% Arrows, measures
    		\draw[thick, <->] (0.929, 0.701) -- (2.058, 0.701);    		
    		\draw[thick, ->] (4.88, 2.677) -- (6.009, 3.805);
            \draw[thick, ->] (6.009, 2.677) -- (6.574, 3.241);
    		
    		% Labels
    		\node[anchor=center, font=\Large] at (1.508, 0.268) {$(ar)\textsuperscript{2}$};
    		\node[anchor=center, font=\large] at (2.058, 1.265) {$t = 0$};
    		\node[anchor=center, font=\large] at (4.034, 2.677) {$X = 0$};
    		\node[anchor=center, font=\Large] at (5.75, 3.109) {$r$};
            \node[anchor=center, font=\Large] at (6.856, 2.959) {$ar$};
    		\node[anchor=center, font=\Large] at (0.365, 6.628) {$\Phi_{ar}$};
    		\node[anchor=center, font=\large, text=RoyalBlue] at (1.494, 4.37) {$\Q^i_{ar}$};
    		\node[anchor=center, font=\large] at (3.751, 4.804) {$(0, 0)$};
    	\end{tikzpicture}
    	}
    	\caption{Because of the parabolic scaling, the big slab $\Phi_{ar}$ can be subdivided in smaller cubes $\Q_{ar}^i$ of the same size: one needs as many as to cover the front-most face $Q_r(0) \times \{0\}$.}
    	\label{fig:decomposition}
    \end{figure}
    
    Choosing $m$ large (and hence $a$ small) so that $C_1a^\e \leq \frac{1}{2}bc_2$, we obtain, using \eqref{eq:using_TBHCC} and \eqref{eq:using_frostman},
    \[
        \mu(\Omega^c\cap\Phi_{ar})
        \leq
        \frac{1}{2}bc_2r^{n+\e}
        \leq
        \frac{1}{2}c_2\mathcal{H}^{n+\e}_{\infty, p}(\Omega^c\cap \Q_r^-)
        \leq
        \frac{1}{2}\mu(\Omega^c\cap \Q_r^-).
    \]
    Therefore, we can hide the contribution of $\Phi_{ar}$ and finish the proof using \eqref{eq:using_TBHCC} and \eqref{eq:using_frostman}:
    \[
    \mu(\Omega^c\cap(\Q_r^-\setminus\Phi_{ar}))
    \geq
    \frac{1}{2}\mu(\Omega^c\cap \Q_r^-)
    \geq
    \frac{1}{2}bc_2r^{n+\e}.
    \]
\end{proof}

\begin{remark} \label{rem:codimension}
	The key point of the proof is to be able to hide the contribution of $\Phi_{ar}$ for small $a$. For that, we have used that after the decomposition in subcubes, a power $a^\e$ pops up. It is of course crucial that the exponent is positive, which happens in our setting with the TBHCC using exponents up to codimension 2. This is in close relationship with the distinguished homogeneity of the time variable in the parabolic setting.
\end{remark}

After these preliminary investigations, we are ready to tackle the proof of the main result of this subsection, Proposition~\ref{thm:TBHCCimpliesTBCDCforallL}, stating that the TBHCC is stronger than the TBCDC. 

\begin{proof}[{\textbf{Proof of Proposition~\ref{thm:TBHCCimpliesTBCDCforallL}}}]
	Let us prove the TBCDC at the scale of $\Q_r(x_0, t_0)$, where $(x_0, t_0) \in \Sigma$ and $0<r<\sqrt{t_0-T_\textrm{min}}/4$.
	Without loss of generality, take $(x_0,t_0) = (0, 0) = \mbf{0}$. 	
	Let us abbreviate also by writing 
	\[
	\Q_{a, r}^- := Q_r(0) \times (-r^2, -(ar)^2).
	\]

    First, we note that it suffices to prove the result for the operators of the form $L = \partial_t - M\Delta$ for some $M > 0$. Indeed, assuming that, Lemmas~\ref{lem:capLtoM} and \ref{lem:cap_cylinder} yield
    \begin{equation*}
        \frac{\text{Cap}_L ( \overline{\Q_{a, r}^-} \, \cap \, \Omega^c )}
        {\text{Cap}_L (\overline{\Q_{a, r}^-} )}
        \gtrsim 
        \frac{\text{Cap}_{\partial_t - M_2 \Delta} ( \overline{\Q_{a, r}^-} \, \cap \, \Omega^c )}
        {r^n}
        \gtrsim 
        1,
    \end{equation*}
    which directly finishes the proof.
			
	Therefore, we are left to consider the case of $L = \partial_t - M\Delta$ for some $M > 0$. By Lemma \ref{lem:TBEHCCselfimprove}, there exists a Borel measure $\mu$ and constants $a\in(0,1/2)$ and $C_1,c_3>0$ such that
    \begin{equation} \label{eqn:mubound1}
        \mu\left(\Q_{a, r}^- \, \cap \, \Omega^c \right)
        \geq 
        c_3 r^{n+\e},
    \end{equation} 
    and for all Borel sets $A\subseteq\R^{n+1}$,
    \begin{equation}
    \label{eqn:mubound2}
        \mu(A)\leq C_1(\diam_p A)^{n+\e}.
    \end{equation}

	We would like to test the definition of capacity with the measure $\mu$. But for that, we need to modify it slightly so that it falls into the domain of the supremum in \eqref{eq:def_cap}. For that purpose, first define its restriction
    \[
        \nu 
        := 
        \mu\mres\left(\Q_{a, r}^- \, \cap \, \Omega^c\right).
    \]
    
    Let us now compute $\norm{\Gamma_L \nu}_\infty$. First, consider $\X \in \R^{n+1} \setminus \Q_{2r}(\mbf{0})$, so that for any $\Y \in \text{spt} (\nu) \subseteq \Q_r(\mbf{0})$, we have $\norm{\X - \Y} \geq r$. Hence, using \eqref{eq:aronson_easy_M} we can compute 
    \begin{equation*}
        \Gamma_L\nu(\X) 
        \leq 
        C_Mr^{-n}\nu\left(\Q_{a, r}^- \cap\Omega^c\right)
        \leq 
        C_1 C_Mr^{-n}r^{n+\e}
        =
        C_1 C_Mr^\e,
    \end{equation*}
    where in the second to last step we used (\ref{eqn:mubound2}). 
    On the other hand, let us consider $\X \in \Q_{2r}(\mbf{0})$. Then, for any $\Y \in \text{spt} (\nu)$ it holds $\norm{\X - \Y} < 3r$ by the triangle inequality, whence again using \eqref{eq:aronson_easy_M} and later \eqref{eqn:mubound2}, we estimate 
    \begin{multline*}
        \Gamma_L\nu(\X) 
        \leq
        \sum_{k=-\infty}^{\log_2 r + 3} \iint_{\Q_{2^{k+1}}(\X) \setminus \Q_{2^k}(\X)} \Gamma_L(\X;\Y)\,d\nu(\Y)
        \\
        \lesssim
        \sum_{k=-\infty}^{\log_2 r + 3} 2^{-kn} \nu(\Q_{2^{k+1}}(\X))
        \lesssim
        \sum_{k=-\infty}^{\log_2 r + 3} 2^{-kn}(2^{k+1})^{n+\e}
        \approx
        \sum_{k=-\infty}^{\log_2 r + 3}2^{k\e}
        \approx
        r^\e.
    \end{multline*}

    Thus, joining both cases, we have shown that there is some $C_\nu>0$ such that $\lVert \Gamma_L\nu\rVert_\infty\leq C_\nu r^\e$. Hence, defining $\tilde{\nu} := (C_\nu r^\e)^{-1} \, \nu$, it holds:
    \begin{enumerate}
        \item $\textrm{spt}(\tilde{\nu})\subseteq \overline{\Q_{a, r}^-} \, \cap \, \Omega^c$ (see the definition of $\nu$ above), 
        
        \item $\lVert \Gamma_L\tilde{\nu}\rVert_\infty\leq 1$ by the normalization in the definition of $\tilde{\nu}$, and
        
        \item $\tilde{\nu} ( \overline{\Q_{a, r}^-} \, \cap \, \Omega^c ) \geq \frac{c_3}{C_\nu}r^n$ by \eqref{eqn:mubound1}.
    \end{enumerate}
    Thus, testing the definition of capacity with $\tilde{\nu}$ and using Lemma~\ref{lem:cap_cylinder}, we conclude because
    \[
        \frac{\text{Cap}_L ( \overline{\Q_{a, r}^-} \, \cap \, \Omega^c )}
        {\text{Cap}_L ( \overline{\Q_{a, r}^-} )}
        \gtrsim
        \frac{\tilde{\nu} ( \overline{\Q_{a, r}^-} \, \cap \, \Omega^c )}{r^n}
        \gtrsim 
        1.
    \]
\end{proof}

%%%%%%%%%%%%%%%%%%%%%%%%%%%%%%%%%%%%%%%%%%%%%%%%%%%%%%%
%%%%%%%%%%%%%%%%%%%%%%%%%%%%%%%%%%%%%%%%%%%%%%%%%%%%%%%
%%%%%%%%%%%%%%%%%%%%%%%%%%%%%%%%%%%%%%%%%%%%%%%%%%%%%%%
%%%%%%%%%%%%%%%%%%%%%%%%%%%%%%%%%%%%%%%%%%%%%%%%%%%%%%%

\subsection{The TBCDC implies the parabolic Wiener's criterion} \label{sec:wiener}

Already more than a century ago, Wiener showed that there is a potential theoretic criterion (based on capacities) characterizing the sets in which solutions to the Laplace equation are continuous all the way up to the boundary (of course, one needs to impose continuous boundary data for that purpose). This result turned out to be the inception of the very successful joint development of potential theory and PDE throughtout the last century. 

In the parabolic setting, a suitable version of Wiener's criterion is true, as was shown by Garofalo and Lanconelli in \cite{GL} (relying on a previous result by Evans and Gariepy \cite{EG}), see Theorem~\ref{thm:ParabolicWienerCriterion}. Our goal in this subsection is to show that the TBCDC is a stronger condition than that in \cite{GL} (which we do in Lemma~\ref{thm:TBCDCimpliesWiener}), whence it concretely implies that the continuous Dirichlet problem is solvable (at least for the operators considered in \cite{GL}). Actually, the TBCDC is somehow a quantitative form of the Wiener's criterion, which is qualitative in nature. For the concrete case of the heat equation, all of this was extensively studied in \cite[Section 3]{MP}.

Before proving the main result, let us give some preliminary definitions.

\begin{definition}[$L$-regularity]
	Let $\Omega \subseteq \R^{n+1}$ be a bounded open set. Let $L$ be a parabolic operator as in Definition~\ref{def:operator}. We say that a point $\x_0 \in\partial_e\Omega$ is \textit{$L$-regular} if
	\[
	\lim_{\X \to \x_0} u(\X)=f(\x_0)
	\]
	for all $f\in C_c(\partial_e\Omega)$, where $u$ is the Perron-Wiener-Brelot-Bauer solution for $L$ in $\Omega$ with boundary data $f$.\footnote{
        A sketch of this (very classical) construction can be found, for instance, in the introduction of \cite{GL}. A very thorough development can be found in the book of Watson \cite[Chapter 8]{W2} (although only for the heat equation). The more general version including parabolic equations dates back to Bauer \cite[Chapter IV]{Bauer}, which generalizes and builds an axiomatic theory upon the subsequent works of Perron, Wiener and Brelot. The first to note that Perron's method works for the heat equation was Sternberg \cite{Sternberg}.  
    }
\end{definition}

\begin{definition}[$L$-heat ball]
	Let $L$ be a parabolic operator as in Definition~\ref{def:operator}. Given $\x_0 \in\R^{n+1}$ and $r > 0$, we define the \textit{$L$-heat ball} of radius $r$ centered at $\x_0$ by 
	\[
	\mathcal{B}^L_r(\x_0) := \big\{ \X \in \R^{n+1} : \; \Gamma_L(\x_0, \X) > (4\pi r)^{-n/2} \big\}. 
	\]
\end{definition}

With this, we can state the parabolic Wiener criterion of Garofalo and Lanconelli, shown in \cite[Theorem 1.1]{GL} (see also \cite[(3.22)]{MP}).

\begin{theorem}[Parabolic Wiener Criterion]
	\label{thm:ParabolicWienerCriterion}
	Let $\Omega\subseteq\R^{n+1}$ be a bounded open set, and $L$ be a parabolic operator as in Definition~\ref{def:operator}, where additionally $A$ is $C^\infty$. Then, a point $\x_0 \in\partial_e\Omega$ is $L$-regular if and only if
	\begin{equation*}
		%\label{eqn:ParabolicWienerCriterion}
		\sum_{k=1}^\infty \lambda^{-kn/2} \mathrm{Cap}_L \Big(\Omega^c \, \cap \, \big(\{\x_0\} \cup 
        \overline{\mathcal{B}^L_{\lambda^k}(\x_0) \setminus \mathcal{B}^L_{\lambda^{k+1}}(\x_0)}\big) \Big)
		=
		+\infty
	\end{equation*}
	for some $\lambda \in (0, 1)$ (and actually any $\lambda \in (0, 1)$, since the convergence of the series does not depend on the value of $\lambda$ in that range).
\end{theorem}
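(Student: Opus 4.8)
This statement is the classical parabolic Wiener test for smooth-coefficient operators, and the plan is to recover it along the potential-theoretic route of Evans--Gariepy \cite{EG} and Garofalo--Lanconelli \cite{GL}; the hypothesis $A\in C^\infty$ is exactly what supplies a clean $L$-potential theory (smooth Green functions off the diagonal, existence of $L$-equilibrium measures, quasi-everywhere statements, a parabolic fine topology, the strong maximum and Harnack principles). First I would reduce $L$-regularity of $\x_0$ (in the Perron-Wiener-Brelot-Bauer sense of the stated definition) to the assertion that the $L$-equilibrium potential of $\Omega^c$ tends to $1$ at $\x_0$ along $\Omega$; this equivalence is part of the classical theory (see \cite{W2,GL}). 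Since $\Gamma_L(\x_0;\cdot)$ is supported in the past $\mathcal{T}_{<t_0}$ of $\x_0$, all the exterior mass that matters lies to the past of $\x_0$, so causality takes care of itself, and the $L$-heat balls $\mathcal{B}^L_r(\x_0)$ are the correct $L$-adapted replacement for Euclidean balls, with the dyadic shells $S_k:=\overline{\mathcal{B}^L_{\lambda^k}(\x_0)\setminus\mathcal{B}^L_{\lambda^{k+1}}(\x_0)}$ playing the role of dyadic annuli.

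The arithmetic backbone I would establish next consists of two estimates: a two-sided shell capacity bound $\mathrm{Cap}_L(S_k)\approx\lambda^{kn/2}$ (the heat-ball analogue of Lemma~\ref{lem:cap_cylinder}, via Lemma~\ref{lem:capLtoM} and Aronson's bounds from Lemma~\ref{lem:fund_sol}), and a reproducing bound $\Gamma_L\nu(\x_0)\approx\lambda^{-kn/2}\,\nu(S_k)$ for every nonnegative $\nu$ carried by $S_k$ — the latter simply because, by the definition of the heat ball, $(4\pi\lambda^k)^{-n/2}\le\Gamma_L(\x_0;\Y)\le(4\pi\lambda^{k+1})^{-n/2}$ for $\Y\in S_k$. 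Letting $\nu_k$ be the $L$-equilibrium measure of $\Omega^c\cap S_k$ and $V_k:=\Gamma_L\nu_k$, one then has $0\le V_k\le1$ on $\R^{n+1}$, $V_k=1$ quasi-everywhere on $\Omega^c\cap S_k$, $\nu_k(\R^{n+1})=\mathrm{Cap}_L(\Omega^c\cap S_k)$, and — the key point — the $k$-th term of the Wiener series is comparable to $V_k(\x_0)$. Consequently the whole series equals, up to harmless constants, $\Gamma_L\nu(\x_0)$ with $\nu:=\sum_k\nu_k$, while $\Gamma_L\nu\ge1$ quasi-everywhere on $\Omega^c$ in a punctured neighbourhood of $\x_0$.

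With this dictionary, the sufficiency direction (series diverges $\Rightarrow$ $\x_0$ is $L$-regular) would be obtained, as in the elliptic argument, from an iteration avoiding over-counting: one compares $\sum_k V_k$ with the equilibrium potential of $\Omega^c$ on heat balls shrinking to $\x_0$, using the maximum principle and the parabolic Harnack inequality (Lemma~\ref{lem:harnack}) along a chain of points approaching $\x_0$ strictly from the past, so that every application of Harnack is legitimate; divergence of the series then forces the equilibrium potential to tend to $1$ at $\x_0$. For the necessity direction I would argue by contraposition: if the series converges, $\nu=\sum_k\nu_k$ has finite total mass and $\Gamma_L\nu$ is a bounded $L$-superparabolic potential, and discarding the convergent tail makes $\Gamma_L\big(\sum_{k\ge k_0}\nu_k\big)(\x_0)<1$ while this potential stays $\ge1$ quasi-everywhere on $\Omega^c$ near $\x_0$ — that is, $\Omega^c$ is $L$-thin at $\x_0$, and the localized function $1-\Gamma_L\big(\sum_{k\ge k_0}\nu_k\big)$ is a barrier obstructing attainment of boundary data, so $\x_0$ is $L$-irregular. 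The independence of convergence on $\lambda\in(0,1)$ then follows by comparing the series for two ratios term-by-term, using the shell capacity estimate and the subadditivity of $\mathrm{Cap}_L$.

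I expect the main obstacle to be twofold. The first is the package of precise potential estimates above (the two-sided shell capacity bound and the reproducing inequality), which require careful use of the Aronson bounds and of the geometry of the level sets of $\Gamma_L$. The second, and more delicate, is the Harnack-chain iteration in the sufficiency step: because the parabolic Harnack inequality propagates information only toward the future, the chain used to upgrade ``$\sum_k V_k$ is large at $\x_0$'' into ``the equilibrium potential tends to $1$ at $\x_0$'' must be arranged so as to respect the time ordering, and the resulting constants depend on the Aronson constant $N$ of $L$. This causality-forced dependence on $N$ is precisely why the parabolic criterion is operator-dependent (compare Petrovsky's example \cite{P} and the discussion around \cite[Theorem 1.7]{GL}), and hence why obtaining an operator-free sufficient condition forces one to pass through the stronger TBHCC of Proposition~\ref{thm:TBHCCimpliesTBCDCforallL}.
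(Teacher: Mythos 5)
You should first be aware that the paper does not prove Theorem~\ref{thm:ParabolicWienerCriterion} at all: it is quoted from Garofalo--Lanconelli \cite[Theorem 1.1]{GL} (see also \cite[(3.22)]{MP}), and the only original content the paper adds is Remark~\ref{rem:symmetry}, which justifies dropping the symmetry hypothesis on $A$ present in \cite{GL} (via \cite[Theorem 3.1]{GZ}, or by checking that the proof in \cite{FGL} survives without symmetry). So there is no ``paper's own proof'' to match; what you have written is an outline of the Evans--Gariepy/Garofalo--Lanconelli argument. Your skeleton is the right one --- heat-ball shells $S_k$, the two-sided estimates $\mathrm{Cap}_L(S_k)\approx\lambda^{kn/2}$ and $\Gamma_L\nu(\x_0)\approx\lambda^{-kn/2}\nu(S_k)$ for $\nu$ carried by $S_k$ (the latter is immediate from the definition of $\mathcal{B}^L_r$), the identification of the $k$-th Wiener term with $V_k(\x_0)$, thinness and barriers for necessity --- and your closing remark about the causality-forced dependence on the Aronson constant $N$ is consistent with the paper's discussion of Petrovsky's example.

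Read as a proof, however, the proposal has genuine gaps. The sufficiency direction is the entire technical content of \cite{EG, GL} (it occupies most of both papers), and your sketch does not supply it: the ``iteration avoiding over-counting'' is precisely the hard step, since $\sum_k V_k$ need not be bounded by $1$ and cannot be compared directly with the equilibrium potential of $\Omega^c$; moreover the Harnack-chain mechanism you invoke does not run in the direction you need. The paper's Lemma~\ref{lem:harnack} bounds $u(Z,\tau)$ by $u(Y,s)$ only for $\tau<s$, i.e.\ it transfers lower bounds from the \emph{future} to the \emph{past}, whereas the shells $S_k$ lie strictly in the past of $\x_0$ and you need to push largeness of the potentials \emph{forward} to interior points approaching $\x_0$; the actual argument in \cite{EG, GL} instead performs a delicate induction over shells using the maximum principle together with the precise geometry of the level sets of $\Gamma_L$. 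A second gap: the theorem as stated in the paper allows non-symmetric $A$, and this extension is not automatic --- it is exactly the point of Remark~\ref{rem:symmetry}, which your proposal never addresses. Finally, the potential-theoretic package you take for granted (existence and properties of $L$-equilibrium measures, the reduction of PWBB-regularity to convergence of the equilibrium potential, quasi-everywhere statements and polar sets for variable-coefficient $L$) is itself a substantial input; for smooth $A$ it is available through Bauer's axiomatic theory \cite{Bauer} and \cite{W2, GL}, but it must be cited or built, not assumed. In short: as a roadmap to the literature your outline is sound, but as a self-contained proof it is far from complete, and the economical route --- the one the paper takes --- is simply to cite \cite{GL} and handle the symmetry issue separately.
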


\begin{remark}
	\label{rmk:ExistenceOfParabolicMeasureForCinfty} 
	We note that if every point in $\partial_e\Omega$ is $L$-regular, then the Perron-Wiener-Brelot-Bauer solutions solve the continuous Dirichlet problem. In particular, the parabolic measure exists on $\Omega$ and yields solutions to the continuous Dirichlet problem. 
\end{remark}

\begin{remark} \label{rem:symmetry}
    The reader may note that, in Theorem~\ref{thm:ParabolicWienerCriterion}, we have removed the assumption on symmetry of $A$ from the original reference \cite{GL}. This was also commented in \cite{GH}, but let us give some concrete references to ascertain it. Indeed, one can remove the symmetry assumption because of the result in \cite[Theorem 3.1]{GZ}.\footnote{
        To verify the hypotheses of \cite[Theorem 3.1]{GZ}, we need to ensure that solutions attain the boundary values in a \textit{weak} sense (as in \cite[Section 2]{GZ}). Indeed, PWB solutions attain boundary values in a Sobolev sense (see \cite[Corollary 9.29]{HKM}), which is stronger than the aforementioned weak sense (see \cite[p. 293]{Z}).
    }
    Alternatively, one can check that the proof of \cite[Theorem 1.1]{FGL} (which is an extension of \cite{GL} to $C^{1, \text{Dini}}$ coefficients) extends to the case of non-symmetric coefficients with virtually no changes,\footnote{
        Indeed, the main points when checking that \cite{FGL} works for non-symmetric coefficients are the following. When writing the operator in non-divergence form, in (2.2) (adopting the terminology of \cite{FGL} from now on), $a_{ij}$ can be taken to be symmetric, i.e., one can use $(a_s)_{ij}$, the entries of the symmetric part $A_s := (A + A^T)/2$ of $A$. The same applies to $a_{ij}$ in (2.3), but the drift coefficient $b_i(\zeta)$ in (2.3) should be changed to $\sum_{j=1}^n D_{x_j} a_{ji}(\zeta)$: in the end, this change will not be relevant because it still satisfies the same Dini continuity estimate. Afterwards, one can check that the fundamental solution has the shape of (2.7), but modifying (2.8) to read $Q_z(y) := A_s^{-1}(z) y \cdot y$. Then, one should add the hypothesis that $\abs{t-\tau} \leq 1$ to Lemma 2.1, what does not have an impact on the statement of Theorem 2.2 because the sets $\Omega_r^0(z)$ are bounded both in space and time. Upon routine modifications based on these observation, the rest of the proofs of the paper remain valid, without any need for symmetry.
    } as would be expected from the fact that fundamental solution enjoys very good properties even in the presence of merely measurable coefficients, without any symmetry assumption, as shown in \cite{QX} (see also \cite{SSSZ}).
\end{remark}

We are ready to show the main result of the subsection: stating that the TBCDC is stronger (and in fact, a quantification) of the parabolic Wiener's criterion (compare \cite[Lemma 3.12]{MP}).

\begin{lemma}[TBCDC implies Wiener's criterion]
\label{thm:TBCDCimpliesWiener}
    Let $\Omega\subseteq\R^{n+1}$ be a bounded open set, and $L$ be a parabolic operator as in Definition~\ref{def:operator}, where additionally $A$ is $C^\infty$. If $\Omega$ satisfies the TBCDC for $L$, then every point $(x_0,t_0)\in\partial_e\Omega$ is $L$-regular. In particular, the parabolic measure exists for $L$ on $\Omega$.
\end{lemma}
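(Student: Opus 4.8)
The plan is to deduce everything from the Parabolic Wiener Criterion (Theorem~\ref{thm:ParabolicWienerCriterion}), which applies here since $\Omega$ is bounded and $A \in C^\infty$: it suffices to show that for \emph{every} $\x_0 = (x_0,t_0)\in\partial_e\Omega$ there is \emph{some} $\lambda\in(0,1)$ making the Wiener series
\[
\sum_{k=1}^\infty \lambda^{-kn/2}\,\mathrm{Cap}_L\Big(\Omega^c\cap\big(\{\x_0\}\cup\overline{\mathcal{B}^L_{\lambda^k}(\x_0)\setminus\mathcal{B}^L_{\lambda^{k+1}}(\x_0)}\big)\Big)
\]
diverge; in fact a single small $\lambda$, depending only on $n$, the ellipticity, and the TBCDC parameter $a$ (but not on $\x_0$), will do. We split into two cases according to whether $t_0 = T_{\min}$ or $t_0 > T_{\min}$; in the latter case, membership $\x_0\in\partial_e\Omega$ forces $\x_0\in\Sigma$ (one checks $\partial_e\Omega\setminus\Sigma\subseteq\mathcal{B}\Omega\cap\T_{=T_{\min}}$ straight from the definitions in Section~\ref{sec:classification_boundary}, using boundedness), so the TBCDC for $L$ is available. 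This mirrors the blueprint of \cite[Lemma 3.12]{MP} for the heat equation.

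The geometric heart is a comparison between $L$-heat balls and parabolic cylinders via Aronson's bounds (Lemma~\ref{lem:fund_sol}), with all constants depending only on $n$ and ellipticity. First I would record: \textbf{(i)} dropping the Gaussian factor in the upper bound, $\Gamma_L(\x_0;X,t)\le N(t_0-t)^{-n/2}$, whence $\mathcal{B}^L_\rho(\x_0)\subseteq\T_{> t_0 - C\rho}$ with $C := (4\pi)N^{2/n}$; \textbf{(ii)} from the lower bound, if one sets $r := c_0\sqrt{\rho}$ with $c_0 = c_0(n,\text{ellipticity},a)\in(0,1)$ small enough, then the solid compact cylinder
\[
C(\x_0,\rho) := \overline{Q_r(x_0)}\times[\,t_0 - r^2,\ t_0 - (ar)^2\,]\ \subseteq\ \mathcal{B}^L_\rho(\x_0),
\]
since on it $t_0 - t\in[(ar)^2, r^2]$ and $|x_0 - X|\le r$ give $\Gamma_L(\x_0;X,t)\ge \frac{1}{N(ar)^n}e^{-N/a^2}$, which exceeds $(4\pi\rho)^{-n/2}$ once $(ac_0)^n$ is small compared to $(4\pi)^{n/2}e^{-N/a^2}/N$. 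It is important that $C(\x_0,\rho)$ has exactly the aspect ratio of the TBCDC cylinders, as the TBCDC is not obviously stable under passing to thinner slabs. Now fix $\lambda\in(0,1)$ with $C\lambda<(ac_0)^2$, and write $A_k:=\overline{\mathcal{B}^L_{\lambda^k}(\x_0)\setminus\mathcal{B}^L_{\lambda^{k+1}}(\x_0)}$ for the closed heat annulus in the series. For every $k\ge1$: $C(\x_0,\lambda^k)\subseteq\mathcal{B}^L_{\lambda^k}(\x_0)$ by \textbf{(ii)}, while its latest time $t_0-(ac_0)^2\lambda^k$ lies strictly before $t_0-C\lambda^{k+1}$, so by \textbf{(i)} it is disjoint from $\mathcal{B}^L_{\lambda^{k+1}}(\x_0)$; hence $C(\x_0,\lambda^k)\subseteq A_k$.

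With this the two cases close quickly. If $t_0 = T_{\min}=\inf\{t:\Omega\cap\T_{=t}\neq\emptyset\}$, then $\T_{<t_0}\subseteq\Omega^c$, and since every $L$-heat ball centered at $\x_0$ lies in $\{t<t_0\}$, the set in the $k$-th term of the series contains $C(\x_0,\lambda^k)$; by Lemma~\ref{lem:cap_cylinder}, $\mathrm{Cap}_L(C(\x_0,\lambda^k))\approx r^n\approx\lambda^{kn/2}$, so every term is $\gtrsim 1$ and the series diverges. (Equivalently, such points are $L$-regular by the classical attainment of initial data, via Lemma~\ref{lem:fund_sol}(1).) If $t_0>T_{\min}$, then $\x_0\in\Sigma$, and for every $k$ large enough that $r=c_0\sqrt{\lambda^k}<\sqrt{t_0-T_{\min}}/4$, we test the TBCDC for $L$ against $C(\x_0,\lambda^k)$; combining monotonicity of $\mathrm{Cap}_L$, the containment $C(\x_0,\lambda^k)\subseteq A_k$, the TBCDC, and Lemma~\ref{lem:cap_cylinder},
\[
\mathrm{Cap}_L\big(\Omega^c\cap A_k\big)\ \ge\ \mathrm{Cap}_L\big(\Omega^c\cap C(\x_0,\lambda^k)\big)\ \gtrsim\ \mathrm{Cap}_L\big(C(\x_0,\lambda^k)\big)\ \approx\ \lambda^{kn/2},
\]
so the $k$-th term of the Wiener series is $\gtrsim 1$ for all large $k$, the tail diverges, and $\x_0$ is $L$-regular by Theorem~\ref{thm:ParabolicWienerCriterion}. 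As $\x_0\in\partial_e\Omega$ was arbitrary, every boundary point is $L$-regular, so by Remark~\ref{rmk:ExistenceOfParabolicMeasureForCinfty} the Perron--Wiener--Brelot--Bauer solutions solve the continuous Dirichlet problem and the parabolic measure for $L$ on $\Omega$ exists.

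The step I expect to be most delicate is the constant bookkeeping in \textbf{(ii)}: one must pin down $c_0$, hence $\lambda$, in terms of $N$, $n$ and the TBCDC aspect $a$ so that $C(\x_0,\lambda^k)$ genuinely lands inside a \emph{single} heat annulus for \emph{every} $k$ and \emph{every} $\x_0$ at once --- the freedom in Theorem~\ref{thm:ParabolicWienerCriterion} to use any $\lambda\in(0,1)$ is exactly what legitimizes this. A secondary, routine point is verifying $\partial_e\Omega\setminus\Sigma\subseteq\mathcal{B}\Omega\cap\T_{=T_{\min}}$ so that the case analysis is exhaustive.
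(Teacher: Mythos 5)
Your proof is correct and follows essentially the same route as the paper's: one includes a cylinder of the exact TBCDC aspect ratio and radius $\approx\lambda^{k/2}$ into each closed $L$-heat annulus via Aronson's bounds, then combines monotonicity of $\mathrm{Cap}_L$, the TBCDC, and Lemma~\ref{lem:cap_cylinder} to bound each term of the Wiener series below by a constant. The only quibble is a slip in \textbf{(ii)}: the prefactor lower bound should be $\tfrac{1}{Nr^n}e^{-N/a^2}$ (using $t_0-t\le r^2$), not $\tfrac{1}{N(ar)^n}e^{-N/a^2}$, which merely tightens the smallness condition on $c_0$ and does not affect the argument; your explicit treatment of the $t_0=T_{\min}$ case, where the TBCDC is vacuous but $\T_{<t_0}\subseteq\Omega^c$, is a welcome detail that the paper's proof leaves implicit.
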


\begin{proof}
    Fix $(x_0,t_0)\in\partial_e\Omega$. Let $\lambda \in (0, 1)$ and $k \in \N$. We claim that if one chooses $\lambda$ small enough and one puts $r_k := \frac{(4\pi)^{1/2} N^{1/n}}{a} \lambda^{(k+1)/2}$ ($N$ is the constant from Lemma~\ref{lem:fund_sol} and $a$ comes from the TBCDC), then it holds
    \begin{equation} \label{claim:wiener_content}
    \overline{Q_{r_k}(x_0)} \times [t_0-r_k^2,t_0-(ar_k)^2]
    \subseteq 
    \overline{\mathcal{B}^L_{\lambda^k}(x_0,t_0) \setminus \mathcal{B}^L_{\lambda^{k+1}}(x_0,t_0)}.
    \end{equation}
    Taking this claim momentarily for granted, it is easy to finish. Indeed, using the claim, the TBCDC (and Lemma~\ref{lem:cap_cylinder}), and later the choice of $r_k$, we obtain 
    \begin{multline*}
    	\sum_{k=1}^\infty \lambda^{-kn/2} \mathrm{Cap}_L \Big(\Omega^c \, \cap \, \big(\{\x_0\} \cup \overline{\mathcal{B}^L_{\lambda^k}(\x_0) \setminus \mathcal{B}^L_{\lambda^{k+1}(\x_0)}} \big) \Big)
    	\\ \geq 
    	\sum_{k=1}^\infty \lambda^{-kn/2} \mathrm{Cap}_L\Big( \Omega^c \cap \big( \overline{Q_{r_k}(x_0)} \times [t_0 - r_k^2, t_0 - (ar_k)^2] \big) \Big)
    	\\ 
    	\gtrsim 
    	\sum_{k=1}^\infty \lambda^{-kn/2} r_k^n
    	\approx 
    	\sum_{k=1}^\infty \lambda^{-kn/2} (\lambda^{(k+1)/2})^n
    	= 
    	\sum_{k=1}^\infty \lambda^{n/2}
    	=
    	+\infty,
    \end{multline*}
    so Theorem \ref{thm:ParabolicWienerCriterion} implies that $(x_0,t_0)$ is $L$-regular. In particular, Remark \ref{rmk:ExistenceOfParabolicMeasureForCinfty} implies that the parabolic measure exists for $L$ on $\Omega$.
    
    Therefore, everything boils down to establishing the claim~\eqref{claim:wiener_content}. For that purpose, let $(X, t) \in \overline{Q_{r_k}(x_0)} \times [t_0-r_k^2,t_0-(ar_k)^2]$. Using Lemma~\ref{lem:fund_sol}, we easily get 
    \[
    \Gamma_L(x_0,t_0; X,t) \leq
    \frac{N}{(t_0-t)^{n/2}}\exp\left(-\frac{|x_0-X|^2}{N(t_0-t)}\right)
    \leq 
    N(ar_k)^{-n}
    \leq 
    (4\pi\lambda^{k+1})^{-n/2}
    \]
    using the definition of $r_k$. Similarly, using Lemma~\ref{lem:fund_sol} and the definition of $r_k$, we get
    \begin{multline*}
    	\Gamma_L(x_0,t_0; X,t)
    	\geq
    	\frac{1}{N(t_0-t)^{n/2}}\exp\left(-\frac{N|x_0-X|^2}{t_0-t}\right)
    	\geq
    	\frac{1}{Nr_k^n}\exp\left(-\frac{Nr_k^2}{(ar_k)^2}\right)
    	\\
    	=
    	a^nN^{-2}\lambda^{-n/2}\exp(-Na^{-2}) (4\pi\lambda^k)^{-n/2}
    	\geq
    	(4\pi\lambda^k)^{-n/2},
    \end{multline*}
	where the last step is true if one chooses $\lambda > 0$ small enough. This finishes the proof of the claim \eqref{claim:wiener_content}, and hence of the lemma.
\end{proof}

\subsection{Structure of boundaries of domains satisfying the TBCDC} \label{sec:structure_boundary_TBCDC}

In Section~\ref{sec:classification_boundary}, we have seen a very thorough classification of the different parts of the boundary of a non-cylindrical domain. Luckily, under our assumption TBCDC (or also TBHCC, by Section~\ref{sec:implies}), this classification becomes much simpler and understandable. For instance, it is trivial to note that 
\begin{quote}
	If $\Omega$ satisfies the TBCDC for any $L$ (or the TBHCC), then 
	\[
	\partial_a \Omega \cap \T_{< T_{\max}} = \emptyset, 
	\qquad 
	\partial_a \Omega = \partial_s \Omega \cap \T_{=T_{\max}}.\] 
	Thus,
	\[ 
	\partial_e \Omega = \partial_n \Omega 
    =\begin{cases}
		\mathcal{P}\Omega&\textrm{if }\Omega\textrm{ is bounded}\\
		\mathcal{P}\Omega\cup\{\infty\}&\textrm{if }\Omega\textrm{ is unbounded.}
	\end{cases}
	\]
\end{quote} 
Concretely, this informs us that the only possible vertical faces that may exist are at the terminal time $T_{\max}$ (which is reasonable because it is not included in $\Sigma$, and we only impose the TBCDC on $\Sigma$) or are part of the bottom boundary $\mathcal{B}\Omega$ (which is also reasonable because it allows to impose initial values).

The following result gives us deeper understanding of boundaries of domains satisfying the TBCDC, and will be useful later.

\begin{lemma}
\label{lem:StructureOfUnboundedTBCDCDomains}
     Let $\Omega\subseteq\R^{n+1}$ be an unbounded open set that satisfies the TBCDC for some parabolic operator. Then, $\partial_e\Omega\setminus\{\infty\}$ is either unbounded or empty.
\end{lemma}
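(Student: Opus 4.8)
The statement to prove: if $\Omega$ is unbounded and satisfies the TBCDC for some operator $L$, then $\partial_e\Omega\setminus\{\infty\}$ is either unbounded or empty.

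Let me think about what could go wrong. If $\partial_e\Omega\setminus\{\infty\}$ is nonempty and bounded, I want to derive a contradiction. Recall from the structure discussion just above that $\partial_e\Omega=\partial_n\Omega=\mathcal{P}\Omega$ (plus $\{\infty\}$ since $\Omega$ is unbounded), and the only vertical faces are at $T_{\max}$ or are part of $\mathcal{B}\Omega$.

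Suppose $\partial_e\Omega\setminus\{\infty\}$ is nonempty and contained in some ball. Since $\Omega$ is unbounded, there is a sequence of points in $\Omega$ escaping to infinity. The key geometric observation: if $\partial_e\Omega\setminus\{\infty\}$ is bounded, then far away from that bounded set, $\partial\Omega$ can only consist of points in $\partial_s\Omega$ (the singular boundary, vertical faces with nothing to their right) — but $\partial_s\Omega$ away from $T_{\max}$ is empty under the TBCDC. So outside a large cube, $\Omega$ has essentially no boundary except possibly at $t=T_{\max}$. I want to argue this forces a contradiction with the TBCDC itself. The TBCDC at a point $(x_0,t_0)\in\Sigma$ requires the complement $\Omega^c$ to be capacitarily large in a backward cube $\overline{Q_r(x_0)}\times[t_0-r^2, t_0-(ar)^2]$ for all $0<r<\sqrt{t_0-T_{\min}}/4$.

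Here is the strategy in order. \textbf{Step 1.} Assume for contradiction that $B:=\partial_e\Omega\setminus\{\infty\}$ is nonempty and bounded; fix $R_0$ with $B\subseteq \Q_{R_0}(\mathbf{0})$. Since $\partial_e\Omega$ is closed (by \cite[Lemma 1.17]{GH}) and $\infty\in\partial_e\Omega$, and $B$ is bounded, this is consistent only if $\Omega$ "opens up" at infinity. \textbf{Step 2.} Take a point $\x_0\in B$ (nonempty by assumption); since $B\neq\{\infty\}$ we may pick it in $\partial\Omega\setminus\{\infty\}$, and it lies in $\Sigma$ unless it is in $\mathcal{B}\Omega\cap\T_{=T_{\min}}$ or $\partial_s\Omega\cap\T_{=T_{\max}}$; in any case some boundary point of $\partial_e\Omega$ must lie in $\Sigma$ (otherwise, examining the definition of $\Sigma$, $\partial\Omega$ would be confined to the two horizontal slices $T_{\min}$ and $T_{\max}$, but an unbounded open set with $T_{\min}$ or $T_{\max}$ finite must have lateral boundary — I would need to handle the cases $T_{\min}=-\infty$, $T_{\max}=\infty$ separately, which is where care is needed). \textbf{Step 3.} The real contradiction: pick $\x_1=(x_1,t_1)\in\Sigma$ with $|x_1|$ very large (larger than $R_0$), which exists because $\Omega$ is unbounded and its quasi-lateral boundary must extend to infinity in space whenever $\partial_e\Omega\setminus\{\infty\}$ is bounded — more precisely, since $\Omega\neq\R^{n+1}$ (it has nonempty boundary) and $\Omega$ is unbounded, $\partial\Omega$ is unbounded, and away from the bounded set $B$ every boundary point is in $\partial_a\Omega=\partial_s\Omega\cap\T_{=T_{\max}}$; so $\partial\Omega$ unbounded with $B$ bounded forces $T_{\max}<\infty$ and a whole horizontal slab of $\partial_s\Omega$ at $T_{\max}$. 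But then $\Omega$ near that slab, a bit below $T_{\max}$, has the slab as its only nearby exterior — yet points $(X,t)\in\Omega$ with $t$ slightly less than $T_{\max}$ and $|X|$ huge lie in $\Omega$, and their backward cubes miss $B$ entirely and miss the slab (which is above them in time), so such points have $\delta(X,t)$ arbitrarily large; tracing down, I find a point $\x_1\in\Sigma$ with $|x_1|\gg R_0$ at which the TBCDC must hold, but the relevant backward cube $\overline{Q_r(x_1)}\times[t_1-r^2,t_1-(ar)^2]$ for suitable $r$ comparable to $\delta(\x_1)$ is entirely inside $\Omega$, so its intersection with $\Omega^c$ is empty and has zero capacity, contradicting the TBCDC lower bound $\geq c>0$.

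\textbf{Expected main obstacle.} The hard part is Step 3: cleanly producing a point of $\Sigma$ far out in space where a definite-size backward cube lies entirely in $\Omega$, and handling all the cases for $T_{\min},T_{\max}$ finite or infinite. The cleanest route is probably: argue that if $B$ is bounded and nonempty, one can choose a boundary point $\x_1\in\Sigma$ with $\delta$-neighborhood arbitrarily large — by taking $\x_1$ on $\partial\Omega$ minimizing distance to a far-away interior point $\Y\in\Omega$ with $|Y|$ huge (such $\x_1$ exists and realizes the distance by \cite[Lemma 1.17]{GH}); since $B$ is bounded and all of $\partial\Omega\setminus B$ is in $\partial_s\Omega\cap\T_{=T_{\max}}$, either $\x_1\in B$ (impossible for $|Y|$ large enough, as then $\delta(\Y)\geq |Y|-R_0\to\infty$ while $\x_1$ stays bounded — wait, that's fine, $\delta$ just grows) or $\x_1$ is at $T_{\max}$; in the latter case the ball $\Q_{\delta(\Y)}(\Y)\subseteq\Omega$ with $\delta(\Y)$ large, and choosing a point $\x_1\in\Sigma$ just below requires knowing $\Sigma$ is reached — so one instead picks $\x_1$ to be a closest point in $\Sigma$ (nonempty because $\Omega$ unbounded with bounded $B$ still needs lateral boundary, else $\Omega$ would be a half-space slab $\R^n\times(T_{\min},T_{\max})$ whose $\partial_e\Omega\setminus\{\infty\}$ is all of two hyperplanes, hence unbounded, contradiction), and near $\x_1$ a backward cube of size $\sim\delta$ avoids $\Omega^c$, killing the TBCDC. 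I would write this carefully, splitting into the case $\Omega$ is (up to the bounded set $B$) essentially a slab — directly contradicting boundedness of $B$ — versus the case there is genuine lateral boundary far out, where the emptiness of the backward cube's exterior intersection contradicts the TBCDC.
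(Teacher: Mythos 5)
Your proposal has the right instinct for part of the statement but contains several concrete errors, and it misses the cases the paper handles directly rather than by contradiction.

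First, your assertion that ``$\Omega$ unbounded with nonempty boundary implies $\partial\Omega$ unbounded'' is false: $\Omega=\R^{n+1}\setminus K$ with $K$ compact is a counterexample. Relatedly, under your contradiction hypothesis there are \emph{no} points $\x_1\in\Sigma$ with $|x_1|$ large: since the TBCDC forces $\partial_a\Omega=\partial_s\Omega\cap\T_{=T_{\max}}$, one has $\Sigma\subseteq\partial_e\Omega\setminus\{\infty\}=B$, so $\Sigma$ is bounded whenever $B$ is. Second, your final contradiction mechanism cannot occur as stated: every point of $\Sigma$ lies in $\mathcal{P}\Omega$, and by the very definition of $\mathcal{P}\Omega$ the backward cubes $\Q_r^-(\x_1)$ meet $\Omega^c$ at \emph{every} scale, so the intersection is never empty. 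The contradiction has to be either quantitative (the capacity of $\Omega^c$ in a backward cube of radius $r$ is $O(R_0^n)$ while the TBCDC demands $\gtrsim r^n$, for $r\to\infty$) or structural (a point of $\partial_e\Omega$ is forced into $\partial_{ss}\Omega$, which the TBCDC rules out). Either route requires proving that, far enough in the past, $\Omega^c$ is confined near the bounded set $B$ --- i.e.\ that an entire backward time region $\T_{<t_0}$ lies in $\Omega$. That is a genuine connectivity argument (the core of the paper's proof in the case $T_{\min}=-\infty$) which your proposal does not supply; pointing to a far-away interior point $\Y$ and its nearest point in $\Sigma$ does not yield it, since $\Omega^c$ could be concentrated arbitrarily close to $\x_1$ itself.

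Third, when $T_{\min}>-\infty$ the radii in the TBCDC are capped by $\sqrt{t_0-T_{\min}}/4$ (and $\Sigma$ may even be empty, e.g.\ for a slab $\R^n\times(0,1)$), so no contradiction with the TBCDC is available at all. The paper instead argues \emph{directly} in these cases: it projects $\Omega$ onto space (or onto time) and, for each point of the unbounded projection, produces a boundary point $(X,T_{\min}(X))$ (or a spatially extremal point at each time), which the structural consequences of the TBCDC place in $\partial_e\Omega$; these points form an unbounded subset of $\partial_e\Omega\setminus\{\infty\}$. You flag ``handling all the cases for $T_{\min},T_{\max}$'' as the main obstacle but give no workable mechanism for them, and the mechanism you do propose would not apply there. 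As written, the proposal does not constitute a proof.
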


\begin{proof}
    Suppose $\partial_e\Omega\setminus\{\infty\}\neq\emptyset$. We split into cases.
    
    \textbf{Case 1: $T_\mathrm{min}>-\infty$ and $T_\mathrm{max}<+\infty$.}
    In this case, the spatial projection of $\Omega$, namely
    \[
        A:= \big\{X\in\R^n:\; \exists \, t\in\R\textrm{ such that }(X,t)\in\Omega \big\},
    \]
    is an unbounded set. For each $X\in A$, set
    \[
        T_\textrm{min}(X):=\inf \big\{t \in \R: \; (X,t)\in\Omega\big\}.
    \]
    Then, for each $X\in A$, $(X,T_\textrm{min}(X))\in\partial\Omega$. Moreover, the TBCDC ensures that $\partial\Omega=\partial_e\Omega\cup\big(\partial_s\Omega \cap \T_{=T_\textrm{max}}\big)$ in this case. Thus, since $T_\textrm{min}\leq T_\textrm{min}(X)<T_\textrm{max}$ for each $X\in A$, it follows that $(X,T_\textrm{min}(X))\in\partial_e\Omega$ for each $X\in A$. Thus, $\{(X,T_\textrm{min}(X)):X\in A\}\subseteq\partial_e\Omega\setminus\{\infty\}$, so since $A$ is unbounded, it must be the case that $\partial_e\Omega\setminus\{\infty\}$ is unbounded.

    \textbf{Case 2: $T_\mathrm{min}>-\infty$ and $T_\mathrm{max}=+\infty$.} In this case, if $A$ as in Case 1 is unbounded, we can repeat the argument above. Otherwise, $A$ is bounded, so the projection in time 
    \[
        B:= \big\{t\in\R: \; \exists \, X\in\R^n\textrm{ such that }(X,t)\in\Omega \big\}
    \]
    is unbounded. For each $t\in B$, choose some $(X_1(t),X_2(t),\ldots,X_n(t),t)\in\Omega$ and set
    \[
        M(t):=\sup \big\{Y \in \R:(Y,X_2(t),\ldots,X_n(t),t)\in\Omega \big\},
    \]
    which is a well-defined real number because $A$ is bounded.
    Then, for each $t\in B$, it holds $(M(t),X_2(t),\ldots,X_n(t),t)\in\partial\Omega$. Moreover, the TBCDC ensures that $\partial\Omega=\partial_e\Omega$ in this case. Thus,
    \[
        \big\{(M(t),X_2(t),\ldots,X_n(t),t):t\in B\big\}
        \subseteq
        \partial_e\Omega\setminus\{\infty\},
    \] 
    so since $B$ is unbounded, it must be the case that $\partial_e\Omega\setminus\{\infty\}$ is unbounded.

    \textbf{Case 3: $T_\mathrm{min}=-\infty$.} In this case, suppose for the sake of obtaining a contradiction that $\partial_e\Omega\setminus\{\infty\}$ is bounded. Set
    \[
        t_0:=T_\textrm{min}(\partial_e\Omega\setminus\{\infty\})>-\infty.
    \]
    We claim that $\T_{< t_0} \subseteq \Omega$. Taking this claim momentarily for granted, let us obtain the desired contradiction. Since $\partial_e\Omega$ is closed (recall Section~\ref{sec:classification_boundary}), there exists some $x_0\in\R^n$ so that $(x_0,t_0)\in\partial_e\Omega$. Then, by our claim, $\Q^-_r(x_0,t_0)\subseteq\Omega$ for all $r>0$, which implies that $(x_0,t_0)\not\in\mathcal{P}\Omega$. Hence, $(x_0,t_0)\in\partial_{ss}\Omega$, which is actually a contradiction because the TBCDC implies that $\partial_{ss}\Omega=\emptyset$ (see the paragraphs before this lemma). 
    
    Thus, we are left to show the claim. First, we show that if a time slice (in $\T_{<t_0}$) contains some point in $\Omega$, then it is fully contained in $\Omega$. Indeed, assume that $(X_1,X_2,\ldots,X_n,t)\in\Omega$ is a point with $t<t_0$. Then, if any of the values 
    \begin{align*}
        \inf\{Y \in \R :(Y,X_2,\ldots,X_n,t)\in\Omega\},
        &\quad&
        \sup\{Y \in \R:(Y,X_2,\ldots,X_n,t)\in\Omega\}
        \\
        \vdots\qquad\qquad\qquad&&\vdots\qquad\qquad\qquad
        \\
        \inf\{Y\in \R :(X_1,X_2,\ldots,Y,t)\in\Omega\},
        &\quad&
        \sup\{Y \in \R:(X_1,X_2,\ldots,Y,t)\in\Omega\},
    \end{align*}
	is finite, we can proceed as in Case 2 to show that there exists a point in the boundary at time $t<t_0$, a contradiction with the definition of $t_0$. Thus, for all $X\in\R^n$, $(X,t)\in\Omega$. 
	
	Therefore, to finish the proof of the claim, it suffices show that all time slices (in $\T_{<t_0}$) contain some point in $\Omega$. For that, assume that $(X, t) \in \Omega$ satisfies $t < t_0$. Then set
	\[
	\tilde{t} := \sup\{s_0\in \R: \; s_0\geq t, \; \text{and } (X, s)\in\Omega\textrm{ for all }t\leq s\leq s_0\}.
	\]
	If $\tilde{t} = +\infty$, then certainly $\tilde{t} \geq t_0$. Otherwise $\tilde{t} < +\infty$, so it must happen that $(X, \tilde{t}) \in \pom$, so $\tilde{t} \geq t_0$. (Indeed, if $(X, \tilde{t}) \in \partial_e \Omega$, it follows from the definition of $t_0$. Otherwise, the TBCDC implies that $(X, \tilde{t}) \in \partial_s \Omega \cap \T_{=T_{\max}}$, which trivially implies $\tilde{t} = T_{\max} \geq t_0$.) Thus, by the definition of $\tilde{t}$, $(X, s) \in \Omega$ for every $t \leq s < t_0$. 
	Since we can repeat this argument with points $(X, t)$ with $t \to -\infty$ (recall that $T_{\min} = -\infty$), this shows that every time slice (in $\T_{< t_0}$) contains some point of $\Omega$. This finishes the proof of the claim.
\end{proof}

%%%%%%%%%%%%%%%%%%%%%%%%%%%%%%%%%%%%%%%%%%%%%%%%%%%%%%%
%%%%%%%%%%%%%%%%%%%%%%%%%%%%%%%%%%%%%%%%%%%%%%%%%%%%%%%
%%%%%%%%%%%%%%%%%%%%%%%%%%%%%%%%%%%%%%%%%%%%%%%%%%%%%%%
%%%%%%%%%%%%%%%%%%%%%%%%%%%%%%%%%%%%%%%%%%%%%%%%%%%%%%%
%%%%%%%%%%%%%%%%%%%%%%%%%%%%%%%%%%%%%%%%%%%%%%%%%%%%%%%
%%%%%%%%%%%%%%%%%%%%%%%%%%%%%%%%%%%%%%%%%%%%%%%%%%%%%%%
%%%%%%%%%%%%%%%%%%%%%%%%%%%%%%%%%%%%%%%%%%%%%%%%%%%%%%%
%%%%%%%%%%%%%%%%%%%%%%%%%%%%%%%%%%%%%%%%%%%%%%%%%%%%%%%
%%%%%%%%%%%%%%%%%%%%%%%%%%%%%%%%%%%%%%%%%%%%%%%%%%%%%%%
%%%%%%%%%%%%%%%%%%%%%%%%%%%%%%%%%%%%%%%%%%%%%%%%%%%%%%%
%%%%%%%%%%%%%%%%%%%%%%%%%%%%%%%%%%%%%%%%%%%%%%%%%%%%%%%

\section{Proof of Theorem \ref{mainthm:Bourgain}, Hölder decay up to the boundary} \label{sec:bourgain_proof}

In this section, we will show Theorem~\ref{mainthm:Bourgain}. Note that in that theorem we assume \textit{a priori} the existence of the parabolic measure for $L$. For sufficiently nice operators $L$, the existence of the parabolic measure is granted by the Wiener's criterion, as discussed in Remark~\ref{rmk:ExistenceOfParabolicMeasureForCinfty}. It will turn out that under the TBCDC assumption, the parabolic measure exists for any $L$ with merely bounded coefficients as in Definition~\ref{def:operator}: we will show this in Section~\ref{sec:existence_parabolic_measure} with the aid of Theorem~\ref{mainthm:Bourgain}, actually. 

\subsection{Proof of Bourgain's estimate (\ref{maineqn:BourgainEstimate})}

Instead of proving \eqref{maineqn:BourgainEstimate} directly within the framework of Theorem~\ref{mainthm:Bourgain}, let us prove a more general non-degeneracy estimate for the parabolic measure (which will easily imply \eqref{maineqn:BourgainEstimate} under the TBCDC assumption). 

\begin{lemma}[Capacity estimate for the parabolic measure]
\label{thm:capacityestimate}
    Let $L$ be a parabolic operator as in Definition~\ref{def:operator}. Assume that the parabolic measure for $L$ in $\Omega$ exists, and denote it by $\omega^{X,t}_L$. Then, there exist constants $M_0,c>0$, depending only on $n, \lambda$, and $a$, such that
    \[
        \omega_L^{X,t}\Big(Q_{M_0r}(x_0) \times (t_0 - r^2, t_0 + r^2) \Big)
        \geq 
        c \, \frac{\mathrm{Cap}_L \Big( \big( \overline{Q_r(x_0)}\times [t_0-r^2,t_0-(ar)^2] \big)\, \cap \, \Omega^c \Big)}{r^n}
    \]
    for all $(x_0,t_0)\in\partial_e\Omega\setminus\{\infty\}$, $r>0$, and $(X,t) \in \big( Q_r(x_0) \times (t_0-(ar)^2/2,t_0+r^2) \big) \, \cap \, \Omega$.
\end{lemma}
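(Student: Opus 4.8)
\emph{Proof proposal.} The plan is a Bourgain-type argument that compares parabolic measure with a capacitary potential. Write $K:=\big(\overline{Q_r(x_0)}\times[t_0-r^2,t_0-(ar)^2]\big)\cap\Omega^c$, a compact set; we may assume $\mathrm{Cap}_L(K)>0$, otherwise there is nothing to prove. Using Definition~\ref{def:capacity}, pick a positive Borel measure $\mu$ with $\supp\mu\subseteq K$, $\Gamma_L\mu\le 1$ on $\R^{n+1}$ and $\mu(K)\ge\tfrac12\mathrm{Cap}_L(K)$; note also $\mu(K)\le\mathrm{Cap}_L(K)$ since $\mu$ is admissible. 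Set $u_\mu:=\Gamma_L\mu$. Then $0\le u_\mu\le 1$ everywhere, $u_\mu$ is a bounded weak solution of $Lu_\mu=0$ in $\Omega$ (a heat potential is $L$-caloric off the support of its measure, and $\supp\mu\subseteq K\subseteq\Omega^c$), $u_\mu$ vanishes at every point with time coordinate $\le t_0-r^2$ (since $\Gamma_L(\X;\Y)$ vanishes unless the time of $\X$ exceeds that of $\Y$, and $\mu$ is carried by times $\ge t_0-r^2$), and $u_\mu(\X)\to 0$ as $\lVert\X\rVert\to\infty$ by \eqref{eq:aronson_easy} and the compactness of $K$.

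\emph{Lower bound on $u_\mu$.} Let $(X,t)\in\big(Q_r(x_0)\times(t_0-(ar)^2/2,\,t_0+r^2)\big)\cap\Omega$ and $(Y,s)\in K$. Then $|X-Y|<2r$ and $(ar)^2/2<t-s<2r^2$ (the lower estimate because $s\le t_0-(ar)^2$ and $t>t_0-(ar)^2/2$, the upper one because $s\ge t_0-r^2$ and $t<t_0+r^2$), so the lower Aronson bound in Lemma~\ref{lem:fund_sol} gives $\Gamma_L(X,t;Y,s)\ge c_1' r^{-n}$ for some $c_1'=c_1'(n,\lambda,a)>0$. Hence $u_\mu(X,t)=\iint_K\Gamma_L(X,t;Y,s)\,d\mu(Y,s)\ge c_1' r^{-n}\mu(K)\ge 2c_1\,r^{-n}\mathrm{Cap}_L(K)$, where $c_1:=c_1'/4$; this is consistent with $u_\mu\le1$ thanks to Lemma~\ref{lem:cap_cylinder}.

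\emph{Upper bound and comparison with $\omega_L$.} By \eqref{eq:aronson_easy}, $u_\mu(\x)\le C_\Gamma\,\dist(\x,K)^{-n}\mu(K)$ whenever $\dist(\x,K)>0$; since $\mu(K)\le\mathrm{Cap}_L(K)$, choosing $M_0=M_0(n,\lambda,a)$ large enough forces $u_\mu\le c_1 r^{-n}\mathrm{Cap}_L(K)$ on $\{|x-x_0|>M_0 r\}$ (there $\dist(\cdot,K)\ge(M_0-1)r$). As moreover $u_\mu\equiv 0$ at times $\le t_0-r^2$, and $\R^{n+1}=\overline S\cup\{|x-x_0|>M_0 r\}\cup(\T_{<t_0-r^2}\cup\T_{=t_0-r^2})$ with $S:=Q_{M_0 r}(x_0)\times(t_0-r^2,\infty)$, using that $u_\mu$ is continuous off $\supp\mu$ we get
\[
\limsup_{\Omega\ni\X\to\x}u_\mu(\X)\ \le\ \mathbf 1_{\overline S}(\x)+c_1 r^{-n}\mathrm{Cap}_L(K)\qquad\text{for every }\x\in\partial_e\Omega
\]
(assigning the value $1$ at $\infty$ if $\infty\in\partial_e\Omega$). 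Since $\mathbf 1_{\overline S}$ is upper semicontinuous, the maximum principle together with the $\omega_L$-representation (cf.\ Lemma~\ref{lem:Uniqueness} and the inner regularity of the Radon measures $\omega_L^{\X}$) yields $u_\mu(\X)\le\omega_L^{\X}(\overline S\cap\partial_e\Omega)+c_1 r^{-n}\mathrm{Cap}_L(K)$ for all $\X\in\Omega$.

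\emph{Conclusion and main obstacle.} Combining the two estimates at $\X=(X,t)$ in the stated region gives $\omega_L^{X,t}(\overline S\cap\partial_e\Omega)\ge c_1 r^{-n}\mathrm{Cap}_L(K)$; and since $\omega_L^{X,t}$ charges only $\partial_e\Omega$ at times $\le t<t_0+r^2$ (causality of parabolic measure), the future part of $S$ is irrelevant, so $\overline S$ may be replaced by $Q_{M_0 r}(x_0)\times(t_0-r^2,t_0+r^2)$ after harmlessly enlarging $M_0$ to absorb closures, which is the asserted inequality (with $c=c_1$). The genuinely delicate step is the comparison: $u_\mu$ need not be continuous up to $\partial_e\Omega$ at points of $\supp\mu\cap\partial\Omega$, $\Omega$ may be unbounded so $\infty$ must be treated as a boundary point, and no boundary regularity is assumed, so one must carefully phrase the maximum principle for bounded weak solutions against an upper semicontinuous boundary majorant and invoke the parabolic-measure representation; the remaining ingredients — Aronson's two-sided bounds, the choice of $M_0$, and the bookkeeping with the cylinder in the conclusion — are routine.
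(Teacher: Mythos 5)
Your argument is correct and is essentially the paper's proof: the paper likewise tests the capacity with a near-extremal measure $\mu$ on $K$, bounds $\Gamma_L\mu$ from below on the forward box by the lower Aronson bound and from above off $\Q_{M_0r}(\x_0)$ by \eqref{eq:aronson_easy}, and compares with $\omega_L^{\X}(F)$ for $F=\Q_{M_0r}(\x_0)\cap\T_{>t_0-r^2}\cap\partial_e\Omega$ via the maximum principle — the only cosmetic difference being that the paper subtracts the far-field constant from the potential to make it a subsolution of $\mathbf{1}_F$, whereas you add that constant to the boundary majorant. One small bookkeeping point: assign the slice $\T_{=t_0-r^2}$ to the piece of your cover where $u_\mu\equiv 0$ rather than to $\overline{S}$ (your own cover already lists it there), so that after enlarging $M_0$ to absorb the spatial closure the set you bound is genuinely contained in $Q_{M_0r}(x_0)\times(t_0-r^2,t_0+r^2)$ with the open time interval of the statement.
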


\begin{proof}
	The proof is a slight modification of that in \cite[Lemma 3.15]{MP}.
    We may assume that $(x_0,t_0)=(0,0)$. For brevity, set
    \[
        K := \big( \overline{Q_r(0)} \times [-r^2,-(ar)^2] \big) \, \cap \, \Omega^c,
    \]
    \[
        \Q^-_{a,r} := Q_r(0) \times (-r^2,-(ar)^2),
        \qquad 
        \Q^+_{a,r} := Q_r(0) \times (-(ar)^2/2,r^2).
    \]
    
    By definition of capacity, there exists a Radon measure $\mu$ supported on $K$ such that $\lVert \Gamma_L \mu\rVert_\infty\leq 1$ and $\mu(K) \geq \text{Cap}_L(K)/2$. Note that if $(X,t)\in \Q^+_{a,r}$ and $(Y,s)\in K \subseteq \Q^-_{a,r}$,
    \[
        (ar)^2/2\leq t-s\leq 2r^2\qquad\textrm{and}\qquad|X-Y|\leq 2r,
    \]
    whence using Aronson's bounds (from Lemma~\ref{lem:fund_sol}) it holds
    \begin{align*}
        \Gamma_L(X,t;Y,s)
        \geq
        \frac{1}{N(2r^2)^{n/2}} \exp\left(-\frac{N(2r)^2}{(ar)^2/2}\right)
        =\frac{\exp(-8Na^{-2})}{2^{n/2}N} r^{-n}
        =:c_4r^{-n},
    \end{align*}
	which trivially implies that
    \begin{equation} \label{eq:potential_below_bourgain}
        \Gamma_L\mu(X,t)\geq c_4 r^{-n} \mu(K), 
        \qquad \text{if } (X,t)\in \Q^+_{a,r}.
    \end{equation}
    
    On the other hand, fix $M_0>0$ large enough. If $(X,t)\in\R^{n+1}\setminus \Q_{M_0r}(\mbf{0})$ and $(Y,s)\in \Q^-_{a,r}$,
    \[
        \lVert(X,t)-(Y,s)\rVert
        \geq
        \lVert(X,t)\rVert-\lVert(Y,s)\rVert
        \geq 
        M_0r-r 
        \geq 
        \frac{M_0}{2} r.
    \]
    Moreover, by Lemma~\ref{lem:capLtoM} and \eqref{eq:aronson_easy}, we have
    \[
        \Gamma_L(X,t;Y,s)
        \leq 
        C \Gamma_{\partial_t-M_2\Delta}(X,t;Y,s)
        \leq 
        C_5 \lVert(X,t)-(Y,s)\rVert^{-n}.
    \]
    Putting together both estimates, we have shown that
    \begin{equation} \label{eq:potential_above_bourgain}
        \Gamma_L\mu(X,t)
        \leq 
        C_5 \left(\frac{M_0}{2}\right)^{-n} r^{-n} \mu(K), 
        \qquad (X,t)\in\R^{n+1}\setminus \Q_{M_0r}(\mbf{0}).
    \end{equation}
    
    Next, choose $M_0$ large enough so that $C_5 \left(\frac{M_0}{2}\right)^{-n} \leq \frac{c_4}{2}$, and define 
    \[
	    u
	    :=
	    \Gamma_L\mu - C_5 \left(\frac{M_0}{2}\right)^{-n} r^{-n} \mu(K).
    \]
    Then, the following hold:
    \begin{enumerate}
        \item $u$ is continuous in $\R^{n+1}$ and $Lu=0$ in $\Omega$ (as for single-layer potentials),
        \item $u\leq 1$ on $\R^{n+1}$ (since $\lVert \Gamma_L\mu\rVert_\infty\leq 1$ by definition of capacity),
        \item $u\leq 0$ on $\R^{n+1}\setminus \Q_{M_0r}(\mbf{0})$ (by \eqref{eq:potential_above_bourgain}), and also on $\T_{\leq -r^2}$ (because $\Gamma_L\mu$ vanishes there since $\text{spt}(\mu) \subseteq K \subseteq \T_{> -r^2}$), and
        \item $u\geq\frac{c_4}{2} r^{-n} \mu(K)$ on $\Q^+_{a,r}$ (by \eqref{eq:potential_below_bourgain} and our choice of $M_0$).
    \end{enumerate}

    Now set $F := \Q_{M_0r}(\mbf{0}) \, \cap \, \T_{> -r^2} \, \cap \, \partial_e\Omega$. Then, we have $u(X,t)\leq\omega_L^{X,t}(F)$ for $(X, t) \in \partial_e\Omega$ by (2) and (3). Thus, $u(X,t)\leq\omega_L^{X,t}(F)$ for $(X,t)\in\Omega$ by the maximum principle (recall (1)). Then, noting that $\textrm{spt}(\omega_L^{X,t})\subseteq\partial_e\Omega\cap \T_{< r^2}$ for $(X,t)\in \Q^+_{a,r}$, we have
    \begin{multline*}
        \omega_L^{X,t}\left(Q_{M_0r}(0) \times (-r^2,r^2) \right) 
        =
        \omega_L^{X,t}(F)
        \geq 
        u(X,t)
        \\ \geq 
        \frac{c_4}{2} r^{-n} \mu(K)
        \geq 
        \frac{c_4}{4} r^{-n} \mathrm{Cap}_L(K), 
        \qquad 
        \text{if } (X,t)\in \Q^+_{a,r} \cap \Omega,
    \end{multline*}
    after using (4), which completes the proof.
\end{proof}

We are now in the position to prove Bourgain's estimate \eqref{maineqn:BourgainEstimate} from Theorem~\ref{mainthm:Bourgain}: the capacitary estimate shows its full power far from the bottom boundary because of the TBCDC assumption, and close to the bottom boundary we can run a simple ad-hoc geometrical argument.

\begin{proof}[\textbf{Proof of Theorem \ref{mainthm:Bourgain}, estimate (\ref{maineqn:BourgainEstimate})}]
    Fix $a\in(0,1)$ from the TBCDC, and $M_0$ from Lemma~\ref{thm:capacityestimate}. Fix also $c > 0$ (from the statement of Theorem~\ref{mainthm:Bourgain}) small enough, depending on these parameters. Let us split the proof into multiple cases, depending on the position of our surface ball $\Q_r(x_0, t_0)$ with respect to the bottom boundary.
    
    %%%%%%%%%%%%%%%%

    \textbf{Case 1: $(x_0,t_0)\in\Sigma$ and $0<r<M_0\sqrt{t_0-T_\mathrm{min}}/4$} (far from the bottom boundary).
    In this case, using Lemmas~\ref{thm:capacityestimate} and \ref{lem:cap_cylinder}, and the TBCDC, we obtain
    \begin{multline*}
        \omega_L^{X,t}(\Q_r(x_0,t_0))
        \geq
        \omega_L^{X,t}\bigg(Q_r(x_0) \times \Big(t_0 - \Big(\frac{r}{M_0}\Big)^2, t_0 + \Big(\frac{r}{M_0}\Big)^2 \Big) \bigg)\\
        \gtrsim 
        \frac{\mathrm{Cap}_L\Big( \big(\overline{Q_{r/M_0}(x_0)} \times \big[t_0-(\frac{r}{M_0})^2,t_0-(a\frac{r}{M_0})^2 \big] \big) \, \cap \, \Omega^c \Big)}{r^n}
        \gtrsim 
        1.
    \end{multline*}
	The application of Lemma~\ref{thm:capacityestimate} is justified if $\gamma < \frac{a}{\sqrt{2}M_0}$ because in such case 
	$(X,t)
	\in 
	\Q_{\frac{a}{\sqrt{2}M_0}r}(x_0,t_0) \cap \Omega
	\subseteq
	Q_{\frac{r}{M_0}}(x_0) \times \big(t_0-(a\frac{r}{M_0})^2/2,t_0+(\frac{r}{M_0})^2 \big)
	\cap
	\Omega$. The use of the TBCDC is also legitimate because of the restriction imposed on $r$ in Case 1.
	
	%%%%%%%%%%%%%%%%%%%%%%

    \textbf{Case 2: $t_0=T_\mathrm{min}$ and $r>0$} (at the bottom boundary). In this case, everything lying to the past from the center is clearly in $\Omega^c$, so it holds
    \begin{multline*} 
    \mathrm{Cap}_L\bigg( \Big(\overline{Q_{r/M_0}(x_0)} \times \Big[t_0-\big(\frac{r}{M_0}\big)^2,t_0 - \big(a\frac{r}{M_0}\big)^2 \Big] \Big) \, \cap \, \Omega^c \bigg)
    \\ =
    \mathrm{Cap}_L \Big(\overline{Q_{r/M_0}(x_0)} \times \Big[t_0-\big(\frac{r}{M_0}\big)^2,t_0 - \big(a\frac{r}{M_0}\big)^2 \Big] \Big)
    \gtrsim 
    r^n,
    \end{multline*}
	using Lemma~\ref{lem:cap_cylinder}. This allows us to mimic the computations in Case 1, without having needed to use the TBCDC assumption.
    
    %%%%%%%%%%%%%%%%%%%

    \textbf{Case 3: $(x_0,t_0)\in\Sigma$ and $M_0\sqrt{t_0-T_\mathrm{min}}/4<r<M_0\sqrt{t_0-T_\mathrm{min}}/a$} (\textit{slightly} intersecting the bottom boundary). In this case, we can forget about a portion of the cube and still obtain the estimate. Concretely, our restriction on $r$ yields $\frac{a}{4} r<\frac{M_0\sqrt{t_0-T_\mathrm{min}}}{4}$, so we can apply the reasoning in Case 1 to obtain (recalling also that $a \in (0, 1)$) 
    \[
	    \omega_L^{X,t}(\Q_r(x_0,t_0))
	    \geq
	    \omega_L^{X,t}(\Q_{\frac{a}{4}r}(x_0,t_0))
	    \gtrsim 
	    1
    \]
    for $(X, t) \in \Q_{\gamma r}(x_0, t_0)$ as soon as we choose $\gamma \leq \frac{a}{\sqrt{2}M_0} \frac{a}{4} = \frac{a^2}{4M_0\sqrt{2n}}$.
    
    %%%%%%%%%%%%%%%%%%%%%%%%%

    \textbf{Case 4: $(x_0,t_0)\in\Sigma$ and $r>M_0\sqrt{t_0-T_\mathrm{min}}/a$} (largely intersecting the bottom boundary). In this case, a lot of the mass lies to the past of the bottom boundary. Concretely, it holds $\overline{Q_{r/M_0}(x_0)} \times \big(t_0-(\frac{r}{M_0})^2,t_0-(a\frac{r}{M_0})^2 \big) \subseteq \Omega^c$ as in Case 2 (because $t_0-(ar/M_0)^2<T_\mathrm{min}$), so the very same reasoning of Case 2 yields the desired estimate.
\end{proof}

\begin{remark}
\label{rmk:M2TBCDCimpliesBourgain}
    In light of Lemma~\ref{lem:capLtoM} (concretely \eqref{eq:comparison_capacities}), if instead of assuming the TBCDC for $L$, we assume the TBCDC for $\partial_t-M_2\Delta$, the conclusion of Theorem~\ref{mainthm:Bourgain} is still true for $\omega_L$. Of course, assuming the TBCDC for $\partial_t-M_2\Delta$ is stronger than assuming the TBCDC for $L$, but it has the upshot that it is a condition that depends only on the ellipticity of the coefficient matrix (recall that $M_2$ depends only on $n$ and $\lambda$) and not potentially on the precise values of the entries of the matrix $A$ defining $L$.
\end{remark}

\subsection{Proof of the Hölder decay up to the boundary (\ref{maineqn:HolderEstimate})}

Given that the non-degeneracy estimate \eqref{maineqn:BourgainEstimate} for the parabolic measure holds, it is standard that some Hölder behavior takes place close to the boundary of our domain. Let us in any case include a general statement and a short proof for completeness.

\begin{lemma}
\label{lem:iteration_holder}
    Let $\x_0\in\partial_e\Omega\setminus\{\infty\}$ and $R > 0$. Assume that there exist $\gamma, \eta > 0$ such that for every $\x\in \partial_e\Omega$ and $r > 0$ satisfying $\Q_r(\x) \subseteq \Q_{3R}(\x_0)$ the following holds:
    \begin{quote}
        If $v$ is any function satisfying (i) $v \geq 0$ in $\Q_r(\x) \cap \Omega$, (ii) $Lv=0$ weakly in $\Q_r(\x) \cap \Omega$, and (iii) $v = 1$ continuously on $\Q_r(\x)\cap\partial_e\Omega$, then
        \[
            v(\Y) \geq \eta, \qquad \forall \, \Y\in \Q_{\gamma r}(\x) \cap \Omega.
        \]
    \end{quote}
    Then, there exist $C, \alpha_H > 0$, depending only on $\eta, \gamma$ and $n$, such that the following holds: 
    \begin{quote}
        If $u$ is any function satisfying (i) $u \geq 0$ in $\Q_{3R}(\x_0) \cap \Omega$, (ii) $Lu=0$ weakly in $\Q_{3R}(\x_0) \cap \Omega$, and (iii) $u = 0$ continuously on $\Q_{3R}(\x_0)\cap\partial_e\Omega$, then 
        \[
            u(\Y) \leq C \left( \frac{\delta(\Y)}{R} \right)^{\alpha_H} \sup_{\Q_{3R}(\x_0)\cap\Omega} u, \qquad \forall \, \Y \in \Q_R(\x_0) \cap \Omega.
        \]
    \end{quote}
\end{lemma}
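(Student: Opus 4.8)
The plan is to run a standard iteration on dyadic parabolic cubes, exploiting the non-degeneracy hypothesis through the maximum principle applied to a rescaled competitor. First I would normalize: by homogeneity of the hypotheses (both are scale-invariant, since multiplying $u$ by a constant preserves (i)--(iii)), we may assume $\sup_{\Q_{3R}(\x_0)\cap\Omega} u = 1$, so it suffices to show $u(\Y) \lesssim (\delta(\Y)/R)^{\alpha_H}$. The core claim is an \emph{oscillation/decay estimate on a chain of shrinking cubes anchored at $\x_0$}: there is $\theta \in (0,1)$ (to be chosen as $\theta = 1 - \eta$, or a suitable power thereof) such that, setting $r_k := 2^{-k} R$ (or $\gamma^k R$, whichever bookkeeping is cleaner), one has
\[
\sup_{\Q_{r_{k+1}}(\x_0) \cap \Omega} u \;\leq\; \theta \, \sup_{\Q_{r_k}(\x_0) \cap \Omega} u, \qquad k = 0, 1, 2, \dots,
\]
as long as $\Q_{r_k}(\x_0) \subseteq \Q_{3R}(\x_0)$, which holds for all $k \geq 0$.

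To prove one step of this claim, fix $k$ and let $m_k := \sup_{\Q_{r_k}(\x_0)\cap\Omega} u$. Apply the non-degeneracy hypothesis with $\x := \x_0$ and $r := r_k$ to the function $v := 1 - u/m_k$, which satisfies $v \geq 0$ in $\Q_{r_k}(\x_0)\cap\Omega$, $Lv = 0$ weakly there (since $L$ annihilates constants and $u$), and $v = 1$ continuously on $\Q_{r_k}(\x_0)\cap\partial_e\Omega$ because $u$ vanishes continuously there. The hypothesis gives $v \geq \eta$ on $\Q_{\gamma r_k}(\x_0)\cap\Omega$, i.e. $u \leq (1-\eta) m_k$ on $\Q_{\gamma r_k}(\x_0)\cap\Omega$. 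Thus, with the choice $r_{k+1} := \gamma r_k$ (so $r_k = \gamma^k R$), we get $m_{k+1} \leq (1-\eta) m_k$, and hence $m_k \leq (1-\eta)^k m_0 \leq (1-\eta)^k$ for all $k \geq 0$. One subtlety to be careful about: the hypothesis requires $\Q_{r_k}(\x) \subseteq \Q_{3R}(\x_0)$; since $\x = \x_0$ and $r_k = \gamma^k R \leq R < 3R$, this is automatic for every $k$ (and we never need $v$ defined outside this cube, so the restriction to $\Q_{3R}$ is exactly what we have).

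Now I would convert the geometric decay into the H\"older bound. Given $\Y \in \Q_R(\x_0)\cap\Omega$, let $\rho := \delta(\Y) = \dist(\Y, \partial_e\Omega)$; if $\rho \geq \gamma R$ there is nothing to prove (take $C$ large enough), so assume $\rho < \gamma R$ and choose the unique integer $k \geq 1$ with $\gamma^{k+1} R \leq \rho < \gamma^k R$, equivalently $k \leq \log_\gamma(\rho/R) < k+1$. Since $\Y \in \Q_R(\x_0)$, we need $\Y \in \Q_{\gamma^{k-1}R}(\x_0)$ for the relevant bound to apply; this is where one must verify that $\delta(\Y)$ small forces $\Y$ close to $\x_0$ \emph{along the chain}, which is the one genuinely delicate point — in general $\delta(\Y)$ small does not put $\Y$ near $\x_0$. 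The correct route is the classical one: instead of a single chain, prove the oscillation decay at \emph{every} boundary point $\x \in \Q_{2R}(\x_0)\cap\partial_e\Omega$ and every scale $r \leq R$ (the hypothesis is stated precisely to allow this), obtaining $\sup_{\Q_{r}(\x)\cap\Omega} u \lesssim (r/R)^{\alpha_H}$ with $\alpha_H := \log_{1/\gamma}\tfrac{1}{1-\eta} = \frac{\log(1/(1-\eta))}{\log(1/\gamma)} > 0$; then, given $\Y \in \Q_R(\x_0)\cap\Omega$, pick $\x \in \partial_e\Omega$ realizing $\delta(\Y) = \|\Y - \x\|$ (which exists by \cite[Lemma 1.17]{GH}, cited in the preliminaries), note $\x \in \Q_{2R}(\x_0)$, and apply the estimate at $\x$ with $r \approx \delta(\Y)$ (say $r = 2\delta(\Y) \wedge R$), since $\Y \in \Q_r(\x)$. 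This yields $u(\Y) \lesssim (\delta(\Y)/R)^{\alpha_H}$, and restoring the normalization gives the claimed inequality with $C$ and $\alpha_H$ depending only on $\eta, \gamma, n$. The main obstacle, then, is purely organizational: setting up the iteration so that it runs from an arbitrary nearby boundary point rather than from $\x_0$ itself, so that the distance $\delta(\Y)$ genuinely controls the scale at which we invoke non-degeneracy; once that is arranged the estimate is a two-line consequence of iterating $v = 1 - u/m$.
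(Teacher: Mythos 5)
Your proposal is correct and follows essentially the same route as the paper: normalize $u$, anchor the iteration at the nearest boundary point $\hat{\y}$ realizing $\delta(\Y)$ (which lies in $\Q_{2R}(\x_0)$, so all cubes $\Q_{\gamma^k R}(\hat{\y})$ stay inside $\Q_{3R}(\x_0)$), iterate the competitor $v=1-u/m_k$ to get $u\leq(1-\eta)^k$ on $\Q_{\gamma^k R}(\hat{\y})$, and read off the H\"older exponent from $\delta(\Y)\approx\gamma^k R$. The "delicate point" you flag and then resolve is exactly how the paper handles it, and the small discrepancy in the exponent ($\log(1-\eta)/\log\gamma$ versus the paper's $\log(1-\eta)/(2\log\gamma)$) is only bookkeeping absorbed into $C$.
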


\begin{proof}
    The proof follows by a well-known iteration argument. Fix $\Y \in \Q_R(\x_0) \cap \Omega$. Find $\hat{\y} \in \partial_e\Omega$ satisfying $\norm{\Y-\hat{\y}} = \delta(\Y)$. It is easy to see that $\hat{\y}\in \Q_{2R}(\x_0)$, so $\Q_R(\hat{\y}) \subseteq \Q_{3R}(\x_0)$, which will justify the application of the assumption repeatedly from now on. 

    Upon dividing by $\sup_{\Q_{3R}(\x_0)} u$, we may assume that $0 \leq u \leq 1$ in $\Q_{3R}(\x_0) \cap \Omega$.
    Define $w_1 := 1-u$, which satisfies (recalling our hypotheses on $u$) $w_1 \geq 0$ in $\Q_R(\hat{\y})$, $Lw_1 = 0$ in $\Q_R(\hat{\y})$, and also $w_1 = 1$ continuously on $\Q_R(\hat{\y})\cap\partial_e\Omega$. Then, by our assumption, $w_1(\Z) \geq \eta$ for every $\Z\in \Q_{\gamma R}(\hat{\y})$. That is, $u(\Z) \leq 1-\eta$ for every $\Z \in \Q_{\gamma R}(\hat{\y})$.

    Defining $w_2 := 1 - \frac{u}{1-\eta}$, we just showed that $w_2 \geq 0$ in $\Q_{\gamma R}(\hat{\y})$. Since it also satisfies $Lw_2 = 0$ in $\Q_R(\hat{\y})$, and $w_2 = 1$ continuously on $\Q_R(\hat{\y})\cap\partial_e\Omega$, applying again our assumption we infer that $w_2(\Z) \geq \eta$ for every $\Z \in \Q_{\gamma^2 R}(\hat{\y})$, that is, $u(\Z) \leq (1-\eta)^2$ for every $\Z \in \Q_{\gamma^2 R}(\hat{\y})$. A simple iteration yields $u(\Z) \leq (1-\eta)^j$ for any $\Z \in \Q_{\gamma^j R}(\hat{\y})$ and $j \geq 0$.

    Now, choose $j$ so that $\delta(\Y) \in [\gamma^{j+1}R, \gamma^jR)$. Simple manipulations yield 
    \[
    u(\Y)
    \leq 
    (1-\eta)^j
    \leq 
    (1-\eta)^{\frac{\log ( \delta(\Y)/R ) }{2\log \gamma}}
    =
    \Big( \frac{\delta(\Y)}{R} \Big)^{\frac{\log(1-\eta)}{2\log \gamma}}, 
    \]
    which is the desired result, with $\alpha_H := \frac{\log(1-\eta)}{2\log \gamma} > 0$ since $0 < \gamma, \eta < 1$.
\end{proof}
    
\begin{remark}
    One can check from the proof that one may instead assume that the functions $v$ in the assumption are supersolutions, and $u$ in the conclusion are subsolutions.
\end{remark}

\begin{proof}[\textbf{Proof of Theorem \ref{mainthm:Bourgain}, estimate \eqref{maineqn:HolderEstimate}}]
    Under the assumptions of Theorem \ref{mainthm:Bourgain}, \eqref{maineqn:BourgainEstimate} implies that the assumptions of Lemma \ref{lem:iteration_holder} hold, so Lemma \ref{lem:iteration_holder} directly yields \eqref{maineqn:HolderEstimate}.
\end{proof}

%%%%%%%%%%%%%%%%%%%%%%%%%%%%%%%%%%%%%%%%%%%%%%%%%%%%%%%
%%%%%%%%%%%%%%%%%%%%%%%%%%%%%%%%%%%%%%%%%%%%%%%%%%%%%%%
%%%%%%%%%%%%%%%%%%%%%%%%%%%%%%%%%%%%%%%%%%%%%%%%%%%%%%%
%%%%%%%%%%%%%%%%%%%%%%%%%%%%%%%%%%%%%%%%%%%%%%%%%%%%%%%
%%%%%%%%%%%%%%%%%%%%%%%%%%%%%%%%%%%%%%%%%%%%%%%%%%%%%%%
%%%%%%%%%%%%%%%%%%%%%%%%%%%%%%%%%%%%%%%%%%%%%%%%%%%%%%%
%%%%%%%%%%%%%%%%%%%%%%%%%%%%%%%%%%%%%%%%%%%%%%%%%%%%%%%
%%%%%%%%%%%%%%%%%%%%%%%%%%%%%%%%%%%%%%%%%%%%%%%%%%%%%%%
%%%%%%%%%%%%%%%%%%%%%%%%%%%%%%%%%%%%%%%%%%%%%%%%%%%%%%%

\section{Proof of Theorem \ref{mainthm:ExistenceOfHolderSolutions}, solvability of the H\"older Dirichlet problem} \label{sec:holder}

In this section, we will prove Theorem~\ref{mainthm:ExistenceOfHolderSolutions}, that is, that the Dirichlet problem in Hölder spaces is well-posed. Our proof will follow closely the elliptic one in \cite{CHMPZ}, but there will also be differences.
Uniqueness follows easily because in Theorem~\ref{mainthm:ExistenceOfHolderSolutions} we assume the existence of the parabolic measure (check Definition~\ref{def:parabolic_measure}). In turn, existence and the quantitative estimates for solutions with respect to boundary data \eqref{3.5.2}, measured in the same Hölder space, require a bit more work. The key tools will be the interior Hölder continuity granted by Lemma~\ref{lem:interiorHolder}, and the behavior at the boundary from Theorem~\ref{mainthm:Bourgain}. Let us use the latter to obtain strong estimates for tails, as in \cite[Lemma 2.17]{CHMPZ}.

\begin{lemma}\label{CHMPZ 2.17}
	Let $L$ be a parabolic operator as in Definition~\ref{def:operator}, and let $\Omega\subseteq\mathbb{R}^{n+1}$ be an open set satisfying the TBCDC for $L$. Assume also that the parabolic measure $\omega_L$ exists for $L$ on $\Omega$. Then, for any $\mbf{x_0}\in\partial_e\Omega$ and $r>0$,
	\begin{equation}
		\label{CHMPZ 2.17.1}
		\omega_L^\X(\partial_e\Omega\,\setminus\,\Q_{4r}(\mbf{x_0}))
		\lesssim 
		\left(\frac{\lVert\mbf{X}-\mbf{x_0}\rVert}{r}\right)^{\alpha_H},\quad\mbf{X}\in\Q_r(\mbf{x_0})\cap\Omega,
	\end{equation}
	where $\alpha_H\in(0,1)$ is the constant from Theorem \ref{mainthm:Bourgain}. Furthermore, if $0<\beta<\alpha_H$, then
	\begin{equation}
		\label{CHMPZ 2.17.3}
		\int_{\partial_e\Omega}\lVert\mbf{y}-\mbf{x_0}\rVert^\beta\,d\omega_L^\mbf{X}(\mbf{y})
		\lesssim 
		\delta(\mbf{X})^\beta
	\end{equation}
	for all $\mbf{X}\in\Omega$ and $\mbf{x_0}\in\partial_e\Omega$ such that $\lVert\mbf{X}-\mbf{x_0}\rVert\leq 3\delta(\mbf{X})$. The implicit constants in all the inequalities depend only on $n, \lambda$ and the TBCDC constants.
\end{lemma}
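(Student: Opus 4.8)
The plan is to prove \eqref{CHMPZ 2.17.1} first, as a quick consequence of the Hölder decay \eqref{maineqn:HolderEstimate}, and then to deduce \eqref{CHMPZ 2.17.3} from \eqref{CHMPZ 2.17.1} by a dyadic decomposition of $\partial_e\Omega$ around $\mbf{x_0}$. We may assume $\mbf{x_0}\neq\infty$ throughout: in \eqref{CHMPZ 2.17.3} this is automatic since $\lVert\X-\mbf{x_0}\rVert\le 3\delta(\X)<\infty$, and in \eqref{CHMPZ 2.17.1} it is the only case in which $\Q_{4r}(\mbf{x_0})$ is literally meaningful.

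\textbf{Proof of \eqref{CHMPZ 2.17.1}.} Fix $\mbf{x_0}\in\partial_e\Omega\setminus\{\infty\}$ and $r>0$, and set $E:=\partial_e\Omega\setminus\Q_{4r}(\mbf{x_0})$. The one genuine point is that $\X\mapsto\omega_L^\X(E)$ should be replaced by an honest solution so that Theorem~\ref{mainthm:Bourgain} applies. I would do this with Urysohn's lemma: since $\overline{\Q_{2r}(\mbf{x_0})}\subseteq\Q_{4r}(\mbf{x_0})$, the closed sets $E$ and $\overline{\Q_{2r}(\mbf{x_0})}\cap\partial_e\Omega$ are disjoint, so there is $g\in C(\partial_e\Omega)$ with $0\le g\le 1$, $g\equiv 1$ on $E$, and $g\equiv 0$ on $\Q_{2r}(\mbf{x_0})\cap\partial_e\Omega$. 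By Definition~\ref{def:parabolic_measure}, $\tilde u(\X):=\int_{\partial_e\Omega}g\,d\omega_L^\X$ is a nonnegative weak solution of $L\tilde u=0$ in $\Omega$ with $0\le\tilde u\le 1$, solving the continuous Dirichlet problem with data $g$; and since the TBCDC forces $\partial_e\Omega=\partial_n\Omega$ (Section~\ref{sec:structure_boundary_TBCDC}), the attainment of boundary values is the genuine (two-sided) one, so $\tilde u$ vanishes continuously on $\Q_{2r}(\mbf{x_0})\cap\partial_e\Omega$. Applying \eqref{maineqn:HolderEstimate} with centre $\mbf{x_0}$ and radius $r$ gives $\tilde u(\X)\le C(\delta(\X)/r)^{\alpha_H}$ for $\X\in\Q_r(\mbf{x_0})\cap\Omega$. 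Finally $\mathbf{1}_E\le g$ gives $\omega_L^\X(E)\le\tilde u(\X)$, and $\delta(\X)\le\lVert\X-\mbf{x_0}\rVert$ (as $\mbf{x_0}\in\partial_e\Omega$), which is \eqref{CHMPZ 2.17.1}.

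\textbf{Proof of \eqref{CHMPZ 2.17.3}.} Fix $\X\in\Omega$ and $\mbf{x_0}\in\partial_e\Omega$ with $\lVert\X-\mbf{x_0}\rVert\le 3\delta(\X)$, write $d:=\delta(\X)$, and decompose $\partial_e\Omega=\{\y:\lVert\y-\mbf{x_0}\rVert<16d\}\cup\bigcup_{k\ge 4}A_k$ with $A_k:=\{\y\in\partial_e\Omega:2^kd\le\lVert\y-\mbf{x_0}\rVert<2^{k+1}d\}$. On the bounded piece I would use $\lVert\y-\mbf{x_0}\rVert^\beta\le(16d)^\beta$ and $\omega_L^\X(\partial_e\Omega)=1$, contributing $\lesssim d^\beta$. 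For $k\ge 4$ one has $\lVert\X-\mbf{x_0}\rVert\le 3d<2^{k-2}d=:r$, so $\X\in\Q_r(\mbf{x_0})\cap\Omega$ and $A_k\subseteq\partial_e\Omega\setminus\Q_{4r}(\mbf{x_0})$; then \eqref{CHMPZ 2.17.1} yields $\omega_L^\X(A_k)\lesssim(3d/(2^{k-2}d))^{\alpha_H}\approx 2^{-k\alpha_H}$, hence $\int_{A_k}\lVert\y-\mbf{x_0}\rVert^\beta\,d\omega_L^\X\le(2^{k+1}d)^\beta\omega_L^\X(A_k)\lesssim d^\beta 2^{k(\beta-\alpha_H)}$. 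Summing over $k\ge 4$ converges since $\beta<\alpha_H$, and adding the bounded piece gives $\int_{\partial_e\Omega}\lVert\y-\mbf{x_0}\rVert^\beta\,d\omega_L^\X\lesssim d^\beta=\delta(\X)^\beta$.

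\textbf{Main obstacle.} There is no serious difficulty beyond Theorem~\ref{mainthm:Bourgain}; the two points that need care are: (i) passing from the Borel quantity $\omega_L^\X(E)$ to the continuous datum $g$, and noting that under the TBCDC the boundary values are attained in the full sense so that \eqref{maineqn:HolderEstimate} truly applies; and (ii) the bookkeeping in the dyadic sum, where the hypothesis $\lVert\X-\mbf{x_0}\rVert\le 3\delta(\X)$ only permits \eqref{CHMPZ 2.17.1} once the radius $2^{k-2}\delta(\X)$ exceeds $\lVert\X-\mbf{x_0}\rVert$ (i.e. from $k\ge 4$ on), forcing the innermost annuli to be absorbed into the crude probability bound, with the tail converging precisely because $\beta<\alpha_H$.
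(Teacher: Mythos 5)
Your proof is correct and follows essentially the same route as the paper: majorize the indicator of $\partial_e\Omega\setminus\Q_{4r}(\mbf{x_0})$ by a continuous boundary datum vanishing on $\Q_{2r}(\mbf{x_0})\cap\partial_e\Omega$, apply the boundary Hölder decay \eqref{maineqn:HolderEstimate} to the resulting solution, and then sum \eqref{CHMPZ 2.17.1} over dyadic annuli, using the probability bound on the innermost piece and $\beta<\alpha_H$ for convergence of the tail. The only cosmetic difference is that the paper majorizes via a sequence of compactly supported cutoffs $\phi_j$ plus monotone convergence, whereas you use a single Urysohn function on the compactified boundary; both are valid.
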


\begin{proof}
	Given $j\gg1$, choose $\phi_j\in C_c^\infty(\mathbb{R}^{n+1})$ such that $\phi_j\equiv 1$ in $\Q_{2^jr}(\mbf{x_0})\setminus \Q_{4r}(\mbf{x_0})$, $\phi_j\equiv 0$ in $\Q_{2r}(\mbf{x_0})\cup(\partial\Omega\setminus \Q_{2^{j+1}r}(\mbf{x_0}))$, and $0\leq\phi_j\leq 1$ everywhere. Hence,
	\[
	\omega_L^{\mbf{X}}(\Q_{2^jr}(\mbf{x_0})\setminus\Q_{4r}\mbf{(x_0}))
	\leq
	\int_{\partial_e\Omega}\phi_j\,d\omega_L^{\mbf{X}}=:v_j(\mbf{X}), 
	\quad \text{for all $\mbf{X}\in\Omega$.}
	\]
	Moreover, applying \eqref{maineqn:HolderEstimate} ($v_j$ is continuous up to $\pom$ because so is $\phi_j$, and the Wiener criterion holds by Lemma~\ref{thm:TBCDCimpliesWiener}) and noting that $0\leq v_j\leq 1$ since $0\leq \phi_j\leq 1$, it holds
	\[
	v_j(\mbf{X})
	\leq
	C\left(\frac{\delta(\mbf{X})}{r}\right)^{\alpha_H}\sup_{\mbf{Y}\in \Q_{3r}(\mbf{x_0})\cap\Omega}v_j(\mbf{Y})
	\leq
	C\left(\frac{\delta(\mbf{X})}{r}\right)^{\alpha_H}, 
	\quad 
	\text{for all $\mbf{X}\in \Q_r(\mbf{x_0})\cap\Omega$.}
	\]
	Thus, letting $j\to\infty$, \eqref{CHMPZ 2.17.1} follows by monotone convergence.
	
	Now fix $\X, \x_0$ as in \eqref{CHMPZ 2.17.3}. Writing $\widetilde{\Q}_k:=\Q_{2^k\delta(\mbf{X})}(\mbf{x_0}) \cap \partial_e \Omega$ for $k\in\N$, we have by \eqref{CHMPZ 2.17.1}
	\begin{multline*}
		\int_{\partial_e\Omega}\lVert\mbf{y}-\mbf{x_0}\rVert^\beta\,d\omega_L^\mbf{X}(\mbf{y})
		=
		\int_{\widetilde{\Q}_4}\lVert\mbf{y}-\mbf{x_0}\rVert^\beta\,d\omega_L^\mbf{X}(\mbf{y})
		+\sum_{k=4}^\infty \int_{\widetilde{\Q}_{k+1}\setminus\widetilde{\Q}_k} \lVert\mbf{y}-\mbf{x_0}\rVert^\beta\,d\omega_L^\mbf{X}(\mbf{y})
		\\ \lesssim
		\omega_L^\mbf{X}(\widetilde{\Q}_4)\delta(\mbf{X})^\beta
		+\sum_{k=4}^\infty (2^k\delta(\mbf{X}))^\beta \left(\frac{\delta(\mbf{X})}{2^k \delta(\mbf{X})}\right)^{\alpha_H}
		\lesssim
		\delta(\mbf{X})^\beta \Big( 1+ \sum_{k=4}^\infty 2^{k(\beta-\alpha_H)}\Big)
		\lesssim
		\delta(\mbf{X})^\beta,
	\end{multline*}
	where we have also used that $\omega_L^\mbf{X}(\widetilde{\Q}_4)\leq\omega_L^{\mbf{X}}(\partial_e\Omega)\leq 1$ and $\beta<\alpha_H$.
\end{proof}

With this estimate in hand, we can continue with the main proof of the section.

\begin{proof}[\textbf{Proof of Theorem \ref{mainthm:ExistenceOfHolderSolutions}}]
	We can set $\alpha := \min\{\alpha_H,\alpha_N\}$, with $\alpha_H$ from Lemma \ref{CHMPZ 2.17}, and $\alpha_N$ from Lemma \ref{lem:interiorHolder}. Then, if $0<\beta<\alpha$, for all $f\in \dot{C}^\beta(\partial_e\Omega)$, the function in the statement of the theorem, namely
	\[
	u(\mbf{X}) = \int_{\partial_e\Omega}f\,d\omega_L^{\mbf{X}}, 
	\qquad \X \in \Omega,
	\]
	is the unique solution to the continuous Dirichlet problem by Definition~\ref{def:parabolic_measure}.\footnote{
        The reader may note that the integral defining $u$ is always absolutely convergent, even when $\partial_e \Omega$ is unbounded. Indeed, in such case, $\infty \in \partial_e \Omega$, which implies that $f$ is actually bounded because $f \in \dot{C}^\beta(\partial_e\Omega)$.
    }
	Thus, it remains to show that $u\in \dot{C}^\beta(\Omega\cup\partial_n\Omega)$ with the estimates \eqref{3.5.2}. To do so, we will mimic the proof of \cite[Theorem 3.1]{CHMPZ}. Forgetting about the trivial case of constant boundary values (solutions are constant in that case), let us assume that $\lVert f\rVert_{\dot{C}^\beta(\partial_e\Omega)}=1$.

	Let $\mbf{X},\mbf{Y}\in\Omega$, and assume that $\delta(\mbf{X}) \leq \delta(\mbf{Y})$. Find $\mbf{\hat{x}},\mbf{\hat{y}}\in\partial_e\Omega$ such that $\lVert\mbf{X}-\mbf{\hat{x}}\rVert=\delta(\mbf{X})$ and $\lVert\mbf{Y}-\mbf{\hat{y}}\rVert=\delta(\mbf{Y})$. Now, we split into cases.
	
	\textbf{Case 1:} $\lVert\mbf{X}-\mbf{Y}\rVert<\delta(\mbf{Y})/4$. In this case, $\mbf{X}\in \Q_{\delta(\mbf{Y})/4}(\mbf{Y})$. Then, the interior H\"older continuity estimate (Lemma \ref{lem:interiorHolder}) implies that
	\begin{equation*}
		|u(\mbf{X})-u(\mbf{Y})|
		\! = \!
		|(u(\mbf{X})-f(\mbf{\hat{y}}))-(u(\mbf{Y})-f(\mbf{\hat{y}}))|
		\lesssim \!
		\left(\frac{\lVert\mbf{X}-\mbf{Y}\rVert}{\delta(\mbf{Y})}\right)^{\alpha_N}
		\!\!
		\norm{u(\cdot) - f(\mbf{\hat{y}})}_{L^\infty(\Q_{\frac{\delta(\mbf{Y})}{2}}(\Y))}.
	\end{equation*}
	Now, if $\Z \in \Q_{\delta(\mbf{Y})/2}(\Y)$, then 
	\[
	\norm{\Z - \mbf{\hat{y}}}
	\leq 
	\norm{\Z - \Y} + \norm{\Y - \mbf{\hat{y}}}
	\leq 
	\frac32 \delta(\Y)
	\leq 
	3 \delta(\Z), 
	\]
	which allows us to use Lemma~\ref{CHMPZ 2.17} to further estimate (recalling that $\omega_L$ is a probability)
	\begin{equation*}
		|u(\mbf{Z})-f(\mbf{\hat{y}})|
		\leq
		\int_{\partial_e\Omega} |f(\mbf{y})-f(\mbf{\hat{y}})|\,d\omega_L^{\mbf{Z}}(\y)
		\leq
		\int_{\partial_e\Omega}\lVert\mbf{y}-\mbf{\hat{y}}\rVert^\beta\,d\omega_L^{\mbf{Z}}(\y)
		\lesssim
		\delta(\mbf{Z})^\beta
		\lesssim 
		\delta(\Y)^\beta.
	\end{equation*}
	All these estimates finally give (recalling that $\lVert\mbf{X}-\mbf{Y}\rVert<\delta(\mbf{Y})$ and $\beta<\alpha\leq\alpha_N$)
	\begin{equation*}
		|u(\mbf{X})-u(\mbf{Y})|
		\lesssim
		\left(\frac{\lVert\mbf{X}-\mbf{Y}\rVert}{\delta(\mbf{Y})}\right)^{\alpha_N}\delta(\mbf{Y})^\beta
		\leq
		\left(\frac{\lVert\mbf{X}-\mbf{Y}\rVert}{\delta(\mbf{Y})}\right)^{\beta}\delta(\mbf{Y})^\beta
		=\lVert\mbf{X}-\mbf{Y}\rVert^\beta.
	\end{equation*}
	
	\textbf{Case 2:} $\delta(\mbf{Y})\leq 4\lVert\mbf{X}-\mbf{Y}\rVert$. In this case, we have (recalling that $\delta(\X) \leq \delta(\Y)$)
	\begin{equation*}
		\lVert\mbf{\hat{x}}-\mbf{\hat{y}}\rVert
		\leq
		\lVert\mbf{\hat{x}}-\mbf{X}\rVert+\lVert\mbf{X}-\mbf{Y}\rVert+\lVert\mbf{Y}-\mbf{\hat{y}}\rVert
		=
		\delta(\mbf{X})+\lVert\mbf{X}-\mbf{Y}\rVert+\delta(\mbf{Y})
		\leq 
		9\lVert\mbf{X}-\mbf{Y}\rVert.
	\end{equation*}
	Therefore, using Lemma~\ref{CHMPZ 2.17} (recalling that $\omega_L$ is a probability), it follows that
	\begin{equation*}
		|u(\mbf{X})-f(\mbf{\hat{x}})|
		\leq
		\int_{\partial_e\Omega}|f(\mbf{z})-f(\mbf{\hat{x}})|\,d\omega_L^{\mbf{X}}(\mbf{z})
		\leq
		\int_{\partial_e\Omega}\lVert\mbf{z}-\mbf{\hat{x}}\rVert^\beta\,d\omega_L^{\mbf{X}}(\mbf{z})
		\lesssim 
		\delta(\mbf{X})^\beta
		\lesssim
		\lVert\mbf{X}-\mbf{Y}\rVert^\beta,
	\end{equation*}
	and a similar computation shows $
	|u(\mbf{Y})-f(\mbf{\hat{y}})|\lesssim \lVert\mbf{X}-\mbf{Y}\rVert^\beta$.
	Therefore, we obtain
	\begin{multline*}
		|u(\mbf{X})-u(\mbf{Y})|
		\leq
		|u(\mbf{X})-f(\mbf{\hat{x}})|+|f(\mbf{\hat{x}})-f(\mbf{\hat{y}})|+|f(\mbf{\hat{y}})-u(\mbf{Y})|\\
		\lesssim \lVert\mbf{X}-\mbf{Y}\rVert^\beta+\lVert\mbf{\hat{x}}-\mbf{\hat{y}}\rVert^\beta
		\lesssim 
		\lVert\mbf{X}-\mbf{Y}\rVert^\beta.
	\end{multline*}
	
	Thus, in both Cases 1 and 2, we have shown that $|u(\mbf{X})-u(\mbf{Y})|\lesssim \lVert\mbf{X}-\mbf{Y}\rVert^\beta$, where the implicit constant depends only on $n,\lambda$ and the TBCDC constants, which shows the upper bound in \eqref{3.5.2}. Concretely, since $u$ is uniformly continuous, we can extend it to the boundary and obtain $u \in \dot{C}^\beta(\Omega \cup \partial_n \Omega)$, and the lower bound in \eqref{3.5.2} follows trivially since $u = f$ over $\partial_e \Omega$ (because $u$ was already known to be a solution to the continuous Dirichlet problem with datum $f$ from Lemma~\ref{mainthm:ExistenceOfParabolicMeasure}). 
\end{proof}

The result that we just showed is easily generalizable to more general spaces of functions, as was done in \cite{CHMPZ}. 

\begin{definition}[Growth functions in $\mathcal{G}_\beta$]
	A function $\varphi:(0,+\infty)\to(0,+\infty)$ is said to belong to the $\mathcal{G}_\beta$ class, for $\beta > 0$, if
	\begin{enumerate}
		\item $\varphi(t)\to0$ as $t\to0^+$, and
		\item there exists some $C_\varphi$ such that
		\[
		\int_0^t \varphi(s)\frac{ds}{s}+t^\beta\int_t^\infty \frac{\varphi(s)}{s^\beta}\frac{ds}{s}\leq C_\varphi\varphi(t),\qquad\textrm{for all }t>0.
		\]
	\end{enumerate}
\end{definition}

Then, if we define the \textit{$\varphi$-H\"older space} on a set $E\subseteq\R^{n+1}$ by 
\[
\dot{C}^\varphi(E)
:=
\bigg\{
u:E\to\R:\lVert u\rVert_{\dot{C}^\varphi(E)}:=\sup_{\substack{\mbf{X},\mbf{Y}\in E \\ \mbf{X}\neq\mbf{Y}}}\frac{|u(\mbf{X})-u(\mbf{Y})|}{\varphi(\lVert\mbf{X}-\mbf{Y}\rVert)}<\infty
\bigg\},
\]
we can obtain the following result, which is analogous to \cite[Theorem 3.1]{CHMPZ}.

\begin{theorem}
	Let $L$ be a parabolic operator as in Definition~\ref{def:operator}, and let $\Omega\subseteq\R^{n+1}$ be an open set satisfying the TBCDC for $L$. Assume also that the parabolic measure $\omega_L$ exists for $L$ on $\Omega$. Then, there exists $\alpha\in(0,1)$ such that for all $\beta\in(0,\alpha]$ and $\varphi\in\mathcal{G}_\beta$, the $\dot{C}^\varphi$-Dirichlet problem
	\[
	\begin{cases}
		u\in \dot{C}^\varphi(\Omega\cup\partial_n\Omega)\cap\Wloc(\Omega),\\
		Lu=0\textrm{ in the weak sense in $\Omega$},\\
		u|_{\partial_e\Omega}=f\in \dot{C}^\varphi(\partial_e\Omega).
	\end{cases}
	\] 
	is well-posed. More specifically, there is a unique solution given by
	\[
	u(\mbf{X})=\int_{\partial_e\Omega}f(\mbf{y})\,d\omega_L^{\mbf{X}}(\mbf{y}),\qquad\mbf{X}\in\Omega,
	\]
	that satisfies
	\[
	\lVert f\rVert_{\dot{C}^\varphi(\partial_e\Omega)}
	\leq
	\lVert u\rVert_{\dot{C}^\varphi(\Omega)}
	\lesssim 
	\lVert f\rVert_{\dot{C}^\varphi(\partial_e\Omega)}.
	\]
	The implicit constant and $\alpha$ only depend on $n, \lambda, C_\varphi$ and TBCDC constants.
\end{theorem}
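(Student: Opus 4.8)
The plan is to mirror the proof of Theorem~\ref{mainthm:ExistenceOfHolderSolutions}, with the homogeneity of $t^\beta$ replaced by the two structural conditions defining $\mathcal{G}_\beta$. As there, set $\alpha := \min\{\alpha_H, \alpha_N\}$, with $\alpha_H$ from Theorem~\ref{mainthm:Bourgain} and $\alpha_N$ from Lemma~\ref{lem:interiorHolder}. Uniqueness is immediate from Definition~\ref{def:parabolic_measure}, and since condition (1) in the definition of $\mathcal{G}_\beta$ gives $\dot{C}^\varphi(\partial_e\Omega)\subseteq C(\partial_e\Omega)$ (with $f$ automatically bounded when $\partial_e\Omega$ is unbounded, as then $\infty\in\partial_e\Omega$), the function $u(\X)=\int_{\partial_e\Omega}f\,d\omega_L^{\X}$ is the unique solution to the continuous Dirichlet problem, and it lies in $\Wloc(\Omega)$ because it is a weak solution. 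The bound $\lVert f\rVert_{\dot{C}^\varphi(\partial_e\Omega)}\leq\lVert u\rVert_{\dot{C}^\varphi(\Omega)}$ is trivial once $u=f$ on $\partial_e\Omega$, so everything reduces to proving $\lVert u\rVert_{\dot{C}^\varphi(\Omega)}\lesssim\lVert f\rVert_{\dot{C}^\varphi(\partial_e\Omega)}$; normalizing $\lVert f\rVert_{\dot{C}^\varphi(\partial_e\Omega)}=1$ and discarding the trivial constant case, once this modulus-of-continuity bound is in place $u$ extends to $\Omega\cup\partial_n\Omega$ in $\dot{C}^\varphi$ by uniform continuity.

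The key step is the $\varphi$-analogue of Lemma~\ref{CHMPZ 2.17}: for $\beta\in(0,\alpha]$, $\varphi\in\mathcal{G}_\beta$, and $\X,\x_0$ with $\lVert\X-\x_0\rVert\leq 3\delta(\X)$,
\[
\int_{\partial_e\Omega}\varphi(\lVert\y-\x_0\rVert)\,d\omega_L^{\X}(\y)\;\lesssim\;\varphi(\delta(\X)).
\]
I would prove this exactly as \eqref{CHMPZ 2.17.3}: decompose $\partial_e\Omega$ into the dyadic annuli $\widetilde{\Q}_{k+1}\setminus\widetilde{\Q}_k$, with $\widetilde{\Q}_k:=\Q_{2^k\delta(\X)}(\x_0)\cap\partial_e\Omega$, bound the $\omega_L^{\X}$-mass of the $k$-th annulus by $2^{-k\alpha_H}$ via the tail estimate \eqref{CHMPZ 2.17.1} (which does not see $\beta$ or $\varphi$), and use that $\varphi$ is quasi-increasing to replace $\varphi(\lVert\y-\x_0\rVert)$ by a multiple of $\varphi(2^k\delta(\X))$ on that annulus. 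Then, since $\beta\leq\alpha_H$ one has $2^{-k\alpha_H}\leq 2^{-k\beta}$, and comparing $\sum_k\varphi(2^k\delta(\X))\,2^{-k\beta}$ with $\delta(\X)^\beta\int_{\delta(\X)}^\infty\varphi(s)s^{-\beta}\,\frac{ds}{s}$ (again via quasi-monotonicity of $\varphi$), condition (2) of $\mathcal{G}_\beta$ closes the estimate by $C_\varphi\,\varphi(\delta(\X))$. Note that this argument survives at the endpoint $\beta=\alpha=\alpha_H$ — the crude geometric series used in Theorem~\ref{mainthm:ExistenceOfHolderSolutions} would not — which is why here the admissible range for $\beta$ is the closed interval $(0,\alpha]$.

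With this bound (and its consequence $|u(\Z)-f(\x)|\lesssim\varphi(\delta(\Z))$ for $\x\in\partial_e\Omega$ and $\lVert\Z-\x\rVert\leq 3\delta(\Z)$) in hand, the modulus-of-continuity estimate follows the two-case split in the proof of Theorem~\ref{mainthm:ExistenceOfHolderSolutions} verbatim. For $\X,\Y\in\Omega$ with $\delta(\X)\leq\delta(\Y)$ and nearest boundary points $\hat{\x},\hat{\y}$: when $\lVert\X-\Y\rVert<\delta(\Y)/4$ one combines interior Hölder continuity (Lemma~\ref{lem:interiorHolder}) with the bound above to get $|u(\X)-u(\Y)|\lesssim(\lVert\X-\Y\rVert/\delta(\Y))^{\alpha_N}\varphi(\delta(\Y))$, and then invokes the elementary property of $\mathcal{G}_\beta$ functions that $(r/t)^{\alpha_N}\varphi(t)\lesssim\varphi(r)$ for $0<r\leq t$ (valid because $\beta\leq\alpha_N$ and $t\mapsto\varphi(t)/t^\beta$ is quasi-decreasing; cf.\ \cite{CHMPZ}) to conclude $|u(\X)-u(\Y)|\lesssim\varphi(\lVert\X-\Y\rVert)$; when $\delta(\Y)\leq 4\lVert\X-\Y\rVert$ one bounds $\lVert\hat{\x}-\hat{\y}\rVert\leq 9\lVert\X-\Y\rVert$, splits $|u(\X)-u(\Y)|\leq|u(\X)-f(\hat{\x})|+|f(\hat{\x})-f(\hat{\y})|+|f(\hat{\y})-u(\Y)|$, and bounds each term using the key estimate together with the quasi-monotonicity of $\varphi$ and $\varphi(9t)\lesssim\varphi(t)$. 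Combining the two cases yields $\lVert u\rVert_{\dot{C}^\varphi(\Omega)}\lesssim 1$, which is the desired upper bound.

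The only genuine work beyond transcribing the proof of Theorem~\ref{mainthm:ExistenceOfHolderSolutions} lies in the two "self-improving" properties of $\mathcal{G}_\beta$ functions used above — quasi-monotonicity (which underlies the comparison of dyadic sums with the integrals in the definition of $\mathcal{G}_\beta$) and the growth bound $(r/t)^{\alpha_N}\varphi(t)\lesssim\varphi(r)$ for $r\le t$ — together with the small bookkeeping that makes the endpoint $\beta=\alpha$ admissible. These are elementary and already present in \cite{CHMPZ}, so I expect no conceptual obstacle; the place to be slightly careful is simply that all the $\mathcal{G}_\beta$-constants entering the final estimate depend only on $C_\varphi$ (and not on the particular $\varphi$), which is what the statement claims.
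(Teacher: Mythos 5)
Your proposal is correct and follows exactly the route the paper indicates (the paper omits the proof, stating it is analogous to Theorem~\ref{mainthm:ExistenceOfHolderSolutions} combined with the growth-function facts of \cite[Lemma 2.15]{CHMPZ}): you prove the $\varphi$-analogue of \eqref{CHMPZ 2.17.3} via the dyadic-annuli decomposition and condition (2) of $\mathcal{G}_\beta$, then rerun the two-case argument using quasi-monotonicity, doubling, and $(r/t)^{\beta}\varphi(t)\lesssim\varphi(r)$. Your observation that the integral condition (rather than a convergent geometric series) is what makes the closed endpoint $\beta=\alpha$ admissible is exactly the right bookkeeping.
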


We omit the proof: it is analogous to the proof of Theorem~\ref{mainthm:ExistenceOfHolderSolutions}, similarly to \cite[Theorem 3.1, Part 2]{CHMPZ}. On top of that, one uses facts about growth functions that can be found in \cite[Lemma 2.15]{CHMPZ}.

\begin{remark}
    If instead of following the convention that $\infty \in \partial_e \Omega$ whenever $\Omega$ is unbounded, one would like to follow the spirit of the original proofs in \cite{CHMPZ} by not imposing any boundary value at infinity, the results would be equally true. Indeed, the reader can check that the proofs from \cite{CHMPZ} still work almost verbatim. For existence of solutions, one uses $\omega_L$ restricted to $\partial_e \Omega \setminus \{\infty\}$, and shows that solutions are well-defined as in \cite[Section 3.1, Step 1]{CHMPZ} using Harnack's inequality from Lemma~\ref{lem:harnack}. In turn, uniqueness of solutions follows \cite[Section 3.2]{CHMPZ} word by word.
\end{remark}

%%%%%%%%%%%%%%%%%%%%%%%%%%%%%%%%%%%%%%%%%%%%%%%%%%%%%%%
%%%%%%%%%%%%%%%%%%%%%%%%%%%%%%%%%%%%%%%%%%%%%%%%%%%%%%%
%%%%%%%%%%%%%%%%%%%%%%%%%%%%%%%%%%%%%%%%%%%%%%%%%%%%%%%
%%%%%%%%%%%%%%%%%%%%%%%%%%%%%%%%%%%%%%%%%%%%%%%%%%%%%%%
%%%%%%%%%%%%%%%%%%%%%%%%%%%%%%%%%%%%%%%%%%%%%%%%%%%%%%%
%%%%%%%%%%%%%%%%%%%%%%%%%%%%%%%%%%%%%%%%%%%%%%%%%%%%%%%
%%%%%%%%%%%%%%%%%%%%%%%%%%%%%%%%%%%%%%%%%%%%%%%%%%%%%%%
%%%%%%%%%%%%%%%%%%%%%%%%%%%%%%%%%%%%%%%%%%%%%%%%%%%%%%%
%%%%%%%%%%%%%%%%%%%%%%%%%%%%%%%%%%%%%%%%%%%%%%%%%%%%%%%
%%%%%%%%%%%%%%%%%%%%%%%%%%%%%%%%%%%%%%%%%%%%%%%%%%%%%%%

\section{Proof of Theorem \ref{mainthm:ExistenceOfParabolicMeasure}, existence of the parabolic measure} \label{sec:existence_parabolic_measure}

As commented in the introduction of Section~\ref{sec:bourgain_proof}, we showed Theorems~\ref{mainthm:Bourgain} and \ref{mainthm:ExistenceOfHolderSolutions} under the \textit{a priori} assumption that the parabolic measure for $L$ exists. In this section, we prove that this is actually the case for any parabolic operator $L$ with merely bounded coefficients as in Definition~\ref{def:operator}. We will actually construct the parabolic measure for $L$ as the limit of the ones associated to certain regularized versions of $L$, for which the parabolic measure is known to exist by Wiener's criterion (see Remark~\ref{rmk:ExistenceOfParabolicMeasureForCinfty}). In fact, we will rely on Theorems~\ref{mainthm:Bourgain} and \ref{mainthm:ExistenceOfHolderSolutions} applied to these regularized operators.

\subsection{Existence of the parabolic measure for bounded domains} 

Let us first consider the case when $\Omega$ is bounded, which is simpler to understand. Recall that if we are given any point $(X, t) \in \Omega$, only what happens in $\mathcal{T}_{< t}$ influences PDEs at $(X, t)$. Concretely, if we want to determine whether $\omega_L^{X, t}$ exists, we could modify $\Omega$ to the future of $(X, t)$ without any side effects.

\begin{theorem}
\label{thm:ExistenceOfParabolicMeasureOnBoundedDomains}
    Let $L=\partial_t-\div A\nabla$ be a parabolic operator as in Definition~\ref{def:operator}, and $\Omega\subseteq\R^{n+1}$ be a bounded open set that satisfies the TBCDC for the operator $\partial_t-M_2\Delta$, where $M_2$ is the constant from Lemma~\ref{lem:capLtoM}. Then, for each $(X,t)\in\Omega$, there exists a unique positive Radon measure $\omega_L^{X,t}$ supported on $\partial_e\Omega$ so that for all $f\in C(\partial_e\Omega)$, the function 
    \[
        u(X,t)=\int_{\partial_e\Omega}f\,d\omega_L^{X,t}, 
        \qquad 
        (X, t) \in \Omega,
    \]
    solves the continuous Dirichlet problem $Lu = 0$ in $\Omega$, and $u = f$ on $\partial_e \Omega$ continuously.
\end{theorem}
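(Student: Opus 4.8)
The plan is to construct $\omega_L^{X,t}$ as a weak-$*$ limit of the parabolic measures of regularized operators. First I would extend $A$ to all of $\R^{n+1}$ keeping the ellipticity bounds (with constants depending only on $n,\lambda$), and set $A_k:=A*\rho_k$ for a standard mollifier. Each $L_k:=\partial_t-\div(A_k\nabla)$ then has $C^\infty$ coefficients, satisfies the ellipticity bounds with the \emph{same} $\lambda$ (by convexity), and $A_k\to A$ a.e.\ on $\Omega$ and in $L^p_\loc(\Omega)$ for every $p<\infty$. The crucial point is that, since $M_2$ in Lemma~\ref{lem:capLtoM} depends only on $n$ and $\lambda$, the same $M_2$ works for every $L_k$; hence \eqref{eq:comparison_capacities} together with Lemma~\ref{lem:cap_cylinder} shows that the TBCDC for $\partial_t-M_2\Delta$ implies the TBCDC for each $L_k$, with constants \emph{uniform in $k$}. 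By Lemma~\ref{thm:TBCDCimpliesWiener} (applicable since $A_k$ is smooth) the parabolic measure $\omega_{L_k}$ exists on $\Omega$, and Theorems~\ref{mainthm:Bourgain} and \ref{mainthm:ExistenceOfHolderSolutions} apply to $L_k$ with all constants (in particular $\alpha=\min\{\alpha_N,\alpha_H\}$, with $\alpha_N$ from Lemma~\ref{lem:interiorHolder} and $\alpha_H$ from Theorem~\ref{mainthm:Bourgain}, and the Hölder bound constant) depending only on $n,\lambda$ and the TBCDC constants, hence uniform in $k$.

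Next, fix $\beta\in(0,\alpha)$ and $f\in\dot{C}^\beta(\partial_e\Omega)$, and set $u_k(X,t):=\int_{\partial_e\Omega}f\,d\omega_{L_k}^{X,t}$, which by Theorem~\ref{mainthm:ExistenceOfHolderSolutions} solves the continuous Dirichlet problem for $L_k$ and satisfies $\|u_k\|_{L^\infty(\Omega)}\le\|f\|_{L^\infty(\partial_e\Omega)}$ plus a uniform bound $\|u_k\|_{\dot{C}^\beta(\Omega\cup\partial_n\Omega)}\lesssim\|f\|_{\dot{C}^\beta(\partial_e\Omega)}$. Recalling that the TBCDC forces $\partial_e\Omega=\partial_n\Omega$ and $\partial_{ss}\Omega=\emptyset$ (Section~\ref{sec:structure_boundary_TBCDC}), these are uniform $\dot{C}^\beta(\overline\Omega)$ bounds, so by Arzelà--Ascoli a subsequence converges uniformly on $\overline\Omega$ (compact, since $\Omega$ is bounded) to some $u\in\dot{C}^\beta(\overline\Omega)$ with $u|_{\partial_e\Omega}=f$. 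To see that $Lu=0$ weakly, I would use Caccioppoli (Lemma~\ref{lem:caccioppoli}) and interior Hölder continuity (Lemma~\ref{lem:interiorHolder}) to bound $u_k$ uniformly in $W^{1,2}_\loc(\Omega)$, pass to a further subsequence with $\nabla u_k\rightharpoonup\nabla u$ in $L^2_\loc(\Omega)$, and pass to the limit in $\iint_\Omega(-u_k\partial_t\psi+A_k\nabla u_k\cdot\nabla\psi)\,dXdt=0$, using that $A_k\nabla u_k\rightharpoonup A\nabla u$ in $L^2_\loc$ (write $A_k\nabla u_k-A\nabla u=A_k(\nabla u_k-\nabla u)+(A_k-A)\nabla u$ and use $A_k\to A$ in $L^2_\loc$ for the second term). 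Since the continuous Dirichlet problem for $L$ on a bounded domain has at most one solution by the classical maximum principle, the limit is independent of the subsequence, so in fact $u_k(X,t)=\int_{\partial_e\Omega}f\,d\omega_{L_k}^{X,t}\to u(X,t)$ for every $(X,t)\in\Omega$.

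Now I would upgrade from Hölder to continuous data. Since $\dot{C}^\beta(\partial_e\Omega)$ is dense in $C(\partial_e\Omega)$ (sup norm on the compact set $\partial_e\Omega$) and each $\omega_{L_k}^{X,t}$ is a probability measure, the convergence of $\int f\,d\omega_{L_k}^{X,t}$ for all Hölder $f$ propagates to every $f\in C(\partial_e\Omega)$; thus $\omega_{L_k}^{X,t}\to\omega_L^{X,t}$ weakly-$*$ for a Borel probability measure $\omega_L^{X,t}$ on $\partial_e\Omega$ (Riesz representation; finite Borel on a compact metric space is automatically Radon). For $f\in C(\partial_e\Omega)$ put $u(X,t):=\int_{\partial_e\Omega}f\,d\omega_L^{X,t}=\lim_k u_k(X,t)$; the interior argument above again gives $u\in W^{1,2}_\loc(\Omega)\cap C(\Omega)$ with $Lu=0$ weakly. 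For continuity up to the boundary, approximate $f$ uniformly by $f_j\in\dot{C}^\beta(\partial_e\Omega)$: the corresponding $u^{(j)}=\int f_j\,d\omega_L^{X,t}$ are continuous on $\overline\Omega$ with $u^{(j)}|_{\partial_e\Omega}=f_j$ by the Hölder case, and $\|u^{(j)}-u\|_{L^\infty(\Omega)}\le\|f_j-f\|_{L^\infty(\partial_e\Omega)}\to0$ because $\omega_L^{X,t}$ is a probability, so $(u^{(j)})$ is Cauchy in $C(\overline\Omega)$ and $u$ extends continuously to $\overline\Omega$ with $u|_{\partial_e\Omega}=f$. Finally, uniqueness of $\omega_L^{X,t}$ follows from uniqueness of the solution (maximum principle) plus Riesz representation, and taking $f\equiv1$ (for which $u\equiv1$) yields $\omega_L^{X,t}(\partial_e\Omega)=1$.

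I expect the main obstacle to be keeping track of the \emph{uniformity in $k$} of every quantitative constant: this is exactly where the hypothesis ``TBCDC for $\partial_t-M_2\Delta$'' (rather than ``TBCDC for $L$'') is essential, via Lemma~\ref{lem:capLtoM}, since regularity of individual boundary points can genuinely change along the family $L_k$. A secondary technical point is the passage to the limit in the weak formulation, where one must argue carefully that $A_k\nabla u_k\rightharpoonup A\nabla u$ even though $A_k\to A$ only in the a.e.\ and $L^p_\loc$ senses.
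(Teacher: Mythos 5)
Your proposal is correct and follows essentially the same route as the paper: mollify $A$, use the TBCDC for $\partial_t-M_2\Delta$ to get the TBCDC (hence Wiener regularity and uniform Hölder estimates) for every $L_k$ simultaneously, extract a limit by Arzelà--Ascoli plus Caccioppoli, pass to the limit in the weak formulation, and then upgrade from Hölder to continuous data by uniform approximation and the maximum principle. The only (cosmetic) difference is that you identify $\omega_L^{X,t}$ directly as the weak-$*$ limit of the $\omega_{L_k}^{X,t}$, whereas the paper first builds the solution operator $f\mapsto u(X,t)$ for all continuous $f$ and only then invokes the Riesz representation theorem.
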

  
\begin{proof}
    \textbf{Part I: when $f$ is Hölder.} First, consider $f\in \dot{C}^\beta(\partial_e\Omega)$, where $\beta<\alpha$ for $\alpha$ in the statement of Theorem \ref{mainthm:ExistenceOfHolderSolutions}.
    
    \textbf{Step 1: approximation of $L$.} First, extend $A$ by the identity matrix outside of $\Omega$. Let $\eta$ be the standard mollifier, and define $A_k := \eta_k*A$ for $k \in \N$, where the convolution is done componentwise and $\eta_k(x) := k^n \eta(kx)$. Then, the matrices $A_k$ are $C^\infty$, and $A_k \to A$ in $L^2_{\loc}$ as $k \to \infty$. Since $\int \eta_k = 1$ and $A$ is elliptic and bounded, it is easy to check that
    \begin{equation*}
    	\lambda|\xi|^2
    	\leq
    	A_k(\X)\xi \cdot \xi,
    	\quad
    	\lVert A_k\rVert_{L^\infty(\R^n)}\leq\lambda^{-1},
    	\qquad 
    	\text{a.e. } \X\in\R^{n+1}, \; \forall \xi \in \Rn,
    \end{equation*}
	with the same constants as $A$. This means that the family of operators $L_k := \partial_t-\div A_k\nabla$ satisfy the ellipticity and boundedness conditions uniformly on $k$. 
	Thus, since the value of $M_2$ from Lemma~\ref{lem:capLtoM} only depends on $n$ and $\lambda$, we can consider a common $M_2$ that makes Lemma~\ref{lem:capLtoM} work for all $L_k$ simultaneously. Concretely, since we have assumed the TBCDC for $\partial_t-M_2\Delta$, \eqref{eq:comparison_capacities} implies that $\Omega$ satisfies the TBCDC for $L_k$ for all $k\in\N$. 
	
	Therefore, since the $A_k$ are smooth, Lemma~\ref{thm:TBCDCimpliesWiener} implies that the parabolic measure $\omega_{L_k}^\X$ exists for $L_k$ in $\Omega$. Hence, we may define
    \[
        u_k(\X) := \int_{\partial_e\Omega}f\,d\omega_{L_k}^\X, 
        \qquad \X \in \Omega,
    \]
    which, by Theorem \ref{mainthm:ExistenceOfHolderSolutions}, satisfies $u_k\in \dot{C}^\beta(\Omega\cup\partial_n\Omega)\cap W^{1,2}_\loc(\Omega)$ and $u_k|_{\partial_e\Omega}=f$, for all $k\in\N$. In fact, we can find extensions of the $u_k$, which we will by abuse of notation again call $u_k$, so that $u_k\in \dot{C}^\beta(\overline{\Omega})\cap W^{1,2}_\loc(\Omega)$.
    
    %%%%%%%%%%%%%%%%%%%%%%%%5
    
    \textbf{Step 2: extracting a limit.} The $u_k$ are uniformly bounded in $\dot{C}^\beta(\overline{\Omega})$ by \eqref{3.5.2}, and also in $L^\infty$ by the maximum principle. Thus, by the Arzelà-Ascoli theorem, there exists $u \in \dot{C}^\beta(\overline{\Omega})$ such that $u_k \to u$ in $L^\infty(\overline{\Omega})$ as $k \to \infty$.

	%%%%%%%%%%%%%%%%%%5

    \textbf{Step 3: $u\in W^{1,2}_\loc(\Omega)$.} Let $\Q$ be a parabolic cube such that $4\Q\subseteq\Omega$. Then, by Caccioppoli's inequality (Lemma~\ref{lem:caccioppoli}) and the maximum principle,
    \begin{align*}
        \sup_{k\in \N} \iint_\Q |\nabla u_k|^2
        \lesssim_\Q
        \sup_{k\in \N} \iint_{2\Q} |u_k|^2 
        \lesssim_\Q
        \sup_{k\in \N} \lVert u_k\rVert_\infty^2
        \leq
        \lVert f\rVert_\infty^2.
    \end{align*}
    Since this bound is uniform (the constants in Caccioppoli's inequality depend on ellipticity, which is uniformly bounded for the $L_k$), we get an uniform bound for $\norm{u_k}_{W^{1, 2}(\Q)}$, independent of $k$. Hence, by compactness, $u_k \to u_\Q$ and $\nabla u_k \rightharpoonup \nabla u_\Q$ in $L^2(\Q)$ for some $u_\Q \in W^{1, 2}(\Q)$ (up to a subsequence).    
    Concretely, up to a further subsequence, $u_k \to u_\Q$ a.e. in $\Q$, so actually $u_\Q = u$ a.e. in $\Q$ because of the uniform convergence obtained in the previous step. 
    
    Upon covering any compact set inside $\Omega$ by cubes $\Q$ as in the last paragraph (with a Vitali-like argument), we obtain $u \in W^{1, 2}_\loc(\Omega)$, with $\nabla u_k \rightharpoonup \nabla u$ in $L^2_\loc(\Omega)$.
    
    %%%%%%%%%%%%%%%%%%%%%%%%%
    
    \textbf{Step 4: $u$ is a weak solution.} Let $\psi\in C_c^\infty(\Omega)$. Then, since $L_k u_k = 0$ in $\Omega$, it holds
    \begin{multline*}
        \left|\iint_\Omega -u\partial_t\psi+A\nabla u\cdot\nabla\psi\right|
        =\left|\iint_\Omega -(u-u_k)\partial_t\psi+(A\nabla u-A_k\nabla u_k)\cdot\nabla\psi\right|
        \\
        \leq
        \abs{ \iint_{\supp \psi} (u-u_k)\partial_t\psi }
        +\left|\iint_{\supp \psi} (A\nabla u-A_k\nabla u_k)\cdot\nabla\psi\right|
        =:
        \textrm{I}+\textrm{II}.
    \end{multline*}
	Clearly $\textrm{I} \to 0$ as $k \to \infty$ because $u_k \to u$ uniformly in $\supp \psi$. In turn,
    \begin{multline*}
        \textrm{II}
        \leq
        \left|\iint_{\supp \psi} \!\!\!\! A(\nabla u-\nabla u_k)\cdot\nabla\psi\right|
        +\iint_{\supp \psi} \!\!\!\! |(A-A_k)\nabla u_k\cdot\nabla\psi|
       	\\ \leq 
        \left|\iint_{\supp \psi} \!\!\!\! (\nabla u-\nabla u_k)\cdot A^T \nabla\psi\right|
        +
        \left(\iint_{\supp \psi} \!\!\!\! |A-A_k|^2 \right)^{1/2} \left(\iint_{\supp \psi}  \!\!\!\! |\nabla u_k|^2 \right)^{1/2} \norm{\nabla\psi}_\infty.
    \end{multline*}
	The first term converges to 0 because $\nabla u_k \rightharpoonup \nabla u$ in $L^2_\loc(\Omega)$, and so does the second because $A_k \to A$ in $L^2_{\loc}$ (using also the uniform bounds for $u_k$ in $W^{1, 2}_\loc(\Omega)$).
    Combining these estimates yields
    $
        \iint_\Omega -u\partial_t\psi+A\nabla u\cdot\nabla\psi=0,
    $
    which implies that $Lu=0$ in $\Omega$.
    
    %%%%%%%%%%%%%%%%%%%%%%%%%%%%%%%%%%%%%%%%%%%

    \textbf{Part II: when $f$ is continuous.} Let us now consider the general case when $f\in C(\partial_e\Omega)$. We will follow similar steps to the ones above to construct a solution in this case.
    
    %%%%%%%%%%%%%%%%%%%%%%%%

    \textbf{Step 1: approximation of $f$.} Since $\partial_e\Omega$ is bounded, $f$ is uniformly continuous, so we can find $\{f_j\}\subseteq \dot{C}^\beta(\partial_e\Omega)$ such that $f_j\to f$ uniformly on $\partial_e\Omega$ (see \cite[Lemma 6.8]{Heinonen}). Then, for each $j\in\N$, we can use Part I to find a solution $u_j$ to
    \[
        \begin{cases}
            u_j\in \dot{C}^\beta(\overline{\Omega})\cap W^{1,2}_\loc(\Omega), \\
            Lu_j=0\textrm{ in the weak sense on }\Omega, \\
            u_j=f_j \text{ on } \partial_e\Omega.
        \end{cases}
    \]

	%%%%%%%%%%%%%%%%%%%%%%%%%%

    \textbf{Step 2: extracting a limit.} Note that for all $j,k\in\N$, $L(u_j-u_k)=0$ weakly in $\Omega$, with boundary data $f_j-f_k$, so we may apply the maximum principle to get
    \[
        \lVert u_j-u_k\rVert_\infty\leq\lVert f_j-f_k\rVert_\infty\underset{j,k\to\infty}{\longrightarrow}0.
    \]
    Thus, $\{u_j\}$ is a Cauchy 
    sequence in $C(\overline{\Omega})$, whence (up to a subsequence) $u_j \to u \in C(\overline{\Omega})$, uniformly. Concretely, we have $u = f$ on $\partial_e\Omega$ because $u_j \to u$ and $f_j \to f$ uniformly.
    
    %%%%%%%%%%%%%%%%%%%%%%%%%%%

    \textbf{Step 3: $u\in W^{1,2}_\loc(\Omega)$.} 
    Similarly as in Part I, if $\Q$ is a cube with $4\Q\subseteq\Omega$, by Caccioppoli's inequality (Lemma~\ref{lem:caccioppoli}) and the maximum principle, we get
    \begin{equation*}
        \sup_{j\in \N} \iint_\Q |\nabla u_j|^2
        \lesssim_\Q
        \sup_{j\in \N} \iint_{2\Q} |u_j|^2 
        \lesssim_\Q
        \sup_{j\in \N} \lVert u_j\rVert_\infty^2
        \leq
        \sup_{j\in\N}\lVert f_j\rVert_\infty^2
        \leq
        \lVert f\rVert_\infty^2 + 1,
    \end{equation*}
	where in the last estimate we have used the uniform convergence $f_j \to f$ (and we possibly need to take a subsequence). This shows (after a covering argument) that the $u_j$ are uniformly bounded in $W^{1, 2}(\Q)$, so by compactness there exists a weak limit that must actually coincide with $u$ from Step 2 (we already explained this in Part I). In the end, we obtain that $u \in W^{1, 2}_\loc(\Omega)$, and $\nabla u_j \rightharpoonup \nabla u$ in $L^2_\loc(\Omega)$.
	
    %%%%%%%%%%%%%%%%%%%%%%%%%%%%%%%%%%%%%%%%%%%

    \textbf{Step 4: $u$ is a weak solution.} Let $\psi\in C_c^\infty(\Omega)$. Since $Lu_j = 0$ in $\Omega$, we have
    \begin{multline*}
        \iint_\Omega -u\partial_t\psi+A\nabla u\cdot\nabla\psi
        =
        \iint_\Omega -(u-u_j)\partial_t\psi+A(\nabla u-\nabla u_j)\cdot\nabla\psi
        \\ =
        \iint_{\supp \psi} (u-u_j)\partial_t\psi
        + \iint_{\supp \psi} (\nabla u-\nabla u_j)\cdot A^T \nabla\psi, 
    \end{multline*}
	and both terms converge to 0 as $j \to \infty$ by the weak convergence $u_j \rightharpoonup u$ in $W^{1, 2}_{\loc}(\Omega)$. Thus, $       \iint_\Omega -u\partial_t\psi+A\nabla u\cdot\nabla\psi=0$, so $Lu=0$ in the weak sense on $\Omega$.
	
	%%%%%%%%%%%%%%%%%%%%%%%%%%%

    \textbf{Part III: existence of parabolic measure.} In Part II we have found, for every $f \in C(\partial_e \Omega)$, a solution to $Lu= 0$ in $\Omega$ satisfying $u = f$ on $\partial_e \Omega$. Note that this construction is linear because the classical maximum principle grants uniqueness of solutions to $Lu=0$ in $\Omega$ with continuous boundary values. Thus, if we fix $\X \in \Omega$, the map $f \mapsto u(\X)$ is linear. This map is easily seen to be positive, too.\footnote{
		If $f \geq 0$, the construction in Part II, Step 1, can be taken so that $f_j \geq 0$ (this is in fact what is done in \cite[Lemma 6.8]{Heinonen}). Using these $f_j$ in Part I, we obtain $u_j \geq 0$. (Indeed, in Step 1 we clearly get $u_k \geq 0$ because $\omega_{L_k}$ is a positive measure, so taking the uniform limit in Step 2 yields a non-negative limit function.) Taking uniform limits in Part II, Step 2, yields $u \geq 0$.
	} Therefore, the existence of the parabolic measure $\omega_L^\X$ is now a consequence of the Riesz representation theorem. The fact that it is a probability measure follows easily because $\Omega$ is bounded, as explained in Remark~\ref{rem:probability_bounded}.
\end{proof}

%%%%%%%%%%%%%%%%%%%%%%%%%%%%%%%%%%%%%%%%%%%%%%%%%%%%
%%%%%%%%%%%%%%%%%%%%%%%%%%%%%%%%%%%%%%%%%%%%%%%%%%%%
%%%%%%%%%%%%%%%%%%%%%%%%%%%%%%%%%%%%%%%%%%%%%%%%%%%%
%%%%%%%%%%%%%%%%%%%%%%%%%%%%%%%%%%%%%%%%%%%%%%%%%%%%
%%%%%%%%%%%%%%%%%%%%%%%%%%%%%%%%%%%%%%%%%%%%%%%%%%%%

\subsection{Existence of the parabolic measure for unbounded domains}

After having worked out the case of bounded domains, strongly taking advantage of Theorem~\ref{mainthm:ExistenceOfHolderSolutions} to grant the compactness necessary to run the approximation argument, let us pass to the unbounded setting. Recall that if $\Omega$ is unbounded, we consider that the point at infinity is part of $\partial_e \Omega$, so uniqueness should not be a major problem (compare this approach to the one in \cite{CHMPZ}, for instance). However, it is still not trivial to obtain global bounds for (candidates to) solutions, so we need to refine the arguments in Theorem~\ref{thm:ExistenceOfParabolicMeasureOnBoundedDomains} to be able to construct global solutions and, in that way, be able to construct the parabolic measure in unbounded domains.

Our approach will be based on approximating the unbounded set $\Omega$ by bounded truncations of it, to which we will apply Theorem~\ref{thm:ExistenceOfParabolicMeasureOnBoundedDomains}. Namely, we will consider
\[
    \Omega_R:=\Omega \cap \Q_R(0,0), 
    \qquad 
    R > 0,
\]
and we will work to get uniform bounds for solutions as $R \to \infty$.
We let $\Sigma_R$ be the quasi-lateral boundary of $\Omega_R$, and $\omega_R^{X,t}$ be the parabolic measure for $L$ on $\Omega_R$: let us first prove a small lemma to ensure that the latter exists, under the assumptions of Theorem \ref{thm:ExistenceOfParabolicMeasureOnBoundedDomains}.
\Bk%

\begin{lemma}
\label{lem:TBCDConOmegaR}
    Suppose $\Omega$ satisfies the TBCDC for $L$. Then, so does $\Omega_R$ for all $R>0$. Moreover, the TBCDC constants of $\Omega_R$ do not depend on $R$, but only on those of $\Omega$, and $n$ and $\lambda$.
\end{lemma}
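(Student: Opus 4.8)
The plan is to deduce the TBCDC for $\Omega_R$ from that for $\Omega$ using only two soft ingredients: the monotonicities $\Omega^c\subseteq\Omega_R^c$ and $\Q_R(0,0)^c\subseteq\Omega_R^c$, together with the two-sided estimate $\mathrm{Cap}_L\big(\overline{Q_\rho(X)}\times[t-(\alpha\rho)^2,t-(\beta\rho)^2]\big)\approx\rho^n$ of Lemma~\ref{lem:cap_cylinder}. Fix $(x_0,t_0)\in\Sigma_R$ and $0<r<\sqrt{t_0-T_{\min}(\Omega_R)}/4$, keep the TBCDC constant $a\in(0,1)$ of $\Omega$, and set $\Psi:=\overline{Q_r(x_0)}\times[t_0-r^2,t_0-(ar)^2]$. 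Since $\mathrm{Cap}_L(\Psi)\approx r^n$, it suffices to prove $\mathrm{Cap}_L(\Psi\cap\Omega_R^c)\gtrsim r^n$ with constants depending only on $n,\lambda$ and the TBCDC data of $\Omega$. First I would record the trivialities $T_{\min}(\Omega_R)\ge T_{\min}(\Omega)$ and $(x_0,t_0)\in\overline{\Omega_R}\subseteq\overline\Omega\cap\overline{\Q_R(0,0)}$, so that in particular $|x_0|\le R$ and $-R^2<t_0\le R^2$, and then split into two cases according to whether $x_0$ lies on the lateral face $\{|x_0|=R\}$ of the cube.

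If $|x_0|=R$, I would push $x_0$ radially outward to $x_0':=\frac{R+r/2}{R}x_0$; then $|x_0'-x_0|=r/2$ and every $Y\in\overline{Q_{r/4}(x_0')}$ satisfies $|Y|\ge R+r/4>R$ and $|Y-x_0|<r$, whence $\overline{Q_{r/4}(x_0')}\times[t_0-r^2,t_0-(ar)^2]\subseteq\Psi\cap\Q_R(0,0)^c\subseteq\Psi\cap\Omega_R^c$. Shrinking the time interval to one of admissible shape but still of length comparable to $r^2$ and applying Lemma~\ref{lem:cap_cylinder} yields $\mathrm{Cap}_L(\Psi\cap\Omega_R^c)\gtrsim(r/4)^n\approx r^n$; note that the TBCDC of $\Omega$ is not used in this case.

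If $|x_0|<R$, the goal is to show $(x_0,t_0)\in\Sigma$ and then invoke the TBCDC of $\Omega$ verbatim; monotonicity $\Omega^c\subseteq\Omega_R^c$ then finishes, since it gives $\mathrm{Cap}_L(\Psi\cap\Omega_R^c)\ge\mathrm{Cap}_L(\Psi\cap\Omega^c)\gtrsim r^n$, the last step being legitimate because $0<r<\sqrt{t_0-T_{\min}(\Omega_R)}/4\le\sqrt{t_0-T_{\min}(\Omega)}/4$. To place $(x_0,t_0)$ in $\Sigma$, I would exploit that $\Q_R(0,0)$ is spatially inactive near $(x_0,t_0)$: if $\|(x_0,t_0)\|<R$ then $\Omega_R$ and $\Omega$ coincide in a neighborhood of $(x_0,t_0)$, so $(x_0,t_0)\in\partial\Omega$; and if $|t_0|=R^2$ then necessarily $t_0=R^2$ (the value $-R^2$ is vacuous, as it forces $t_0=T_{\min}(\Omega_R)$, hence $r=0$), while $(x_0,R^2)\in\Omega$ is impossible because it would then be a point of $\partial_s\Omega_R$ at the terminal time $T_{\max}(\Omega_R)=R^2$, hence removed from $\Sigma_R$. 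Knowing $(x_0,t_0)\in\partial\Omega$, I would rule out the two pieces deleted in the definition of $\Sigma$: it cannot lie in $\mathcal{B}\Omega\cap\T_{=T_{\min}(\Omega)}$ since $t_0>T_{\min}(\Omega_R)\ge T_{\min}(\Omega)$, and it cannot lie in $\partial_s\Omega\cap\T_{=T_{\max}(\Omega)}$ because a point of $\partial_s\Omega$, being in $\partial_a\Omega$, has a backward cube contained in $\Omega$, which then lies in $\Omega_R$ as well and would place the point in $\partial_s\Omega_R\cap\T_{=T_{\max}(\Omega_R)}$, again excluded from $\Sigma_R$; here I would lean on the structural facts recalled in Section~\ref{sec:structure_boundary_TBCDC}, which guarantee that for $\Omega$ one has $\partial_a\Omega=\partial_s\Omega\cap\T_{=T_{\max}(\Omega)}$. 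Tracking constants through both cases, everything produced depends only on $n,\lambda$ and the TBCDC constants of $\Omega$, so those of $\Omega_R$ are independent of $R$. The main obstacle is precisely this bookkeeping in the second case — reconciling $\Sigma_R$ with $\Sigma$, and the various $\partial_s$, $\partial_{ss}$, $\mathcal{B}$ pieces, at the faces of $\Omega_R$ that $\partial\Q_R(0,0)$ creates; once that is settled, the capacity estimates are immediate from Lemma~\ref{lem:cap_cylinder} and monotonicity.
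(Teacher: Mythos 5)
Your proof is correct and follows essentially the same route as the paper's: split according to whether $(x_0,t_0)$ lies on the spatial lateral face of $\Q_R(\mathbf{0})$ (where the cube's exterior alone supplies the capacity lower bound via Lemma~\ref{lem:cap_cylinder}) or is a point of $\Sigma$ (where the TBCDC of $\Omega$ plus monotonicity of capacity finishes), with all constants independent of $R$. The paper merely asserts this case dichotomy and the cube's own TBCDC; you have filled in those details, including the $\Sigma_R$ versus $\Sigma$ boundary bookkeeping, correctly.
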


\begin{remark}	
    We include a short proof of this lemma because one has to be careful about the time-directedness of the TBCDC.
\end{remark}

\begin{proof}	
    Let $(x_0,t_0)\in\Sigma_R$ and $0<r<\sqrt{t_0-T_\textrm{min}(\Omega_R)}/4$. We depict the possible situations in Figure~\ref{fig:restricted_boundary}. If $(x_0,t_0)\in\Sigma$ (resp. $\Sigma(\Q_R(\mbf{0}))$, that is, the quasi-lateral boundary of the set $\Q_R(\mbf{0})$), we may simply use the TBCDC estimate for $\Omega$ (resp. $\Q_R(\mbf{0})$, which satisfies the TBCDC with constant depending only on $n$ and $\lambda$) because $\Omega_R^c \supset \Omega^c$ (resp. $\Omega_R^c \supset \Q_R(\mbf{0})^c$). 
    These are actually the only cases to take into account because $\Sigma_R$ does not include initial and terminal faces, and it clearly holds $T_{\min}, T_{\min}(\Q_R) \leq T_{\min}(\Omega_R) \leq T_{\max}(\Omega_R) \leq T_{\max}, T_{\max}(\Q_R)$.
\end{proof}

\begin{figure}
	\centering 
	\scalebox{.8}{
	\begin{tikzpicture}[scale=1]
		% Omega
		\fill[Cyan!40, opacity=0.5] 
		(0, 3.058)
		.. controls (0, 3.058) and (0, 3.058) .. (0.188, 3.388)
		.. controls (0.376, 3.717) and (0.753, 4.375) .. (1.223, 4.422)
		.. controls (1.693, 4.47) and (2.258, 3.905) .. (2.775, 3.67)
		.. controls (3.293, 3.435) and (3.763, 3.529) .. (4.186, 3.811)
		.. controls (4.61, 4.093) and (4.986, 4.564) .. (5.315, 4.846)
		.. controls (5.644, 5.128) and (5.927, 5.222) .. (6.209, 5.269)
		.. controls (6.491, 5.316) and (6.773, 5.316) .. (7.103, 5.034)
		.. controls (7.432, 4.752) and (7.808, 4.187) .. (7.996, 3.905)
		.. controls (8.184, 3.623) and (8.184, 3.623) .. (8.184, 3.623)
		.. controls (8.184, 3.623) and (8.184, 3.623) .. (8.184, 4.375)
		.. controls (8.184, 5.128) and (8.184, 6.633) .. (8.184, 7.386)
		.. controls (8.184, 8.138) and (8.184, 8.138) .. (8.184, 8.138)
		.. controls (8.184, 8.138) and (8.184, 8.138) .. (6.82, 8.138)
		.. controls (5.456, 8.138) and (2.728, 8.138) .. (1.364, 8.138)
		.. controls (0, 8.138) and (0, 8.138) .. (0, 8.138)
		.. controls (0, 8.138) and (0, 8.138) .. (0, 8.138)
		.. controls (0, 8.138) and (0, 8.138) .. (0, 7.292)
		.. controls (0, 6.445) and (0, 4.752) .. (0, 3.905)
		.. controls (0, 3.058) and (0, 3.058) .. cycle;
		
		% Boundary of Omega
		\draw[ultra thick, opacity=1] (0.004, 3.065) .. controls (0.292, 3.565) and (0.397, 3.725) .. (0.463, 3.823) .. controls (0.529, 3.92) and (0.555, 3.955) .. (0.572, 3.976) .. controls (0.588, 3.997) and (0.594, 4.004) .. (0.605, 4.016) .. controls (0.615, 4.029) and (0.631, 4.047) .. (0.658, 4.076) .. controls (0.685, 4.105) and (0.723, 4.144) .. (0.76, 4.178) .. controls (0.798, 4.213) and (0.834, 4.243) .. (0.877, 4.273) .. controls (0.919, 4.303) and (0.968, 4.333) .. (1.012, 4.355) .. controls (1.057, 4.377) and (1.097, 4.392) .. (1.145, 4.403) .. controls (1.193, 4.413) and (1.249, 4.419) .. (1.304, 4.418) .. controls (1.358, 4.416) and (1.412, 4.408) .. (1.495, 4.38) .. controls (1.579, 4.351) and (1.694, 4.303) .. (1.802, 4.247) .. controls (1.91, 4.192) and (2.012, 4.129) .. (2.11, 4.068) .. controls (2.207, 4.007) and (2.3, 3.946) .. (2.382, 3.894) .. controls (2.464, 3.842) and (2.537, 3.799) .. (2.627, 3.752) .. controls (2.717, 3.706) and (2.826, 3.656) .. (2.932, 3.621) .. controls (3.038, 3.585) and (3.142, 3.564) .. (3.215, 3.552) .. controls (3.288, 3.54) and (3.331, 3.538) .. (3.382, 3.54) .. controls (3.434, 3.542) and (3.494, 3.547) .. (3.568, 3.561) .. controls (3.642, 3.575) and (3.73, 3.598) .. (3.833, 3.639) .. controls (3.936, 3.681) and (4.054, 3.741) .. (4.151, 3.8) .. controls (4.248, 3.858) and (4.323, 3.913) .. (4.389, 3.966) .. controls (4.455, 4.019) and (4.511, 4.068) .. (4.565, 4.117) .. controls (4.618, 4.166) and (4.668, 4.213) .. (4.717, 4.261) .. controls (4.765, 4.308) and (4.812, 4.355) .. (4.868, 4.411) .. controls (4.924, 4.468) and (4.989, 4.534) .. (5.046, 4.59) .. controls (5.102, 4.646) and (5.15, 4.693) .. (5.197, 4.737) .. controls (5.244, 4.781) and (5.289, 4.822) .. (5.343, 4.865) .. controls (5.396, 4.909) and (5.457, 4.955) .. (5.52, 4.997) .. controls (5.582, 5.038) and (5.645, 5.075) .. (5.692, 5.101) .. controls (5.739, 5.126) and (5.771, 5.141) .. (5.824, 5.161) .. controls (5.878, 5.181) and (5.953, 5.206) .. (6.037, 5.227) .. controls (6.121, 5.249) and (6.213, 5.266) .. (6.294, 5.276) .. controls (6.375, 5.286) and (6.445, 5.288) .. (6.506, 5.286) .. controls (6.568, 5.283) and (6.621, 5.275) .. (6.68, 5.258) .. controls (6.74, 5.242) and (6.807, 5.217) .. (6.86, 5.192) .. controls (6.914, 5.167) and (6.955, 5.142) .. (6.995, 5.114) .. controls (7.035, 5.086) and (7.075, 5.055) .. (7.116, 5.019) .. controls (7.157, 4.983) and (7.2, 4.941) .. (7.251, 4.887) .. controls (7.303, 4.833) and (7.362, 4.767) .. (7.419, 4.699) .. controls (7.476, 4.631) and (7.531, 4.561) .. (7.578, 4.499) .. controls (7.626, 4.437) and (7.666, 4.383) .. (7.709, 4.323) .. controls (7.752, 4.262) and (7.799, 4.196) .. (7.854, 4.114) .. controls (7.91, 4.033) and (7.975, 3.936) .. (8.184, 3.623);		
		
		% Q_R
		\filldraw[draw=BrickRed!90, ultra thick, fill=Salmon, fill opacity=0.3, draw opacity=.7] (1.976, 6.445) rectangle (5.927, 1);
		
		% Origin
		\node[circle, fill, inner sep=3pt] at (3.927, 3.666) {};
		
		% Rectangles
		\filldraw[draw=RoyalBlue!80, ultra thick, fill=Lavender, fill opacity=0.3, draw opacity=.7] (3.106, 6.92) -- (3.106, 6.01) -- (3.556, 6.01) -- (3.558, 6.921) -- cycle;
		%\filldraw[draw=RoyalBlue!80, ultra thick, fill=Lavender, fill opacity=0.3, draw opacity=.7] (1.693, 5.881) rectangle (2.258, 5.034);
		\filldraw[draw=RoyalBlue!80, ultra thick, fill=Lavender, fill opacity=0.3, draw opacity=.7] (4.903, 5.028) rectangle (5.349, 4.137);
		
		% Striped lines in cubes
		\filldraw[draw=RoyalBlue!80, thick, fill=Lavender] (3.351, 6.914) .. controls (3.189, 6.803) and (3.109, 6.748) .. (3.109, 6.748);		
		%\filldraw[draw=RoyalBlue!80, thick, fill=Lavender] (1.916, 5.864) -- (1.706, 5.715);
		%\filldraw[draw=RoyalBlue!80, thick, fill=Lavender] (1.953, 5.705) -- (1.709, 5.524);
		%\filldraw[draw=RoyalBlue!80, thick, fill=Lavender] (1.951, 5.514) -- (1.709, 5.349);
		%\filldraw[draw=RoyalBlue!80, thick, fill=Lavender] (1.951, 5.331) -- (1.704, 5.172);
		%\filldraw[draw=RoyalBlue!80, thick, fill=Lavender] (1.955, 5.168) -- (1.763, 5.038);
		\filldraw[draw=RoyalBlue!80, thick, fill=Lavender] (3.542, 6.884) -- (3.124, 6.587);
		\filldraw[draw=RoyalBlue!80, thick, fill=Lavender] (3.536, 6.689) -- (3.186, 6.47);
		\filldraw[draw=RoyalBlue!80, thick, fill=Lavender] (3.531, 6.533) .. controls (3.457, 6.489) and (3.419, 6.467) .. (3.419, 6.467);
		\filldraw[draw=RoyalBlue!80, thick, fill=Lavender] (5.347, 4.715) -- (5.01, 4.519);
		\filldraw[draw=RoyalBlue!80, thick, fill=Lavender] (5.344, 4.521) -- (4.93, 4.309);
		\filldraw[draw=RoyalBlue!80, thick, fill=Lavender] (5.328, 4.345) .. controls (5.084, 4.217) and (4.963, 4.154) .. (4.963, 4.154);
		\filldraw[draw=RoyalBlue!80, thick, fill=Lavender] (5.341, 4.201) -- (5.259, 4.157);
		
		% Labels
		\node[rotate=90, anchor=center, text=black, font=\small] at (1.657, 2.5) {$\mathcal{B}\mathbf{Q}_R(\mbf{0}) \subset \mathcal{T}_{=T_{\min}(\mathbf{Q}_R(\mbf{0}))}$};
		\node[rotate=90, anchor=center, text=black, font=\small] at (6.302, 2.839) {$\partial_s (\mathbf{Q}_R(\mbf{0})) \subset \mathcal{T}_{=T_{\max}(\mathbf{Q}_R(\mbf{0}))}$};
		\node[anchor=center, font=\Large] at (4.781, 6.753) {$\Sigma_R$};
		\node[anchor=center, font=\large] at (4.188, 3.296) {$\mbf{0} = (0, 0)$};
		\node[anchor=center, font=\Large] at (4.028, 1.59) {$\mathbf{Q}_R(\mbf{0})$};
		\node[anchor=center, font=\LARGE] at (7.46, 7.564) {$\Omega$};
		\node[anchor=center, font=\Large] at (7.38, 5.295) {$\Sigma$};
		\node[anchor=center, font=\LARGE] at (3.845, 5.318) {$\Omega_R$};
	\end{tikzpicture}
	}
	\caption{The boundary of $\Omega_R = \Omega \cap \Q_R(\mbf{0})$ is divided into several lateral and vertical regions.}
	\label{fig:restricted_boundary}
\end{figure}
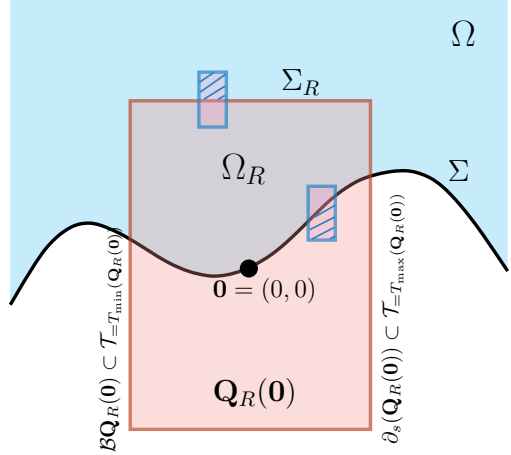

Before proceeding with the main proof of the section, let us present as an auxiliary result the fact that solutions to the continuous Dirichlet problem satisfy maximum principles even in unbounded domains. This is a consequence of the fact that the boundary value at infinity must be attained continuously, because $\infty \in \partial_e \Omega$ when $\Omega$ is unbounded.

\begin{lemma}[Classical maximum principle in unbounded domains]
	\label{lem:Uniqueness}
	Let $L$ be a parabolic operator as in Definition~\ref{def:operator}. Let $\Omega\subseteq\R^{n+1}$ be an open set, and $f\in C(\partial_e\Omega)$. If $u$ is a weak solution to $Lu= 0$ in $\Omega$ that continuously attains the boundary value $f$ on $\partial_e \Omega$, then $u$ satisfies the maximum principle 
	\[
	\inf_{\partial_e \Omega} f 
	\leq 
	\inf_\Omega u 
	\leq 
	\sup_\Omega u 
	\leq 
	\sup_{\partial_e \Omega} f.
	\]
	Concretely, there is at most one solution to the continuous Dirichlet problem.
\end{lemma}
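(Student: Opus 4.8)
The plan is to prove the maximum principle by reducing to the classical maximum principle on bounded subdomains, exploiting that the point at infinity is part of $\partial_e \Omega$ and that $f$ is attained continuously there. Without loss of generality it suffices to prove $\sup_\Omega u \le \sup_{\partial_e \Omega} f$ (the lower bound follows by applying this to $-u$), and we may assume $M := \sup_{\partial_e \Omega} f < \infty$, since otherwise there is nothing to prove. Fix $\varepsilon > 0$. Because $u$ attains the value $f(\infty)$ continuously at $\infty$ (when $\Omega$ is unbounded) and $f(\infty) \le M$, there is $R_0 > 0$ such that $u(X,t) < M + \varepsilon$ for every $(X,t) \in \Omega$ with $\|(X,t)\| > R_0$; when $\Omega$ is bounded this is vacuous and one takes $\Omega$ itself.

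Next I would work on the bounded set $\Omega_{R} := \Omega \cap \Q_R(\mbf 0)$ for a fixed $R > R_0$. The key observation is how the boundary of $\Omega_R$ splits: $\partial \Omega_R \subseteq (\partial \Omega \cap \overline{\Q_R(\mbf 0)}) \cup (\Omega \cap \partial \Q_R(\mbf 0))$. On the first piece, any boundary point of $\Omega_R$ that lies in $\partial_e \Omega$ is a point where $u$ continuously attains a value $\le M$; the points of $\partial \Omega \cap \overline{\Q_R(\mbf 0)}$ lying in $\partial_s \Omega$ (the singular boundary) do not influence the equation to the future and can be handled by the structure of the classical maximum principle for parabolic equations on $\Omega_R$ (only the quasi-lateral boundary $\Sigma_R$ together with the bottom boundary matters, cf.\ the discussion in Section~\ref{sec:classification_boundary} and Lemma~\ref{lem:TBCDConOmegaR}). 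On the second piece, $\Omega \cap \partial \Q_R(\mbf 0)$, every point has parabolic norm $R > R_0$, hence $u < M + \varepsilon$ there by the choice of $R_0$. Therefore $u \le M + \varepsilon$ on the parabolic (quasi-lateral plus bottom) boundary of $\Omega_R$, and the classical maximum principle for weak solutions of $Lu = 0$ on the bounded set $\Omega_R$ gives $u \le M + \varepsilon$ throughout $\Omega_R$.

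Finally, I would let $R \to \infty$: every point of $\Omega$ eventually belongs to $\Omega_R$, so $u(X,t) \le M + \varepsilon$ for all $(X,t) \in \Omega$, and then letting $\varepsilon \to 0$ yields $\sup_\Omega u \le M = \sup_{\partial_e \Omega} f$. Applying the same argument to $-u$ gives the lower bound, completing the chain $\inf_{\partial_e \Omega} f \le \inf_\Omega u \le \sup_\Omega u \le \sup_{\partial_e \Omega} f$. Uniqueness is then immediate: if $u_1, u_2$ are two solutions with the same data $f$, then $u_1 - u_2$ solves $L(u_1 - u_2) = 0$ and attains the boundary value $0$ continuously on $\partial_e \Omega$, so the maximum principle forces $u_1 - u_2 \equiv 0$.

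The main obstacle I anticipate is the careful bookkeeping on $\partial \Omega_R$: one must make sure that when passing from $\Omega$ to the truncation $\Omega_R$ no ``new'' problematic boundary appears where $u$ is not controlled — in particular that the only genuinely new boundary (the spherical cap $\Omega \cap \partial \Q_R(\mbf 0)$) lies at parabolic distance $> R_0$ from the origin, and that the singular/terminal vertical faces of $\Omega_R$ play no role in the classical parabolic maximum principle on the bounded set. This is exactly the point flagged in the remark preceding Lemma~\ref{lem:TBCDConOmegaR} about the time-directedness of the construction, and it is where the convention $\infty \in \partial_e \Omega$ does the essential work, since without it one could not guarantee the $M + \varepsilon$ bound far out.
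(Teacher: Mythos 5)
Your proof is correct and follows essentially the same route as the paper: use the continuous attainment of $f(\infty)$ to get $u \le \sup_{\partial_e\Omega} f + \varepsilon$ outside a large cube, apply the classical maximum principle on the bounded truncation $\Omega \cap \Q_R(\mbf{0})$, then let $R \to \infty$ and $\varepsilon \to 0$. Your extra bookkeeping on the decomposition of $\partial\Omega_R$ is a sound (and slightly more explicit) rendering of the step the paper compresses into the phrase ``by the classical maximum principle in bounded domains.''
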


\begin{proof}
	The only meaningful case is when $\Omega$ is unbounded since otherwise, this is a consequence of classical maximum principles. In such a case, fix $\X_0 \in \Omega$ and let $\e > 0$. Since $\lim_{\Omega \ni \X \to \infty} u(\X) = f(\infty)$, we must have 
	\[
	\inf_{\partial_e \Omega} f - \e
	\leq 
	f(\infty) - \e
	\leq 
	u(\X)
	\leq 
	f(\infty) + \e 
	\leq 
	\sup_{\partial_e \Omega} f + \e, 
	\quad 
	\text{for } \X \in \Omega \cap \Q_R(\mbf{0})^c, 
	\]
	if $R = R(\e) > 0$ is large enough. By the classical maximum principle in bounded domains, 
	\[
	\inf_{\partial_e \Omega} f - \e
	\leq 
	u(\X)
	\leq 
	\sup_{\partial_e \Omega} f + \e, 
	\quad 
	\text{for } \X \in \Omega \cap \Q_{2R}(\mbf{0}).
	\]
	Concretely, for $\e > 0$ small enough (so $R$ large enough), this applies to $\X = \X_0$, and letting $\e \to 0$ yields the result. The uniqueness of solutions follows by linearity of the equation.
\end{proof}

With all the tools already developed, we can construct solutions to the continuous Dirichlet problem in unbounded domains that satisfy the maximum principle and yield a parabolic measure. For that, we will approximate our domain by bounded domains to take advantage of Theorem~\ref{thm:ExistenceOfParabolicMeasureOnBoundedDomains}, and then carry out a detailed analysis of what happens at the point at infinity. It will be important to, at all points in the proof, be certain that solutions attain their boundary values continuously, for that is a key ingredient of basic results like the maximum principle in Lemma~\ref{lem:Uniqueness}.

\begin{theorem}[Existence of parabolic measure on unbounded domains]
\label{thm:ExistenceOfParabolicMeasureOnUnboundedDomains}
    Let $L=\partial_t-\div A\nabla$ be a parabolic operator as in Definition~\ref{def:operator}, and $\Omega\subseteq\R^{n+1}$ be an unbounded open set that satisfies the TBCDC for $\partial_t-M_2\Delta$, where $M_2$ is the constant from Lemma~\ref{lem:capLtoM}. Then, for each $(X,t)\in\Omega$, there exists a unique positive Radon measure $\omega_L^{X,t}$ supported on $\partial_e\Omega$ so that for all $f\in C(\partial_e\Omega)$, 
    \[
        u(X,t)=\int_{\partial_e\Omega}f\,d\omega_L^{X,t}, 
        \qquad (X, t) \in \Omega,
    \]
    solves the continuous Dirichlet problem with boundary data $f$ and satisfies the maximum principle $\norm{u}_{L^\infty(\Omega)} \leq \norm{f}_{L^\infty(\partial_e \Omega)}$.
\end{theorem}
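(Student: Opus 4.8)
The plan is to build the solution by exhausting $\Omega$ with the bounded truncations $\Omega_R = \Omega \cap \Q_R(\mbf 0)$ and passing to the limit, in analogy with the proof of Theorem~\ref{thm:ExistenceOfParabolicMeasureOnBoundedDomains}, but with the extra care required to control the behavior at $\infty$. First I would reduce to Hölder data: fix $\beta \in (0, \alpha)$ and $f \in \dot C^\beta(\partial_e \Omega)$ (recall that since $\infty \in \partial_e \Omega$, such $f$ is automatically bounded). By Lemma~\ref{lem:TBCDConOmegaR}, each $\Omega_R$ satisfies the TBCDC for $\partial_t - M_2\Delta$ with constants independent of $R$, so Theorem~\ref{thm:ExistenceOfParabolicMeasureOnBoundedDomains} gives a parabolic measure $\omega_R$ on $\Omega_R$ and, via Theorem~\ref{mainthm:ExistenceOfHolderSolutions} applied to $\Omega_R$, a solution
\[
u_R(\X) := \int_{\partial_e \Omega_R} \tilde f_R \, d\omega_R^{\X}, \qquad \X \in \Omega_R,
\]
where $\tilde f_R$ is a continuous (in fact $\dot C^\beta$) extension of $f|_{\partial_e \Omega \cap \overline{\Q_R(\mbf 0)}}$ to $\partial_e \Omega_R$, with $\|\tilde f_R\|_{L^\infty} \le \|f\|_{L^\infty}$ and $\dot C^\beta$-seminorm bounded independently of $R$ (one extends $f$ to all of $\R^{n+1}$ first, keeping the $\dot C^\beta$ norm, then restricts). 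The maximum principle gives $\|u_R\|_{L^\infty(\Omega_R)} \le \|f\|_{L^\infty(\partial_e\Omega)}$, and Theorem~\ref{mainthm:ExistenceOfHolderSolutions} (with $R$-independent constants) gives a uniform bound $\|u_R\|_{\dot C^\beta(\Omega_R \cup \partial_n \Omega_R)} \lesssim \|f\|_{\dot C^\beta(\partial_e\Omega)}$.

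Next I would extract a limit. On any compact $K \subset \Omega$, for $R$ large the $u_R$ are defined on $K$, uniformly bounded, and uniformly $\dot C^\beta$; by Arzelà--Ascoli and a diagonal argument over an exhaustion of $\Omega$ there is a subsequence converging locally uniformly to some $u \in \dot C^\beta_{\loc}(\Omega)$ with $\|u\|_{L^\infty(\Omega)} \le \|f\|_{L^\infty}$. Caccioppoli (Lemma~\ref{lem:caccioppoli}), the uniform $L^\infty$ bound, and the uniform ellipticity give uniform local $W^{1,2}$ bounds, so along a further subsequence $\nabla u_R \rightharpoonup \nabla u$ in $L^2_{\loc}(\Omega)$ and $u \in W^{1,2}_{\loc}(\Omega)$; testing the weak formulation of $L u_R = 0$ against $\psi \in C_c^\infty(\Omega)$ and passing to the limit (as in Part~I, Step~4 of Theorem~\ref{thm:ExistenceOfParabolicMeasureOnBoundedDomains}, but now without the mollification error term since the coefficients $A$ are fixed) yields $Lu = 0$ weakly in $\Omega$. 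The uniform $\dot C^\beta$ bound up to $\partial_n \Omega_R$ lets one show $u$ attains $f$ continuously along $\partial_n \Omega \setminus \{\infty\}$: near a finite boundary point $\mbf x \in \partial_n \Omega$, for $R$ large the geometry of $\Omega_R$ agrees with that of $\Omega$ in a fixed neighborhood, and the equicontinuity passes to the limit. The main obstacle — and the genuinely new point compared with the bounded case — is continuity of $u$ at $\infty$: I expect to use Theorem~\ref{mainthm:Bourgain} (the Hölder decay~\eqref{maineqn:HolderEstimate}, valid on each $\Omega_R$ with $R$-independent constants) applied at finite boundary points far from a fixed center, together with the tail estimate Lemma~\ref{CHMPZ 2.17}, to show that $\omega_R^{\X}$ of a fixed large cube's complement tends to $1$ as $\X \to \infty$ uniformly in $R$; this forces $u_R(\X) \to \tilde f_R(\infty)$ and, after arranging $\tilde f_R(\infty) = f(\infty)$, gives $\lim_{\Omega \ni \X \to \infty} u(\X) = f(\infty)$. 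Once continuity up to all of $\partial_e\Omega$ is in hand, Lemma~\ref{lem:Uniqueness} gives uniqueness and the maximum principle.

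Finally I would remove the Hölder restriction and produce the measure. For general $f \in C(\partial_e\Omega)$, note that $\partial_e\Omega$ is a compact metric space (it is closed in the one-point compactification of $\R^{n+1}$), so $f$ is uniformly continuous and can be approximated uniformly by $\dot C^\beta$ functions $f_j$; the solutions $u_j$ from the previous paragraph satisfy $\|u_j - u_k\|_{L^\infty(\Omega)} \le \|f_j - f_k\|_{L^\infty} \to 0$ by Lemma~\ref{lem:Uniqueness}, so $u_j \to u$ uniformly on $\overline{\Omega}$ with $u = f$ on $\partial_e\Omega$; the interior regularity $u \in W^{1,2}_{\loc}(\Omega)$ and $Lu = 0$ pass to the limit exactly as in Part~II of Theorem~\ref{thm:ExistenceOfParabolicMeasureOnBoundedDomains} (Caccioppoli plus weak compactness). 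The map $f \mapsto u(\X)$ is linear (by uniqueness) and positive (by the maximum principle), hence by the Riesz representation theorem there is a positive Radon measure $\omega_L^{\X}$ on the compact set $\partial_e\Omega$ with $u(\X) = \int_{\partial_e\Omega} f\, d\omega_L^{\X}$; taking $f \equiv 1$ gives $\omega_L^{\X}(\partial_e\Omega) = 1$, so it is a probability measure, and uniqueness of the measure follows from the Riesz representation theorem together with density of $C(\partial_e\Omega)$.
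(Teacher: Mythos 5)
Your overall skeleton (exhaust by $\Omega_R=\Omega\cap\Q_R(\mbf{0})$, use Lemma~\ref{lem:TBCDConOmegaR} and Theorem~\ref{thm:ExistenceOfParabolicMeasureOnBoundedDomains} with $R$-independent constants, extract a limit, then Riesz) matches the paper's, and most of your steps (uniform $L^\infty$ and $W^{1,2}_\loc$ bounds, passing to the limit in the weak formulation, continuity at finite boundary points, the final approximation/Riesz argument) are sound. The paper organizes the reduction differently (compactly supported data first, with monotone limits obtained by extending $u_R$ by zero, then data vanishing at infinity, then general data by subtracting $f(\infty)$), whereas you reduce to Hölder data and use Arzelà--Ascoli; either skeleton works.

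The genuine gap is the step you yourself flag as the new point: continuity of $u$ at $\infty$. You propose to deduce from \eqref{maineqn:HolderEstimate} and Lemma~\ref{CHMPZ 2.17} that $\omega_R^{\X}\big(\partial_e\Omega_R\cap\Q_{R_0}(\mbf{0})\big)\to 0$ as $\X\to\infty$, uniformly in $R$. Neither tool gives this. Lemma~\ref{CHMPZ 2.17} bounds the measure of the \emph{far} boundary for a pole \emph{close} to a boundary point ($\lVert\X-\x_0\rVert\leq 3\delta(\X)$, or $\X\in\Q_r(\x_0)$ with the bound $(\lVert\X-\x_0\rVert/r)^{\alpha_H}$); to convert it into smallness of the \emph{near} mass $\omega_R^{\X}(\Q_{R_0})$ for a far-away pole you would need $\delta(\X)\ll\dist(\X,\Q_{R_0}(\mbf{0}))$, which fails whenever $\X\to\infty$ deep inside the domain (e.g.\ $\delta(\X)\approx\lVert\X\rVert$, as happens already for the complement of an infinite cylinder). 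The same obstruction defeats a direct use of \eqref{maineqn:HolderEstimate}. The paper closes this step with a barrier that your proposal is missing: for compactly supported data with $\supp f\subseteq\Q_{R_0}(\mbf{0})$, one first notes $u_R\equiv 0$ on $\T_{<-R_0^2}$ by causality, and then compares $u_R$ on $\Omega_R\cap\T_{>-2R_0^2}$ with $v(\X):=c_6^{-1}\,\Gamma_L(\X;\Y_0)\,\norm{f}_\infty$ for a fixed pole $\Y_0\in\T_{<-2R_0^2}$; Aronson's lower bound makes $v\geq f$ on $\supp f$, the maximum principle gives $u_R\leq v$ uniformly in $R$, and \eqref{eq:aronson_easy} gives $v(\X)\to 0$ as $\X\to\infty$ in every direction (far past is trivial, far space and far future both come from the global decay of $\Gamma_L$). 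Some such global supersolution is indispensable; without it your argument does not control poles escaping to infinity away from the boundary. A secondary, fixable point: your extension $\tilde f_R$ takes essentially arbitrary (Hölder-compatible) values on the artificial boundary $\partial\Q_R(\mbf{0})\cap\overline{\Omega}$, so to conclude $\lim_{\X\to\infty}u(\X)=f(\infty)$ you must also arrange $\tilde f_R\to f(\infty)$ there; the paper's reduction to data with $f(\infty)=0$ and the extension of $u_R$ by zero sidesteps this.
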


\begin{proof}
    From Lemma \ref{lem:StructureOfUnboundedTBCDCDomains}, we know that $\partial_e\Omega\setminus\{\infty\}$ is either unbounded or empty. The case that it is empty is easy (we may just consider solutions which constantly take the value $f(\infty)$), so let us concentrate on the case that $\partial_e\Omega\setminus\{\infty\}$ is unbounded. We will now split the proof into several parts.
    
    \textbf{Part I: when $f$ is compactly supported.} Let us assume that $\supp(f)\subseteq \Q_{R_0}(0,0)$ for some ${R_0}>0$. Assume also $f\geq 0$.

    \textbf{Step 1: construction of $u$.} Consider $R \geq 2 {R_0}$. By Theorem \ref{thm:ExistenceOfParabolicMeasureOnBoundedDomains} (we can apply it because $\Omega_R$ satisfies the TBCDC by Lemma~\ref{lem:TBCDConOmegaR}, and constants will not depend on $R$), the parabolic measure $\omega_R^\X$ for $L$ on $\Omega_R$ exists. Thus,
    \[
        u_R(\X):=\int_{\partial_e\Omega}f\,d\omega_R^\X, 
        \qquad \X \in \overline{\Omega_R},
    \]
    solves the continuous Dirichlet problem on $\Omega_R$ with boundary data $f\mathbf{1}_{\partial_e \Omega}$ (which is a continuous function on $\partial \Omega_R$ because $\supp f \subseteq \Q_{R_0} \Subset \Q_R$). Let us extend $u_R$ by 0 in $\overline{\Omega} \setminus \overline{\Omega_R}$, so that $u_R \in C(\overline{\Omega})$.
    
    It is easy to see that if $R^* > R$, the maximum principle yields $u_{R^*} \geq u_R$ in $\Omega_R$ (see Figure~\ref{fig:monotone}, recalling that $f \geq 0$, so $u_R, u_{R^*} \geq 0$). Hence, $u_{R^*} \geq u_R$ in $\Omega$ by the way we have extended these solutions. Moreover, by the maximum principle for bounded domains, $\norm{u_R}_\infty \leq \norm{f}_\infty$ for any $R$. Hence, pointwise limits exist and we can define
    \[
        u(\X):=\lim_{R\to\infty}u_R(\X), \qquad \X \in \overline{\Omega}.
    \]
    This definition and the upper bound we had for the $u_R$ readily yield $\norm{u}_\infty \leq \norm{f}_\infty$.
    
    \begin{figure}
    	\centering 
    	\scalebox{.8}{
    	\begin{tikzpicture}[scale=1]
    		% Omega
    		\fill[SkyBlue, opacity=0.3] 
    		(0, 3.685)
    		.. controls (0, 3.685) and (0, 3.685) .. (0.188, 3.874)
    		.. controls (0.376, 4.062) and (0.753, 4.438) .. (1.317, 4.344)
    		.. controls (1.881, 4.25) and (2.634, 3.685) .. (3.246, 3.591)
    		.. controls (3.857, 3.497) and (4.327, 3.874) .. (4.986, 4.015)
    		.. controls (5.644, 4.156) and (6.491, 4.062) .. (7.009, 3.968)
    		.. controls (7.526, 3.874) and (7.714, 3.78) .. (7.808, 3.733)
    		.. controls (7.902, 3.685) and (7.902, 3.685) .. (7.902, 3.685)
    		.. controls (7.902, 3.685) and (7.902, 3.685) .. (7.902, 4.438)
    		.. controls (7.902, 5.191) and (7.902, 6.696) .. (7.902, 7.448)
    		.. controls (7.902, 8.201) and (7.902, 8.201) .. (7.902, 8.201)
    		.. controls (7.902, 8.201) and (7.902, 8.201) .. (6.585, 8.201)
    		.. controls (5.268, 8.201) and (2.634, 8.201) .. (1.317, 8.201)
    		.. controls (0, 8.201) and (0, 8.201) .. (0, 8.201)
    		.. controls (0, 8.201) and (0, 8.201) .. (0, 8.201)
    		.. controls (0, 8.201) and (0, 8.201) .. (0, 7.448)
    		.. controls (0, 6.696) and (0, 5.191) .. (0, 4.438)
    		.. controls (0, 3.685) and (0, 3.685) .. cycle;
    		
    		% Boundary of Omega
    		\draw[ultra thick] (0, 3.685) .. controls (0.224, 3.91) and (0.339, 4.01) .. (0.427, 4.08) .. controls (0.515, 4.15) and (0.575, 4.189) .. (0.61, 4.211) .. controls (0.645, 4.233) and (0.654, 4.238) .. (0.683, 4.25) .. controls (0.712, 4.263) and (0.76, 4.283) .. (0.824, 4.302) .. controls (0.887, 4.32) and (0.966, 4.337) .. (1.036, 4.346) .. controls (1.106, 4.355) and (1.169, 4.355) .. (1.241, 4.347) .. controls (1.313, 4.339) and (1.396, 4.323) .. (1.459, 4.307) .. controls (1.523, 4.292) and (1.566, 4.278) .. (1.62, 4.258) .. controls (1.675, 4.237) and (1.74, 4.211) .. (1.801, 4.185) .. controls (1.863, 4.159) and (1.921, 4.132) .. (1.975, 4.107) .. controls (2.028, 4.082) and (2.077, 4.059) .. (2.133, 4.032) .. controls (2.188, 4.005) and (2.25, 3.974) .. (2.315, 3.943) .. controls (2.379, 3.912) and (2.446, 3.879) .. (2.505, 3.852) .. controls (2.564, 3.824) and (2.614, 3.801) .. (2.673, 3.776) .. controls (2.732, 3.751) and (2.8, 3.723) .. (2.867, 3.698) .. controls (2.935, 3.674) and (3.003, 3.651) .. (3.068, 3.634) .. controls (3.133, 3.616) and (3.194, 3.603) .. (3.264, 3.594) .. controls (3.334, 3.586) and (3.412, 3.582) .. (3.493, 3.585) .. controls (3.575, 3.588) and (3.66, 3.599) .. (3.73, 3.612) .. controls (3.8, 3.624) and (3.854, 3.638) .. (3.919, 3.658) .. controls (3.984, 3.677) and (4.061, 3.703) .. (4.117, 3.723) .. controls (4.172, 3.742) and (4.207, 3.755) .. (4.261, 3.776) .. controls (4.316, 3.797) and (4.39, 3.826) .. (4.466, 3.854) .. controls (4.541, 3.882) and (4.618, 3.908) .. (4.688, 3.931) .. controls (4.758, 3.954) and (4.821, 3.973) .. (4.868, 3.985) .. controls (4.915, 3.998) and (4.945, 4.005) .. (5.001, 4.016) .. controls (5.058, 4.027) and (5.14, 4.041) .. (5.228, 4.052) .. controls (5.316, 4.063) and (5.41, 4.072) .. (5.497, 4.077) .. controls (5.585, 4.082) and (5.666, 4.084) .. (5.744, 4.085) .. controls (5.821, 4.085) and (5.896, 4.084) .. (5.984, 4.08) .. controls (6.073, 4.077) and (6.176, 4.07) .. (6.28, 4.062) .. controls (6.385, 4.053) and (6.49, 4.042) .. (6.605, 4.027) .. controls (6.719, 4.012) and (6.842, 3.994) .. (6.955, 3.974) .. controls (7.068, 3.955) and (7.17, 3.934) .. (7.284, 3.907) .. controls (7.399, 3.88) and (7.525, 3.846) .. (7.902, 3.685);
    		
    		% Origin
    		\node[circle, fill, inner sep=3pt] at (3.967, 3.673) {};
    		
    		% Q_R
    		\filldraw[draw=BrickRed!90, ultra thick, fill=Salmon, opacity=0.3] (2.776, 1.393) rectangle (5.135, 5.939);
    		\filldraw[draw=BrickRed!90, ultra thick, fill=Salmon, opacity=0.3] (1.801, 0) rectangle (6.197, 7.366);
    		
    		% Supp f
    		\draw[RoyalBlue!80, ultra thick] (3.532, 2.621) rectangle (4.405, 4.72);
    		\draw[RoyalBlue!80, ultra thick] (3.667, 3.595) .. controls (3.85, 3.638) and (3.983, 3.679) .. (4.089, 3.715) .. controls (4.195, 3.751) and (4.275, 3.782) .. (4.275, 3.782);
    		
    		% Labels
    		\node[anchor=center, font=\Large] at (3.974, 1.767) {$\mathbf{Q}_R(0, 0)$};
    		\node[anchor=center, font=\Large] at (3.924, 0.374) {$\mathbf{Q}_{R\textsuperscript{*}}(0, 0)$};    		
    		\node[rotate=15.899, anchor=center, text=RoyalBlue!80] at (3.947, 3.371) {supp($f$)};
    		\node[anchor=center, font=\large] at (6.2, 3.5) {$u_R = u_{R\textsuperscript{*}} = f$};
    		\node[anchor=center, font=\Large] at (0.628, 4.574) {$\Sigma$};
    		\node[anchor=center, font=\large] at (3.922, 6.215) {$u_R = 0$};
    		\node[anchor=center, font=\large] at (3.906, 7.675) {$u_{R\textsuperscript{*}} = 0$};
    		\node[anchor=center, font=\huge] at (7.368, 7.527) {$\Omega$};
    		\node[anchor=center, font=\Large] at (4.693, 5.444) {$\Omega_R$};
    		\node[anchor=center, font=\Large] at (5.711, 6.887) {$\Omega_{R\textsuperscript{*}}$};
    	\end{tikzpicture}
    	}
    	\caption{The solutions $u_R$ grow (by the maximum principle) as $R$ grows.}
    	\label{fig:monotone}
	\end{figure}
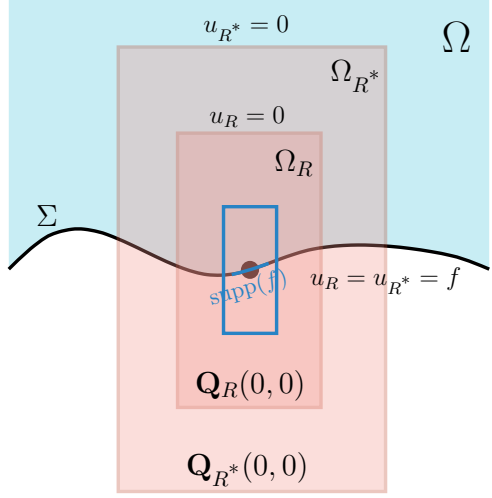

    \textbf{Step 2: $u\in W^{1,2}_\loc(\Omega)$.} Let $\Q$ be a parabolic cube such that $4\Q\subseteq\Omega$. For $R$ large enough, we have $4\Q \subseteq \Omega_R$, which allows us to use Caccioppoli's estimate from Lemma~\ref{lem:caccioppoli} (along with the maximum principle discussed before) to obtain 
    \[
        \limsup_{R \to \infty}\iint_\Q|\nabla u_R|^2
        \lesssim_\Q
        \limsup_{R \to \infty}\iint_{2\Q}|u_R|^2
        \lesssim_\Q
        \limsup_{R \to \infty} \, \lVert u_R\rVert_\infty
        \leq
        \norm{f}_\infty.
    \]
    This uniform bound in $W^{1, 2}(\Q)$ yields, by standard compactness and covering arguments (and recalling Step 1), that (up to a subsequence) $u \in W^{1, 2}_\loc(\Omega)$ and $\nabla u_R \rightharpoonup \nabla u$ in $L^2_\loc(\Omega)$.
	
	%%%%%%%%%%%%%%%%%%%%%%%%%%%%%%

    \textbf{Step 3: $Lu=0$ in the weak sense on $\Omega$.} Let $\psi\in C_c^\infty(\Omega)$. Then, $\supp \psi \subseteq \Omega_R$ for $R \gg 1$, and we may repeat the very same computation in Theorem~\ref{thm:ExistenceOfParabolicMeasureOnBoundedDomains}, Part II, Step 4, replacing $u_j$ by $u_R$, to determine that $Lu = 0$ in the weak sense in $\Omega$. 
    
    %%%%%%%%%%%%%%%%%%%%%%%%%%%%%
    
    \textbf{Step 4: $u\in C(\Omega\cup\partial_n\Omega)$.} Since we just showed that $u$ solves $Lu=0$ in $\Omega$, interior H\"older continuity (Lemma~\ref{lem:interiorHolder}) implies that $u \in C(\Omega)$. Next, fix $\x_0 \in \partial_e \Omega \setminus \{\infty\}$, and $r > 0$. If $R > 2R_0$ is large enough so that $\Q_{10r}(\x_0) \subseteq \Q_{R/2}(\mbf{0})$, and $R^* > R$, we have, for $\X \in \Q_r(\x_0) \cap \Omega$, that
    \[
    0 
    \leq 
    (u_{R^*} - u_R)(\X)
    \lesssim 
    \left( \frac{\delta_{\Omega_R}(\X)}{r} \right)^{\alpha_H} 
    \norm{u_{R^*} - u_R}_\infty 
    \lesssim 
    \left( \frac{\delta(\X)}{r} \right)^{\alpha_H} 
    \norm{f}_\infty 
    \underset{\X \to \x_0}{\longrightarrow}
    0,
    \]
    where we have used \eqref{maineqn:HolderEstimate} for the solution $u_{R^*} - u_R$ in $\Omega_R$, because it vanishes continuously on $\partial_e \Omega \cap \Q_{2r}(\x_0)$, and $\Omega_R$ satisfies the TBCDC by Lemma~\ref{lem:TBCDConOmegaR} (and the TBCDC constants do not depend on $R$, so in the last display the constants are independent of $R$, too). Letting $R^* \to \infty$, this implies that
    \[
    	\lim_{\X \to \x_0}(u-u_R)(\X)=0. 
    \]
    Taking into account also that $u_R \in C(\overline{\Omega_R})$ and $u_R = f$ on $\partial_e \Omega \cap \Omega_R$, this yields
    \[
    \lim_{\X \to \x_0} u(\X)
    =
    \lim_{\X \to \x_0} (u-u_R)(\X) + \lim_{\X \to \x_0}u_R(\X)
    =
    f(\x_0)
    =
    u(\x_0), 
    \]
    so $u$ is continuous at $\x_0$. That is, $u$ is continuous on $\partial_e \Omega \setminus \{\infty\}$, taking the value $f$.
    
    It remains to study the behavior at infinity: since $u_R(\infty) = 0$ for every $R$ (recall that we extended the $u_R$ by 0), we need to show that $\lim_{\X \to \infty} u(\X) = 0$ to acknowledge continuity at this point. First note that since $\supp f \subseteq \Q_{R_0}(\mbf{0})$, we clearly have $u_R(\X) = 0$ for $\X \in \T_{< -R_0^2}$, so only points in $\T_{\geq -R_0^2}$ are relevant. For that purpose, fix $\Y_0 \in \T_{< -2R_0^2}$ and note that Lemma~\ref{lem:fund_sol} implies that $\Gamma_L(\X; \Y_0) \geq c_6$ for all $\X \in \supp f$, where $c_6 > 0$ may depend on $R_0, \norm{\Y_0}, n$ and $\lambda$. With this in mind, define 
    \[
    v(\X)
    :=
    \frac{\Gamma_L(\X; \Y_0)}{c_6} \norm{f}_\infty, 
    \qquad 
    \X \in \Omega \cap \T_{\geq -2R_0^2}.
    \]
    Clearly, $v \geq 0$, and $v \geq f$ on $\partial_e \Omega$ (recall our choice of $c_6$). Then, it is easy to infer that, for any $R$, $v \geq u_R$ on $\partial (\Omega_R \cap \T_{> -2R_0^2})$, which extends to $\Omega_R \cap \T_{> -2R_0^2}$ by the maximum principle because $Lv = 0$ in $\Omega \cap \T_{> -2R_0^2}$. By the definition of $u$, it must then hold that $v \geq u$ in $\Omega \cap \T_{>-2R_0^2}$. Furthermore, $\lim_{\X \to \infty} v(\X) = 0$ by the inherent decay of fundamental solutions (see e.g. \eqref{eq:aronson_easy}), which finally implies that $\lim_{\X \to \infty} u(\X) = 0$, as desired.
    
    %%%%%%%%%%%%%%%%%%%%%%%%55
    
    \textbf{Part II: when $f$ vanishes at infinity.} Let us now consider the case that $f(\infty)=0$, without necessarily having compact support as in Part I. It will suffice to restrict to the case that $f\geq 0$. Set, for every $k \in \N$,
    \[
        f_k(\mbf{x}):=\max\Big\{f(\mbf{x})-\frac{1}{k},0\Big\}, 
        \qquad 
        \text{for } \mbf{x}\in\partial_e\Omega,
    \]
    so that, for each $k\in\N$, $f_k$ satisfies the assumptions of Part I (because $f \in C(\partial_e \Omega)$), whence there exists an associated continuous solution $u_k$. It clearly holds $f_{k+1}\geq f_k$ for $k\in\N$, so $u_{k+1}\geq u_k$ by the construction in Part I. Moreover, from Part I we also know that $0 \leq u_k \leq \norm{f_k}_\infty \leq \norm{f}_\infty$. This monotonicity allows us to define 
    \[
    u(\mbf{X}):=\lim_{k\to\infty}u_k(\mbf{X}), \qquad \X \in \overline{\Omega},
    \]
    and it still satisfies $0 \leq u \leq \norm{f}_\infty$. 
    
    Now, the facts that $u\in W^{1,2}_\loc(\Omega)$, $Lu=0$, $u\in C(\overline{\Omega})$, and $u|_{\partial_e\Omega\setminus\{\infty\}}=f$ follow in exactly the same way as in Part I, Steps 2, 3, and 4. The only part in Step 4 before that requires a new proof is the fact that
    \[
        \lim_{\mbf{X}\to\infty}u(\mbf{X})=0=f(\infty).
    \]

    To show this, let $\e>0$. 
    Then, it is easy to note that $0\leq f_\ell-f_k<\e$ for all $\ell>k>1/\e$. Since $u_\ell-u_k$ is a solution attaining the boundary value $f_\ell - f_k$ continuously, the maximum principle (see Lemma~\ref{lem:Uniqueness}) yields $0\leq u_\ell-u_k\leq\e$ in $\Omega$. Letting $\ell \to \infty$, we obtain $0\leq u - u_k\leq\e$ in $\Omega$ for all $k > 1/\e$. By what was shown in Part I, $\abs{u_k(\X)} < \e$ for $\X \in \Omega \cap \Q_R(\mbf{0})^c$ if $R$ is large enough, whence $0 \leq u(\X) < 2\e$ for such $\X$. This precisely shows that $\lim_{\X \to \infty} u (\X) = 0$, as desired.
	
	%%%%%%%%%%%%%%%%%%%%%%%%%%%%

    \textbf{Part III: existence of the parabolic measure.} Now consider a general $f\in C(\partial_e\Omega)$. Associated to the functions $(f - f(\infty))_+$ and $(f - f(\infty))_-$, we can construct, following Part II, positive solutions $u_+$ and $u_-$ in $\Omega$ attaining the boundary values continuously, satisfying the maximum principle, and vanishing at infinity. Thus, $u := f(\infty) + u_+ - u_-$ is also a solution in $\Omega$ satisfying the maximum principle, which attains the boundary value $f$ continuously on $\partial_e \Omega$ (including the point at infinity). This construction generates a map $f \mapsto u$ between $C(\partial_e \Omega)$ and $C(\overline{\Omega})$, which is linear by the uniqueness of solutions to the continuous Dirichlet problem provided by the maximum principle in unbounded domains in Lemma~\ref{lem:Uniqueness}. The map is also positive (see the construction of $u$ above in the paragraph, and Parts I and II, where we only considered non-negative data).

    Given $\X \in \Omega$, the discussion in last paragraph means that the map $C(\partial_e \Omega) \ni f \mapsto u(\X) \in \R$ is linear, bounded and positive. Thus, viewing $\partial_e\Omega$ as a compact set under the one-point compactification of $\R^{n+1}$ (since $\infty \in \partial_e\Omega$),   
    the Riesz representation theorem implies the existence of the parabolic measure, as in the statement of the theorem.
\end{proof}

What we have already shown is basically what was asserted in Theorem~\ref{mainthm:ExistenceOfParabolicMeasure}. Let us quickly finish the proof.

\begin{proof}[\textbf{Proof of Theorem \ref{mainthm:ExistenceOfParabolicMeasure}}]
    In Lemma~\ref{thm:TBCDCimpliesWiener} we saw that the parabolic measure exists in the case that $L$ has smooth coefficients $\Omega$ is bounded, which we generalized to merely bounded coefficients (as in Definition~\ref{def:operator}) in Theorem~\ref{thm:ExistenceOfParabolicMeasureOnBoundedDomains}, and even further in Theorem~\ref{thm:ExistenceOfParabolicMeasureOnUnboundedDomains} to unbounded domains. Therefore, we are only left to see that this parabolic measure is a probability: the reader can readily check that if we take $f \equiv 1$ as boundary datum, then $u \equiv 1$ is a solution. Moreover, $u$ is the unique solution by Lemma~\ref{lem:Uniqueness}. Thus, $\omega_L^\X(\partial_e\Omega) = \int_{\partial_e\Omega}1\,d\omega_L^\X = u(\X) = 1$ for all $\X \in \Omega$, as desired.
\end{proof}

%%%%%%%%%%%%%%%%%%%%%%%%%%%%%%%%%%%%%%%%%%%%%%%%%%%%%%%
%%%%%%%%%%%%%%%%%%%%%%%%%%%%%%%%%%%%%%%%%%%%%%%%%%%%%%%
%%%%%%%%%%%%%%%%%%%%%%%%%%%%%%%%%%%%%%%%%%%%%%%%%%%%%%%
%%%%%%%%%%%%%%%%%%%%%%%%%%%%%%%%%%%%%%%%%%%%%%%%%%%%%%%
%%%%%%%%%%%%%%%%%%%%%%%%%%%%%%%%%%%%%%%%%%%%%%%%%%%%%%%
%%%%%%%%%%%%%%%%%%%%%%%%%%%%%%%%%%%%%%%%%%%%%%%%%%%%%%%
%%%%%%%%%%%%%%%%%%%%%%%%%%%%%%%%%%%%%%%%%%%%%%%%%%%%%%%
%%%%%%%%%%%%%%%%%%%%%%%%%%%%%%%%%%%%%%%%%%%%%%%%%%%%%%%
%%%%%%%%%%%%%%%%%%%%%%%%%%%%%%%%%%%%%%%%%%%%%%%%%%%%%%%
%%%%%%%%%%%%%%%%%%%%%%%%%%%%%%%%%%%%%%%%%%%%%%%%%%%%%%%

\section{What happens if we do not impose values at infinity?} \label{sec:probability}

Until now, we have taken $\infty \in \partial_e \Omega$ whenever $\Omega$ is unbounded. Since $\partial_e \Omega$ is the set supporting the parabolic measure, this means that we have only considered the case that we can impose boundary values at the point at infinity. In this section, we explore a different possibility (closer e.g. to the one in \cite{CHMPZ}), when one does not impose any condition at infinity. This gives more freedom to find solutions to the Dirichlet problem, which can of course be beneficial, but as a consequence, we may run into the problem that these problems do not have unique solutions.

In this section, we will explore which geometrical/topological conditions ensure that the Dirichlet problem has (or do not have) unique solutions.
As for elliptic equations, the fact that the caloric/parabolic measure is a probability (or not) is heavily influenced by the domain and its boundary being bounded (or not). Hence, whether or not we consider the point at infinity as part of our boundary really does make a difference.

\begin{proposition} \label{prop:probability}
	Let $L$ be a parabolic operator as in Definition~\ref{def:operator}, and $\Omega \subseteq \R^{n+1}$ be an open set. Let $(X, t) \in \Omega$, and assume that the parabolic measure with pole $(X, t)$ exists for $L$ in $\Omega$. Let $U$ be the connected component of $\Omega \cap \mathcal{T}_{< t}$ for which $(X, t) \in \overline{U}$. Then it holds:
	\begin{enumerate}
		\item \label{it:bdd} If $U$ is bounded, then $\omega_L^{X, t}(\partial_e \Omega) = 1$.
		\item \label{it:unbdd_bdd} If $U$ is unbounded and $\partial_e U \setminus \{\infty\}$ is bounded, then $\omega_L^{X, t}(\partial_e \Omega \setminus \{\infty\}) < 1$.
		\item \label{it:unbdd_unbdd} If $U$ is unbounded and $\partial_e U \setminus \{\infty\}$ is unbounded:
		\begin{enumerate}
			\item \label{it:unbdd_unbdd_Tmin} if $T_{\min}(U) > -\infty$, then $\omega_L^{X, t}(\partial_e \Omega \setminus \{\infty\}) = 1$, 
			\item \label{it:unbdd_unbdd_TBHCC} if $T_{\min}(U) = -\infty$ and $\Omega$ satisfies the TBCDC for any parabolic operator, then $\omega_L^{X, t}(\partial_e \Omega \setminus \{\infty\}) = 1$,
			\item \label{it:unbdd_unbdd_counterexample} if $T_{\min}(U) = -\infty$, but $\R^{n+1}\setminus U$ does not satisfy a backwards thickness assumption (like the TBCDC or TBHCC) for large scales, then it may happen that $\omega_L^{X, t}(\partial_e \Omega \setminus \{\infty\}) < 1$.
		\end{enumerate}
	\end{enumerate}
\end{proposition}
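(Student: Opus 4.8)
The quantity $\omega_L^{X,t}(\partial_e\Omega\setminus\{\infty\})$ measures the part of the parabolic measure that does \emph{not} escape to infinity, and since heat propagates only forward in time it is governed entirely by $U$. The plan is to control it through the monotone truncation already used in Section~\ref{sec:existence_parabolic_measure}: with $\Omega_R:=\Omega\cap\Q_R(\mbf 0)$, let $v_R$ be the solution on $\Omega_R$ (obtained by Perron's method) with boundary value $0$ on $\partial_e\Omega\cap\overline{\Omega_R}$ and $1$ on $\partial\Q_R(\mbf 0)\cap\overline\Omega$. Then $0\le v_R\le1$, $v_R$ decreases as $R\uparrow\infty$ (cf.\ Figure~\ref{fig:monotone}), $v_R(X,t)$ depends only on the component of $\Omega_R$ through $(X,t)$ (whose intersection with $\mathcal{T}_{<t}$ exhausts $U$), and one has $\omega_L^{X,t}(\partial_e\Omega\setminus\{\infty\})=1-\lim_{R\to\infty}v_R(X,t)$. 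Thus parts \ref{it:bdd}, \ref{it:unbdd_unbdd_Tmin}, \ref{it:unbdd_unbdd_TBHCC} reduce to $v_R(X,t)\to0$, part \ref{it:unbdd_bdd} to $\inf_R v_R(X,t)>0$, and part \ref{it:unbdd_unbdd_counterexample} to exhibiting an example. Throughout, only the geometry inside $U$ enters; WLOG one may take $T_{\min}(\Omega)=T_{\min}(U)$. The main work will be the barrier constructions, and the main nuisance the book-keeping of which pieces of $\partial U$ belong to the parabolic boundary (controlled via Section~\ref{sec:classification_boundary} and the truncation estimates of the Lemma~\ref{lem:TBCDConOmegaR} type).

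\textbf{The cases $v_R(X,t)\to0$.} Part \ref{it:bdd} is immediate: once $R>\diam_p(U)$ the set $\partial\Q_R(\mbf 0)$ misses $\overline U$, so $v_R(X,t)=0$. For part \ref{it:unbdd_unbdd_TBHCC}, $T_{\min}(U)=-\infty$ forces $T_{\min}(\Omega)=-\infty$, so the TBCDC holds at \emph{all} scales, the truncations $\Omega_R$ satisfy it with uniform constants (as in Lemma~\ref{lem:TBCDConOmegaR}), and the tail estimate \eqref{CHMPZ 2.17.1} of Lemma~\ref{CHMPZ 2.17}, applied in $\Omega_R$ with $\mbf x_0\in\partial_e\Omega$ realizing $\delta(X,t)$, gives $v_R(X,t)\lesssim(\delta(X,t)/R)^{\alpha_H}\to0$. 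For part \ref{it:unbdd_unbdd_Tmin} I would use a barrier that \emph{grows} at spatial infinity: fix $s_0<T_{\min}(U)$ and $\mu>0$ and set $h(Z,\tau):=\iint_{\R^n}\Gamma_L(Z,\tau;Y,s_0)\,e^{\mu|Y|}\,dY$, a positive $L$-solution on $\mathcal{T}_{>s_0}$ which is finite there and, by Aronson's bounds (Lemma~\ref{lem:fund_sol}), satisfies $h(Z,\tau)\approx e^{\mu|Z|}$ on the time-slab $T_{\min}(U)\le\tau\le t$ containing $\overline U$. Because $T_{\min}(U)>-\infty$, for $R$ large the part of $\partial\Q_R(\mbf 0)$ that $\omega_L^{X,t}$ can see (i.e.\ outside $\mathcal{T}_{>t}$ and meeting $\overline U$) lies in $\{|Z|=R\}$, where $h\gtrsim e^{\mu R}$. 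Hence, given $\e>0$, the rescaled barrier $\tfrac{\e}{h(X,t)}h$ dominates $v_R$ on the relevant part of the parabolic boundary (it is $\ge0=v_R$ on $\partial_e\Omega$ and $\gtrsim\tfrac{\e}{h(X,t)}e^{\mu R}\ge1=v_R$ on $\{|Z|=R\}$ once $R$ is large), so the maximum principle gives $v_R(X,t)\le\e$; let $R\to\infty$, then $\e\to0$.

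\textbf{The case $\inf_R v_R(X,t)>0$.} For part \ref{it:unbdd_bdd}, put $K:=\partial_e U\setminus\{\infty\}$, which is compact (if $K=\emptyset$ the statement is clear, since then $U$ is a full time-slab and all the mass escapes). Since the relevant component of $\Omega_R$ stays, below time $t$, inside $U\subseteq\mathcal{T}_{<t}\setminus K$, domain monotonicity of parabolic measure gives $1-v_R(X,t)\le\omega_{\widehat U}^{X,t}(K)$ for the \emph{fixed} domain $\widehat U:=\mathcal{T}_{<t+1}\setminus K$ (independent of $R$). So it suffices to show $\omega_{\widehat U}^{X,t}(K)<1$, and here I would compare with the $L$-equilibrium potential $\Gamma_L\mu_K$ of $K$ ($\mu_K$ the equilibrium measure, so $\Gamma_L\mu_K\le1$ everywhere and $Lu=0$ off $K$): one has $\omega_{\widehat U}^{X,t}(K)\le\Gamma_L\mu_K(X,t)$, and if this equalled $1$ at $(X,t)\notin K$ the parabolic strong maximum principle would force $\Gamma_L\mu_K\equiv1$ on the whole (unbounded) set of points reachable backward-in-time from $(X,t)$ within $K^c$ — which includes $\mathcal{T}_{(\max_K\tau,\,t)}$ — contradicting the decay $\Gamma_L\mu_K(\X)\to0$ at infinity (Aronson's bounds plus compactness of $K$). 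Hence $c:=1-\Gamma_L\mu_K(X,t)>0$ and $v_R(X,t)\ge c$ for every $R$, so $\omega_L^{X,t}(\partial_e\Omega\setminus\{\infty\})=1-\lim_R v_R(X,t)\le1-c<1$.

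\textbf{The counterexample \ref{it:unbdd_unbdd_counterexample}.} For $n\ge2$ take $\Omega:=(\R^n\setminus\{0\})\times(-\infty,0)$; then $\partial_e\Omega\setminus\{\infty\}=\{0\}\times(-\infty,0]$ is unbounded, $T_{\min}(\Omega)=-\infty$, and for $(X,t)\in\Omega$ the relevant component is $U=(\R^n\setminus\{0\})\times(-\infty,t)$ with $\partial_e U\setminus\{\infty\}=\{0\}\times(-\infty,t]$ unbounded, so we are in case \ref{it:unbdd_unbdd}. But this set has parabolic Hausdorff dimension $2$, hence (being of parabolic codimension $n\ge2$, cf.\ Remark~\ref{rem:codimension}) vanishing $L$-thermal capacity at every scale; it fails any backwards thickness hypothesis at large scales, and being $L$-polar it carries no parabolic measure, so $\omega_L^{X,t}(\partial_e\Omega\setminus\{\infty\})=0<1$ (all the mass escapes to $\infty$), while the parabolic measure still exists here because $\Omega$ differs from the slab $\mathcal{T}_{<0}$ only by a polar set. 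Overall I expect the hardest points to be the barrier/maximum-principle arguments in parts \ref{it:unbdd_bdd} and \ref{it:unbdd_unbdd_Tmin}, and, throughout, verifying the boundary classification claims that make the truncation reduction legitimate.
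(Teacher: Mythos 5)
Your truncation framework and parts \ref{it:bdd}, \ref{it:unbdd_unbdd_Tmin}, \ref{it:unbdd_unbdd_TBHCC} are essentially sound. Part \ref{it:unbdd_unbdd_TBHCC} matches the paper's argument, and for \ref{it:unbdd_unbdd_Tmin} your exponential barrier $h$ is a legitimate alternative to the paper's route (the paper instead centers at a point of $\partial_e U \cap \T_{=T_{\min}(U)}$, where the Bourgain estimate holds for free by the ``Case 2'' computation, and iterates); the barrier cleanly captures the fact that Gaussian decay over a finite time slab beats exponential spatial growth, though you should justify that $\int\Gamma_L(\cdot;Y,s_0)e^{\mu|Y|}\,dY$ is a weak $L$-solution for non-$L^2$ data. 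For part \ref{it:unbdd_bdd} the paper is far more elementary: it reduces to the complement of a single large cube $\Q_R(\mbf 0)$ and uses the explicit heat-kernel estimate of Lemma~\ref{lem:complement_cube}, giving $\omega^{(0,R^4)}_{\R^{n+1}\setminus\Q_R}(\partial\Q_R)\lesssim R^{-n/2}<1$. Your equilibrium-potential argument is correct in spirit, but it silently invokes the existence of an equilibrium measure $\mu_K$ with $\Gamma_L\mu_K=1$ quasi-everywhere on $K$ for a general non-symmetric $L$ with merely bounded coefficients, together with the fact that the exceptional polar set is null for $\omega_{\widehat U}$; neither is developed in the paper, and Definition~\ref{def:capacity} only produces measures with potential $\leq 1$.

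The genuine gap is part \ref{it:unbdd_unbdd_counterexample}. For $\Omega=(\R^n\setminus\{0\})\times(-\infty,0)$ with $n\geq 2$, the needle $\{0\}\times(-\infty,0)$ is polar precisely as you observe --- but this kills your example rather than saving it. Polarity means every point of $\partial_e\Omega\setminus\{\infty\}$ is irregular, so no solution attains a non-constant continuous datum there, the continuous Dirichlet problem is not solvable, and the parabolic measure \emph{in the sense of Definition~\ref{def:parabolic_measure}} does not exist; the hypothesis of the proposition is therefore violated, and your closing claim that ``the parabolic measure still exists here because $\Omega$ differs from the slab only by a polar set'' is exactly backwards. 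Moreover, even if one relaxed the definition, your domain is thin at \emph{every} scale, whereas the content of \ref{it:unbdd_unbdd_counterexample} (see Remark~\ref{rem:probability} and the remark after the paper's proof) is a domain that is Wiener-regular at small scales yet thin at large scales. The paper achieves this with $\Omega:=\R^{n+1}\setminus\bigcup_{j}\Q_1(0,-R_j^2)$: each unit cube is a fat obstacle, so every boundary point is regular and the parabolic measure exists, while Lemma~\ref{lem:complement_cube} and the maximum principle give
\[
\omega_U^{\mbf 0}(\partial_e U)\;\leq\;\sum_{j}\omega^{\mbf 0}_{\R^{n+1}\setminus \Q_1(0,-R_j^2)}\big(\partial \Q_1(0,-R_j^2)\big)\;\lesssim\;\sum_j R_j^{-n/2}\;<\;1
\]
once the $R_j$ grow fast enough. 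You need an example of this type; the polar-needle construction cannot be repaired within the paper's definitions.
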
 

\begin{remark} \label{rem:probability}
	Some remarks are in order before diving into the proof:
	\begin{itemize}
        \item We note that, in presence of the TBHCC or TBCDC (for any operator as in Definition~\ref{def:operator}), it always holds $\omega_L^{X, t}(\partial_e \Omega \setminus \{\infty\}) =1$ for every $(X, t) \in \Omega$. Indeed, Case~\eqref{it:unbdd_bdd} is ruled out by Lemma~\ref{lem:StructureOfUnboundedTBCDCDomains}, and Case \eqref{it:unbdd_unbdd_counterexample} is clearly not possible.
    
		\item For simplicity in the notation, we will restrict the discussion to the heat equation $L = \partial_t - \Delta$, but everything will work very similarly for the operators in Proposition~\ref{prop:probability}. The reader can check that everything translates to general $L$ by only replacing the appearances of Gaussians like $\Gamma := \Gamma_{\partial_t-\Delta}$ with other Gaussians (with constants) that bound $\Gamma_L$ from above and below by Lemma~\ref{lem:capLtoM} (and \eqref{eq:fund_sol_heat}). We will also abbreviate $\omega := \omega_{\partial_t - \Delta}$.
		
		\item As already discussed in the previous sections, since we consider that $\infty \in \partial_e \Omega$, maximum principles are available and it always holds $\omega_L^\X (\partial_e \Omega) = 1$ for any $\X \in \Omega$ (see also \cite[Theorem 8.27]{W2}). Hence, the only relevant difference with regards to Proposition~\ref{prop:probability} is what happens precisely at the point at infinity.
		
		\item As we have already discussed several times, a very fundamental property of parabolic equations is that heat only flows towards the future, so to determine whether $\omega^{X, t}$ is a probability or not, only what happens to the past of $(X, t)$ matters. (For the heat equation, see e.g. \cite[Lemma 8.29]{W2},\footnote{The notation for that result is explained in \cite[p. 25]{W2}} but this is also reflected in the one-sided nature of fundamental solutions, for instance). Mathematically, one could write $\omega^{X, t}(\partial_e \Omega \cap \T_{\geq t}) = 0$. Thus, because $\omega^{X, t}$ is actually supported on $\partial_e U$, and not on the whole essential boundary $\partial_e \Omega$, only properties about $U$ (as opposed to the whole domain $\Omega$) matter in Proposition~\ref{prop:probability}. We will make use of the property 
		\begin{equation*} %\label{eq:probability_CC}
			\omega^{X, t}(\partial_e \Omega \setminus \{\infty\}) 
			= \omega^{X, t}_U(\partial_e \Omega \cap \partial_e U \setminus \{\infty\}) 
			= \omega^{X, t}_U(\partial_e U \setminus \{\infty\}).
		\end{equation*}
	\end{itemize}
\end{remark}

Along the proof of Proposition~\ref{prop:probability}, we will make extensive use of a simple (not meant to be optimal) estimate of how heat evolves in the complement of a cube in the long term.

\begin{lemma} \label{lem:complement_cube}
	It holds, for $R \geq 2$ large enough (depending on $n$), 
	\[
	\omega^{(0, R^2)}_{\R^{n+1} \setminus \Q_1(0, -1)} (\partial \Q_1(0, -1))
    \lesssim 
    R^{-n/2},
    \]
    where the implicit constant only depends on $n$.
\end{lemma}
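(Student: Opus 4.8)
Write $\Omega:=\R^{n+1}\setminus\overline{\Q_1(0,-1)}$, so $\partial\Omega=\partial\Q_1(0,-1)$ is compact and $\infty\in\partial_e\Omega$ since $\Omega$ is unbounded. The whole argument is a single supersolution/comparison estimate for the caloric measure. Fix the source point $\mbf{Y}_0:=(0,-4)\in\Omega$ (to the past of the cube), let $\Gamma:=\Gamma_{\partial_t-\Delta}$ as in \eqref{eq:fund_sol_heat}, and consider
\[
v(\X):=c_n^{-1}\,\Gamma(\X;\mbf{Y}_0),\qquad \X\in\R^{n+1},
\]
where $c_n>0$ will be chosen in the next step. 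I would first record the three properties of $v$: (i) $v$ is the heat potential of the positive measure $c_n^{-1}\delta_{\mbf{Y}_0}$, hence superparabolic on $\R^{n+1}$ and in particular a supersolution ($Lv\ge 0$) on $\Omega$, with $v\ge 0$ everywhere; (ii) $v\ge 1$ on $\partial\Q_1(0,-1)$; (iii) $v(\X)\to 0$ as $\X\to\infty$, directly from the Gaussian decay (or \eqref{eq:aronson_easy}).

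For (ii), which is the only genuine computation, note that if $(Y,s)\in\partial\Q_1(0,-1)$ then $|Y|\le 1$ and $s+4\in[2,4]$, so
\[
\Gamma((Y,s);(0,-4))=(4\pi(s+4))^{-n/2}\exp\!\Big(-\frac{|Y|^2}{4(s+4)}\Big)\ge (16\pi)^{-n/2}e^{-1/8}=:c_n,
\]
whence $v\ge 1$ on $\partial\Q_1(0,-1)$ with $c_n$ depending only on $n$.

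Next I would invoke the comparison principle to conclude $\omega^{\X}_{\Omega}(\partial\Q_1(0,-1))\le v(\X)$ for every $\X\in\Omega$. Since $\mathbf 1_{\partial\Q_1(0,-1)}$ is not continuous, I approximate it from above: by Urysohn's lemma in the one‑point compactification there are $g_j\in C(\partial_e\Omega)$ with $g_j\downarrow\mathbf 1_{\partial\Q_1(0,-1)}$, $0\le g_j\le 1$, $g_j(\infty)=0$; then $\int g_j\,d\omega^{\X}_{\Omega}=u_{g_j}(\X)$ where $u_{g_j}$ solves the continuous Dirichlet problem with datum $g_j$. As $v\ge g_j$ on $\partial_e\Omega$ (using $v\ge 1\ge g_j$ on the cube boundary, $v\ge 0=g_j(\infty)$, and (iii) for the behaviour at $\infty$), the minimum principle applied to the supersolution $v-u_{g_j}\ge 0$ on $\partial_e\Omega$ — via the unbounded‑domain maximum principle in the spirit of Lemma~\ref{lem:Uniqueness} together with the decay of both $v$ and $u_{g_j}$ at infinity — gives $v\ge u_{g_j}$ in $\Omega$. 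Letting $j\to\infty$ and using monotone convergence yields $\omega^{\X}_{\Omega}(\partial\Q_1(0,-1))\le v(\X)$. Finally, evaluating at the pole $\X=(0,R^2)$,
\[
\omega^{(0,R^2)}_{\Omega}(\partial\Q_1(0,-1))\le v(0,R^2)=c_n^{-1}(4\pi(R^2+4))^{-n/2}\le c_n^{-1}(4\pi)^{-n/2}R^{-n}\le c_n^{-1}(4\pi)^{-n/2}R^{-n/2}
\]
for all $R\ge 1$, with implicit constant $2^{n}e^{1/8}$ depending only on $n$. (In fact this gives the stronger bound $R^{-n}$, and no largeness of $R$ beyond $R\ge 1$ is needed; I keep the weaker statement $\lesssim R^{-n/2}$ as in the lemma.)

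\textbf{Main obstacle.} The computations are routine; the only point requiring care is the justification that the caloric measure of the \emph{compact} set $\partial\Q_1(0,-1)$ is dominated by the superparabolic function $v$ in the \emph{unbounded}, non‑smooth domain $\Omega$ — i.e. that the approximation $g_j\downarrow\mathbf 1_{\partial\Q_1(0,-1)}$ plus the minimum principle for $v-u_{g_j}$ really applies here. This is where one must use that $\infty\in\partial_e\Omega$ (so Lemma~\ref{lem:Uniqueness}-type maximum principles hold) and the decay of $v$ and of the $u_{g_j}$ at infinity; everything else is immediate.
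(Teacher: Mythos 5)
Your proof is correct, and it takes a genuinely different --- and in fact shorter and sharper --- route than the paper's. The paper first constructs a one--dimensional barrier launched from time $t=-2$ to prove the pointwise decay $u(X,0)\lesssim\int_{|X|-1}^\infty e^{-c\rho^2}\,d\rho$, and then propagates from $t=0$ to $t=R^2$ by convolving $u(\cdot,0)$ against the heat kernel, estimating the resulting integrals somewhat lossily to land on $R^{-n/2}$. You instead dominate the caloric measure by a single translated fundamental solution with pole at $(0,-4)$, normalized to exceed $1$ on $\partial\Q_1(0,-1)$; this yields the stronger (and in fact sharp) bound $R^{-n}$ with an explicit constant and needs no largeness of $R$. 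Both approaches rest on the same comparison principle; yours trades the paper's two-step propagation for one clean barrier.

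Two small points. First, since $\partial_e\Omega\setminus\{\infty\}=\partial\Q_1(0,-1)\setminus\partial_s\Omega$ is compact, $\infty$ is an isolated point of $\partial_e\Omega$, so $\mathbf{1}_{\partial\Q_1(0,-1)}$ restricted to $\partial_e\Omega$ is already continuous and the Urysohn approximation $g_j\downarrow\mathbf{1}$ is unnecessary (though harmless). Second --- the only place where you should be more careful --- your pole $(0,-4)$ lies in $\Omega$, and $\Gamma(\cdot;(0,-4))$ is not in $W^{1,2}_{\loc}$ near its pole, so the weak-solution maximum principle in the spirit of Lemma~\ref{lem:Uniqueness} does not literally apply to $v-u_{g_j}$; you must either invoke the classical minimum principle for supertemperatures (as in Watson's book), or, more simply, run the comparison in $\Omega\cap\T_{>-3}$: there $v$ is genuinely caloric, $u_{g_j}\equiv 0$ on the initial slice $\T_{=-3}$ (for $\X$ to the past of the cube, $\omega^{\X}$ charges only $\{\infty\}$, where $g_j$ vanishes), and $v\ge 0$, so the comparison involves only solutions. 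With either fix the argument is complete.
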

\begin{proof}
	Set $u(X, t) := \omega^{X, t}_{\R^{n+1} \setminus \Q_1(0, -1)}(\partial \Q_1(0, -1))$. Note that $u \equiv 1$ on $\partial \Q_1(0, -1)$, and $u \equiv 0$ in $\mathcal{T}_{< -2}$. We claim, and will shortly prove, that 
	\begin{equation} \label{eq:claim_exponential_cube}
		u(X, 0)
		\lesssim
		\int_{\abs{X}-1}^\infty e^{-c\rho^2} \, d\rho, 
		\qquad 
		\text{ if } \abs{X} \geq 1.
	\end{equation}
    Assuming this claim holds for the moment, and recalling that $u \equiv 1$ on $\partial \Q_1(0, -1)$, we can estimate $u(0, R^2)$ using the heat kernel (see Lemma~\ref{lem:fund_sol} and Figure~\ref{fig:complement_cube}):
	\begin{align*}
		u(0& , R^2)
		=
		\int_{\{\abs{Y} \leq 1\}} \Gamma(0, R^2; Y, 0) \, u(Y, 0) \, dY
		+ \int_{\{\abs{Y} > 1\}} \Gamma(0, R^2; Y, 0) \, u(Y, 0) \, dY
		\\ & \lesssim
		\int_{\{\abs{Y} \leq 1\}} R^{-n} e^{-c\frac{\abs{Y}^2}{R^2}} \, dY
		+ \int_{\{\abs{Y} > 1\}} R^{-n} e^{-c\frac{\abs{Y}^2}{R^2}} \int_{\abs{Y}-1}^\infty e^{-c\rho^2} \, d\rho \, dY
		\\ & \lesssim
		R^{-n}
		+ \int_{1}^{\sqrt{R}} r^{n-1} R^{-n} e^{-c\frac{r^2}{R^2}} \int_{r-1}^\infty e^{-c\rho^2} \, d\rho \, dr
		+ \int_{\sqrt{R}}^\infty r^{n-1} R^{-n} e^{-c\frac{r^2}{R^2}} \int_{r-1}^\infty e^{-c\rho^2} \, d\rho \, dr
		\\ & \lesssim
		R^{-n}
		+ \int_{1}^{\sqrt{R}} r^{n-1} R^{-n} e^{-c\frac{r^2}{R^2}} \, dr
		+ \int_{\sqrt{R}}^\infty r^{n-1} R^{-n} e^{-c\frac{r^2}{R^2}} \int_{\sqrt{R}/2}^\infty e^{-c\rho^2} \, d\rho \, dr
		\\ & =
		R^{-n}
		+ \int_{1/R}^{1/\sqrt{R}} s^{n-1} e^{-cs^2} \, ds
		+ \left( \int_{1/\sqrt{R}}^\infty s^{n-1} e^{-cs^2} \, ds \right) \left( \int_{\sqrt{R}/2}^\infty e^{-c\rho^2} \, d\rho\right) 
		\\ & \lesssim 
		R^{-n} 
		+ R^{-n/2}
		+ e^{-cR}
		\lesssim 
		R^{-n/2}
	\end{align*}
    if $R$ is large enough.
	In the second line, we have used \eqref{eq:fund_sol_heat} and \eqref{eq:claim_exponential_cube}.	
	The rest of calculations follow by elementary real variable techniques: changes of variables (first to polar coordinates, and later $s = r/R$) and estimates for easy integrals.\footnote{Concretely, we have used $\int_{-\infty}^{+\infty} e^{-t^2} \, dt = \sqrt{\pi}$, $\int_a^b s^{n-1} e^{-s^2} \, ds \leq \int_0^b s^{n-1} \, ds \approx b^n $ (if $0 < a < b$), $\int_0^{\infty} s^{n-1} e^{-s^2}\, ds \lesssim \int_0^{\infty} e^{-s^2/2}\, ds \approx 1$ (because $s \mapsto s^{n-1}e^{-s^2/2}$ is bounded in $(0, +\infty)$), and $\int_a^\infty e^{-\rho^2} \, d\rho \leq e^{-a^2} \int_0^\infty e^{-\theta^2} \, d\theta \approx e^{-ca^2}$ (by a translation, if $a > 0$).}

	To finish the proof, let us prove the claim \eqref{eq:claim_exponential_cube}. Upon rotating, we may assume $X = \abs{X}e_n$. Then, let $v$ be the solution to (keep Figure~\ref{fig:complement_cube} in mind along the proof)
	\[
	\begin{cases}
		\partial_t v - \Delta v = 0 \qquad \qquad & \text{ in } \T_{> -2}, \\ 
		v(X_1, \ldots, X_n, t) = 3 \qquad & \text{ on } \mathcal{T}_{=-2} \cap \{-M < X_n < 1\}, \\ 
		v(X_1, \ldots, X_n, t) = 0 \qquad & \text{ on } \mathcal{T}_{=-2} \cap \{X_n \geq 1 \text { or } X_n \leq -M\}, \\ 
	\end{cases}
	\]
	where $M$ is a large constant to be determined soon. By Lemma~\ref{lem:fund_sol}, 
	\[v(Z, \tau) = 3 \int_{\R^{n-1} \times (-M, 1)} \Gamma(Z, \tau; Y, -2) \, dY, 
		\qquad (Z, \tau) \in \T_{>-2},
	\] 
	so it is easy to check that, if $M$ is large enough, $v \geq 1$ in $\partial \Q_{1}(0, -1)$. Indeed, by symmetry, $v(Z, \tau) \geq v(0, \ldots, 0, 1, 0)$ for $(Z, \tau) \in \partial \Q_1(0, -1)$, and 
	\begin{multline*}
	v(0, \ldots, 0, 1, 0) 
	= 3 \int_{\R^{n-1} \times (-M, 1)} \Gamma(0, \ldots, 0, 1, 0; Y, -2) \, dY
	\\ \underset{M \to \infty}{\longrightarrow} 
	3 \int_{\R^{n-1} \times (-\infty, 1)} \Gamma(0, \ldots, 0, 1, 0; Y, -2) \, dY
	= \frac32 
	> 1.
	\end{multline*}
	The last equality follows simply because we are integrating the Gaussian in a half-space, so we can use its symmetry and the fact that it integrates to 1. Hence, we may choose $M$ large enough so that $v(0, \ldots, 0, 1, 0) \geq 1$. 
	
	\begin{figure}
		\centering 
		\scalebox{.7}{
		\begin{tikzpicture}[scale=1]
			\fill[Cyan!40, opacity=0.3] (0.457, 8.308) rectangle (10.617, 0.405);
			
			\filldraw[draw=BrickRed!90, ultra thick, fill=white] (2.715, 8.308) -- (2.715, 0.405);
			
			\node[circle, fill, inner sep=2pt] at (10.053, 4.357) {};
			\node[circle, fill, inner sep=2pt] at (3.844, 7.743) {};
			\filldraw[draw=BrickRed!90, ultra thick, <->, fill=white] (2.433, 8.308) -- (2.433, 4.921);
			\filldraw[draw=BrickRed!90, ultra thick, <->, fill=white] (2.433, 4.921) -- (2.433, 0.405);
			\draw[RoyalBlue, dashed, ultra thick] (3.844, 8.308) -- (3.844, 0.405);
			
			\filldraw[ultra thick, fill=white] (2.715, 4.921) rectangle (3.844, 3.792);
			
			% Axes
			\filldraw[ultra thick, ->, fill=white] (0.457, 0.405) -- (2.15, 0.405);
			\filldraw[ultra thick, ->, fill=white] (0.457, 0.405) -- (0.457, 2.099);
			\node[anchor=center, font=\large] at (1.304, 1.817) {$X_n \in \mathbb{R}$};
			\node[anchor=center, font=\large] at (1.586, 0.7) {$t \in \mathbb{R}$};
			
			% Labels
			\node[anchor=center, font=\large] at (3.279, 4.357) {$u = 1$};
			\node[anchor=center, font=\large, text=BrickRed!90] at (1.868, 6.614) {$v = 0$};
			\node[anchor=center, font=\large, text=BrickRed!90] at (1.868, 2.663) {$v=3$};
			\node[anchor=center, font=\large] at (4.5, 7.743) {$(X, 0)$};
			\node[anchor=center, text=RoyalBlue] at (3.947, 0.13) {$t = 0$};
			\node[anchor=center, text=BrickRed!90] at (2.79, 0.13) {$t = -2
				$};
			\node[anchor=center] at (0.482, 4.325) {$X_n = 0$};
			\node[anchor=center, font=\large] at (10.047, 3.912) {$(0, R\textsuperscript{2})$};
			\node[anchor=center] at (4.006, 3.538) {$\mathbf{Q}_1(0, -1)$};
			\node[anchor=center, rotate=90] at (3, 6.578) {$u = 0$};
			\node[anchor=center, rotate=90] at (3, 1.814) {$u=0$};
		\end{tikzpicture}
		}
		\caption{Sketch of the main points when comparing $u$ and $v$. By symmetry, if $M$ is large enough, $v$ should be close to $3/2$ on $\{X_n=1\}$, and hence $v\geq 1\geq u$ on $\partial \Q_1(0, -1)$. Then, evaluating $v(X, 0)$ is just a matter of using a Gaussian going from time $t=-2$ to $(X, 0)$. Similarly, we evaluate $u(0, R^2)$ using Gaussians going from time $t=0$ to $(0, R^2)$.}
		\label{fig:complement_cube}
	\end{figure}
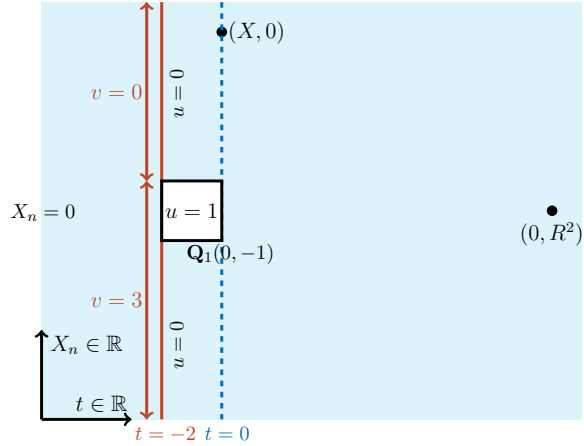
	
	The computations in last paragraph confirm that $v \geq 1 \geq u$ over $\partial \Q_1(0, -1)$, and clearly $v \geq 0 = u$ over $\T_{=-2}$. Therefore, by the maximum principle, $v \geq u$ in $\T_{> -2} \setminus \Q_1(0, -1)$. Using the representation of $v$ by integration against the heat kernel (see Lemma~\ref{lem:fund_sol}):
	\begin{align*}
        u(X, 0)
        \leq
		v(\abs{X}e_n, 0)
		& \leq
		3 \int_{\{Y_n \leq 1\}} \Gamma(\abs{X}e_n, 0; Y, -2) \, dY
		\approx 
		\int_{\{Y_n \leq 1\}} e^{-\frac{\abs{\abs{X}e_n - Y}^2}{8}} \, dY
		\\ & = 
		\int_{-\infty}^1 \int_\R \cdots \int_\R e^{-Y_1^2/8} \cdots e^{-Y_{n-1}^2/8} e^{-(\abs{X} - Y_n)^2/8}\, dY_1 \cdots dY_{n-1} \, dY_n 
		\\ & \approx 
		\int_{-\infty}^1 e^{-(\abs{X} - Y_n)^2/8} \, dY_n 
		=
		\int_{\abs{X}-1}^\infty e^{-\rho^2/8} \, d\rho, 
	\end{align*}
	which concludes the proof of the claim \eqref{eq:claim_exponential_cube}.
\end{proof}

\begin{proof}[\textbf{Proof of Proposition~\ref{prop:probability}}] 
	Let us examine each case separately.
	\begin{itemize}
		\item Case~\eqref{it:bdd} is trivial because $\infty \notin \partial_e \Omega$ (check Remark~\ref{rem:probability}).
		
		\item Case~\eqref{it:unbdd_bdd}. Assume $(X, t) = (0, 0) = \mbf{0}$, and extend $U$ to also contain $\mathcal{T}_{> 0}$, since it does not change the value of $\omega^{\mbf{0}}(\partial_e U)$ (see Remark~\ref{rem:probability}).
		Using that $\partial_e U$ is bounded, take $R \gg 1$ so that $\partial_e U \Subset \Q_{R/2}(\mbf{0})$. By the strong maximum principle, $\omega^{\mbf{0}}_U(\partial_e U) = 1$ if and only if $\omega^{(0, R^4)}_U(\partial_e U) = 1$. Moreover, by the maximum principle (just compare the values at $\partial \Q_R(\mbf{0})$ taking into account that $\partial_e U \subseteq \Q_{R/2}(\mbf{0})$), we have 
		\[
		\omega^{(0, R^4)}_U(\partial_e U)
		\leq 
		\omega^{(0, R^4)}_V(\partial_e V), 
		\qquad 
		\text{where } V := \R^{n+1} \setminus \Q_R(\mbf{0}).
		\]
		(This means that this case reduces to the complement of a cube.) Moreover, the estimate in Lemma~\ref{lem:complement_cube} (appropriately translated and rescaled) yields 
		\[
		\omega_V^{(0, R^4)}(\partial_e V)
		\lesssim 
		R^{-n/2}
		< 
		1
		\] 
		if $R$ is chosen large enough, which finishes the proof. 
		
		\item Case \eqref{it:unbdd_unbdd_Tmin}.
		Pick $\x_0 \in \partial_e U \cap T_{\min}(U)$. Then, for $R$ large enough, it holds $(X, t) \in \Q_R(\x_0)$. By Hölder continuity at the bottom boundary (the estimate \eqref{maineqn:BourgainEstimate} is true by Case 2 in the proof of Theorem~\ref{maineqn:BourgainEstimate}, and this allows to apply Lemma~\ref{lem:iteration_holder} to obtain \eqref{maineqn:HolderEstimate}), we can estimate
		\[
		\omega^{X, t}_U(\{\infty\})
		=
		\lim_{R \to \infty} \omega^{X, t}_U (\partial_e U \setminus \Q_R(\x_0))
		\lesssim 
		\lim_{R \to \infty} \left( \frac{\delta(X, t)}{R} \right)^\alpha
		=
		0, 
		\]
		which gives the result by Remark~\ref{rem:probability}.
		
		\item Case \eqref{it:unbdd_unbdd_TBHCC}.
		The proof follows from the same computation as in Case~\eqref{it:unbdd_unbdd_Tmin}, using that 
		$\omega^{X, t}_U (\partial_e U \setminus \Q_R(\x_0))
		\lesssim 
		\left( \frac{\delta(X, t)}{R} \right)^\alpha$
		if we choose any $\x_0 \in \Sigma$ and $R$ large enough, because we can invoke \eqref{maineqn:HolderEstimate} directly from Theorem~\ref{mainthm:Bourgain}.
		
		\item Case \eqref{it:unbdd_unbdd_counterexample}.
		Let us construct a domain whose exterior does not satisfy the TBCDC at large scales, and hence we will be able to show that the associated caloric measure is not a probability. Concretely, we assume $(X, t) = (0, 0) = \mbf{0}$ and set 
		\[\Omega := \R^{n+1} \setminus \bigcup_{j = 1}^\infty K_j, 
		\qquad 
		\text{where } K_j := \Q_1(0, -R_j^2), \]
		for $R_j$ to be shortly determined. Then, by the maximum principle and Lemma~\ref{lem:complement_cube}, 
		\[
		\omega_U^{\mbf{0}}(\partial_e U)
		=
		\sum_{j=1}^\infty \omega_U^{\mbf{0}}(\partial_e K_j)
		\leq 
		\sum_{j=1}^\infty \omega_{\R^{n+1} \setminus K_j}^{\mbf{0}}(\partial_e K_j)
		\lesssim 
		\sum_{j=1}^\infty R_j^{-1/2}
		< 
		1
		\]
		by choosing $R_j$ growing fast enough, which proves our claim.
	\end{itemize}
\end{proof}

\begin{remark}
    The example that we have constructed for Case~\eqref{it:unbdd_unbdd_counterexample} satisfies the Wiener criterion (see Theorem~\ref{thm:ParabolicWienerCriterion}) because that only concerns small scales, but does not satisfy the TBCDC at large scales. The important part is what happens at large scales: locally around boundary points, our set has \textit{large} exterior, but globally the exterior is fairly small.
\end{remark}

\begin{remark}
    The same kind of counterexample could have been used in \cite[Remark 2.8]{CHMPZ} (i.e. it is not needed that the radii of the balls get smaller). What really matters is having some thickness assumption for large scales. Indeed, we take advantage of it in Cases~\eqref{it:unbdd_unbdd_Tmin} and \eqref{it:unbdd_unbdd_TBHCC}.
\end{remark}

%%%%%%%%%%%%%%%%%%%%%%%%%%%%%%%%%%%%%%%%%%%%%%%%%%%%%%%
%%%%%%%%%%%%%%%%%%%%%%%%%%%%%%%%%%%%%%%%%%%%%%%%%%%%%%%
%%%%%%%%%%%%%%%%%%%%%%%%%%%%%%%%%%%%%%%%%%%%%%%%%%%%%%%
%%%%%%%%%%%%%%%%%%%%%%%%%%%%%%%%%%%%%%%%%%%%%%%%%%%%%%%
%%%%%%%%%%%%%%%%%%%%%%%%%%%%%%%%%%%%%%%%%%%%%%%%%%%%%%%
%%%%%%%%%%%%%%%%%%%%%%%%%%%%%%%%%%%%%%%%%%%%%%%%%%%%%%%
%%%%%%%%%%%%%%%%%%%%%%%%%%%%%%%%%%%%%%%%%%%%%%%%%%%%%%%
%%%%%%%%%%%%%%%%%%%%%%%%%%%%%%%%%%%%%%%%%%%%%%%%%%%%%%%
%%%%%%%%%%%%%%%%%%%%%%%%%%%%%%%%%%%%%%%%%%%%%%%%%%%%%%%
%%%%%%%%%%%%%%%%%%%%%%%%%%%%%%%%%%%%%%%%%%%%%%%%%%%%%%%
%%%%%%%%%%%%%%%%%%%%%%%%%%%%%%%%%%%%%%%%%%%%%%%%%%%%%%%
%%%%%%%%%%%%%%%%%%%%%%%%%%%%%%%%%%%%%%%%%%%%%%%%%%%%%%%
%%%%%%%%%%%%%%%%%%%%%%%%%%%%%%%%%%%%%%%%%%%%%%%%%%%%%%%


\begin{thebibliography}{AHMMT20}
\parskip=0.1cm

\bibitem[Ai02]{Aikawa}
Hiroaki Aikawa, 
\emph{Hölder continuity of the Dirichlet solution for a general domain},
Bull. London Math. Soc \textbf{34} (2002), 691–702.

\bibitem[Ar67]{Aronson}
Donald G. Aronson, 
\emph{Bounds for the fundamental solution of a parabolic equation}, 
Bull. Am. Math. Soc. \textbf{73} (1967), 890--896.

\bibitem[AEN20]{AEN}
Pascal Auscher, Moritz Egert, and Kaj Nyström,
\emph{$L^2$ well-posedness of boundary value problems for parabolic systems with measurable coefficients},
J. Eur. Math. Soc. \textbf{22} (2020), 2943--3058.

\bibitem[AHMMT20]{AHMMT}
Jonas Azzam, Steve Hofmann, José María Martell, Mihalis Mourgoglou, and Xavier Tolsa, 
\emph{Harmonic measure and quantitative connectivity: geometric characterization of the $L^p$-solvability of the Dirichlet problem}, 
Invent. Math. \textbf{222} (2020), 881–993.

\bibitem[BG25]{BG} 
Matthew Badger, and Alyssa Genschaw, 
\emph{Hausdorff dimension of caloric measure}, 
Am. J. Math. \textbf{147} (2025), 465-502.

\bibitem[BMP25]{BMP}
Ariel Barton, Svitlana Mayboroda, and Alberto Pacati,
\emph{The Poisson-Dirichlet problem in domains with Ahlfors regular boundary},
arXiv preprint 2506.14639.

\bibitem[Ba66]{Bauer}
Heinz Bauer, 
\emph{Harmonische Räume und ihre Potentialtheorie}, 
Lecture Notes in Math., vol. 22, Springer-Verlag, 1966.

\bibitem[BM85]{BM}
Marco Biroli, and Umberto Mosco,
\emph{Wiener estimates at boundary points for parabolic equations}, 
Ann. Mat. Pura Appl. \textbf{141} (1985), 353–-367.
% Similar to GZ, uses weak solutions

\bibitem[BFHH25]{BFHH}
Simon Bortz, Sandra Ferris, Pablo Hidalgo-Palencia, and Steve Hofmann. 
\emph{A variable coefficient free boundary problem for $L^p$ solvability of parabolic Dirichlet problem in graph domains},
arXiv preprint 2503.00873.

\bibitem[BHHLN23]{BHHLN2}
Simon Bortz, John Hoffman, Steve Hofmann, José Luis Luna-García, and Kaj Nyström, 
\emph{Corona decompositions for parabolic uniformly rectifiable sets}, 
J. Geom. Anal. \textbf{33} (2023), art. no. 96.

\bibitem[BHMN25]{BHMN}
Simon Bortz, Steve Hofmann, José María Martell, and Kaj Nyström, 
\emph{Solvability of the $L^p$ Dirichlet problem for the heat equation is equivalent to parabolic uniform rectifiability in the case of a parabolic Lipschitz graph,}
Invent. Math. \textbf{239} (2025), 165–217.

\bibitem[Bo87]{Bourgain}
Jean Bourgain,
\emph{On the Hausdorff dimension of harmonic measure in higher dimension},
Invent. Math. \textbf{87} (1987), 477–483.

\bibitem[CFMS81]{CFMS}
Luis A. Caffarelli, Eugene B. Fabes, Stefano Mortola, and Sandro Salsa, 
\emph{Boundary behavior of non-negative solutions of elliptic operators in divergence form}, 
Indiana Univ. Math. J. \textbf{30} (1981), 621--640.

\bibitem[CHMPZ25]{CHMPZ}
Mingming Cao, Pablo Hidalgo-Palencia, José María Martell, Cruz Prisuelos-Arribas, and Zihui Zhao, 
\emph{Elliptic operators in rough sets, and the Dirichlet problem with boundary data in H\"older spaces}, 
J. Funct. Anal. \textbf{288} (2025), 110801.

\bibitem[Da77]{Dah}
Björn E.J. Dahlberg, 
\emph{Estimates of harmonic measure}, 
Arch. Rational Mech. Anal. \textbf{65} (1977), 275–-288.

\bibitem[De03]{De}
Richard Delaware, 
\emph{Every set of finite Hausdorff measure is a countable union of
sets whose Hausdorff measure and content coincide}, 
Proc. Amer. Math. Soc. \textbf{131} (2003), 2537–-2542.

\bibitem[DLP24]{DLP}
Martin Dindo\v{s}, Linhan Li, and Jill Pipher,
\emph{The $L^p$ regularity problem for parabolic operators},
arXiv preprint 2410.23801.

\bibitem[DN24]{DN}
Martin Dindo\v{s}, and Erika Nyström, 
\emph{A relation between the Dirichlet and the regularity problem for parabolic equations}, 
arXiv preprint 2409.09197.

\bibitem[DPP17]{DPP}
Martin Dindo\v{s}, Jill Pipher, and Stefanie Petermichl, 
\emph{BMO solvability and the $A_\infty$ condition for second order parabolic operators}, 
Ann. Inst. H. Poincaré Anal. Non Linéaire \textbf{34} (2017), 1155–1180.

\bibitem[EK71]{EK}
Edward G. Eefros, and Jerry L. Kazdan, 
\emph{On the Dirichlet problem for the heat equation},
Indiana U. Math. J. \textbf{20} (1971), 683--693.
% They give a Wiener condition using curvatures

\bibitem[EG82]{EG}
Lawrence C. Evans, and Ronald F. Gariepy, 
\emph{Wiener's criterion for the heat equation}, 
Arch. Ration. Mech. Anal. \textbf{78} (1982), 293--314.

\bibitem[FGL89]{FGL} 
Eugene B. Fabes, Nicola Garofalo and Ermanno Lanconelli, 
\emph{Wiener’s criterion for divergence form parabolic operators with $C^1$-Dini continuous coefficients}, 
Duke Math. J. \textbf{59} (1989), 191--232.

\bibitem[FGS86]{FGS} 
Eugene B. Fabes, Nicola Garofalo, and Sandro Salsa, 
\emph{A backward Harnack inequality and Fatou theorem for nonnegative solutions of parabolic equations}, 
Ill. J. Math. \textbf{30} (1986), 536--565.

\bibitem[FTV25]{FTV}
Ian Fleschler, Xavier Tolsa, and Michele Villa,
\emph{Carleson’s $\e^2$ conjecture in higher dimensions}, 
Invent. Math. \textbf{241} (2025), 207--307.

\bibitem[GL88]{GL} 
Nicola Garofalo and Ermanno Lanconelli, 
\emph{Wiener’s criterion for parabolic equations with variable coefficients and its consequences}, 
Trans. Amer. Math. Soc. \textbf{308} (1988), 811--836.

\bibitem[GZ82]{GZ}
Ronald Gariepy, and William P. Ziemer,
\emph{Thermal capacity and boundary regularity}, 
J. Differ. Equ. \textbf{45} (1982), 374–-388.
% Some Wiener result with coefficients, but heat-thermal capacity. Uses weak solutions instead of the continuous Dirichlet problem

\bibitem[GMT18]{GMT}
John Garnett, Mihalis Mourgoglou, and Xavier Tolsa, 
\emph{Uniform rectifiability from Carleson measure estimates and $\e$-approximability of bounded harmonic functions}, 
Duke Math. J. \textbf{167} (2018), 1473–-1524.

\bibitem[GH20]{GH} 
Alyssa Genschaw and Steve Hofmann, 
\emph{A weak reverse H\"older inequality for caloric measure}, 
J. Geom. Anal. \textbf{30} (2020), 1530--1564.

\bibitem[Ge13]{Gevrey}
Maurice Gevrey, 
\emph{Sur les équations aux dérivées du type parabolique}, 
J. Math. Pures Appl. \textbf{9} (1913), 305--471.
% Proposes Hölder with exponent > 1/2 for the parametrization of the boundary (pp. 322, 323)

\bibitem[He01]{Heinonen}
Juha Heinonen, 
\emph{Lectures on analysis on metric spaces}, 
Universitext, Springer-Verlag, New York, 2001.

\bibitem[HKM]{HKM} 
Juha Heinonen, Tero Kilpeläinen, and Olli Martio, 
\emph{Nonlinear potential theory of degenerate elliptic equations},
Dover Publications, Inc., Mineola, NY, 2006, Unabridged republication of the 1993 original.

\bibitem[HL01]{HL} 
Steve Hofmann, and John L. Lewis, 
\emph{The Dirichlet problem for parabolic operators with singular drift terms}, 
Mem. Amer. Math. Soc. \textbf{151} (2001), no. 719.

\bibitem[HMMTZ21]{HMMTZ}
Steve Hofmann, José María Martell, Svitlana Mayboroda, Tatiana Toro, and Zihui Zhao, 
\emph{Uniform rectifiability and elliptic operators satisfying a Carleson measure condition}, 
Geom. Funct. Anal. \textbf{31} (2021), 325-–401.

\bibitem[Ka59]{K}
Erich Kamke, 
\emph{Über die erste Randwertaufgabe bei der Laplace -und der Wärmeleitungs-
Differentialgleichung}, 
J. Deutschen Math. Verein \textbf{62} (1959), 1--33.
% Very similar to Petrovsky (in fact, he re-proves his results), but much more general, axiomatic

\bibitem[KPT09]{KPT}
Carlos E. Kenig, David Preiss, and Tatiana Toro, 
\emph{Boundary structure and size in terms of interior and exterior harmonic measures in higher dimensions}, 
J. Amer. Math. Soc. \textbf{22} (2009), 771–-796.

\bibitem[La73]{L}
Ermanno Lanconelli, 
\emph{Sul problema di Dirichlet per l'equazione del calore}, 
Ann. Math. Pura Appl. \textbf{97} (1973), 83--114.
% In Th 2.2 gives the necessary condition as in Wiener

\bibitem[La77]{L2}
Ermanno Lanconelli,
\emph{Sul confronto della regolarità dei punti di frontiera rispetto ad operatori lineari parabolici diversi}, 
Ann. Math. Pura Appl. \textbf{114} (1977), 207--227.
% Shows that two operators have the same regular points iff they are the same

\bibitem[La69]{Landis}
Evgenii M. Landis, 
\emph{Necessary and sufficient conditions for regularity of a boundary point in the Dirichlet problem for the heat-conduction equation}, 
Soviet Math. \textbf{10} (1969), 380--384.

\bibitem[Le08]{Levi}
Eugenio Levi, 
\emph{Sull' equazione del calore,} 
Ann. Mat. Pura Appl. \textbf{14} (1908), 187–264.
% In Section 5, he gets some conditions for continuity up to the boundary (see p. 215, "la z (the solution) prenda su s_1 ed s_2 (which are the parametrizations of the boundary) i valori assegnati (take the assigned values)"), but are not easy to understand because they involve modifications of the fundamental solution (h_ab, where h_0,1/2 is the true fund sol)

\bibitem[Li96]{L3} 
Gary M. Lieberman, 
\emph{Second order parabolic differential equations}, 
World Scientific Publishing Co., Inc., River Edge, NJ (1996).

\bibitem[LSW63]{LSW}
Walter Littman, Guido Stampacchia, and Hans F. Weinberger, 
\emph{Regular points for elliptic equations with discontinuous coefficients}, 
Ann. Scuola Norm.-Sci. \textbf{17} (1963), 43–77.

\bibitem[Ma95]{M} 
Pertti Mattila, 
\textit{Geometry of Sets and Measures in Euclidean Spaces: Fractals and Rectifiability}, 
Cambridge University Press, 1995. 

\bibitem[Mo64]{Moser}
Jürgen Moser, 
\emph{A Harnack inequality for parabolic differential equations}, 
Comm. Pure Appl. Math. \textbf{17} (1964), 101–134.

\bibitem[MP21]{MP} 
Mihalis Mourgoglou, and Carmelo Puliatti, 
\emph{Blow-ups of caloric measure in time varying domains and applications to two-phase problems},
J. Math. Pures Appl. \textbf{152} (2021), 1--68.

\bibitem[Na58]{N} 
John Nash, 
\emph{Continuity of solutions of parabolic and elliptic equations},
Am. J. Math. \textbf{80} (1958), 931--954.

\bibitem[No73]{Nov}
A. Novruzov, 
\emph{On some regularity criteria for boundary points of linear and quasi-linear parabolic equations,}
Dokl. Akad. Nauk SSSR \textbf{209} (1973), 785--787.
% Extends Landis' criterion to equations with coefficients

\bibitem[Pe35]{P}
Ivan Petrovsky, 
\emph{Zur ersten Randwertaufgabe der Wärmeleitungsgleichung}, 
Compositio Math. \textbf{1} (1935), 383--419.

\bibitem[Pi55]{Pini}
Bruno Pini, 
\emph{Sulla regolarità e irregolarità della frontiera per il primo problema di valori al contorno relativo all'equazione del calore, }
Ann. Math. Pura Appl. \textbf{40} (1955), 69--88.
% It is the immediate precedent of [Lanconelli], working only on the plane and with boundaries which are parametrized by (continuous) curves

\bibitem[QX19]{QX} 
Zhongmin Qian, and Guangyu Xi, 
\emph{Parabolic equations with singular divergence‐free drift vector fields},
J. Lond. Math. Soc. \textbf{100} (2019), 17--40.

\bibitem[SSSZ12]{SSSZ}
Gregory Seregin, Luis Silvestre, Vladimír \v{S}verák, and Andrej Zlato\v{s}, 
\emph{On divergence-free drifts}, 
J. Differ. Equ. \textbf{252} (2012), 505-–540.

\bibitem[St29]{Sternberg}
Wolfgang Sternberg, 
\emph{Über die Gleichung der Wärmeleitung}, 
Math. Ann. \textbf{101} (1929), 394--398.

\bibitem[Wa12]{W2} 
Neil A. Watson, 
\emph{Introduction to heat potential theory}, 
Mathematical Surveys and Monographs 182, Amer. Math. Soc., Providence, RI, 2012.

\bibitem[Wi24]{Wiener}
Norbert Wiener, 
\emph{The Dirichlet problem}, 
J. Math. and Phys. \textbf{3} (1924), 127–146.

\bibitem[Zi80]{Z}
William P. Ziemer, 
\emph{Behavior at the boundary of solutions of quasilinear parabolic equations},
J. Differ. Equ. \textbf{35} (1980), 291--305.

\end{thebibliography}
\end{document}